\newtheorem{thm}{Theorem}[section]
\newtheorem{cor}[thm]{Corollary}
\newtheorem{lemma}[thm]{Lemma}
\newtheorem{prop}[thm]{Proposition}
\newtheorem{defn}[thm]{Definition}
\newtheorem{defi}[thm]{Definition} %this is redundant
\theoremstyle{remark}
\theoremstyle{definition}
\newtheorem{rmk}[thm]{Remark}
\newtheorem{rem}[thm]{Remark} %this is redundant
\newtheorem{exa}[thm]{Example}
\numberwithin{equation}{section}
\def\beq{\begin{equation}}
\def\eeq{\end{equation}}
\def\crash#1{}
\def\N{{\mathbb N}}
\def\Z{{\mathbb Z}}
\def\Q{{\mathbb Q}}
\def\R{{\mathbb R}}
\def\C{{\mathbb C}}
\def\I{{\mathbb I}}
\def\F{{\mathbb F}}
\def\l{\left}
\def\r{\right}
\def\p{\prime}
\def\cE{{\mathcal E}}
\def\cL{{\mathcal L}}
\def\wtilde{\widetilde}
\def\ul{\underline}
\def\veps{\varepsilon}
\def\a{\alpha}
\def\be{\beta}
\def\de{\delta}
\def\ga{\gamma}
\def\sg{\sigma}
\def\la{\lambda}
\def\deg{\mathop{\rm deg}}
\def\s{\sigma}
\def\Gl{\operatorname{Gl}}
\def\Sl{\operatorname{Sl}}
\def\Aut{\operatorname{Aut}}
\def\ida{\mathfrak{a}}
\def\idb{\mathfrak{b}}
\def\Alg{\operatorname{Alg}}
\def\spec{{\operatorname{Spec}}}
\def\q{\mathfrak{q}}
\def\quot{\operatorname{Quot}}
\def\sdim{\sigma\text{-}\dim}
\def\strdeg{\sigma\text{-}\operatorname{trdeg}}
\def\Ga{\mathbf{G}_a}
\def\Gm{\mathbf{G}_m}
\def\trdeg{\operatorname{trdeg}}
\def\id{\operatorname{id}}
\def\<{\langle}
\def\>{\rangle}
\def\ds{{\delta\sigma}}
\def\sgal{\sigma\text{-}\operatorname{Gal}}
\def\diag{\operatorname{diag}}
\author{Lucia Di Vizio, Charlotte Hardouin and Michael Wibmer}
\title{Difference algebraic relations among solutions of linear differential equations
%\footnotetext{Date: \today}
\footnotetext{Lucia Di Vnizio,
Laboratoire de Math\'ematiques UMR8100, UVSQ,
45 avenue des \'Etats-Unis
78035 Versailles cedex, France.
{e-mail: {\tt divizio@math.cnrs.fr}.}}
\footnotetext{Charlotte Hardouin, {Institut de Math\'{e}matiques de Toulouse,}
{118 route de Narbonne,
31062 Toulouse Cedex 9, France.}
{e-mail: {\tt hardouin@math.ups-tlse.fr}}.}
\footnotetext{Michael Wibmer, {Lehrstuhl f\"{u}r Mathematik (Algebra),} {RWTH Aachen}, {
52056 Aachen, Germany.} {e-mail: {\tt michael.wibmer@matha.rwth-aachen.de}.}}}
\begin{document}
%\makeatletter\c@page=267\makeatother
\maketitle

\begin{abstract}
We extend and apply the Galois theory of linear differential equations equipped with the action of an endomorphism. The Galois groups in this Galois theory are difference algebraic groups and we use structure theorems for these groups to characterize the possible difference algebraic relations among solutions of linear differential equations. This yields tools to show that certain special functions are difference transcendent. One of our main results is a characterization of discrete integrability of linear differential equations with almost simple usual Galois group, based on a structure theorem for the Zariski dense difference algebraic subgroups of almost simple algebraic groups, which is a schematic version,
in characteristic zero, of a result  due to Z. Chatzidakis, E. Hrushovski and Y. Peterzil.
\end{abstract}

\bibliographystyle{alpha}

%%%%%%%%%%%%%%%%%%%%%%%%%%%%%%%%%%%%%%%%%%%%%%%%%%%%%%%%%%%%%%%%%%%%
%%%%%%%%%%%%%%%%%%%%%%%%%%%%%%%%%%%%%%%%%%%%%%%%%%%%%%%%%%%%%%%%%%%%
%%%%%%%%%%%%%%%%%%%%%%%%%%%%%%%%%%%%%%%%%%%%%%%%%%%%%%%%%%%%%%%%%%%%
%%%%%%%%%%%%%%%%%%%%%%%%%%%%%%%%%%%%%%%%%%%%%%%%%%%%%%%%%%%%%%%%%%%%
%%%%%%%%%%%%%%%%%%%%%%%%%%%%%%%%%%%%%%%%%%%%%%%%%%%%%%%%%%%%%%%%%%%%
%%%%%%%%%%%%%%%%%%%%%%%%%%%%%%%%%%%%%%%%%%%%%%%%%%%%%%%%%%%%%%%%%%%%
%\input{abstract.tex}

\renewcommand{\labelenumi}{{\rm (\roman{enumi})}}
\renewcommand{\labelenumii}{{\rm (\alph{enumii})}}

\maxtocdepth{subsection}
%\tableofcontents

\section{Introduction}

In \cite{articleone} the authors developed a Galois theory for linear differential equations equipped with the action of an endomorphism $\s$. Such actions occur, for example, when considering linear differential equations depending on a parameter $\alpha$, in which case the action of $\s$ is determined by $\s(\alpha)=\alpha+1$. In the $p$-adic analysis of differential equations one may choose $\s$ as a Frobenius lift.

The Galois groups in this Galois theory are linear difference algebraic groups, i.e., subgroups of the general linear group defined by equations which involve powers of $\s$. These Galois groups measure not only the algebraic relations among the solutions of a linear differential equation, but also the difference algebraic relations, i.e., the algebraic relations between all the transforms of the solutions under $\s$. For example, the difference algebraic relation $xJ_{\alpha+2}(x)-2(\alpha+1)J_{\alpha+1}(x)+xJ_{\alpha}(x)=0$ satisfied by the Bessel function $J_\alpha(x)$ will be witnessed by our new Galois group, whereas the usual (differential) Galois group of the Bessel equation does not take it into account.

In this article we prove structural results about difference algebraic groups to apply them to characterize the possible difference algebraic relations among various types of linear differential equations. Thereby we often obtain criteria to decide if certain functions are transformally (i.e., difference algebraically) independent.

This Galoisian approach to transformal independence is analogous to the approaches to differential independence of special functions via the Galois theories in \cite{cassisinger} and \cite{HardouinSinger}. See \cite{DiVizio:ApprocheGaloisienneDeLaTranscendanceDifferentielle} for a survey and \cite{Arreche:AGaloisTheoreticProofOfTheDifferentialTranscendenceOfTheIncompleteGammaFunction} for a recent example.

\medskip

The usual Galois theory of linear differential equations (see e.g. \cite{vdPutSingerDifferential} for an introduction) is aimed at studying the algebraic relations among the solutions of a linear differential system $\de(y)=Ay$, $A\in K^{n\times n}$ over a differential field $K$, i.e., a field equipped with a derivation $\de\colon K\to K$.
In order to study the difference algebraic properties of the solutions, we enrich the situation with the action of a difference operator $\s$.
Our base field $K$ is a differential--difference field, i.e., it is equipped with a derivation $\de\colon K\to K$ and an endomorphism $\s\colon K\to K$ such that $\de$ and $\s$ commute (up to a convenient factor). For example, $K$ could be the field $\C(\alpha,x)$ with derivation $\de=\frac{d}{dx}$ and endomorphism $\s$ given
by $\s(f(\alpha,x))=f(\alpha+1,x)$. One can also consider the $\ds$-field $K=\C(x)$ endowed with the
derivation $\de(x)=1$ and the endomorphism $\s(x)=x^2$.
 To study the difference algebraic relations among the solutions we have to adjoin not only the solutions to the base field but also all their transforms under $\s$. As in the usual Galois theory of linear differential equations, it is crucial to do this
without enlarging the $\de$-constants $K^\de:=\{a\in K|\ \de(a)=0\}$. A differential--difference field obtained in this way is called a $\s$-Picard-Vessiot extensions of $K$ for $\de(y)=Ay$. The automorphism group of a $\s$-Picard-Vessiot extension is naturally a linear difference algebraic group over the difference field $k=K^\de$, i.e., a subgroup of the general linear group $\Gl_{n,k}$ defined by algebraic difference equations. These groups play the role of Galois groups in the Galois theory developed in \cite{articleone} and they encode valuable information about the difference algebraic relations among the solutions. For example, the difference--transcendence degree of a $\s$-Picard-Vessiot extension is the dimension (in the sense of difference algebra) of the corresponding Galois group.

The basic idea of this article is to apply, via the Galois theory from  \cite{articleone}, structural results for difference algebraic groups to study the difference algebraic relations among the solutions of linear differential equations. We will show that often some understanding of the usual Galois group of $\de(y)=Ay$ is already sufficient to obtain an explicit description of the possible difference algebraic relations among the solutions. For example, if $A$ is a diagonal matrix, one knows that the usual Galois group is an algebraic subgroup of $\Gm^n$, and since the difference algebraic subgroups of $\Gm^n$ are well--understood, a description of the possible difference algebraic relations among the solutions can be obtained.

Our explicit descriptions of the possible difference algebraic relations among the solutions often lead to criteria for transformal independence. For example, we prove the following statement (cf. Corollary \ref{cor:forintro}):

\begin{center}
\begin{minipage}{15cm}
\textit{Let $L$ be a differential--difference field extension of $K$ such that $L^\de=K^\de$ is algebraically closed and $\s\colon K\to K$ is surjective. For $Y\in\Gl_n(L)$ with $\de(Y)=AY$ for some $A\in K^{n\times n}$, let $T\subset L$ be a transcendence basis of $K(Y)|K$ and assume that the usual Galois group of $\de(y)=Ay$ is an almost simple algebraic group. If, for every $d\geq 1$, the linear differential equation $\de(B)+BA=\s^d(A)B$ over $K$ has no non-zero solution $B$ which is algebraic over $K$, then $T$ is transformally independent over $K$, i.e., $T,\s(T),\ldots$ are algebraically independent over $K$.
}
\end{minipage}
\end{center}

Applying the above criterion to the Airy equation $\de^2(y)-xy=0$ yields the transformally independence of the Airy functions (see Corollary \ref{cor:Airy}):

\begin{center}
\begin{minipage}{15cm}
\textit{Let $\operatorname{Ai}(x)$ and $\operatorname{Bi}(x)$ be two $\C$-linearly independent solutions of the Airy equation. Then the functions
$\operatorname{Ai}(x),\operatorname{Bi}(x),\operatorname{Bi}'(x), \operatorname{Ai}(x+1),\operatorname{Bi}(x+1),\operatorname{Bi}'(x+1),\operatorname{Ai}(x+2),\ldots$ are algebraically independent over $\C(x)$.
}
\end{minipage}
\end{center}

\medskip

Let us describe the content of the article in more detail. In Section 2 we fix the notation and recall the basic definitions and results from the $\s$-Galois theory of linear differential equations developed in \cite{articleone}. In Section 3 we study three different classes of linear differential systems. First, systems of the form
$$\de(y_1)=b_1,\ldots,\de(y_n)=b_n,$$ where the usual Galois group is a subgroup of $\Ga^n$, secondly, systems of the form
$$\de(y_1)=a_1y_1,\ldots,\de(y_n)=a_ny_n,$$ where the usual Galois group is a subgroup of $\Gm^n$, and finally an equation of the form
$$\de(y)=ay+b,$$ where the usual Galois group is a subgroup of $\Ga\rtimes\Gm$.

In Section 4, we investigate the inverse problem for difference algebraic subgroups of
the additive group $\Ga$. The inverse problem for a continuous parameter
was discussed in \cite{Singerinv}, where it is proved that, having $\Ga$ or $\Gm$ as quotient
is an  obstruction for a differential algebraic group to be a parametrized Galois
group in the sense of \cite{cassisinger}. In this section, we show that unlike the
continuous parameter case, one can realize $\Ga$ as well
as its non reduced subgroups, as $\s$-Galois group.

In Section 5 we introduce and study discrete integrability also called discrete isomonodromy. A linear differential system $\de(y)=Ay$ over a differential--difference field $K$ is called $\s^d$-integrable for an integer $d\geq 0$, if there exists $B\in\Gl_n(K)$ such that $\de(B)+BA=\s^d(A)B$. Note that $\s$-integrability is a necessary condition for the existence of a fundamental solution matrix $Y$ of $\de(y)=Ay$ with $\s(Y)=BY$ derived from the identity $\s(\de(Y))=\de(\s(Y))$. The main result here is a characterization of $\s^d$-integrability via the $\s$-Galois group (Theorem \ref{thm:integra}): The differential system $\de(y)=Ay$ is $\s^d$-integrable if and only if up to extension of the constants and conjugation inside $\Gl_{n}$, the associated $\s$-Galois group $G\leq\Gl_{n}$ satisfies $\s^d(g)=g$ for all $g\in G$.  In \S \ref{subsec:integraexamples}, we give examples
of $\s^d$-integrability as contiguity relations, Frobenius structure  for $p$-adic differential equations and
Lax pairs for lattices in Mathematical physics.

Then, in Section 6, we combine this characterization of $\s^d$-integrability with a classification result for the Zariski dense difference algebraic subgroups of almost simple algebraic groups  to obtain a strong dichotomy for the possible difference algebraic relations among the solutions of linear differential systems $\de(y)=Ay$, whose usual Galois group is an almost simple algebraic group. The philosophy of the result is that either the system is $\s^d$-integrable for some $d\geq 1$ or, there are no (proper) difference algebraic relations among the solutions, i.e., all the difference algebraic relations can be derived from algebraic relations.
We present two versions of this result. One, for simple algebraic groups (Proposition \ref{prop:simpleintegra}) and another one for almost simple algebraic groups (Theorem \ref{thm:almostintegra}).

The relevant structure results for difference algebraic groups have been collected in an appendix.
Difference algebraic groups are the group objects in the category of difference varieties.
A difference variety in the classical sense (see Section 2.6 in \cite{Levin}) is determined by its points in difference fields. There is a one--to--one correspondence between the difference subvarieties of affine $n$-space and the so--called \emph{perfect} difference ideals in the difference polynomial ring in $n$ difference variables (Theorem 2.6.4 in \cite{Levin}).
In this article we have to employ a more general notion of difference variety, which one might term a difference scheme. We consider points in arbitrary difference algebras. There is a one--to--one correspondence between the difference subschemes of affine $n$-space and \emph{all} difference ideals in the difference polynomial ring in $n$ difference variables. The difference schemes corresponding to the classical difference varieties we call perfectly $\s$-reduced.

Difference algebraic groups which are not perfectly $\s$-reduced occur rather frequently as $\s$-Galois groups. For example, if the system $\de(y)=Ay$ has a solution $z$ whose coordinates $z_1,\ldots,z_n$ are algebraic over $K$, then a $\s$-Picard-Vessiot extension $L$ for $\de(y)=Ay$ will contain all the $\s^j(z_i)$ ($j\geq 1$, $1\leq i\leq n$). Unless $\s$ is of a rather special nature the field extension generated by all the $\s^j(z_i)$ will be an infinite algebraic extension of $K$. Thus the relative algebraic closure of $K$ in $L$ is of infinite degree over $K$. This shows that the $\s$-Galois group of $L|K$ is not perfectly $\s$-reduced since a perfectly $\s$-reduced difference algebraic group can have only finitely many components. (Cf. Lemma 6.4 in \cite{articleone}.)

The major bulk of the appendix is concerned with  a classification result for the Zariski dense difference algebraic subgroups of almost simple algebraic groups.  In  \cite{ChatHrusPet}, the authors proved this result for perfectly $\s$-reduced difference algebraic groups. We show that a naive generalization of \cite{ChatHrusPet} does not hold. Thus,  we adapt their arguments to our schematic  framework and  introduce  new  techniques
to obtain Theorems   \ref{thm:classsimple} and \ref{thm:classalmostsimple} as structure theorem.
(See Section \ref{subsubsec:simple} for a detailed discussion).  A further benefit of our version is that it is expected to be applicable (via the Galois theory in \cite{OvchinnikovWibmer:SGaloisTheoryOfLinearDifferenceEquations}) to linear difference equations as well.

%%%%%%%%%%%%%%%%%%%%%%%%%%%%%%%%%%%%%%%%%%%%%%%%%%%%%%%%%%%%%%%%%%%%%%%%%%%%%%%%%%%%%%%%%%%%%%%%
%%%%%%%%%%%%%%%%%%%%%%%%%%%%%%%%%%%%%%%%%%%%%%%%%%%%%%%%%%%%%%%%%%%%%%%%%%%%%%%%%%%%%%%%%%%%%%%%
%\input{preliminaries}

\section{Difference Galois theory of linear differential equations}
\label{sec:recallarticleone}

In this section, we quickly recall some basic facts of difference/differential
algebra as well as some very basic notions of difference algebraic geometry,
mainly in the affine case. We recall also  some fundamental results from
\cite{articleone}, that we will need in the sequel. More precise notions will be recalled along the paper
when needed.
We largely use standard notations of difference and differential algebra as can
be found in \cite{Cohn:difference}, \cite{Levin} and \cite{Kolchin:differentialalgebraandalgebraicgroups}.

\subsection{Differential algebra} \label{subsec:differentialalgebra}

All rings considered in this work are commutative with identity and contain the field of rational numbers.
In particular, all fields are of characteristic zero.
A differential ring (or $\de$-ring for short) is a ring $R$ together with a derivation $\de:R\rightarrow R$.
A $\de$-ring $R$ is said to be $\de$-simple if it does not contain any proper $\de$-ideals,
i.e, proper ideals invariant under the action of $\de$.
The ring of $\de$-constants is $R^\de=\{r\in R| \ \de(r)=0\}$.
We will use more than once the following lemma on $\de$-simple $\de$-rings:

\begin{lemma}[{Lemma 2.3 in \cite{articleone}}]\label{lemma:simple}
Let $R$ be a $\de$-simple $\de$-ring, $k:=R^\de$ and $S$ a $k$-algebra,
considered as a constant $\de$-algebra, i.e., $S^\de=S$. Then $(R\otimes_k S)^\de=S$ and the assignments
$\ida\mapsto R\otimes_k \ida$ and $\idb\mapsto S\cap\idb$ define mutually inverse bijections between
the set of ideals of $S$ and the set of $\de$-ideals of $R\otimes_k S$.
In particular, every $\de$-ideal $\idb$ of $R\otimes_k S$ is generated by $\idb\cap S$ as an ideal.
\end{lemma}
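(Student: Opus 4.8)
\medskip
\noindent\textbf{Proof proposal.}
The plan is to exploit throughout that $k=R^\de$ is a field: if $a\in k$ is nonzero then $Ra$ is a nonzero $\de$-ideal of $R$, so $Ra=R$ by $\de$-simplicity, whence $a$ is a unit and $\de(a^{-1})=-a^{-2}\de(a)=0$ gives $a^{-1}\in k$. Consequently $R$ is a nonzero free $k$-module and, for any $k$-algebra $S$, the ring $T:=R\otimes_k S$ (with $\de(r\otimes s):=\de(r)\otimes s$, which makes sense since $S$ is constant) is free as an $R$-module, and any $k$-linearly independent subset of $S$ extends to a $k$-basis and hence gives part of an $R$-basis of $T$. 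Using this, $T^\de=S$ is quick: the inclusion $S\subseteq T^\de$ is obvious, and conversely, writing $x\in T^\de$ as $x=\sum_{i=1}^n r_i\otimes s_i$ with the $s_i$ chosen $k$-linearly independent (reduce the number of terms otherwise), the relation $0=\de(x)=\sum_i\de(r_i)\otimes s_i$ forces $\de(r_i)=0$, i.e. $r_i\in k$, so $x=1\otimes\big(\sum_i r_is_i\big)\in S$.

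Next I would verify that $\ida\mapsto R\otimes_k\ida$ is well defined into the set of $\de$-ideals and is a one-sided inverse of $\idb\mapsto S\cap\idb$. For an ideal $\ida\subseteq S$, the submodule $R\otimes_k\ida$ is exactly the kernel of the surjection $T\twoheadrightarrow R\otimes_k(S/\ida)$, hence an ideal of $T$; it is $\de$-stable since $\de(R\otimes_k\ida)=\de(R)\otimes_k\ida\subseteq R\otimes_k\ida$. Restricting the same surjection to $1\otimes S$ and using that $R$ is a nonzero free $k$-module shows $S\cap(R\otimes_k\ida)=\ida$.

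The heart of the matter is to show that \emph{every} $\de$-ideal $\idb$ of $T$ equals $R\otimes_k(S\cap\idb)$. The inclusion $\supseteq$ is clear, and since $T/(R\otimes_k(S\cap\idb))\cong R\otimes_k\big(S/(S\cap\idb)\big)$ with the image of $\idb$ still contracting to $0$ in $S/(S\cap\idb)$, it suffices to prove: if $S\cap\idb=0$ then $\idb=0$. Assume $\idb\neq 0$ and pick $0\neq x=\sum_{i=1}^n r_i\otimes s_i\in\idb$ with $n$ minimal, so the $s_i$ are $k$-linearly independent and every $r_i\neq 0$. The key device is the set
$$\mathfrak{c}:=\big\{\,r\in R\ :\ r\otimes s_1+\textstyle\sum_{i=2}^n r'_i\otimes s_i\in\idb\ \text{ for some }r'_2,\dots,r'_n\in R\,\big\},$$
which is an ideal of $R$, and in fact a $\de$-ideal because applying $\de$ to such an element keeps it in $\idb$; since $r_1\in\mathfrak{c}$ it is nonzero, so $\mathfrak{c}=R$ by $\de$-simplicity. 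Hence $1\in\mathfrak{c}$, i.e. there is $x'=1\otimes s_1+\sum_{i=2}^n r'_i\otimes s_i\in\idb$. Now $\de(x')=\sum_{i=2}^n\de(r'_i)\otimes s_i\in\idb$ involves at most $n-1$ of the $s_i$, so minimality of $n$ forces $\de(x')=0$, whence $r'_i\in k$ and $x'=1\otimes\big(s_1+\sum_{i\geq 2}r'_is_i\big)\in S\cap\idb=0$; but the $s_1$-coefficient of $x'$ is $1$, contradicting $x'=0$. Thus $\idb=0$, which finishes the proof that the two maps are mutually inverse, and the final clause is just the observation that $R\otimes_k(S\cap\idb)$ is the ideal of $T$ generated by $S\cap\idb$.

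I expect everything except the last step to be routine bookkeeping with freeness over the field $k$; the one genuine idea is the normalization in that step, namely passing from an arbitrary minimal-length element of $\idb$ to one with leading coefficient $1$ by observing that the set $\mathfrak c$ of possible leading coefficients is a $\de$-ideal of $R$ and therefore, by $\de$-simplicity, equals $R$.
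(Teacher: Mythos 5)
Your argument is correct and follows essentially the same route as the proof of Lemma 2.3 in the cited reference: one reduces to showing that a $\de$-ideal $\idb$ with $\idb\cap S=0$ is zero, picks an element of minimal length with respect to a $k$-linearly independent family of $s_i$, uses $\de$-simplicity applied to the $\de$-ideal of leading coefficients to normalize the first coefficient to $1$, and then notes that the derivative has strictly shorter length, hence vanishes, forcing the element into $(R\otimes_k S)^\de=S$ and yielding a contradiction. The preliminary steps (that $k$ is a field, freeness of $R\otimes_k S$ over $R$, and the computation of the constants) are handled exactly as in the standard proof, so there is nothing to add.
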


\subsection{Difference algebra}
\label{subsec:differencealgebra}

A difference ring (or $\s$-ring for short) is a ring $R$ together with a ring endomorphism $\s\colon R\rightarrow R$.
We do not assume that $\s$ is an automorphism or injective. A $\s$-ring with $\s$ injective is called $\s$-reduced.
If $\s$ is an automorphism, the $\s$-ring is called inversive. The ring of $\s$-constants is $R^\s=\{r\in R|\ \s(r)=r\}$.
\par
The $\s$-ideals, i.e., the $\s$-invariant ideals, of a $\s$-ring $R$ can be classified through many
properties that are specific to difference algebra.  A $\s$-ideal $\ida \subset R$ is called reflexive if $\s^{-1}(\ida) \subset \ida$, which is equivalent to say
that $R/ \ida$ is $\s$-reduced. Then, a $\s$-ideal $\q$ of $R$ is said to
be $\s$-prime if it is prime and reflexive.
Finally, a $\s$-ideal  $\ida$ is called perfect if
$\s^{\alpha_1}(r)\cdots\s^{\alpha_n}(r)\in\ida$ implies $r\in \ida$,
for all $r\in R$, and $n,\a_1,\dots,\a_r\in \Z$, with $n\geq 1$ and $\alpha_1,\ldots,\alpha_n\geq 0$.
\par
Given two $\s$-rings $(R,\s)$ and ($R^\p,\s^\p)$,
a morphism $\psi:R\rightarrow R'$ of $\s$-rings is a morphism of rings such that $\psi\s=\s'\psi$.
A $\s$-field is a field that is also a $\s$-ring. A $\s$-field extension is an extension of $\s$-fields
such that the inclusion is also a morphism of $\s$-rings.
\par
Let $k$ be a $\s$-field and $R$ a $k$-$\s$-algebra, i.e., a $k$-algebra such that
the morphism $k\to R$ is a morphism of $\s$-rings.
We say that $R$ is $\s$-separable (resp. perfectly $\s$-separable) over $k$ if the zero ideal in
 $R\otimes_k k'$ is reflexive (resp. perfect), for every $\s$-field extension $k'$ of $k$.
\par
If $R$ is a $k$-$\s$-algebra over a $\s$-field $k$ and $B$ a subset of $R$,
then $k\{B\}_\s$ denotes the smallest $k$-$\s$-subalgebra of $R$ that contains $B$.
If $R=k\{B\}_\s$ for some finite subset $B$ of $R$, we say that $R$ is finitely $\s$-generated over $k$.
If $K|k$ is an extension of $\s$-fields and $B\subset K$, then $k\<B\>_\s$ denotes the smallest $\s$-field
extension of $k$ inside $K$ that contains $B$.
The $k$-$\s$-algebra $k\{x\}_\s=k\{x_1,\ldots,x_n\}_\s$ of $\s$-polynomials over $k$
in the $\s$-variables $x_1,\ldots,x_n$ is the polynomial ring over $k$ in the
countable set of algebraically independent variables $x_1,\ldots,x_n,\s(x_1),\ldots,\s(x_n),\ldots,$ with an action of $\s$
as suggested by the names of the variables.

\begin{defn}[Definition 4.1.7 in \cite{Levin}]
Let $L|K$ be a $\s$-field extension. Elements $a_1,\ldots,a_n\in L$ are called \emph{transformally
(or $\s$-algebraically) independent over $K$} if the elements $a_1,\ldots,a_n,\s(a_1),\ldots,\s(a_n),\ldots$
are algebraically independent over $K$. Otherwise, they are called transformally dependent over $K$.
A \emph{$\s$-transcendence basis of $L$ over $K$} is a maximal transformally independent over $K$ subset of $L$.
Any two $\s$-transcendence bases of $L|K$ have the same cardinality and so we can define the
\emph{$\s$-transcendence degree of $L|K$}, or
$\strdeg(L|K)$ for short,
as the cardinality of any $\s$-transcendence basis of $L$ over $K$.
\end{defn}

\subsection{Difference algebraic geometry}

In \cite{articleone}, we work with the formalism of difference group schemes.
They are defined as follows:

\begin{defn} Let $k$ be a $\s$-field. A \emph{group $k$-$\s$-scheme} is a (covariant)
functor $G$ from the category of $k$-$\s$-algebras to the category of groups which is representable
by a  $k$-$\s$-algebra.
I.e., there exists a $k$-$\s$-algebra $k\{G\}$ such that
$G\simeq\Alg_k^\s(k\{G\},-)$, where $\Alg_k^\s$ stands for morphisms of $k$-$\s$-algebras. If
$k\{G\}$ is finitely $\s$-generated over $k$, we say that $G$ is a $\s$-algebraic group over $k$.
\par
By a \emph{$\s$-closed subgroup $H$ of $G$} we mean a group $k$-$\s$-scheme $H$ such that $H(S)$
is a subgroup of $G(S)$ for every $k$-$\s$-algebra $S$. We call $H$ \emph{normal} if $H(S)$ is a
normal subgroup of $G(S)$ for every $k$-$\s$-algebra $S$.
\end{defn}

As in the classical setting, the Yoneda lemma implies that the algebra $k\{G\}$ is
a $k$-$\s$-Hopf algebra, i.e.,  a $k$-$\s$-algebra equipped with the structure of a Hopf algebra
over $k$ such that the Hopf algebra structure maps are morphisms of difference rings.
It also follows immediately that  the category of group $k$-$\s$-schemes is anti-equivalent
to the category of $k$-$\s$-Hopf algebras.
We are not giving further details on this point
(see the Appendix A.2 and A.8 in \cite{articleone}).

\par
A $\s$-closed subgroup $H$ of $G$ corresponds to a $\s$-Hopf-ideal $\mathbb I(H)$ of $k\{G\}$, i.e.,
a Hopf-ideal which is a difference ideal,
that we will call the vanishing ideal of $H$ inside $G$. Then $k\{H\}\cong k\{G\}/ \mathbb I(H)$.
Normal $\s$-closed subgroups of $G$ correspond to normal $\s$-Hopf-ideals, i.e., $\s$-Hopf-ideals which are normal Hopf-ideals.

\par
Difference  properties of the vanishing ideal of a difference group scheme $G$ translate
into geometric properties of the difference group scheme $G$. For instance, a difference analog of
irreducibility is that the zero ideal of $k\{G\}$ is $\s$-prime. The difference analog of
reduced scheme is more subtle and admits two definitions. First of all, we say that a group $k$-$\s$-scheme $G$ is perfectly $\s$-reduced if the zero ideal in $k\{G\}$ is perfect.
Perfectly $\s$-reduced difference schemes correspond to difference varieties in the classical sense (\cite{Cohn:difference}, \cite{Levin}) or in the model theoretic sense (\cite{chatdifffield}, \cite{ChatHrusPet}, \cite{Hrushovski:elementarytheoryoffrobenius},...), where it suffices to focus on the solution set of a system of difference equations with values in a sufficiently big field, i.e., a $\s$-closed field.
In the present work, we encounter a broader class of difference schemes so that we need a weaker notion.
Thus, we say that a a group $k$-$\s$-scheme $G$ is $\s$-reduced if the zero ideal in $k\{G\}$ is reflexive.
If one requires that these two notions remain unchanged by base field extension, we
say absolutely perfectly $\s$-reduced and absolutely $\s$-reduced. Of course, absolutely (resp. absolutely perfectly) $\s$-reduced
difference group schemes are in one to one correspondence with (resp. perfectly) $\s$-separable $k$-$\s$-Hopf algebras.

\medskip
One can attach to any affine group $k$-scheme $\wtilde G$  over $k$, a
group $k$-$\s$-scheme, denoted $[\s]_k\wtilde G$  (see \S A.4 in \cite{articleone}).
Let us recall the construction.
Let $d \geq 0$ be a positive integer. For any $k$-algebra $R$,  we set ${}^{\s^d}R=R\otimes_k k$, where the tensor product
is formed by using $\s^d\colon k\to k$ on the right hand side. We consider ${}^{\s^d}R$ as $k$-algebra via the right factor.
We define  \[R_d=R\otimes_k{}^{\s}R\otimes_k \cdots\otimes_k{}^{\s^d}R.\]
and $[\s]_kR$ as the limit (i.e., the union) of the $R_d$'s ($d\geq 0$).
Let $\wtilde G=\spec(k[\wtilde G])$  be  a group $k$-scheme. Then, we denote by ${}^{\s^d}\wtilde G$ the group $k$-scheme
represented by ${}^{\s^d} k[\wtilde G]$, i.e., the group $k$-scheme
obtained from $\wtilde G$ by extension of scalar via $\s^d:k\rightarrow k$. We let also
$\wtilde G_d=\wtilde G \times {}^{\s}\wtilde G\times \dots \times{}^{\s^d} \wtilde G$ be the group $k$-scheme
represented by $k[\wtilde G]_d$. Finally, we define \textit{the group $k$-$\s$-scheme $[\s]_k \wtilde G$ associated with $\wtilde G$}
as the group $k$-$\s$-scheme represented by  $[\s]_k k[\wtilde G]$.
Notice that if $S$ is a $k$-$\s$-algebra and $S^\sharp$ is the underlying $k$-algebra, then we have
(see the end of \S A.4 in \cite{articleone}):
\beq\label{eq:skG(S)}
[\s]_k\wtilde G(S)=\wtilde G(S^\sharp).
\eeq
By abuse of notation, we will say that $H$ is a $\s$-closed subgroup
of the algebraic group $\wtilde G$ to mean that $H$ is a $\s$-closed subgroup of $[\s]_k\wtilde G$.

\medskip
On the other hand, if $G$ is a group $k$-$\s$-scheme, one can define its $d$-th order Zariski closure $G[d]$
as follows (see Definition A.11 in \cite{articleone}).
Let $\wtilde G$ be a group scheme over $k$ and $G$ a $\s$-closed subgroup of $\wtilde G$.  We denote by $G^\sharp$ the
group $k$-scheme defined by $G^\sharp(S)=\Alg_k(k\{G\},S)$ for any $k$-algebra $S$, where $\Alg_k$
stands for morphism of $k$-algebra. For $d\geq 0$, we consider
the group $k$-scheme $\wtilde G_d$ represented by $k[\wtilde G]_d$. Then, the smallest closed subscheme $G[d]$ of $\wtilde G_d$ such that
$G^\sharp\to \wtilde G_d$ factors through $G[d]\hookrightarrow \wtilde G_d$ is called the \emph{$d$-th order Zariski closure}
of $G$ inside $\wtilde G$. If $\I(G)$ is the vanishing
ideal of $G$ inside $\wtilde G$, then the vanishing ideal of $G[d]$ inside $\wtilde G_d$
is nothing else than $\I(G) \cap k[\wtilde G]_d$.

\medskip
The $\s$-dimension
$\s$-$\dim_k G$ of a $\s$-algebraic group $G$ over a $\s$-field $k$ is defined thanks to a difference Hilbert
polynomial. We refer to Appendix A.7 in \cite{articleone} for a precise definition. As one may expect,
for an algebraic group $\wtilde G$, we have
$\dim_k\wtilde G=\s$-$\dim_k[\s]_k\wtilde G$.

\subsection{$\s$-Picard-Vessiot extensions}

A $\ds$-ring is a ring $R$, that is simultaneously a $\de$ and a $\s$-ring,
such that for some unit $\hslash\in R^\de$ we have
\beq \label{eq:com}
\de(\s(r)) = \hslash\s(\de(r)),
\hbox{~for all $r\in R$.}
\eeq
We set $\hslash_0=1$ and
$\hslash_j=\hslash\sigma(\hslash_{j-1})$, for all integers $j\geq 1$, so that
$\de(\s^j(r)) = \hslash_j\s^j(\de(r))$, for all $r\in R$ and all non-negative integers $j$.
The element $\hslash$ is understood to be part of the data of a $\ds$-ring. So a morphism $\psi:R\rightarrow R'$
of $\ds$-rings is a morphism of rings such that $\psi\s=\s'\psi$, $\psi\de=\de'\psi$ and $\psi(\hslash)=\hslash'$.
Note that condition (\ref{eq:com}) implies that $R^\de$ is a $\s$-ring.
The definitions of $k$-$\ds$-algebra, $\ds$-extension, $\ds$-ideal are the intuitive ones. They
are exposed in detail in \cite{articleone}.

\medskip
Let $K$ be a $\ds$-field and $A\in K^{n\times n}$. We consider the differential system
$\de(y)=Ay$.
If $R$ is a $K$-$\ds$-algebra, then a matrix $Y\in\Gl_n(R)$ such that $\de(Y)=AY$ is called a
fundamental solution matrix for $\de(y)=Ay$. Notice that if $Y,Y'\in\Gl_n(R)$ are two fundamental
solution matrices for $\de(y)=Ay$ in some $K$-$\ds$-algebra then there exists $C\in\Gl_n(R^\de)$ such that $Y'=YC$.

\begin{defn}\label{defi:PVextPVring}
A $\s$-Picard-Vessiot extension of $K$ for $\de(y)=Ay$ is a $\ds$-field extension $L$
of $K$, $\s$-generated by the entries of a fundamental solution matrix and without new $\de$-constants, i.e.,
such that $L^\de = K^\de$.
A $K$-$\ds$-algebra $R$ is called a \emph{$\s$-Picard-Vessiot ring for $\de(y)=Ay$} if it is
$\s$-generated by the entries of a fundamental solution matrix and the inverse of its determinant and it is $\de$-simple.
A $\ds$-field extension $L$ of $K$ is called a \emph{$\s$-Picard-Vessiot extension} if it is a
$\s$-Picard-Vessiot extension for some differential equation $\de(y)=Ay$ with $A\in K^{n\times n}$;
similarly for $\s$-Picard-Vessiot rings.
\end{defn}

We recall the following two fundamental results about $\s$-Picard-Vessiot extensions.

\begin{prop}[{Corollary 1.13 in \cite{articleone}}]\label{cor:existenceofPVextension}
Let $K$ be a $\ds$-field and $A\in K^{n\times n}$. Assume that $K^\de$ is an algebraically closed field.
Then there exists a $\s$-Picard-Vessiot extension for $\de(y)=Ay$ over $K$.
\end{prop}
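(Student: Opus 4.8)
First I would reduce the statement to the construction of a $\s$-Picard-Vessiot \emph{ring}. Indeed, a $\de$-simple $\de$-ring containing $\Q$ is an integral domain, and in a $\de$-simple $K$-$\ds$-algebra $R$ the kernel of $\s$ is a $\de$-ideal (from $\de\s=\hslash\s\de$, $\s(r)=0$ forces $\s(\de(r))=0$), hence zero; so $\s$ is injective on $R$, and $\s,\de,\hslash$ extend to $L:=\operatorname{Frac}(R)$, making it a $\ds$-field. Since no new $\de$-constants appear in the fraction field of a $\de$-simple ring, $L^\de=R^\de$; thus if $R$ is $\de$-simple, $\s$-generated over $K$ by the entries of some $Y$ with $\de(Y)=AY$ and by $1/\det(Y)$, and $R^\de=K^\de$, then $L$ is a $\s$-Picard-Vessiot extension. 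So it is enough to build such an $R$.

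The plan for building $R$ is to realize it as an increasing union $R=\bigcup_{d\ge 0}R^{(d)}$ of \emph{ordinary} $\de$-Picard-Vessiot rings over $K$, where $R^{(d)}$ is a $\de$-Picard-Vessiot ring for the ``prolonged'' system
$$\de(z)=\operatorname{diag}\!\big(A,\hslash\s(A),\dots,\hslash_d\s^d(A)\big)z$$
with fundamental solution matrix $\operatorname{diag}(\bar Y_0,\dots,\bar Y_d)$, the blocks being linked by compatible ring morphisms $\s=\s^{(d)}\colon R^{(d)}\to R^{(d+1)}$ which extend $\s\colon K\to K$ and send $\bar Y_j\mapsto\bar Y_{j+1}$. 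The inclusions $R^{(d)}\subseteq R^{(d+1)}$ together with the $\s^{(d)}$ then assemble $R$ into a $K$-$\ds$-algebra which is $\s$-generated by $\bar Y_0$ and $1/\det(\bar Y_0)$ (all further blocks being $\s$-iterates), with $\de(\bar Y_0)=A\bar Y_0$. That $R$ is $\de$-simple is automatic: a proper $\de$-ideal of $R$ meets each $\de$-simple $R^{(d)}$ in $0$. That $R^\de=K^\de$ is where the hypothesis enters: each $R^{(d)}$, being an ordinary $\de$-Picard-Vessiot ring over $K$ with $K^\de$ algebraically closed, has $(R^{(d)})^\de=K^\de$ by the classical theorem, and hence $R^\de=\bigcup_d(R^{(d)})^\de=K^\de$.

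The work is in the inductive step: given $R^{(d)}$ together with the previously built map $\s^{(d-1)}\colon R^{(d-1)}\to R^{(d)}$ (chosen so that the top block of $R^{(d)}$ equals $\s^{(d-1)}$ of the top block of $R^{(d-1)}$), one must produce $R^{(d+1)}\supseteq R^{(d)}$ and $\s^{(d)}$ extending $\s^{(d-1)}$. On generators $\s^{(d)}$ is forced ($\bar Y_j\mapsto\bar Y_{j+1}$, $c\mapsto\s(c)$), so the issue is that the vanishing ideal of $R^{(d)}$ be respected, i.e.\ that $R^{(d+1)}$ contain a point of the $\s$-twist ${}^\s\operatorname{Spec}(R^{(d)})$ whose first $d$ blocks are the already-fixed $\bar Y_1,\dots,\bar Y_d$. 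Identifying the sub-$\de$-$K$-algebra of $R^{(d)}$ generated by these blocks (a $\de$-Picard-Vessiot ring for the corresponding subsystem) with ${}^\s R^{(d-1)}$, which uses the compatibility built in at the previous stage, one sees such a point exists because the projection ${}^\s\operatorname{Spec}(R^{(d)})\to\operatorname{Spec}({}^\s R^{(d-1)})$ is dominant (the twist functor being exact), and one then cuts $R^{(d)}[Z,1/\det Z]$ (with $\de(Z)=\hslash_{d+1}\s^{d+1}(A)Z$) down by a maximal $\de$-ideal containing the resulting relations. The key inputs — that the $\s$-twist of a $\de$-Picard-Vessiot ring over $K$ is again one, for the $\s$-transformed system, and that the twisted tensor/quotient constructions stay $\de$-simple with $\de$-constants $K^\de$ — are exactly where Lemma~\ref{lemma:simple} is used. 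I expect this $\s$-coherence of the tower to be the main obstacle: the naive attempt to take a maximal $\ds$-ideal of the universal $\ds$-ring $K\{X,1/\det X\}_\s$ fails, since a maximal $\de$-ideal of it need not be $\s$-stable, a maximal $\ds$-ideal need not give a $\de$-simple quotient, and — there being infinitely many $\s$-variables — one cannot control its $\de$-constants directly; it is precisely the finite-level Picard-Vessiot rings and their intertwining with $\s$ that make the argument go through.
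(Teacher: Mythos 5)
The paper does not actually prove this statement---it is imported verbatim from the companion article \cite{articleone} (Corollary 1.13 there), and the proof given there is exactly the strategy you describe: a tower of ordinary Picard--Vessiot rings for the prolonged systems $\de(z)=\operatorname{diag}\bigl(A,\hslash\s(A),\dots,\hslash_d\s^d(A)\bigr)z$ inside $K\{X,1/\det X\}_\s$, glued by an inductively constructed chain of maximal $\de$-ideals $\mathfrak{m}_d$ with $\mathfrak{m}_{d+1}\cap U_d=\mathfrak{m}_d$ and $\s(\mathfrak{m}_d)\subset\mathfrak{m}_{d+1}$, the algebraically closed constants entering only through the classical no-new-constants theorem at each finite level. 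Your reduction to the ring statement and your identification of the crux---that the ideal generated by $\mathfrak{m}_d$ and $\s(\mathfrak{m}_d)$ must be proper, which is exactly the twist/linear-disjointness bookkeeping you sketch---match that proof.
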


\begin{prop}[{Proposition 1.5 in \cite{articleone}}]\label{prop:pvringpvext}
Let $K$ be a $\ds$-field and $A\in K^{n\times n}$. If $L|K$ is a $\s$-Picard-Vessiot extension for $\de(y)=Ay$,
with fundamental solution matrix $Y\in\Gl_n(L)$, then
$R:=K\{Y,\frac{1}{\det(Y)} \}_\s$
 is a $\s$-Picard-Vessiot ring for $\de(y)=Ay$.
Conversely, if $R$ is a $\s$-Picard-Vessiot ring for $\de(y)=Ay$ with $R^\de=K^\de$, then the field of fractions
of $R$ is a $\s$-Picard-Vessiot extension for $\de(y)=Ay$.
\end{prop}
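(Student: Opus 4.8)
The plan is to prove the two implications separately. The reverse one is elementary: it rests on the fact that the fraction field of a $\de$-simple $\s$-ring with $R^\de=K^\de$ acquires no new $\de$-constants. The forward one is the substantial part, and I would deduce it from the existence and, above all, the uniqueness of $\s$-Picard-Vessiot extensions.

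\emph{Fraction field of a $\s$-Picard-Vessiot ring.} Let $R$ be a $\s$-Picard-Vessiot ring for $\de(y)=Ay$ with $R^\de=K^\de$ and fundamental solution matrix $Y$. Being $\de$-simple and containing $\Q$, the ring $R$ is an integral domain, so $L:=\operatorname{Frac}(R)$ is a field and $\de$ extends to it uniquely. Next, $\s$ is injective on $R$: by (\ref{eq:com}) one has $\s(\de(r))=\hslash^{-1}\de(\s(r))$, so $\ker(\s)$ is a $\de$-ideal of $R$, which is proper since $\s(1)=1$ and hence zero; thus $\s$ and $\hslash$ extend to $L$, making it a $\ds$-field. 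The smallest $\ds$-subfield of $L$ containing $K$ and the entries of $Y$ contains $R$, hence equals $L$, so $L$ is $\s$-generated over $K$ by the entries of a fundamental solution matrix. Finally, for $c\in L^\de$ the set $\ida:=\{r\in R\mid rc\in R\}$ is a nonzero ideal of $R$, and a $\de$-ideal because $r\in\ida$ forces $\de(rc)=\de(r)c+r\de(c)=\de(r)c\in R$, i.e.\ $\de(r)\in\ida$; by $\de$-simplicity $\ida=R$, so $c\in R$ and thus $c\in R^\de=K^\de$. Hence $L$ is a $\s$-Picard-Vessiot extension for $\de(y)=Ay$.

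\emph{The ring attached to a $\s$-Picard-Vessiot extension.} Now let $L$ be a $\s$-Picard-Vessiot extension for $\de(y)=Ay$ with fundamental solution matrix $Y\in\Gl_n(L)$ and put $R:=K\{Y,\frac1{\det(Y)}\}_\s\subseteq L$. Since $\frac1{\det(Y)}\in R$ we have $Y\in\Gl_n(R)$, so $R$ is $\s$-generated by the entries of a fundamental solution matrix and the inverse of its determinant; it remains to prove that $R$ is $\de$-simple. First I would reduce to the case where $K^\de$ is algebraically closed: by Lemma \ref{lemma:simple}, base-changing $K$ to the fraction field of $K\otimes_{K^\de}\overline{K^\de}$ (a $\de$-simple domain, because $K^\de$ is relatively algebraically closed in $K$ --- a root of a separable polynomial over $K^\de$ is killed by $\de$) turns $R$ into the analogous ring over the new base without affecting $\de$-simplicity, as $K^\de\hookrightarrow\overline{K^\de}$ is faithfully flat. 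Assuming $K^\de$ algebraically closed, Proposition \ref{cor:existenceofPVextension} produces a $\s$-Picard-Vessiot ring $R_0$ for $\de(y)=Ay$, constructed as $K\{X,\frac1{\det X}\}_\s$ modulo a maximal $\de$-ideal and therefore $\de$-simple; by the part already proved, $L_0:=\operatorname{Frac}(R_0)$ is a $\s$-Picard-Vessiot extension. I would then invoke the uniqueness of $\s$-Picard-Vessiot extensions to obtain a $K$-$\de$-isomorphism $\Phi\colon L_0\xrightarrow{\ \sim\ }L$. Writing $Y_0\in R_0$ for the fundamental solution matrix and noting that $\s^k(Y_0)$ and $\s^k(Y)$ are fundamental solution matrices for $\de(y)=\hslash_k\s^k(A)y$ (by the commutation rule), one gets $\Phi(\s^k(Y_0))=\s^k(Y)C_k$ with $C_k\in\Gl_n(L^\de)=\Gl_n(K^\de)\subseteq\Gl_n(K)$ for every $k\ge 0$; hence $\Phi$ carries $R_0=K\{Y_0,\frac1{\det(Y_0)}\}_\s$ \emph{onto} $R=K\{Y,\frac1{\det(Y)}\}_\s$, even though $\Phi$ need not commute with $\s$, and being a $\de$-ring isomorphism it transports $\de$-simplicity from $R_0$ to $R$.

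\emph{Where the work is.} The reverse implication and the bookkeeping in the forward one (domain, extension of $\de$ and $\s$, $\s$-generation, the denominator-ideal trick, the identity $\Phi(R_0)=R$) are routine. The genuine obstacle is the uniqueness of $\s$-Picard-Vessiot extensions. I would prove it by forming the $\ds$-ring $L_0\otimes_K L$, picking a maximal $\de$-ideal $\m$, and setting $T:=(L_0\otimes_K L)/\m$, which is $\de$-simple; the crucial point $T^\de=K^\de$ is the $\s$-version of the statement that a $\de$-simple $\s$-finitely generated algebra over algebraically closed constants has no new constants, and it is exactly where algebraic closedness of $K^\de$ enters. The images of $L_0$ and of $L$ in $T$ then coincide --- over $T$ the matrices $\overline{\s^k(Y_0)}$ and $\overline{\s^k(Y)}$ are fundamental for $\de(y)=\hslash_k\s^k(A)y$ and differ by a matrix over $T^\de\subseteq K$ --- which yields the $\de$-isomorphism $\Phi$. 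Two features make the adaptation of the classical argument delicate: a maximal $\de$-ideal of $L_0\otimes_K L$ need not be $\s$-stable, so $T$ is only a $\de$-ring and one must content oneself with a $\de$-isomorphism $L_0\cong_K L$ (recovering $\Phi(R_0)=R$ by hand, as above); and $L_0\otimes_K L$ is not finitely generated as a ring, so the no-new-constants lemma must be replaced throughout by its $\s$-finitely generated counterpart.
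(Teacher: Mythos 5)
A preliminary remark: this paper does not prove the proposition at all --- it is recalled verbatim from \cite{articleone} (Proposition 1.5 there) --- so you cannot be compared against an in-text argument. On its own terms, your converse direction is complete and correct (domain property of a $\de$-simple $\Q$-algebra, $\ker(\s)$ being a $\de$-ideal, the denominator-ideal trick for constants of the fraction field), and your overall strategy for the forward direction is the classical one of van der Put--Singer: compare $R$ with an abstractly constructed $\de$-simple Picard-Vessiot ring via a uniqueness argument. The closing bookkeeping --- $\Phi(\s^k(Y_0))=\s^k(Y)C_k$ with $C_k$ constant, hence $\Phi(R_0)=R$ even though $\Phi$ is only a $\de$-isomorphism --- is handled correctly.

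The forward direction nevertheless has a genuine gap at its foundation. You take as input a $\de$-simple $\s$-Picard-Vessiot ring $R_0$, ``constructed as $K\{X,\frac1{\det X}\}_\s$ modulo a maximal $\de$-ideal''. A maximal $\de$-ideal of the $\ds$-ring $K\{X,\frac1{\det X}\}_\s$ need not be $\s$-stable, so that quotient is in general not a $\ds$-algebra at all; and a maximal $\ds$-ideal only yields a $\ds$-simple quotient, not a $\de$-simple one. Producing a $\de$-simple $\s$-Picard-Vessiot ring is precisely the hard content of the existence theorem in \cite{articleone} (an inductive prolongation construction of compatible classical Picard-Vessiot rings for the systems $\de(y)=\hslash_j\s^j(A)y$), and it cannot be extracted from Proposition \ref{cor:existenceofPVextension} as quoted here without invoking the very implication you are proving. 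Ironically you diagnose exactly this phenomenon (non-$\s$-stability of a maximal $\de$-ideal) two sentences later in your uniqueness sketch, but it already undermines your description of $R_0$. Two further gaps: the reduction to algebraically closed constants requires $\s$ to extend to $\quot(L\otimes_k\overline{k})$, i.e., $\overline{k}$ must be $\s$-separable over $k$, which can fail when $k$ is not inversive (and the proposition assumes nothing on $K$); and the finitely-$\s$-generated no-new-constants lemma, which you rightly identify as the crux and leave unproved, should be applied to $R_0\otimes_K L$ (finitely $\s$-generated over the field $L$) rather than to $L_0\otimes_K L$, which is not finitely $\s$-generated over anything. A more economical route avoids all of this: write $R=\bigcup_d R_d$ with $R_d=K\bigl[Y,\s(Y),\ldots,\s^d(Y),\tfrac{1}{\det Y\cdots\det\s^d(Y)}\bigr]$, observe that $R_d$ is the classical Picard-Vessiot ring of the block system $\de(y)=\operatorname{diag}(A,\hslash_1\s(A),\ldots,\hslash_d\s^d(A))y$ inside $L_d:=K(Y,\ldots,\s^d(Y))$ with $L_d^\de=K^\de$, invoke the classical theorem to get that each $R_d$ is $\de$-simple, and note that a $\de$-ideal of $R$ meets each $R_d$ in $0$ or $R_d$. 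This reduces everything to a purely differential statement and sidesteps the $\s$-stability and $\s$-separability issues entirely.
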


\subsection{The $\s$-Galois group and its properties}

If $R\subset S$ is an inclusion of $\ds$-rings, we denote by $\Aut^\ds(S|R)$ the automorphisms of $S$ over $R$ in the
category of $\ds$-rings, i.e., the automorphisms are required to be the identity on $R$ and to commute with $\de$ and $\s$.

\begin{defn}
Let $L|K$ be a $\s$-Picard-Vessiot extension with $\s$-Picard-Vessiot ring $R\subset L$. Set $k=K^\de$.
We define $\sgal(L|K)$ to be the functor from the category of $k$-$\s$-algebras to the category of groups
given by
\[\sgal(L|K)(S):=\Aut^\ds(R \otimes_{k} S | K \otimes_{k} S)\]
for every $k$-$\s$-algebra $S$. Notice that the action of $\de$ on $S$ is trivial, i.e.,
$\de(r\otimes s)=\de(r)\otimes s$ for $r\in R$ and $s\in S$. On morphisms $\sgal(L|K)$ is given by base extension.
We call $\sgal(L|K)$ the \emph{$\s$-Galois group of $L|K$}.
 \end{defn}

\begin{prop}[{Proposition 2.5 in \cite{articleone}}]
\label{prop:defgal}
Let $L|K$ be a $\s$-Picard-Vessiot extension with $\s$-Picard-Vessiot ring $R\subset L$. Then $\sgal(L|K)$ is a
$\s$-algebraic group over $k=K^\de$. More precisely, $\sgal(L|K)$ is represented by the finitely $\s$-generated
$k$-$\s$-algebra $(R \otimes_{K} R)^\de$. The choice of matrices $A\in K^{n\times n}$ and $Y\in\Gl_n(L)$ such
that $L|K$ is a $\s$-Picard-Vessiot extension for $\de(y)=Ay$ with fundamental solution matrix $Y$ defines a $\s$-closed embedding
\[\sgal(L|K)\hookrightarrow\Gl_{n,k} \] of $\s$-algebraic groups.
\end{prop}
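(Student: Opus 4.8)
The plan is to follow the classical Picard-Vessiot strategy, transplanted to the $\ds$-setting, with the $\s$-structure carried along throughout. First I would analyze the functor $S\mapsto\Aut^\ds(R\otimes_k S\mid K\otimes_k S)$ by identifying it with a functor of points of an explicit $k$-$\s$-algebra. The natural candidate is $(R\otimes_K R)^\de$: as in the usual theory, one shows that giving a $\ds$-automorphism $\tau$ of $R\otimes_k S$ over $K\otimes_k S$ is the same as giving the element $\tau$ applied to the "universal" solution matrix, i.e. a point of the scheme representing $(R\otimes_K R)^\de$. The key input here is Lemma \ref{lemma:simple}: since $R$ is $\de$-simple with $R^\de=k$ (this uses $L^\de=K^\de$ and Proposition \ref{prop:pvringpvext}), for any $k$-algebra $S$ the $\de$-ideals of $R\otimes_k S$ are in bijection with ideals of $S$, and $(R\otimes_k S)^\de=S$. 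Applying this with $R\otimes_K R$ in place of $R$ (after checking $R\otimes_K R$ is again $\de$-simple over its constants, or arguing directly via the torsor structure) gives that $(R\otimes_K R)\otimes_k S \cong (R\otimes_K R)^\de\otimes_k S$ as $\de$-rings, which is exactly what is needed to represent the automorphism functor.

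Next I would concretely realize the embedding into $\Gl_{n,k}$. Fix $A$ and a fundamental solution matrix $Y\in\Gl_n(L)$ with $R=K\{Y,\det(Y)^{-1}\}_\s$. For a $\ds$-automorphism $\tau$ of $R\otimes_k S$ over $K\otimes_k S$, both $Y\otimes 1$ and $\tau(Y\otimes 1)$ are fundamental solution matrices for $\de(y)=Ay$ over $R\otimes_k S$, hence $\tau(Y\otimes 1)=(Y\otimes 1)\cdot C_\tau$ for a unique $C_\tau\in\Gl_n\big((R\otimes_k S)^\de\big)=\Gl_n(S)$, using the constants computation above. One checks $\tau\mapsto C_\tau$ is a natural transformation of group functors $\sgal(L\mid K)\to\Gl_{n,S}$; because $\tau$ commutes with $\s$ and $\s(Y)$ is expressed through the generators, compatibility with $\s$ is automatic, so this is a morphism of group $k$-$\s$-schemes. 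Injectivity (i.e. that it is a $\s$-closed embedding) follows because $R\otimes_k S$ is $\s$-generated over $K\otimes_k S$ by the entries of $Y\otimes 1$ and $\det(Y\otimes 1)^{-1}$, so $\tau$ is determined by $C_\tau$; dually, the coordinate ring of the image surjects onto $(R\otimes_K R)^\de$. Finite $\s$-generation of $(R\otimes_K R)^\de$ then follows from it being a quotient of the coordinate $k$-$\s$-algebra of a closed $\s$-subscheme of $\Gl_{n,k}$, which is finitely $\s$-generated.

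The main obstacle I expect is the passage from $\de$-constants to the representability statement in the difference-enhanced setting: specifically, verifying that $(R\otimes_K R)^\de$ is stable under $\s$ and carries the structure of a finitely $\s$-generated $k$-$\s$-Hopf algebra, and that Lemma \ref{lemma:simple} can be applied in the form needed (it is stated for $S$ a \emph{constant} $\de$-algebra, so one must be careful that the $\s$-action on $S$, though present, does not interfere — the compatibility condition \eqref{eq:com} and the fact that $\s$ acts trivially with respect to $\de$ on $S$ are what make this work). A secondary subtlety is checking $R\otimes_K R$ remains $\de$-simple, or circumventing this: the clean route is to first establish that $R$ is a torsor under $\sgal(L\mid K)$ over $K$, i.e. the natural map $R\otimes_K R\to R\otimes_k (R\otimes_K R)^\de$ is an isomorphism, which simultaneously yields representability and the $\s$-closed embedding. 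All of this is carried out in \cite{articleone}; here I would cite Proposition 2.5 there for the full details and content myself with indicating the structure of the argument.
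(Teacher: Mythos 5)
The paper offers no proof of this proposition: it is recalled verbatim from Proposition 2.5 of \cite{articleone}, exactly the citation you fall back on at the end. Your sketch of the underlying argument (the $\de$-constants computation via Lemma \ref{lemma:simple}, the torsor identification $R\otimes_K R\cong R\otimes_k(R\otimes_K R)^\de$, and the embedding $\tau\mapsto C_\tau$ with $\tau(Y\otimes 1)=(Y\otimes 1)C_\tau$, $C_\tau\in\Gl_n(S)$) is the standard Picard--Vessiot representability argument carried out in that reference, and the subtleties you flag — that $S$ with trivial $\de$-action is a constant $\de$-algebra so the lemma applies, and that one circumvents $\de$-simplicity of $R\otimes_K R$ via the torsor structure — are exactly the right ones, so this is correct and matches the cited source.
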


In the notation of the proposition above,  we will identify $\sgal(L|K)$ with its image in
$\Gl_{n,k}$.
For $S$ a $k$-$\s$-algebra and $\tau\in\sgal(L|K)(S)$, we will usually
denote by $[\tau]_Y$ the image under the above morphism,
i.e., the matrix $[\tau]_Y$ in $\Gl_{n}(S)$ such that $\tau(Y\otimes 1) =(Y\otimes 1)[\tau]_Y$.
Another choice of fundamental solution matrix yields
a conjugated representation of $\sgal(L|K)$ in $\Gl_{n,k}$.
Therefore sometimes,  we will  consider
$\sgal(L|K)$ as a $\s$-closed subgroup of $\Gl_{n,k}$ without mentioning  the fundamental solution matrix $Y$.

\begin{prop}[{Proposition 2.17 in \cite{articleone}}]\label{prop:dimensiondegtrans}
Let $L|K$ be a $\s$-Picard-Vessiot extension with $\s$-Galois group $G$ and constant field $k=K^\de$.  Then
\[\strdeg(L|K) = \sdim_k(G).\]
\end{prop}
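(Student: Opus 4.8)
This is the difference analog of the classical identity $\trdeg(L|K)=\dim(G)$ in the Picard--Vessiot theory of linear differential equations, and the plan is to carry the usual torsor argument over to the $\ds$-setting. Write $R\subseteq L$ for the $\s$-Picard--Vessiot ring, so that $R$ is a domain with $\operatorname{Frac}(R)=L$ and $R=K\{Y,\det(Y)^{-1}\}_\s$ is finitely $\s$-generated over $K$ by the entries of a fundamental solution matrix $Y$; by Proposition \ref{prop:defgal} one has $k\{G\}=(R\otimes_K R)^\de$ together with a $\s$-closed embedding $G\hookrightarrow\Gl_{n,k}$.

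The engine of the argument is a \emph{$\s$-torsor isomorphism}
\[
\Theta\colon R\otimes_K R\ \xrightarrow{\ \sim\ }\ R\otimes_k k\{G\}
\]
of $R$-$\ds$-algebras, with $R$ acting through the left-hand tensor factor on both sides. I would construct it from the tautological automorphism $\gamma_0\in\Aut^\ds(R\otimes_k k\{G\}\,|\,K\otimes_k k\{G\})$ attached to $\id\in\Alg^\s_k(k\{G\},k\{G\})=G(k\{G\})$, by setting $\Theta(r_1\otimes r_2)=(r_1\otimes 1)\,\gamma_0(r_2\otimes 1)$; this is a morphism of $\ds$-algebras because $\gamma_0$ commutes with $\de$ and $\s$. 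Surjectivity of $\Theta$ is the easy half: its image is an $R$-subalgebra of $R\otimes_k k\{G\}$ containing $R\otimes 1$, and since it also contains $\gamma_0(Y\otimes 1)=(Y\otimes 1)[\gamma_0]_Y$ it contains the entries of the matrix $[\gamma_0]_Y$, which $\s$-generate $k\{G\}$ over $k$ by definition of the embedding $G\hookrightarrow\Gl_{n,k}$. Injectivity of $\Theta$ is the deep half and, together with the whole torsor package, is the main obstacle: here one inverts the left factor, observes that $Y\otimes 1$ and $1\otimes Y$ are two fundamental solution matrices for $\de(y)=Ay$ over $L\otimes_K R$, so $1\otimes Y=(Y\otimes 1)C$ for some $C\in\Gl_n\bigl((L\otimes_K R)^\de\bigr)$, and then applies Lemma \ref{lemma:simple} to the $\de$-simple field $L$ and the constant $k$-algebra $B$ generated by the entries of $C,\s(C),\ldots$ in order to identify $L\otimes_K R$ with $L\otimes_k B$ and to pin down $(L\otimes_K R)^\de=B$; this is exactly where the $\de$-simplicity of $R$ and the no-new-constants hypothesis $L^\de=K^\de$ are essential. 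I would simply quote this $\s$-torsor theorem from \cite{articleone}.

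Granting $\Theta$, the conclusion is $\s$-dimension bookkeeping, using three standard properties of $\sdim$ recalled in Appendix A.7 of \cite{articleone}: (i) for a finitely $\s$-generated $\s$-domain over a $\s$-field $F$, the $\s$-dimension equals the $\s$-transcendence degree of its field of fractions over $F$; (ii) $\sdim$ is additive on tensor products of finitely $\s$-generated $\s$-algebras over a $\s$-field; (iii) $\sdim$ is invariant under base change along a $\s$-field extension. By (i) and $\operatorname{Frac}(R)=L$ one has $\sdim_K R=\strdeg(L|K)$, hence by (ii) $\sdim_K(R\otimes_K R)=2\,\strdeg(L|K)$. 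Writing $R\otimes_k k\{G\}=R\otimes_K(K\otimes_k k\{G\})$ and combining (ii) with (iii) one gets $\sdim_K(R\otimes_k k\{G\})=\sdim_K R+\sdim_K(K\otimes_k k\{G\})=\strdeg(L|K)+\sdim_k G$. Applying $\sdim_K$ to the isomorphism $\Theta$ then yields $2\,\strdeg(L|K)=\strdeg(L|K)+\sdim_k G$, i.e.\ $\strdeg(L|K)=\sdim_k G$. Apart from the torsor isomorphism, the only point that requires care is that this $\sdim$ calculus --- notably the additivity in (ii) --- holds for the possibly non-$\s$-reduced $\s$-algebras at hand, which is precisely what the difference Hilbert polynomial machinery of \cite{articleone} is designed to cover.
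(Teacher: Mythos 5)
The paper does not prove this proposition: it is imported verbatim as Proposition 2.17 of \cite{articleone}, so there is no in-house argument to compare against. Your reconstruction is the standard one and, as far as one can tell, the intended one: reduce everything to the $\s$-torsor isomorphism $R\otimes_K R\simeq R\otimes_k k\{G\}$ (which is exactly the content of the torsor statement in \cite{articleone}, proved via $1\otimes Y=(Y\otimes 1)C$ and Lemma \ref{lemma:simple} as you sketch) and then apply the additivity and base-change invariance of $\sdim$ from Appendix A.7 of \cite{articleone} together with $\sdim_K R=\strdeg(L|K)$ for the $\s$-domain $R$ with $\operatorname{Frac}(R)=L$. The bookkeeping $2\,\strdeg(L|K)=\strdeg(L|K)+\sdim_k G$ is correct, and you rightly flag that the only delicate point is that the dimension calculus must be available for non-$\s$-reduced finitely $\s$-generated algebras, which the difference dimension polynomial formalism does cover; I see no gap.
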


\begin{prop} [{Proposition 2.15 in \cite{articleone}}]\label{prop:Zariskiclosures}
Let $L|K$ be a $\s$-Picard-Vessiot extension with $\s$-field of $\de$-constants $k=K^\de$.
Let $A\in K^{n\times n}$ and $Y\in\Gl_n(L)$ such that $L|K$ is a $\s$-Picard-Vessiot extension
for $\de(y)=Ay$ with fundamental solution matrix $Y$. We consider the $\s$-Galois group $G$ of
$L|K$ as a $\s$-closed subgroup of $\Gl_{n,k}$ via the embedding associated with the choice of $A$ and $Y$.
Set $L_0=K\left(Y\right)\subset L$.
\par
Then $L_0|K$ is a (classical) Picard-Vessiot extension for the linear system $\de(y)=Ay$.
The (classical) Galois group of $L_0|K$ is naturally isomorphic to $G[0]$, the Zariski closure of $G$ inside $\Gl_{n,k}$.
\end{prop}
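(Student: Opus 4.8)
I would begin with the (easy) first assertion together with the necessary setup. Since $L_0=K(Y)\subseteq L$ and $L^\de=K^\de$, we have $L_0^\de=K^\de$; moreover $\de(Y)=AY$ shows that $L_0$ is a $\de$-subfield of $L$, hence a $\de$-field extension of $K$ generated by the entries of a fundamental solution matrix for $\de(y)=Ay$ with no new $\de$-constants. By the classical Picard-Vessiot theory (see e.g. \cite{vdPutSingerDifferential}), $L_0|K$ is thus a classical Picard-Vessiot extension, with Picard-Vessiot ring $R_0:=K[Y,\det(Y)^{-1}]\subseteq R$, where $R:=K\{Y,\det(Y)^{-1}\}_\s$ is the $\s$-Picard-Vessiot ring of Proposition \ref{prop:pvringpvext}; furthermore $\Gal(L_0|K)$ is represented by $(R_0\otimes_K R_0)^\de$, and the choice of $Y$ realizes it as a closed subgroup of $\Gl_{n,k}$ whose coordinate ring is the image of the homomorphism $k[\Gl_{n,k}]\to(R_0\otimes_K R_0)^\de$ sending the generic matrix to $M_0:=(Y^{-1}\otimes 1)(1\otimes Y)\in\Gl_n\big((R_0\otimes_K R_0)^\de\big)$ (here $\de(M_0)=0$ because $A$ has coefficients in $K$). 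By Proposition \ref{prop:defgal}, the analogous description holds for $G$: it is represented by $k\{G\}=(R\otimes_K R)^\de$, and $G\hookrightarrow\Gl_{n,k}$ corresponds to a homomorphism $k[\Gl_{n,k}]\to k\{G\}$ sending the generic matrix to $M:=(Y^{-1}\otimes 1)(1\otimes Y)\in\Gl_n\big((R\otimes_K R)^\de\big)$. Finally, the inclusion $R_0\subseteq R$ induces a homomorphism $\iota\colon(R_0\otimes_K R_0)^\de\to(R\otimes_K R)^\de$ with $\iota(M_0)=M$.

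Next I would prove the inclusion $G[0]\subseteq\Gal(L_0|K)$ inside $\Gl_{n,k}$. Given a $k$-$\s$-algebra $S$ and $\tau\in G(S)$, the identity $\tau(Y\otimes 1)=(Y\otimes 1)[\tau]_Y$ with $[\tau]_Y\in\Gl_n(S)$ shows that $\tau$ stabilizes the $\de$-stable $K\otimes_k S$-subalgebra $R_0\otimes_k S$ of $R\otimes_k S$ and restricts there to a $\de$-automorphism over $K\otimes_k S$, i.e. to an element of $\Gal(L_0|K)(S^\sharp)=[\s]_k\Gal(L_0|K)(S)$. This yields a natural transformation $G\to[\s]_k\Gal(L_0|K)$ compatible with the two embeddings into $\Gl_{n,k}$. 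Since $G[0]$ is by definition the smallest closed subscheme of $\Gl_{n,k}$ through which $G^\sharp\to\Gl_{n,k}$ factors, and $\Gal(L_0|K)$ is one such subscheme, we get $G[0]\subseteq\Gal(L_0|K)$; in particular the induced homomorphism $k[\Gal(L_0|K)]\to k[G[0]]$ (sending $(M_0)_{ij}\mapsto M_{ij}$) is surjective.

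The crux is the reverse inclusion $\Gal(L_0|K)\subseteq G[0]$, which I would reduce to the injectivity of $\iota$. Granting that $\iota$ is injective, $k[\Gal(L_0|K)]=(R_0\otimes_K R_0)^\de$ embeds into $k\{G\}=(R\otimes_K R)^\de$; since the vanishing ideal of $G[0]$ in $\Gl_{n,k}$ is $\I(G)\cap k[\Gl_{n,k}]$, the subalgebra $k[G[0]]\subseteq k\{G\}$ is exactly the image of $k[\Gl_{n,k}]$, namely the $k$-subalgebra generated by the $M_{ij}$ and $\det(M)^{-1}$, and $k[\Gal(L_0|K)]$ --- being generated by the $(M_0)_{ij}$, which $\iota$ carries to the $M_{ij}$ --- maps isomorphically onto $k[G[0]]$. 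Together with the surjection of the previous paragraph, this makes the natural map $k[\Gal(L_0|K)]\to k[G[0]]$ an isomorphism, so $\Gal(L_0|K)=G[0]$ inside $\Gl_{n,k}$; the identification is induced by the inclusion $R_0\subseteq R$, hence canonical, and is compatible with the $\Gl_{n,k}$-embeddings by construction. The injectivity of $\iota$ follows from that of $R_0\otimes_K R_0\to R\otimes_K R$, which one obtains by writing it as $R_0\otimes_K R_0\hookrightarrow R_0\otimes_K R\hookrightarrow R\otimes_K R$, both maps being injective because $R_0$ and $R$ are free (hence flat) $K$-modules and $R_0\hookrightarrow R$, and restriction to $\de$-constants preserves injectivity. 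This last step --- equivalently, the assertion that no $\s$-free polynomial relation valid on $G$ is lost on passing to $G[0]$ --- is the only point I expect to require any care, and over the base field $K$ it is made harmless by the flatness; everything else is bookkeeping with functors of points and coordinate rings.
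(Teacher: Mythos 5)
Your proof is correct and is essentially the standard argument: the paper does not actually reprove this proposition (it is recalled verbatim from Proposition 2.15 of \cite{articleone}), and the route you take --- realizing $k[\Gal(L_0|K)]=(R_0\otimes_K R_0)^\de=k[M_0,\det(M_0)^{-1}]$ inside $k\{G\}=(R\otimes_K R)^\de$ via the injection $\iota$ supplied by flatness over the field $K$, and matching its image with the degree-zero subalgebra $k[G[0]]$ generated by the entries of $M$ and $\det(M)^{-1}$ --- is exactly the expected one. The only ingredients you delegate to ``classical Picard--Vessiot theory'' (the $\de$-simplicity of $R_0$ and the representability of $\Gal(L_0|K)$ by $(R_0\otimes_K R_0)^\de$ without assuming $k$ algebraically closed) are precisely the $d=0$ specializations of Propositions \ref{prop:pvringpvext} and \ref{prop:defgal}, so nothing essential is missing.
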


\subsection{Galois correspondence}

In the notation of Proposition \ref{prop:Zariskiclosures},
let $S$ be a $k$-$\s$-algebra, $\tau\in G(S)$ and $a\in L$.
By definition, $\tau$ is an automorphism of $R\otimes_k S$.
If we write $a=\frac{r_1}{r_2}$ with $r_1,r_2\in R$, $r_2\neq 0$ then, we say that $a$ is invariant under $\tau$ if and only if
$\tau(r_1\otimes 1)\cdot r_2\otimes 1=r_1\otimes 1\cdot \tau(r_2\otimes 1)$ in $R\otimes_k S$.

\par
If $H$ is a subfunctor of $G$, we say that $a\in L$ is invariant under $H$ if $a$ is invariant under every element
of $H(S)\subset G(S)$, for every $k$-$\s$-algebra $S$.
The set of all elements in $L$, invariant under $H$, is denoted with $L^H$. Obviously $L^H$ is an intermediate $\ds$-field of $L|K$.

\par
If $M$ is an intermediate $\ds$-field of $L|K$, then it is immediately clear from Definition \ref{defi:PVextPVring}
that $L|M$ is a $\s$-Picard-Vessiot extension with $\s$-Picard-Vessiot ring $MR$, the ring compositum of $M$ and $R$ inside $L$.
There is a natural $\s$-closed embedding $\sgal(L|M)\hookrightarrow\sgal(L|K)$ whose image consists of precisely
those automorphism that leave invariant every element of $M$.

\begin{thm}[$\s$-Galois correspondence; Theorem 3.2 in {\cite{articleone}}]
\label{theo:Galoiscorrespondence}
Let $L|K$ be a $\s$-Picard-Vessiot extension with $\s$-Galois group $G=\sgal(L|K)$. Then there is an inclusion
reversing bijection between the set of intermediate $\ds$-fields $M$ of $L|K$ and the set of $\s$-closed subgroups $H$ of $G$ given by
\[M\mapsto\sgal(L|M) \text{ and } H\mapsto L^H.\]
\end{thm}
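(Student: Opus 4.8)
The plan is to follow the classical Picard-Vessiot strategy, reorganized so that the difference-algebraic input is concentrated in two places. First I would dispose of the formal part: both assignments are well defined ($\sgal(L|M)$ is a $\s$-closed subgroup of $G$ and $L^H$ is an intermediate $\ds$-field, as recorded above), both are inclusion reversing, and one trivially has $M\subseteq L^{\sgal(L|M)}$ and $H\subseteq\sgal(L|L^H)$. Hence the theorem reduces to the two equalities $(\ast)$ $L^{\sgal(L|M)}=M$ for every intermediate $\ds$-field $M$, and $(\ast\ast)$ $\sgal(L|L^H)=H$ for every $\s$-closed subgroup $H\le G$. Since $L|M$ is again a $\s$-Picard-Vessiot extension (with ring $MR$) and $M^\de=K^\de$, statement $(\ast)$ follows from the special case $M=K$, that is, from $L^{G}=K$; and, taking $M=L^H$ and applying $(\ast)$ to $L|M$, statement $(\ast\ast)$ reduces to the \emph{key lemma}: if $H\le G$ is $\s$-closed with $L^H=K$, then $H=G$.

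The engine of the argument is the $\s$-torsor identity $R\otimes_K R\cong R\otimes_k k\{G\}$, where $R$ is a $\s$-Picard-Vessiot ring and $k=K^\de$. I would prove it thus: with $A\in K^{n\times n}$ and $Y\in\Gl_n(L)$ as in Proposition \ref{prop:Zariskiclosures}, both $Y\otimes 1$ and $1\otimes Y$ solve $\de(W)=(A\otimes 1)W$ in $\Gl_n(R\otimes_K R)$ (because $A$ has entries in $K$), so $Z:=(Y\otimes 1)^{-1}(1\otimes Y)$ and all its $\s$-transforms have entries in $(R\otimes_K R)^\de$, and $R\otimes_K R$ is $\s$-generated over $R\otimes 1$ by the entries of $Z$ and $\det(Z)^{-1}$. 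Letting $B$ be the $k$-$\s$-subalgebra of $(R\otimes_K R)^\de$ generated by these elements, the multiplication map $R\otimes_k B\to R\otimes_K R$ is surjective, and it is injective by Lemma \ref{lemma:simple}: its kernel is a $\de$-ideal of $R\otimes_k B$, hence generated by its intersection with the constant ring $B$, on which the map is the inclusion $B\hookrightarrow R\otimes_K R$. Lemma \ref{lemma:simple} then also gives $B=(R\otimes_k B)^\de=(R\otimes_K R)^\de$, which is $k\{G\}$ by Proposition \ref{prop:defgal}. The same computation produces the \emph{generic automorphism} $\gamma\in G(k\{G\})$, characterized (through the above identification) by $\gamma(r\otimes 1)=1\otimes r$ for $r\in R$ and $\gamma|_{k\{G\}}=\id$, i.e. $[\gamma]_Y=Z$.

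Granting this, $(\ast)$ is immediate: if $a=r_1/r_2\in L^G$ with $r_1,r_2\in R$, then invariance of $a$ under $\gamma$ says $r_2\otimes r_1=r_1\otimes r_2$ in $R\otimes_K R$, and since $R\otimes_K R\hookrightarrow L\otimes_K L$ ($K$ being a field) this forces $a\otimes 1=1\otimes a$, hence $a\in K$. For the key lemma I would note that $H$ acts on $R\otimes_K R\cong R\otimes_k k\{G\}$ trivially on $R$ and by right translation ($Z\mapsto Z[\tau]_Y$) on $k\{G\}$; flatness of $R$ over $k$ and over $K$ then yields $R\otimes_k k\{G\}^{H}=(R\otimes_K R)^{H}=R\otimes_K R^{H}$. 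But $R^{H}=R\cap L^H=R\cap K=K$, so $R\otimes_k k\{G\}^{H}\cong R$ as $R$-modules, which forces $k\{G\}^{H}=k$. Finally, by the theory of quotients of difference algebraic groups developed in \cite{articleone}, $k\{G\}$ is faithfully flat over $k\{G\}^{H}$ with $k\{G\}\otimes_{k\{G\}^{H}}k\{G\}\cong k\{G\}\otimes_k k\{H\}$; with $k\{G\}^{H}=k$ and base change along the counit this gives $k\{G\}\cong k\{H\}$, i.e. $H=G$. Combining the key lemma with $(\ast)$ delivers $(\ast\ast)$ and hence the theorem.

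The hard part is the last step of the key lemma, the implication $k\{G\}^{H}=k\Rightarrow H=G$. Classically this says that a proper closed subgroup has a strictly larger field of invariants, and it follows from Chevalley's construction of the quotient $G/H$; in the present schematic difference setting there is no dimension count available, and one must instead rely on the construction of quotients of difference algebraic groups and the faithful flatness of the quotient map. This is where real care is needed, precisely because the groups at hand need not be perfectly $\s$-reduced and may have infinitely many components; the torsor identity and the generic automorphism, by contrast, follow fairly directly from Lemma \ref{lemma:simple}, and the remainder of the argument simply transcribes the classical one.
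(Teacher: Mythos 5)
This theorem is only quoted in the present paper from \cite{articleone} (Theorem 3.2 there), so there is no in-paper proof to compare against; your argument is nevertheless correct and is essentially the standard one used in that companion work: reduce to $L^{\sgal(L|M)}=M$ plus the key lemma, establish the torsor identity $R\otimes_K R\cong R\otimes_k k\{G\}$ via Lemma \ref{lemma:simple}, use the generic automorphism to get $L^G=K$, and finish with the Hopf-algebraic implication $k\{G\}^H=k\Rightarrow H=G$. The one point I would adjust is attribution in that last step: it does not really rest on the quotient theory for difference algebraic groups in \cite{articleone} (which is developed for \emph{normal} $\s$-closed subgroups), but on Takeuchi's faithful-flatness theorem $\I(H)=k\{G\}\cdot\bigl(k\{G\}^H\bigr)^{+}$ for the underlying commutative Hopf algebra over the field $k$ — the $\s$-structure is irrelevant there, one only checks afterwards that everything is compatible with $\s$.
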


\begin{thm}[Second fundamental theorem of $\s$-Galois theory; Theorem 3.3 in {\cite{articleone}}]
\label{theo:secondfundamentaltheorem}
Let $L|K$ be a $\s$-Picard-Vessiot extension with $\s$-Galois group $G$. Let $K\subset M\subset L$
be an intermediate $\ds$-field and $H\leq G$ a $\s$-closed subgroup of $G$ such that $M$ and $H$
correspond to each other in the $\s$-Galois correspondence.
\par
Then $M$ is a $\s$-Picard-Vessiot extension of $K$ if and only if $H$ is normal in $G$.
If this is the case, the $\s$-Galois group of $M|K$ is the quotient $G/H$.
(See Definition A.41 for the definition and Theorem A.43 in
\cite{articleone} for the existence of the quotient $G/H$ in the category of group $k$-$\s$-scheme.)
\end{thm}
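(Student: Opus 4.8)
The plan is to adapt the proof of the corresponding statement of classical Picard-Vessiot theory, keeping track of the difference structure; all the genuinely new work will sit in the implication ``$H$ normal $\Rightarrow$ $M$ is $\s$-Picard-Vessiot''. Two elementary remarks are used throughout. First, there are never new $\de$-constants to worry about: from $K\subseteq M\subseteq L$ and $L^\de=K^\de$ we get $M^\de=K^\de$. Second, writing $k=K^\de$, the $\ds$-field $L$ is in particular a $\de$-simple $\de$-ring, so Lemma~\ref{lemma:simple} gives $(L\otimes_k S)^\de=S$ for every $k$-$\s$-algebra $S$.

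\emph{The implication $M$ $\s$-Picard-Vessiot over $K$ $\Rightarrow$ $H$ normal in $G$.} Fix $A'\in K^{m\times m}$ and a fundamental solution matrix $Y'\in\Gl_m(M)$ with $M=K\<Y'\>_\s$. Let $S$ be a $k$-$\s$-algebra and $\tau\in G(S)$, regarded (as in \cite{articleone}) as acting on $L\otimes_k S$. Since $A'\in K^{m\times m}$, $\tau$ fixes $K\otimes_k S$ and commutes with $\de$, the matrix $\tau^{-1}(Y')$ again satisfies $\de(\tau^{-1}(Y'))=A'\,\tau^{-1}(Y')$ and has unit determinant; hence $Y'\otimes 1$ and $\tau^{-1}(Y')$ are two fundamental solution matrices for $\de(y)=A'y$, so $\tau^{-1}(Y')=(Y'\otimes 1)C$ for some $C\in\Gl_m\big((L\otimes_k S)^\de\big)=\Gl_m(S)$. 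Thus $\tau^{-1}$, and likewise $\tau$, stabilizes $M\otimes_k S$. Consequently, if $h\in H(S)=\sgal(L|M)(S)$ fixes $M\otimes_k S$ pointwise, then for $a\in M$ one has $(\tau h\tau^{-1})(a\otimes 1)=\tau\big(h(\tau^{-1}(a\otimes 1))\big)=\tau(\tau^{-1}(a\otimes 1))=a\otimes 1$, i.e. $\tau h\tau^{-1}\in H(S)$. As $S$ was arbitrary, $H$ is normal in $G$.

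\emph{The implication $H$ normal $\Rightarrow$ $M$ $\s$-Picard-Vessiot over $K$ with $\sgal(M|K)\cong G/H$.} As $H$ is normal, the quotient $Q:=G/H$ exists as a $\s$-algebraic group over $k$ (Theorem~A.43 of \cite{articleone}) and, being affine $\s$-algebraic, admits a $\s$-closed embedding $Q\hookrightarrow\Gl_{m,k}$. Let $R\subset L$ be a $\s$-Picard-Vessiot ring for $L|K$ and let $R^H\subseteq R$ be its $K$-$\s$-subalgebra of $H$-invariants, which is a $\de$-subring contained in $M$. The core of the argument is to establish: (a) $\spec(R^H)$ is a $Q$-torsor over $K$, equivalently $R\otimes_K R^H\cong R\otimes_k k\{Q\}$ and $(R^H\otimes_K R^H)^\de\cong k\{Q\}$; (b) $R^H$ is finitely $\s$-generated over $K$, is $\de$-simple, and equals $K\{Z,\frac{1}{\det Z}\}_\s$ for a matrix $Z\in\Gl_m(R^H)$ with $\de(Z)=A'Z$, $A'\in K^{m\times m}$; and (c) $\operatorname{Frac}(R^H)=M$. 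Granting these, (b) and (c) say that $R^H$ is a $\s$-Picard-Vessiot ring with field of fractions $M$, so Proposition~\ref{prop:pvringpvext} makes $M$ a $\s$-Picard-Vessiot extension of $K$ with $\s$-Picard-Vessiot ring $R^H$; then by Proposition~\ref{prop:defgal} and (a) the group $\sgal(M|K)$, represented by $(R^H\otimes_K R^H)^\de$, is identified with $Q=G/H$. (Equivalently, the restriction map $G\to\sgal(M|K)$ has kernel $\sgal(L|M)=H$ and is surjective because $M$ is itself $\s$-Picard-Vessiot.)

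It remains to indicate how (a)--(c) are obtained, and this is where the difference-scheme setting forces arguments absent from the classical case. For (a), the representability statement of Proposition~\ref{prop:defgal} is, via Lemma~\ref{lemma:simple}, the torsor identity $R\otimes_K R\cong R\otimes_k k\{G\}$; normality of $H$ makes the residual $Q$-action well defined, and taking $H$-invariants factorwise yields (a). Given (a), $R$ is faithfully flat over $R^H$, so a proper nonzero $\de$-ideal of $R^H$ would extend to a proper nonzero $\de$-ideal of the $\de$-simple ring $R$, whence $R^H$ is $\de$-simple; finite $\s$-generation follows since $k\{Q\}$ is finitely $\s$-generated; and $\operatorname{Frac}(R^H)=M$ follows by base-changing the torsor of (a) to the generic point of $\spec(R^H)$, which presents $L$ as an $H$-torsor over $\operatorname{Frac}(R^H)$ and hence forces $L^H=\operatorname{Frac}(R^H)$. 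The matrix $Z$ in (b) is the matrix of coordinate functions of $Q\hookrightarrow\Gl_{m,k}$, realized inside $R^H$ through the coaction (so that $\tau(Z)=(Z\otimes 1)[\tau]_Z$ with $[\tau]_Z$ a constant matrix); its logarithmic derivative $\de(Z)Z^{-1}$ is then $G$-invariant, hence lies in $K^{m\times m}$. The main obstacle is precisely this block, above all (a) together with the realization step in (b): in the classical setting one could use finite-dimensional representations of $H$, average over $H$ when it is finite, or pass to the underlying reduced variety, but here $H$ is typically far from finite and the difference group schemes involved need not be perfectly $\s$-reduced (with $\s$ not even injective in general), so exhibiting $R^H$ as a $\s$-Picard-Vessiot ring --- simultaneously controlling finite $\s$-generation, $\de$-simplicity, and the linear differential equation satisfied by $Z$ --- must be carried out entirely within the torsor formalism.
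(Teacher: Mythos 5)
First, a caveat: this paper does not prove Theorem \ref{theo:secondfundamentaltheorem}; the statement is imported verbatim from Theorem 3.3 of \cite{articleone}, so there is no in-text proof to measure you against. That said, your overall strategy --- normality via conjugation in one direction, and the torsor/Hopf-quotient formalism in the other --- is the route taken there, and your first implication ($M$ a $\s$-Picard-Vessiot extension $\Rightarrow$ $H$ normal) is essentially complete, modulo the small care needed because $\tau$ acts on $R\otimes_k S$ while the entries of $Y'$ are a priori only fractions of elements of $R$ (the paper's notion of invariance of elements of $L$ is designed to absorb exactly this).

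The converse direction, however, is a roadmap rather than a proof, and the gap sits exactly where you yourself locate it. Three points are asserted but not established. (1) Step (a): ``taking $H$-invariants factorwise'' in $R\otimes_KR\cong R\otimes_k k\{G\}$ requires knowing that forming $H$-invariants commutes with the base change $R\otimes_K(-)$ and that $k\{G\}^H=k\{Q\}$; the compatibility of these identifications with the torsor isomorphism is the content of the claim, not a formality. (2) The fundamental matrix $Z$: the coaction sends $R^H$ into $R^H\otimes_k k\{Q\}$, and the coordinate functions of $Q\hookrightarrow\Gl_{m,k}$ live in $k\{Q\}$, not in $R^H$; producing an \emph{invertible} matrix $Z\in\Gl_m(R^H)$ whose coaction is $Z\otimes(\text{coordinate matrix of }Q)$ is precisely the descent/triviality statement that makes $R^H$ a $\s$-Picard-Vessiot ring, and it is here that the whole difficulty of the theorem is concentrated --- without $Z$ there is no equation $\de(y)=A'y$ to exhibit, and your computation of the logarithmic derivative has nothing to apply to. (3) Finite $\s$-generation of $k\{Q\}$: a $\s$-Hopf subalgebra of a finitely $\s$-generated $\s$-Hopf algebra is not obviously finitely $\s$-generated (subalgebras of finitely generated algebras need not be finitely generated), so this is a separate theorem, not something that ``follows''; likewise the faithful flatness of $R$ over $R^H$, which you invoke for $\de$-simplicity, is a Takeuchi-type result that must be proved in this setting. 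Your closing sentence, that all of this ``must be carried out entirely within the torsor formalism,'' is an accurate description of the missing proof rather than a substitute for it; and the parenthetical alternative (surjectivity of $G\to\sgal(M|K)$ ``because $M$ is itself $\s$-Picard-Vessiot'') has the same status, since it presupposes an extension theorem for $\ds$-automorphisms from $M$ to $L$, which is again part of what has to be shown.
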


%%%%%%%%%%%%%%%%%%%%%%%%%%%%%%%%%%%%%%%%%%%%%%%%%%%%%%%%%%%%%%%%%%%%%%%%%%%%%%%%%%%%%%%%%%%%%%%%
%%%%%%%%%%%%%%%%%%%%%%%%%%%%%%%%%%%%%%%%%%%%%%%%%%%%%%%%%%%%%%%%%%%%%%%%%%%%%%%%%%%%%%%%%%%%%%%%
%\input{lucia}
\section{Transformally dependent solutions of first order linear differential equations}

In this section, we study the existence of algebraic relations over a $\ds$-field K,
satisfied  by solutions of a differential equation with coefficients in $K$
and their transforms of all  order with respect to $\s$.
We show that these relations reflect the structure of the $\s$-Galois group of the equation.
More precisely, we
use the $\s$-Galois correspondence and the classification of the
$\s$-closed subgroups of $\Ga, \mathbf{G}_m$ and $\Gl_2$
(see the appendix) to characterize the transformal algebraic relations satisfied by
solutions of differential equations.
\par
We focus on two special cases: The field $\C(x)$ equipped with the
the usual derivation and with either the shift operator $\s:x\mapsto x+1$
or a $q$-difference operator $\s:x\mapsto qx$, for some $q\in\C$.
Notice that these two operators cover all the possible non-trivial
automorphisms of $\C(x)$, up to a M\"obius transformation.
\par
Analogous results for differential relations among solutions of
linear difference equations are proved in \S3.1 of \cite{HardouinSinger}.

\subsection{The additive case}

As above, let $K$ be a $\ds$-field,  with $\de\s= \hslash\s\de$ and $\hslash\in k:=K^\de$.
We remind that $\hslash_j\in k$ is defined so that $\de\s^j=\hslash_j\s^j\de$ for any integer $j\geq 1$. It is convenient to set
$\hslash_0=1$.
\par
We consider a system of first order inhomogeneous linear differential equations
\beq\label{eq:inhomogeneaoussystem}
\de(y_1)=b_1,~\ldots,~\de(y_n)=b_n,
\hbox{~with $b_1,\dots,b_n\in K$.}
\eeq
We are going to prove a result on the transformal dependence of a set of solutions
of \eqref{eq:inhomogeneaoussystem}. It can be considered as a $\s$-analog of the following theorem by
Ostrowski \cite{OstrovskiActa}:

\begin{thm}\label{thm:ostrowski}
Let $M$ be a field of meromorphic functions, containing $\C$ and stable by the derivation $\de=\frac{d}{dx}$.
Let $z_1,\dots,z_n$ be meromorphic functions such that $\de(z_i)\in M$, for all $i=1,\dots,n$.
Then $z_1,\dots,z_n$ are algebraically dependent over $M$ if and only if there exist
$\la_1,\dots,\la_n\in\C$, not all equal to zero, such that $\sum_{i=1}^n \la_i z_i\in M$.
\end{thm}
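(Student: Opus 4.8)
The plan is to prove the two implications separately. The easy direction is ``$\Leftarrow$'': if $\sum_{i=1}^n\la_iz_i=:m\in M$ with not all $\la_i$ zero, then that single linear relation with constant coefficients is already a nontrivial polynomial relation over $M$ among $z_1,\dots,z_n$, so they are algebraically dependent over $M$. No analysis is needed here. The substantive direction is ``$\Rightarrow$''. I would argue by induction on $n$. Reorder so that $z_1,\dots,z_r$ are algebraically independent over $M$ and each of $z_{r+1},\dots,z_n$ is algebraic over $M(z_1,\dots,z_r)$; since by hypothesis the full family is algebraically dependent, $r<n$, so there is at least one $z_j$, say $z_n$, algebraic over $M_0:=M(z_1,\dots,z_{n-1})$.

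The key step is to differentiate the minimal polynomial of $z_n$ over $M_0$. Write $P(T)=T^m+c_{m-1}T^{m-1}+\dots+c_0$ with $c_i\in M_0$, $P(z_n)=0$, $m$ minimal. Applying $\de$ and using $\de(z_i)=b_i\in M\subset M_0$, one gets
\[
P'(z_n)\,b_n+\sum_{i=0}^{m-1}\de(c_i)z_n^i=0,
\]
and since $\de$ sends $M(z_1,\dots,z_{n-1})$ into itself (the $z_i$ have derivatives in $M$), each $\de(c_i)\in M_0$. Because $P$ is the minimal polynomial, $P'(z_n)\neq 0$, so $b_n\in M_0$; more importantly, rewriting this identity shows that $\de$ of the element $c_{m-1}+mz_n$ is expressible back in $M_0$, and a short manipulation (comparing the derivative relation with $m$ times the original, or examining the coefficient of $z_n^{m-1}$) produces an element of the form $z_n+\tfrac1m c_{m-1}$ whose derivative lies in $M$. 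Thus, after replacing $z_n$ by $z_n':=z_n+\tfrac1m c_{m-1}$, I may assume $z_n$ itself is algebraic over $M(z_1,\dots,z_{n-1})$ \emph{and} has the property that all its conjugates over $M(z_1,\dots,z_{n-1})$ are translates of it by constants; iterating / taking norms and using that $\de$ kills $\C$ and that $M^\de\supseteq\C$, one concludes that $z_n$ is linear over $M(z_1,\dots,z_{n-1})$ with constant leading behaviour, which ultimately yields a relation $z_n-\sum_{i<n}\mu_i z_i\in M$ with $\mu_i\in\C$. That is the desired relation with $\la_n=1$. The main obstacle — and the reason meromorphy enters — is controlling the constants: one needs that the field of constants of $M$ (and of the algebraic extensions involved) is exactly $\C$, so that the ``integration constants'' produced by the above manipulations genuinely lie in $\C$ rather than in some larger constant field; this is where the hypothesis that $M$ consists of meromorphic functions is used, via the classical fact that the constants of such a field are the complex numbers.

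Alternatively, and perhaps more cleanly, I would phrase the argument Galois-theoretically: the differential field $N:=M(z_1,\dots,z_n)$ is generated over $M$ by elements whose derivatives lie in $M$, so $N|M$ is built from a tower of extensions each either transcendental with $\de(z)\in M$ (a ``primitive'' extension) or algebraic; the algebraic steps can be eliminated (an algebraic extension of a differential field generated by a primitive must already contain a translate of that primitive by a constant, using $M^\de=\C$), reducing to the case where $N=M(z_1,\dots,z_r)$ with the $z_i$ algebraically independent and $\de(z_i)\in M$. Algebraic dependence of the original $z_1,\dots,z_n$ over $M$ then forces, after this reduction, a nontrivial $\C$-linear combination of the $z_i$ (and the discarded ones) to fall into $M$, which unwinds to the stated conclusion. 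I expect the bookkeeping of the algebraic-to-primitive reduction, together with the constant-field control, to be the only genuinely delicate points; everything else is linear algebra over $\C$ and routine differential-field manipulation.
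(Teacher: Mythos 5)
First, a point of reference: the paper does not prove this statement. It is quoted as a classical theorem of Ostrowski (cited to \cite{OstrovskiActa}) purely as motivation for its $\s$-analog, Theorem \ref{thm:rank1ga}, which the paper proves by a Galois-theoretic argument. So your proposal can only be measured against the standard proof and against the paper's treatment of the analog. Your ``$\Leftarrow$'' direction is fine, and your algebraic-reduction step is essentially correct: differentiating the minimal polynomial $P(T)=T^m+c_{m-1}T^{m-1}+\cdots+c_0$ of $z_n$ over $M_0=M(z_1,\dots,z_{n-1})$ and using minimality to conclude that the resulting degree-$(m-1)$ relation is identically zero gives, from the coefficient of $z_n^{m-1}$, that $m\,\de(z_n)+\de(c_{m-1})=0$, i.e.\ that $mz_n+c_{m-1}$ has derivative \emph{zero}. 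You only claim its derivative lies in $M$, but you need (and in fact get) the stronger statement: only because the constant field of any differential field of meromorphic functions is $\C$ does $mz_n+c_{m-1}\in\C$ follow, whence $z_n\in M(z_1,\dots,z_r)$ outright (not merely ``algebraic with conjugates differing by constants'').

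The genuine gap is what comes after this reduction. You are then left with: $z_1,\dots,z_r$ algebraically independent over $M$ with $\de(z_i)\in M$, and an element $w=z_n\in M(z_1,\dots,z_r)$ with $\de(w)\in M$; you must show $w\in M+\C z_1+\cdots+\C z_r$. This is the actual content of Ostrowski's theorem, and your proposal asserts it (``ultimately yields a relation $z_n-\sum_{i<n}\mu_iz_i\in M$'', ``forces a nontrivial $\C$-linear combination to fall into $M$'') without argument; it is not routine linear algebra. Two standard ways to close it: (a) identify $M(z_1,\dots,z_r)$ with the rational function field $M(X_1,\dots,X_r)$ equipped with the derivation $D$ extending $\de$ with $D(X_i)=\de(z_i)$, write $w=F(z_1,\dots,z_r)$, and show variable by variable that $D(F)\in M$ forces the proper-fraction part of $F$ to be a $D$-constant (hence $0$, since a nonzero constant is not a proper fraction) and the polynomial part to have degree at most one with top coefficient in $\C$ (degree $\geq 2$ would produce a $D$-constant of the form $a_{d-1}+d\,a_dX_r$, contradicting algebraic independence); or (b) the route the paper itself takes for the $\s$-analog: $M(z_1,\dots,z_n)|M$ is a Picard-Vessiot extension whose Galois group embeds in $\Ga^n$ via $z_i\mapsto z_i+c_i$, algebraic dependence forces this group to be a proper algebraic subgroup, proper algebraic subgroups of $\Ga^n$ in characteristic zero are cut out by nonzero $\C$-linear forms $\sum\la_ic_i$, and the Galois correspondence then places the invariant element $\sum\la_iz_i$ in $M$. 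Either argument would complete your proof; as written, the central implication is missing.
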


We go back to our notation.
The theorem below relies on the classification of
$\s$-closed subgroups of $\Ga^n$ (see \S\ref{subsec:subgroupsGa} below).

\begin{thm}\label{thm:rank1ga}
Let $L|K$ be a $\ds$-field extension containing
a solution $z_1,\hdots, z_n \in L$ of \eqref{eq:inhomogeneaoussystem} and having the property that
$L^\de=k(:=K^\de)$.
Then, $z_1,\hdots, z_n$ are transformally dependent over $K$ if and only if
there exist a non-zero homogeneous linear $\s$-polynomial
$\cL(X_1,\dots,X_n)\in k\{X_1,\dots,X_n\}_\s$
%a non-negative integer $s$ and, for
%$i=1,\dots,n$ and $j=1,\dots,s$, a family of constants $\lambda_{i,j} \in k$, not all equal to zero,
and an element $g$ of $K$ such that $\cL(b_1,\dots,b_n)=\de(g)$.
\end{thm}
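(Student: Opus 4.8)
The plan is to translate the statement entirely through the $\s$-Galois correspondence into a statement about $\s$-closed subgroups of $\Ga^n$, using the fact that the relevant $\s$-Galois group here is such a subgroup. First I would set up the Picard--Vessiot machinery: enlarging the constants if necessary (which does not affect transformal dependence over $K$, by a standard faithful-flatness argument), I may assume $k=K^\de$ is algebraically closed, so by Proposition \ref{cor:existenceofPVextension} there is a $\s$-Picard-Vessiot extension $\wtilde L|K$ for the system \eqref{eq:inhomogeneaoussystem}, which I can embed over $K$ into a suitable common extension containing $L$. The system \eqref{eq:inhomogeneaoussystem} is equivalent to the linear system $\de(y)=Ay$ where $A$ is the $(n+1)\times(n+1)$ matrix built from the $b_i$ in the usual way, with fundamental solution matrix having a column $(z_1,\dots,z_n,1)^t$; its usual Galois group sits inside $\Ga^n$, so by Proposition \ref{prop:Zariskiclosures} the $\s$-Galois group $G=\sgal(\wtilde L|K)$ is a $\s$-closed subgroup of $[\s]_k\Ga^n$. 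By Proposition \ref{prop:dimensiondegtrans}, $\strdeg(\wtilde L|K)=\sdim_k G$, and $z_1,\dots,z_n$ are transformally dependent over $K$ precisely when $\sdim_k G < n$, i.e. when $G$ is a proper-dimensional $\s$-closed subgroup of $\Ga^n$.

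The next step invokes the classification of $\s$-closed subgroups of $\Ga^n$ (from \S\ref{subsec:subgroupsGa}): every such subgroup is the vanishing locus of a family of homogeneous linear $\s$-polynomials $\cL\in k\{X_1,\dots,X_n\}_\s$, and it has $\s$-dimension $<n$ exactly when it is contained in the kernel of some \emph{non-zero} homogeneous linear $\s$-polynomial $\cL$. So $z_1,\dots,z_n$ are transformally dependent over $K$ iff there is a non-zero $\cL=\sum_{i,j} c_{i,j}\s^j(X_i)$ with $c_{i,j}\in k$ vanishing on $G$. Now I unwind what "$\cL$ vanishes on $G$" means in Galois terms: applying the definition of the action of $G$ on the fundamental solution matrix, an element $\tau\in G(S)$ sends $z_i\otimes 1$ to $(z_i\otimes 1)+ (c_i\otimes 1)$ where $(c_1,\dots,c_n)\in\Ga^n(S)$ is the point representing $\tau$; $\cL$ vanishes on $G$ iff $\tau$ fixes the element $w:=\cL(z_1,\dots,z_n)=\sum_{i,j}c_{i,j}\s^j(z_i)$ for all $\tau$. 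By the $\s$-Galois correspondence (Theorem \ref{theo:Galoiscorrespondence}), this holds iff $w\in K$. Finally I compute $\de(w)=\sum_{i,j}c_{i,j}\de(\s^j(z_i)) = \sum_{i,j}c_{i,j}\hslash_j\s^j(\de(z_i))=\sum_{i,j}c_{i,j}\hslash_j\s^j(b_i)$, using \eqref{eq:com}; this is an explicit element of $K$. Conversely, given such $w\in K$, I get $g:=w\in K$ with $\de(g)=\cL'(b_1,\dots,b_n)$ for the twisted $\s$-polynomial $\cL'=\sum c_{i,j}\hslash_j\s^j(X_i)$, and since $\hslash_j$ is a non-zero element of $k$, $\cL'$ is non-zero iff $\cL$ is; renaming $\cL'$ as $\cL$ gives the stated form.

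**Main obstacle.** The routine parts (reduction to $k$ algebraically closed, assembling $\de(y)=Ay$, identifying the usual Galois group inside $\Ga^n$) are standard. The genuine content is the precise classification of $\s$-closed subgroups of $\Ga^n$ together with the characterization of those of full $\s$-dimension as exactly the kernels of non-zero homogeneous linear $\s$-polynomials — this is where the difference-algebra input (the $\s$-polynomial ring $k\{X_1,\dots,X_n\}_\s$ behaving like a left module over the twisted polynomial ring $k[\s]$, and its annihilator-of-dimension dictionary) really enters, and it is the step I would cite from \S\ref{subsec:subgroupsGa} rather than reprove. The only other point requiring care is bookkeeping the factor $\hslash_j$ coming from the non-commutativity of $\de$ and $\s$: one must check it never vanishes (it is a product of units in $k$) so that passing between $\cL$ and its twist $\cL'$ preserves non-triviality, and one must make sure the direction "$w\in K \Rightarrow$ transformal dependence" is handled symmetrically — here one observes that if $w=\sum_{i,j}c_{i,j}\s^j(z_i)\in K$ is a non-trivial relation, then the corresponding non-zero $\cL$ vanishes on $G$, forcing $\sdim_k G<n$, hence transformal dependence of the $z_i$. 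Assembling these pieces in the correct logical order, with the base-change reduction at the front, is the whole proof.
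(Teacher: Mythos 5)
Your proposal is correct and follows essentially the same route as the paper: realize the $\s$-Galois group as a $\s$-closed subgroup of $\Ga^n$, invoke the classification of such subgroups by homogeneous linear $\s$-polynomials together with the $\s$-dimension/$\s$-transcendence-degree formula, translate the vanishing of a nonzero linear $\s$-polynomial on the group into an invariant element via the $\s$-Galois correspondence, and track the $\hslash_j$ twist when differentiating. The only superfluous step is your opening reduction (enlarging constants and embedding an abstract Picard--Vessiot extension into a common overfield of $L$): since $L^\de=K^\de$ by hypothesis, the subfield $K\langle z_1,\dots,z_n\rangle_\s\subset L$ is already a $\s$-Picard--Vessiot extension, which is what the paper uses directly.
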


\begin{proof}
Let us first assume that there exist
$\cL(X_1,\dots,X_n)=\sum_{i=1}^n\sum_{j=0}^s \lambda_{i,j} \sigma^j(X_i)\in k\{X_1,\dots,X_n\}_\s$
and $g\in K$ such that
\beq\label{eq:AdditiveTransformallyDependentCondition}
\sum_{i=1}^n\sum_{j=0}^s \lambda_{i,j} \sigma^j(b_i) = \de(g).
\eeq
Then a direct calculation shows that
$$
\de\l(\sum_{i,j}\hslash_j^{-1}\lambda_{i,j} \sigma^j(z_i) -g\r)=0.
$$
Since $L^\de=K^\de$, we conclude that $\sum_{i,j}\hslash_j^{-1}\lambda_{i,j} \sigma^j(z_i)\in K$.
Hence $z_1,\hdots, z_n$ are transformally dependent over $K$.
\par
To prove the inverse implication, we consider the differential system
\beq\label{eq:ga}
\de(y)=Ay,
\hbox{~where~}
A=\operatorname{diag}
\l(\begin{pmatrix}0&b_1\\0&0\end{pmatrix},\dots,\begin{pmatrix}0&b_n\\0&0\end{pmatrix}\r)\in K^{2n\times 2n}
\hbox{~is a diagonal block matrix,}
\eeq
and its fundamental solution $Y:=\operatorname{diag}\l(\begin{pmatrix}1&z_1\\0&1\end{pmatrix},\dots,\begin{pmatrix}1&z_n\\0&1\end{pmatrix}\r)\in\Gl_{2n}(L)$.
\par
First of all, we construct the $\s$-Galois group of $\de(y)=Ay$.
Since $L^\de=K^\de$, the $\ds$-field $K\left\<z_1,\hdots,z_n \right\>_\s\subset L$ is a $\s$-Picard-Vessiot extension for \eqref{eq:ga} and it is not restrictive to assume that
$L=K\left\<z_1,\hdots,z_n \right\>_\s$.
The corresponding $\s$-Picard-Vessiot ring is
$R=K\left\{z_1,\hdots,z_n \right\}_\s$. So let $G=\sgal(L|K)$ denote the $\s$-Galois group of $L|K$, seen
as a $\s$-closed subgroup of $\Gl_{2n,k}$ via the representation associated to $Y$.
\par
We claim that $G$ is a proper $\s$-closed subgroup of $\Ga^n$, where we have identified
the vector group $\Ga^n$ with a $\s$-closed subgroup of $\Gl_{2n,k}$ via the embedding $\Ga^n\hookrightarrow \Gl_{2n,k}$
defined by:
$$
\begin{array}{lcl}
\Ga^n(S)(=S^n) & \longrightarrow & \Gl_{2n}(S)\\
(c_1,\ldots,c_n) & \longmapsto &
        \operatorname{diag}\l(\begin{pmatrix}1&c_1\\0&1\end{pmatrix},\dots,\begin{pmatrix}1&c_n\\0&1\end{pmatrix}\r)
\end{array},
\hbox{~for any $k$-$\s$-algebra $S$.}
$$
Indeed, for any $k$-$\s$-algebra $S$ and $\tau\in G(S)$, the automorphism $\tau$ commutes with $\de$, so that in $R\otimes_k S$
we have $\de(\tau(z_i \otimes 1))=b_i\otimes 1$, for all $i=1,\ldots,n$, and, hence $\de(\tau(z_i\otimes 1)-z_i\otimes 1)=0$. So
$\tau(z_i\otimes 1)=z_i\otimes 1+1\otimes c_i$ for some $c_i\in S$, which identifies $G$ to a $\s$-closed subgroup of $\Ga^n$.
Since $z_1,\hdots, z_n$ are transformally dependent over $K$,
i.e., since $\strdeg(L|K)<n$, Proposition \ref{prop:dimensiondegtrans} implies that
$$
\sdim_k(G)=\strdeg(L|K)<n=\sdim_k(\Ga^n),
$$
and therefore that $G$ is a proper $\s$-closed subgroup of $\Ga^n$.
\par
The group $G$ being a proper $\s$-closed subgroup of $\Ga^n$,
there exists a non-zero linear $\s$-polynomial in $n$ variables, say $\sum_{i,j} \mu_{i,j}\s^j(X_i)\in k\{X_1,\dots,X_n\}_\s$
(see Theorem \ref{thm:clasga} below),
which belongs to the vanishing ideal of $G$ inside $\Ga^n$. This means that for any $k$-$\s$-algebra $S$
and any $\tau=(c_1,\ldots,c_n)\in G(S)$, we must have $\sum_{i,j} \mu_{i,j}\s^j(c_i)=0$.
We set $g=\sum_{i,j} \mu_{i,j}\s^j(z_i) \in R$. Then for any $k$-$\s$-algebra $S$ and any $\tau=(c_1,\ldots,c_n) \in G(S)$,
in $R\otimes_k S$ we have
$$
\tau(g\otimes 1)=\sum_{i,j} \mu_{i,j}\s^j(z_i \otimes 1+1\otimes c_i)=g\otimes 1.
$$
The Galois correspondence (see Theorem \ref{theo:Galoiscorrespondence} above)
implies that $g \in K$. Taking the derivative of $g$,
we find $\sum_{i,j} \hslash_j \mu_{i,j} \sigma^j(b_i) = \de(g)$.
To conclude, it is enough to choose $\cL(X_1,\dots,X_n)=\sum_{i,j} \hslash_j \mu_{i,j} \sigma^j(X_i)$.
\end{proof}

\begin{rem}
Theorem \ref{thm:rank1ga} can be deduced from a more algebraic version of Ostrowski's Theorem,
involving instead of differential fields of meromorphic functions, a differential field extension $L$ of $K$
with $L^\de=k$ (see \S 2 in \cite{Kolchin:AlgebraicGroupsAndAlgebraicDependence}). If
$L$ is a $\ds$-field extension  of $K$, with $L^\de=k$, containing solutions $z_1,\dots,z_n$
of $\de(y_i)=b_i$, it is enough to apply Ostrowski's Theorem
to $z_1,\dots,z_n,\s(z_1),\dots,\s(z_n),\dots,\s^l(z_1),\dots,\s^l(z_n)$,
for some large enough integer $l$, to get Theorem \ref{thm:rank1ga}. However, we want to point out that the existence of
such a $\ds$-field extension $L$ is a very strong hypothesis, which is a priori not guaranteed by classical Picard-Vessiot
theory.  Indeed, for the differential equations \beq \label{eq:prolsyst}\de(y_{i,j})=\hslash_j\s^j(b_i)y_{i,j}\eeq
with $i=1,\dots,n$ and $j=1,\dots,l$, classical Picard-Vessiot theory
 provide us with a $\de$-field $L$ with $L^\de=k$ containing the solutions of \eqref{eq:prolsyst}.
 However, nothing guaranties that $L$ is also a $\s$-field.
 The strength of our approach is to show that
if one assume $k$ to be algebraically closed, one can solve any linear differential equation
in a $\ds$-field $L$ with $L^\de=k$ (see Proposition \ref{cor:existenceofPVextension}). Moreover, we also prefer to give here
a proof of Theorem \ref{thm:rank1ga} relying on the classification of difference group
scheme of $\Ga$ and parametrized Galois correspondence, in preparation of the more
complicated applications to come. We think that a purely differential proof of those statements would be, if possible,
extremely heavy.
%because,
%in the sequel for more complicated differential equations, we will really need
%to work with difference group scheme to get criteria of transformal dependencies.  To our opinion,
%if it ever exists a   purely algebraic proof of these statements, this proof shall be much more heavy.
% Of course,   the multiplicative case (see \S \ref{subsec:multcas}) is the same than the additive one and
%can be also deduced directly from Kolchin's Theorem.
\end{rem}

In the special case $K=\C(x)$, we consider the following two situations
\begin{align}
&\hbox{$\de =\frac{d}{dx}$ and $\s(x)=x+1$;}\label{shift}\\
&\hbox{$\de=\frac{d}{dx}$ and $\s(x)=qx$, where $q\in\C\smallsetminus\{0,1,\hbox{roots of unity}\}$.}\label{qdiff}
\end{align}
Notice that, in the first case, $\s$ and $\de$ commute, so $\hslash=1$, while in the second case we have
$\de\s=q\s\de$, and therefore $\hslash=q$. In both cases,
$\hslash_j=\hslash^j$, for any integer $j\geq 0$.

\begin{exa}
We consider the situation \eqref{shift}.
Let $L$ be the field of meromorphic function over
$\C\smallsetminus\R_{\leq 0}$
and let  $z=\log(x) \in L$ be the principal branch of the logarithm.
Since $L^\de=\C$ and $\de(z) =\frac{1}{x}$, the assumptions of
Theorem \ref{thm:rank1ga} are verified.
If $z$ was transformally dependent over $\C(x)$, then there would exist
a non-zero homogeneous linear $\s$-polynomial $\cL(X)=\sum_{j=0}^s\la_j\s^j(X)\in\C\{X\}_\s$
and $g \in \C(x)$, such that
$$
\sum_{j=0}^s \frac{\lambda_j}{x+j} =\de(g).
$$
Writing the fractional expansion of $\de(g)$, we
see that $\de(g)$ can never satisfy such a relation.
This means that $\log(x)$, $\log(x+1)$, $\hdots$, $\log(x+n)$, $\hdots$ are algebraically independent over $\C(x)$.
\end{exa}

This example illustrates the following more general criterion.

\begin{cor}\label{cor:gaexample}
Let $b \in \C(x)$ and let $L|\C(x)$ be a $\ds$-field extension, with $L^\de=\C$,
containing a solution $z$ of $\de(y)=b$. Then,
in the situation \eqref{shift} (resp. \eqref{qdiff}),
$z$ is transformally dependent  over $\C(x)$ if and only if $b$ has no simple poles
(resp. $b$ has no non-zero simple poles).
\end{cor}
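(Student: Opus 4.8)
The plan is to invoke Theorem~\ref{thm:rank1ga} with $n=1$, $K=\C(x)$, $k=\C$, $b_1=b$ and $z_1=z$: the solution $z$ is transformally dependent over $\C(x)$ if and only if there exist a non-zero homogeneous linear $\s$-polynomial $\cL(X)=\sum_{j=0}^s\lambda_j\s^j(X)\in\C\{X\}_\s$ and $g\in\C(x)$ with $\cL(b)=\de(g)$. The first step is to recast the condition $\cL(b)\in\de(\C(x))$ in terms of residues, using the classical fact that an element $h\in\C(x)$ is a derivative of an element of $\C(x)$ precisely when all its residues vanish (the logarithms $\log(x-a)$ occurring in a general antiderivative are $\C$-linearly independent modulo $\C(x)$). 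Combined with the elementary identities $\operatorname{Res}_{x=c}b(x+j)=\operatorname{Res}_{x=c+j}b$ in the shift case and $\operatorname{Res}_{x=c}b(q^jx)=q^{-j}\operatorname{Res}_{x=q^jc}b$ in the $q$-difference case, the criterion becomes purely combinatorial.

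For the easy implication, if $b$ has no simple poles then $b=\de(g)$ for some $g\in\C(x)$, and taking $\cL(X)=X$ in Theorem~\ref{thm:rank1ga} shows $z$ is transformally dependent. In the $q$-difference case one writes $b=\tilde b+c_0/x$, where $c_0=\operatorname{Res}_{x=0}b$ and $\tilde b$ has all residues zero, hence $\tilde b=\de(\tilde g)$ with $\tilde g\in\C(x)$; the point is that $\cL(X)=X-q\s(X)$ satisfies $\sum_j\lambda_jq^{-j}=0$, so the pole of $c_0/x$ is killed and a short computation gives $\cL(b)=\de\big(\tilde g(x)-\tilde g(qx)\big)$, again yielding transformal dependence.

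The main obstacle is the converse: assuming $b$ has a simple pole (a non-zero one, in the $q$-difference case), I must show $\cL(b)\notin\de(\C(x))$ for every non-zero $\cL$, i.e. that $\cL(b)$ always has a non-vanishing residue. I would let $\rho\colon\C\to\C$, $w\mapsto\operatorname{Res}_{x=w}b$, which is finitely supported and non-zero, fix $a$ with $\rho_a\neq 0$ (with $a\neq 0$ in the $q$-difference case), and localize everything on the additive coset $a+\Z$, respectively the multiplicative coset $a\,q^{\Z}$ — which consists of pairwise distinct non-zero points because $q$ is not a root of unity and $a\neq 0$. Since the residue of $\cL(b)$ at $x=c$ is $\sum_j\lambda_j\operatorname{Res}_{x=c}b(x+j)$ (resp. $\sum_j\lambda_jq^{-j}\operatorname{Res}_{x=q^jc}b$), one sees that at $x=a-\ell$ (resp. $x=a\,q^{-\ell}$) it equals $\sum_j\lambda_j\rho_{a+j-\ell}$ (resp. $\sum_j\lambda_jq^{-j}\rho_{a q^{j-\ell}}$), the chain-rule factor $q^{-j}$ being absorbed into the coefficients. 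As a function of $\ell\in\Z$ this is the coefficient sequence of the product of two non-zero Laurent polynomials in $\C[t,t^{-1}]$, namely $\big(\sum_j\lambda_jt^j\big)\big(\sum_m\rho_{a+m}t^{-m}\big)$ (resp. with $\lambda_j$ replaced by $\lambda_jq^{-j}$); since $\C[t,t^{-1}]$ is an integral domain this product is non-zero, so some residue of $\cL(b)$ does not vanish. By Theorem~\ref{thm:rank1ga}, this forces $z$ to be transformally independent over $\C(x)$. The only genuinely delicate part is the residue bookkeeping together with the ring-theoretic input that $\C[t,t^{-1}]$ has no zero divisors; and, in the $q$-difference case, the reason the statement singles out the pole at $0$ is exactly that its multiplicative coset is $\{0\}$, so a well-chosen $\cL$ can annihilate its residue, as the easy implication already exhibits.
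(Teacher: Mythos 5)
Your proposal is correct and follows essentially the same route as the paper: reduce to Theorem \ref{thm:rank1ga}, translate ``$\cL(b)\in\de(\C(x))$'' into the vanishing of all residues (what the paper calls ``no simple poles''), and then show a non-zero $\cL$ cannot kill every residue along the orbit $a+\Z$ (resp. $aq^{\Z}$) of a pole with non-zero residue. Your Laurent-polynomial convolution step is just a repackaging of the paper's argument, which picks the extremal pole of the orbit and observes that the corresponding extremal residue of $\cL(b)$ survives --- i.e.\ the leading-coefficient proof that $\C[t,t^{-1}]$ is a domain.
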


\begin{proof}
It follows from Theorem \ref{thm:rank1ga} that $z$ is transformally dependent over $\C(x)$ if and only if
there exists a non-zero homogeneous linear $\s$-polynomial $\cL(X)=\sum_{j=0}^s\la_j\s^j(X)\in\C\{X\}_\s$
and $g\in \C(x)$, such that $\cL(b)=\de(g)$.
Writing the fractional expansion of $\de(g)$,
we find that $z$ is transformally dependent over $\C(x)$ if and only if  there exists a non-zero homogeneous linear $\s$-polynomial $\cL(X)\in\C\{X\}_\s$, such that $\cL(b)$ has no simple poles.
\par
We consider the case \eqref{shift}.
We claim that $b$ has no simple poles if and only if there exists
a non-zero homogeneous linear $\s$-polynomial $\cL(X)=\sum_{j=0}^s\la_j\s^j(X)\in\C\{X\}_\s$
such that $\cL(b)$ has no simple poles.
In fact, one implication is trivial. So let us assume that
such an $\cL$ exists and that $\gamma$ is a simple pole of $b$. Then, let $n$
be the largest integer such that $\gamma-n$ is a simple pole of $b$.  Then,
$\gamma-n-s$ is a simple pole of $\sum_{j=0}^s\lambda_j\s^j(b)$, contradicting the assumption.
\par
In the situation \eqref{qdiff}, we are going to prove that
$b$ has no non-zero simple poles if and only if there exists
a non-zero homogeneous linear $\s$-polynomial $\cL(X)=\sum_{j=0}^s\la_j\s^j(X)\in\C\{X\}_\s$
such that $\cL(b)$ has no simple poles.
If $b$
has no non-zero simple poles, then we can write $b$ as the sum $\frac{c}{x} + g$,
where $c\in\C$ and $g\in\C(x)$ has no simple poles.
It follows that $q\s(b)-b = qg(qx)-g$ has no simple poles.
The same reasoning as in case \eqref{shift} proves the inverse implication.
\end{proof}

The above corollary allows us to describe precisely which $\s$-closed subgroups of $\Ga$ occur as $\s$-Galois groups of an equation $\de(y)=b$ in the situations \eqref{shift} and \eqref{qdiff}, anticipating on the content of \S \ref{sec:pbinv}.

\begin{cor}\label{cor:shiftigaC}
In the situation \eqref{shift}, let $L|K$ be a $\s$-Picard-Vessiot extension for $\de(y)=b$ with $b\in K=\C(x)$ and let $G$ be the $\s$-Galois group of $L|K$.
\begin{enumerate}
\item
If $b$ has no simple poles then $G$ is trivial, i.e., $L=K$.
\item
If $b$ has a simple pole then $G=\Ga$.
\end{enumerate}
\end{cor}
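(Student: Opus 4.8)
The plan is to combine Corollary \ref{cor:gaexample} with the classification of $\s$-closed subgroups of $\Ga$ (Theorem \ref{thm:clasga}, as used already in the proof of Theorem \ref{thm:rank1ga}) and with the dimension formula of Proposition \ref{prop:dimensiondegtrans}. Recall that, exactly as in the proof of Theorem \ref{thm:rank1ga} (with $n=1$), the $\s$-Galois group $G$ of $L|K$ is realized as a $\s$-closed subgroup of $\Ga$, and every proper $\s$-closed subgroup of $\Ga$ is the zero set of a non-zero homogeneous linear $\s$-polynomial $\cL(X)=\sum_{j=0}^s\la_j\s^j(X)\in\C\{X\}_\s$.

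For part (i): if $b$ has no simple poles, then by Corollary \ref{cor:gaexample} (case \eqref{shift}) the solution $z$ is transformally dependent over $\C(x)$, so $\strdeg(L|K)<1$, i.e. $\strdeg(L|K)=0$; by Proposition \ref{prop:dimensiondegtrans} we get $\sdim_k(G)=0$. But the argument in the proof of Theorem \ref{thm:rank1ga} produces, from the relation $\cL(b)=\de(g)$, an element $\sum_j \hslash_j^{-1}\la_j\s^j(z)-g\in K^\de=\C$; since here $\hslash=1$ and we may rescale, we get $\sum_j\la_j\s^j(z)\in K$ for the very $\cL$ furnished by Corollary \ref{cor:gaexample}. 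In fact, when $b$ has no simple poles one can take $\cL(X)=X$ directly (the proof of Corollary \ref{cor:gaexample} shows $\cL(b)$ has no simple poles is equivalent to $b$ has no simple poles, and the trivial implication uses $\cL=\mathrm{id}$): then $b=\de(g)$ for some $g\in\C(x)$, hence $z-g\in L^\de=\C$, so $z\in\C(x)=K$ and therefore $L=K\<z\>_\s=K$, i.e. $G$ is trivial.

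For part (ii): if $b$ has a simple pole, then by Corollary \ref{cor:gaexample} $z$ is transformally independent over $\C(x)$, so $\strdeg(L|K)=1$ and hence $\sdim_k(G)=1$ by Proposition \ref{prop:dimensiondegtrans}. The only $\s$-closed subgroup of $\Ga$ of $\s$-dimension $1$ is $\Ga$ itself: indeed a proper $\s$-closed subgroup is cut out by a non-zero $\cL=\sum_{j=0}^s\la_j\s^j(X)$, and such a subgroup has $\s$-dimension $0$ (its defining difference ideal has a non-trivial linear relation, so by the difference Hilbert polynomial computation its $\s$-dimension is $0$; alternatively, invoke Theorem \ref{thm:clasga}). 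Hence $G=\Ga$. The only mild subtlety — and the point to state carefully — is that we are using the hypothesis that $k=\C$ is algebraically closed (so that Proposition \ref{cor:existenceofPVextension} guarantees a $\s$-Picard-Vessiot extension exists and Corollary \ref{cor:gaexample} applies with such an $L$), and that over $\C$ there are no issues with non-reduced or twisted subgroups of $\Ga$ beyond those cut out by linear $\s$-polynomials; this is precisely what Theorem \ref{thm:clasga} provides, so no real obstacle remains.
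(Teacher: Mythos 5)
Your proposal is correct and follows essentially the same route as the paper, which proves this corollary by referring to the proof of the $q$-difference analogue (Corollary \ref{cor:qdiffigaC}): part (i) comes down to the fact that a rational function with no simple poles has a rational antiderivative, so $z\in K$ and $L=K$, and part (ii) follows from Corollary \ref{cor:gaexample} (transformal independence of $z$) together with the fact that a proper $\s$-closed subgroup of $\Ga$ has $\s$-dimension $0$. Your slightly more explicit invocation of Proposition \ref{prop:dimensiondegtrans} and Theorem \ref{thm:clasga} only spells out what the paper leaves implicit.
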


\begin{proof}
Since the proof is quite similar to the proof of the $q$-difference case (see Corollary
 \ref{cor:qdiffigaC} below),
we refer to the $q$-difference case below for the proof.
%, we will prove only the proposition in the case \eqref{qdiffbis} and
%we refer the reader to the proof of Corollary \ref{cor:gaexample} and its results for the case \eqref{shiftbis}.
\end{proof}

\begin{cor}\label{cor:qdiffigaC}
In the situation \eqref{qdiff}, let $L|K$ be a $\s$-Picard-Vessiot extension for $\de(y)=b$ with $b\in K=\C(x)$ and let $G$ be the $\s$-Galois group of $L|K$.

\begin{enumerate}
\item
If $b$ has no simple poles then $G$ is trivial.
\item
If $b$ has a simple pole in zero but no other simple poles then $G=\Ga^\s$, i.e.,
$G(S)=\{c\in S|\ \s(c)=c\}\leq \Ga(S)$ for every $k$-$\s$-algebra $S$.
\item
If $b$ has a simple pole distinct from zero then $G=\Ga$.
\end{enumerate}
\end{cor}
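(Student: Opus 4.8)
The plan is to treat all three cases uniformly by reducing $G$ to a single ideal in a polynomial ring. First I would present $\de(y)=b$ as the homogeneous $2\times2$ system \eqref{eq:ga} with $n=1$, so that, exactly as in the proof of Theorem \ref{thm:rank1ga}, $L=K\<z\>_\s$ for some $z\in L$ with $\de(z)=b$, and $G$ is realised as a $\s$-closed subgroup of $\Ga$ via $\tau(z\otimes1)=z\otimes1+1\otimes c_\tau$. Combining the Galois-correspondence step from that same proof with the additivity of homogeneous linear $\s$-polynomials, one checks that such a polynomial $\cL(X)=\sum_i\lambda_i\s^i(X)\in k\{X\}_\s$ lies in the vanishing ideal $\I(G)$ of $G$ in $\Ga$ if and only if $\cL(z)\in K$. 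Since here $k=K^\de=\C$ carries the trivial $\s$-action, I identify $\cL$ with $\sum_i\lambda_iT^i\in\C[T]$; by the classification of the $\s$-closed subgroups of $\Ga$ (Theorem \ref{thm:clasga}) the ideal $\I(G)$ is generated, as a $\s$-ideal, by its homogeneous linear elements, so $G$ is completely determined by the ideal
\[V:=\{\cL\in\C[T]\ :\ \cL(z)\in K\}\trianglelefteq\C[T].\]
In this dictionary $V=\C[T]$ corresponds to $G$ trivial, $V=(0)$ to $G=\Ga$, and $V=(T-1)$ to $G=\Ga^\s$ (since $\sum_i\lambda_i\s^i$ vanishes on $\Ga^\s$ exactly when $\sum_i\lambda_i=0$, i.e.\ when $T-1$ divides $\sum_i\lambda_iT^i$).

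The second ingredient is the elementary partial-fraction description of $\de(\C(x))$: a rational function is of the form $\de(g)$ with $g\in\C(x)$ if and only if it has no simple poles. In particular $b=\de(g)$ for some $g\in K$ precisely when $b$ has no simple poles.

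Granting these, the three cases become short. If $b$ has no simple poles, choose $g\in K$ with $\de(g)=b$; then $\de(z-g)=0$, so $z-g\in L^\de=K^\de$ and $z\in K$, hence $L=K$ and $G$ is trivial. If $b$ has a simple pole different from $0$, then $z$ is transformally independent over $K$ by Corollary \ref{cor:gaexample}, so no nonzero $\cL$ satisfies $\cL(z)\in K$, i.e.\ $V=(0)$ and $G=\Ga$. Finally, if $b$ has a simple pole at $0$ and no other simple poles, write $b=\frac{c}{x}+\tilde b$ with $c\in\C^\times$ and $\tilde b$ without simple poles, so $\tilde b=\de(\tilde g)$ with $\tilde g\in K$; replacing $z$ by $z-\tilde g$ (which changes neither $L$ nor the embedding $G\hookrightarrow\Ga$) we may assume $\de(z)=\frac{c}{x}$. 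Then $\de\s=q\s\de$ yields
\[\de(\s(z))=q\,\s\!\left(\frac{c}{x}\right)=q\cdot\frac{c}{qx}=\frac{c}{x}=\de(z),\]
so $\s(z)-z\in L^\de=\C\subset K$, i.e.\ $T-1\in V$; on the other hand $z\notin K$, for otherwise $\frac{c}{x}=\de(z)$ would admit an antiderivative in $\C(x)$. Hence $(T-1)\subseteq V\subsetneq\C[T]$, and since $(T-1)$ is a maximal ideal of the principal ideal domain $\C[T]$ this forces $V=(T-1)$, that is $G=\Ga^\s$.

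The only genuinely non-routine point is the reduction to the single ideal $V\trianglelefteq\C[T]$, i.e.\ the fact that the vanishing ideal of a $\s$-closed subgroup of $\Ga$ is generated by homogeneous linear $\s$-polynomials and that $G$ is thereby recovered from $V$; this is precisely what the classification Theorem \ref{thm:clasga} of the appendix provides. Everything else — the homogenisation, the partial-fraction statement, and the three displayed identities — is elementary, and the same argument with the middle case omitted proves Corollary \ref{cor:shiftigaC} in the shift setting \eqref{shift}.
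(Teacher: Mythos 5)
Your proof is correct and follows essentially the same route as the paper's: case (i) via an antiderivative in $K$, case (iii) via Corollary \ref{cor:gaexample}, and case (ii) by showing $\s(z)-z\in K$ while $z\notin K$ and then invoking the classification of $\s$-closed subgroups of $\Ga$ (your maximality of $(T-1)$ in $\C[T]$ is exactly the paper's observation that $\Ga^\s$ has no non-trivial proper $\s$-closed subgroups). The only cosmetic difference is that you normalise $b$ to $c/x$ before computing $\de(\s(z)-z)$, whereas the paper works directly with $q\s(b)-b=\de(h)$; both rest on the same partial-fraction fact.
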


\begin{proof}
Let $z \in L$ be a solution of $\de(y)=b$. Assume that $b$ has no simple poles. Then there exists a solution of $\de(y)=b$ in $K$. Therefore $z\in K$ and $L=K\langle z\rangle_\s=K$. This proves (i).

If $b$ has a simple pole in zero but no other simple poles, $q\s(b)-b =\de(h)$ for some $h \in K$. Then,
$\de(\s(z)-z)  =\de(h)$, which implies $\s(z)-z \in K$. It follows easily that $G\leq \Ga^\s$. Since $z\notin K$ we must have $G=\Ga^\s$ (as $\Ga^\s$ has no non-trivial $\s$-closed subgroups). This proves (ii).

If $b$ has a simple pole distinct from zero, then $z$ is transformally independent over $K$ by Corollary \ref{cor:gaexample}.
Therefore $G=\Ga$ in this case.
\end{proof}

\subsection{The multiplicative case}\label{subsec:multcas}

Inspired by Ostrowski's Theorem \ref{thm:ostrowski} above, Kolchin proved the following
statement (see \cite{Kolchin:AlgebraicGroupsAndAlgebraicDependence}, page 1156):

\begin{thm}\label{thm:Kolchin}
Let $L|K$ be a $\de$-field extension, with $L^\de=K^\de$, and assume that $z_1,\ldots,z_n\in L^\times$ satisfy $\frac{\de(z_1)}{z_1},\ldots,\frac{\de(z_n)}{z_n}\in K^\times$.
Then $z_1,\ldots,z_n$ are algebraically dependent over $K$
if and only if there exist $r_1,\ldots,r_n\in\mathbb{Z}$, not all equal to zero, such that
$z_1^{r_1}\cdots z_n^{r_n}\in K$.
\end{thm}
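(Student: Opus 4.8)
The plan is to mirror the strategy used for Theorem~\ref{thm:rank1ga}, with the multiplicative group $\Gm$ in place of the additive group $\Ga$ and with classical Picard--Vessiot theory (see e.g.\ \cite{vdPutSingerDifferential}) in place of the $\s$-theory. One implication is elementary: if $z_1^{r_1}\cdots z_n^{r_n}=f\in K$ with $r_1,\dots,r_n\in\Z$ not all zero, then (the $z_i$ being units, so that all powers make sense) separating the positive from the negative exponents and clearing denominators produces a non-trivial polynomial relation over $K$ among $z_1,\dots,z_n$, so these elements are algebraically dependent over $K$.

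For the converse I would argue as follows. Put $a_i=\de(z_i)/z_i\in K^\times$, so that $\de(z_i)=a_iz_i$ and hence $Y=\diag(z_1,\dots,z_n)\in\Gl_n(L)$ is a fundamental solution matrix for $\de(y)=Ay$ with $A=\diag(a_1,\dots,a_n)\in K^{n\times n}$. Since $K(z_1,\dots,z_n)\subseteq L$ and $L^\de=K^\de$, we get $K(z_1,\dots,z_n)^\de=K^\de$, so $L_0:=K(z_1,\dots,z_n)$ is a Picard--Vessiot extension of $K$ for $\de(y)=Ay$; let $G=\Gal(L_0|K)$ be its differential Galois group. Analogously to the additive case, for every $\tau\in G$ one has $\de(\tau(z_i)/z_i)=0$, so $\tau(z_i)=c_iz_i$ with $(c_1,\dots,c_n)\in\Gm^n$; this identifies $G$ with a closed subgroup of $\Gm^n$, and by the Picard--Vessiot dimension formula $\dim G=\trdeg(L_0|K)$.

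The key step is then to invoke the classification of closed subgroups of the torus $\Gm^n$: every such subgroup $G$ is of the form $\bigcap_{\chi\in\La}\ker\chi$ for a subgroup $\La\leq\Z^n$ of the character lattice (dually, $\Gm^n/G$ is diagonalizable with character group $\Z^n/\La$), and $G=\Gm^n$ if and only if $\La=0$. If now $z_1,\dots,z_n$ are algebraically dependent over $K$, then $\trdeg(L_0|K)<n$, hence $\dim G<n$, so $G$ is a proper closed subgroup and $\La\neq 0$; choose a non-zero character $\chi=(r_1,\dots,r_n)\in\La$, i.e.\ such that $X_1^{r_1}\cdots X_n^{r_n}-1$ lies in the vanishing ideal of $G$ inside $\Gm^n$. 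Set $f=z_1^{r_1}\cdots z_n^{r_n}\in L_0^\times$. For every $\tau\in G$, writing $\tau(z_i)=c_iz_i$ with $\prod_i c_i^{r_i}=1$, one finds $\tau(f)=\prod_i(c_iz_i)^{r_i}=\bigl(\prod_i c_i^{r_i}\bigr)f=f$, so $f$ is fixed by $G$ and therefore $f\in K$ by the Galois correspondence. Since the $r_i$ are not all zero and $f\neq 0$, this is the desired relation.

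I do not expect a genuine obstacle in this argument once classical Picard--Vessiot theory is taken as a black box: the only real input is the description of the closed subgroups of $\Gm^n$ in terms of integer character lattices, which is precisely what makes the integers $r_i$ appear in the conclusion. The points that deserve attention are (a) the verification that $L_0$ really is a Picard--Vessiot extension, which is exactly where the hypothesis $L^\de=K^\de$ is used---without it the Galois-theoretic machinery, and in fact the statement itself, may fail---and (b) the fact that non-reduced finite subgroups such as $\mu_\ell$ occur among the closed subgroups of $\Gm^n$; these are nonetheless cut out by characters, so the argument is unaffected. Alternatively, as the remark after Theorem~\ref{thm:rank1ga} indicates for the additive analogue, one can give a direct proof along the lines of Kolchin's original argument in \cite{Kolchin:AlgebraicGroupsAndAlgebraicDependence}, but the Galois-theoretic route is cleaner and fits the setting of this paper.
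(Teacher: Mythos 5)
First, a point of comparison: the paper does not prove Theorem \ref{thm:Kolchin} at all --- it is quoted from Kolchin's 1968 paper --- so the only internal benchmark is the proof of its $\s$-analogue, Theorem \ref{thm:rank1}. Your argument is exactly that proof transposed back to the classical setting: realize the $z_i$ as a diagonal fundamental matrix, identify the Galois group with a closed subgroup of $\Gm^n$, use $\dim G=\trdeg(L_0|K)$ to see it is proper, extract a character from the lattice description of closed subgroups of a torus, and apply the Galois correspondence. The overall strategy is sound, and your elementary direction is fine.

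There is, however, one genuine gap. The theorem carries no hypothesis that $k:=K^\de$ is algebraically closed, but that is the standing assumption of the black box you cite, and the two facts you need --- $\dim G=\trdeg(L_0|K)$ and $L_0^G=K$ --- fail for the naive Galois group $\Aut^\de(L_0|K)$ over non-algebraically-closed constants. Concretely, take $K=\R(x)$ and $z=x^{1/3}$, so $\de(z)/z=\frac{1}{3x}\in K^\times$: the group of $\de$-automorphisms of $K(z)|K$ is trivial, its Zariski closure in $\Gm$ is $\{1\}$, the corresponding lattice is all of $\Z$, and your recipe would permit the choice $r=1$ and conclude $x^{1/3}\in\R(x)$, which is false; the correct group is the group scheme $\mu_3$, whose character lattice is $3\Z$, giving $z^3=x\in K$. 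Two standard repairs: (1) use the Hopf-algebraic/functor-of-points formulation of Picard--Vessiot theory, in which the dimension formula and the Galois correspondence hold over arbitrary constants --- this is precisely the formulation the paper adopts for its $\s$-Galois theory, and is why Theorem \ref{thm:rank1} needs no hypothesis on $k$; or (2) base change to $\wtilde K=\quot(K\otimes_k\overline{k})$, which creates no new constants by Lemma \ref{lemma:simple}, run your argument there to obtain $f=z_1^{r_1}\cdots z_n^{r_n}\in\wtilde K$, and descend by a norm: $f$ lies in $\quot(K\otimes_k k')$ for some finite Galois extension $k'|k$, each conjugate $\gamma(f)$ equals $c_\gamma f$ with $c_\gamma\in k'$ because $\gamma(f)/f$ is a $\de$-constant, so $N(f)=cf^m\in K$ with $c=N(f)/f^m\in L_0\cap\overline{k}\subseteq L_0^\de=k$, whence $z_1^{mr_1}\cdots z_n^{mr_n}\in K$. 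Finally, your worry (b) is vacuous in characteristic zero, where all affine algebraic groups are reduced (Cartier); the genuine subtlety with $\mu_\ell$ is not non-reducedness but scarcity of rational points, which is exactly the issue isolated above.
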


We are going to prove a $\s$-analog of Kolchin's theorem:

\begin{thm}\label{thm:rank1}
Let $L|K$ be a $\ds$-field extension with $L^\de =K^\de$. Let $a_1,\hdots,a_n \in K^\times$
and $z_1,\hdots,z_n\in L^\times$ be such that
$$
\de (z_i)=a_iz_i \ \text{ for } i=1, \hdots, n.
$$
Then $z_1,\hdots, z_n$ are transformally dependent over $K$  if and only if
there exist
a non-zero homogeneous linear $\s$-polynomial
$\mathcal{L}(X_1,\hdots,X_n)=\sum_{i=1}^n\sum_{j=0}^s r_{i,j} \sigma^j(X_i)\in \Z\{X_1,\dots,X_n\}_\s$
and a non-zero element
$g \in K$ such that
\beq\label{eq:homogeneousrelation}
\sum_{i=1}^n\sum_{j=0}^s r_{i,j}\hslash_j\sigma^j(a_i) = \frac{\de(g)}{g}.
\eeq
\end{thm}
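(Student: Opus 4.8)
The plan is to transcribe the proof of Theorem~\ref{thm:rank1ga} from the additive to the multiplicative setting, replacing the vector group $\Ga^n$ by the torus $\Gm^n$ and ``homogeneous linear $\s$-polynomials over $k$'' by ``monomial $\s$-relations with exponent vector in $\Z[\s]^n$''. For the ``if'' direction, suppose \eqref{eq:homogeneousrelation} holds for a non-zero $\cL=\sum_{i,j}r_{i,j}\s^j(X_i)$ and some non-zero $g\in K$, and set $w=\prod_{i,j}\s^j(z_i)^{r_{i,j}}\in L^\times$. From $\de(\s^j(z_i))=\hslash_j\s^j(\de(z_i))=\hslash_j\s^j(a_i)\s^j(z_i)$ one gets, by the usual logarithmic-derivative computation, $\de(w)/w=\sum_{i,j}r_{i,j}\hslash_j\s^j(a_i)=\de(g)/g$, hence $\de(w/g)=0$ and $w/g\in L^\de=K^\de\subseteq K$, so $w\in K$. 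Clearing denominators in the Laurent-monomial identity $\prod_{i,j}\s^j(z_i)^{r_{i,j}}=w$ (legitimate because each $z_i$ is a unit) exhibits a non-trivial algebraic relation among $z_1,\dots,z_n,\s(z_1),\dots$, so the $z_i$ are transformally dependent over $K$.

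For the converse, assume the $z_i$ are transformally dependent over $K$. Consider $A=\diag(a_1,\dots,a_n)\in K^{n\times n}$ with fundamental solution matrix $Y=\diag(z_1,\dots,z_n)\in\Gl_n(L)$. Since $L^\de=K^\de$, the $\ds$-field $K\<z_1,\dots,z_n\>_\s\subseteq L$ is a $\s$-Picard-Vessiot extension for $\de(y)=Ay$, and we may assume $L=K\<z_1,\dots,z_n\>_\s$, with $\s$-Picard-Vessiot ring $R=K\{z_1,z_1^{-1},\dots,z_n,z_n^{-1}\}_\s$; let $G=\sgal(L|K)$, viewed as a $\s$-closed subgroup of $\Gl_{n,k}$ via $Y$. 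First one checks $G\leq\Gm^n$, the diagonal torus: for any $k$-$\s$-algebra $S$ and $\tau\in G(S)$, the element $\tau(z_i\otimes1)$ satisfies the same equation $\de(\tau(z_i\otimes1))=(a_i\otimes1)\tau(z_i\otimes1)$, so $\tau(z_i\otimes1)(z_i\otimes1)^{-1}$ is a $\de$-constant; as $R$ is $\de$-simple with $R^\de=k$, Lemma~\ref{lemma:simple} gives $(R\otimes_kS)^\de=S$, hence $\tau(z_i\otimes1)=(z_i\otimes1)(1\otimes c_i)$ with $c_i\in S^\times$, which identifies $G$ with a $\s$-closed subgroup of $\Gm^n$. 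Transformal dependence means $\strdeg(L|K)<n$, so by Proposition~\ref{prop:dimensiondegtrans}, $\sdim_k(G)=\strdeg(L|K)<n=\sdim_k(\Gm^n)$, and $G$ is a \emph{proper} $\s$-closed subgroup of $\Gm^n$.

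Now I would invoke the classification of $\s$-closed subgroups of $\Gm^n$ from the appendix: a proper one carries a non-trivial monomial in its vanishing ideal, i.e.\ there are integers $r_{i,j}$, not all zero, with $\prod_{i,j}\s^j(x_i)^{r_{i,j}}-1$ in the vanishing ideal of $G$ inside $\Gm^n$; equivalently $\prod_{i,j}\s^j(c_i)^{r_{i,j}}=1$ for every $(c_1,\dots,c_n)\in G(S)$ and every $S$. Put $g=\prod_{i,j}\s^j(z_i)^{r_{i,j}}\in R^\times$. Using $\tau(z_i\otimes1)=(z_i\otimes1)(1\otimes c_i)$ and $\s$-equivariance, $\tau(g\otimes1)=(g\otimes1)\bigl(1\otimes\prod_{i,j}\s^j(c_i)^{r_{i,j}}\bigr)=g\otimes1$, so $g$ is $G$-invariant; the $\s$-Galois correspondence (Theorem~\ref{theo:Galoiscorrespondence}) then forces $g\in K$, and $g\neq0$ since it is a unit. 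The same logarithmic-derivative computation as above yields $\de(g)/g=\sum_{i,j}r_{i,j}\hslash_j\s^j(a_i)$, which is exactly \eqref{eq:homogeneousrelation} with $\cL(X_1,\dots,X_n)=\sum_{i,j}r_{i,j}\s^j(X_i)\in\Z\{X_1,\dots,X_n\}_\s$. The main obstacle is the classification of proper $\s$-closed subgroups of $\Gm^n$ (the multiplicative analogue of the $\Ga^n$ classification used in Theorem~\ref{thm:rank1ga}), namely that every such subgroup has a non-trivial $\s$-monomial in its vanishing ideal, coming from a non-zero $\Z[\s]$-submodule of $\Z[\s]^n$; once that is in hand the rest is a routine transcription. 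The secondary points requiring care are the identity $(R\otimes_kS)^\de=S$ needed to place $G$ inside the torus, and the fact that, since the $z_i$ are invertible, Laurent-monomial relations are genuine algebraic relations over $K$.
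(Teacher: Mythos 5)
Your proposal is correct and follows essentially the same route as the paper: the ``if'' direction via the logarithmic derivative of $\prod_{i,j}\s^j(z_i)^{r_{i,j}}/g$, and the converse via the diagonal system, the identification of $\sgal(L|K)$ as a proper $\s$-closed subgroup of $\Gm^n$ through Proposition~\ref{prop:dimensiondegtrans}, the classification of such subgroups by multiplicative functions (Theorem~\ref{thm:classgm}), and the $\s$-Galois correspondence. The ``main obstacle'' you isolate is exactly Theorem~\ref{thm:classgm} from the appendix, which the paper likewise invokes, so no gap remains.
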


\begin{rmk}
We are going to prove that \eqref{eq:homogeneousrelation} is equivalent
to $\prod_{i,j}\s^j(z_i)^{r_{i,j}}\in K$, which makes clear that the theorem above is
the analogue of Kolchin's result.
\end{rmk}

\begin{proof}
Assume that \eqref{eq:homogeneousrelation} is satisfied. Then, the element
$$
z:= \frac{1}{g}\prod_{i,j} \s^j(z_i)^{r_{i,j}}\in L^\times
$$
satisfies $\frac{\de(z)}{z}=0$. Since $L^\de=K^\de$, we have $z \in K$.
Hence, we conclude that $z_1,\hdots, z_n$ are transformally dependent over $K$.
\par
Conversely, assume that $z_1,\hdots, z_n$ are transformally dependent over $K$.
The proof is quite similar to the proof of Theorem \ref{thm:rank1ga}, therefore we will
skip some details.
Since $L^\de=k(:=K^\de)$, the $\ds$-field $K\left\< z_1, \hdots,z_n \right\>_\s$
is a $\s$-Picard-Vessiot extension for the differential system $\de(y)=Ay$,
where $A={\rm diag}(a_1,\dots,a_n)$.
We can assume that $L=K\left\< z_1, \hdots,z_n \right\>_\s$.
Let $G=\sgal(L|K)$. Via the fundamental solution matrix $Y:=\diag(z_1,\dots,z_n)$, the $\s$-Galois group $G$ is naturally a $\s$-closed subgroup of $\Gm^n$.
Now, since $z_1,\dots,z_n$ are transformally dependent over $K$, we find that $\strdeg(L|K)<n$. Proposition \ref{prop:dimensiondegtrans} allows us to conclude that
$$
\sdim_k(G)=\strdeg(L|K)<n=\sdim_k(\Gm^n)
$$
and hence that $G$ is a proper $\s$-closed subgroup of $\Gm^n$.
It follows from Theorem \ref{thm:classgm} that there exists a non-trivial multiplicative function
$\psi\in k\{\Gm^n\}=k\{x_1,\hdots,x_n,\frac{1}{x_1},\ldots\frac{1}{x_n}\}_\s$,
of the form
\beq\label{eq:multiplicativerelation}
\psi(x_1,\dots,x_n)=x_1^{r_{1,0}}\cdots x_n^{r_{n,0}}~\s(x_1)^{r_{1,1}}\cdots \s(x_n)^{r_{n,1}} \cdots \s^s(x_1)^{r_{1,s}}\cdots \s^s(x_n)^{r_{n,s}},
\eeq
where $s \in \Z_{\geq 0}$ and $r_{i,j} \in \Z$, for $1\leq i\leq n$, $0\leq j\leq s$,
such that $\psi(c_1,\dots,c_n)=1$, for all $k$-$\s$-algebra $S$ and all $\tau=(c_1,\dots,c_n)\in G(S)\leq\Gm^n(S)$.
Consider the non-zero element $g:=\psi(z_1,\ldots,z_n)$ of $L$.
For any $k$-$\s$-algebra $S$ and any $\tau=(c_1,\ldots,c_n)\in G(S)\subset\Gm^n(S)$,
in the $\ds$-algebra $R\otimes_k S$, we have
$$
\tau(g\otimes 1)=\psi(z_1\otimes c_1,\ldots,z_n\otimes c_n)=\psi(z_1,\ldots,z_n)\otimes \psi(\tau)=g\otimes 1.
$$
It follows from the Galois correspondence (see Theorem \ref{theo:Galoiscorrespondence} above) that $g\in K$.
Taking the logarithmic derivative of $g$,
we find \eqref{eq:homogeneousrelation}.
\end{proof}

As in the previous section, we are going to give a more detailed description of the
case $K=\C(x)$,
under the assumptions \eqref{shift} and \eqref{qdiff}.
It is an elementary observation that a rational function $g\in\C(x)$ is the logarithmic derivative
of a rational function if and only if it is of the form
$g=\sum_{i=1}^k\frac{c_i}{x-\alpha_i}$ with $c_1\ldots c_k\in\mathbb{Z}$ and $\alpha_1,\ldots,\alpha_k\in\C$.
Therefore, the following corollaries yield a very direct and easy to use criterion
to decide whether or not a meromorphic solution of $\de(y)=ay$, with $a\in\C(x)^\times$, is transformally dependent.

\begin{cor}\label{cor:rank1shift}
In the situation \eqref{shift}, let $L|\C(x)$ be a $\ds$-field extension, with $L^\de=\C$, and assume that $z\in L^\times$
satisfies $\de(z)=a z$, with $a\in\C(x)^\times$.
Then, $z$ is transformally dependent over $\C(x)$ if and only if
there exist $P\in \C[x]$, $f \in \C(x)^\times$ and $N\in \Z^\times$ such that $a=P + \frac{1}{N}\frac{\de(f)}{f}$.
\end{cor}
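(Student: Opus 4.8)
The plan is to read the statement off from Theorem~\ref{thm:rank1} in the case $n=1$, combined with the elementary description (recalled just before the corollary) of which rational functions are logarithmic derivatives of rational functions. In the situation \eqref{shift} one has $\hslash_j=1$ for all $j$, so Theorem~\ref{thm:rank1} says that $z$ is transformally dependent over $\C(x)$ if and only if there exist a non-zero $\cL(X)=\sum_{j=0}^s r_j\s^j(X)\in\Z\{X\}_\s$ and a non-zero $g\in\C(x)$ with $\cL(a)=\sum_{j=0}^s r_j\,a(x+j)=\de(g)/g$. Hence the whole content is the equivalence between the existence of such $\cL$ and $g$ and the displayed condition ``$a=P+\frac1N\frac{\de(f)}{f}$ for some $P\in\C[x]$, $f\in\C(x)^\times$, $N\in\Z^\times$''.

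First I would unwind the right-hand condition. Writing $f=c\prod_k(x-\beta_k)^{m_k}$ one has $\frac{\de(f)}{f}=\sum_k\frac{m_k}{x-\beta_k}$ with $m_k\in\Z$, and $P$ contributes no poles; so that condition holds if and only if every pole of $a$ is simple with residue in $\Q$ (then $N$ is a common denominator of the residues, the numerators provide $f$, and $P$ is the polynomial part of $a$). So it suffices to prove: there exist $\cL\neq0$ and $g\in\C(x)^\times$ with $\cL(a)=\de(g)/g$ if and only if all poles of $a$ are simple with rational residues.

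For the ``if'' direction I would argue constructively. If $a=P+\sum_i\frac{\rho_i}{x-\beta_i}$ with $\rho_i\in\Q$ and $d=\deg P$, apply the difference operator $(\s-1)^{d+1}=\sum_{m=0}^{d+1}\binom{d+1}{m}(-1)^{d+1-m}\s^m\in\Z\{X\}_\s$: it annihilates every polynomial of degree $\le d$, while on each $\frac{\rho_i}{x-\beta_i}$ it produces a $\Q$-linear combination of the $\frac1{x-(\beta_i-m)}$. Thus $(\s-1)^{d+1}(a)$ has no polynomial part and only simple poles with rational residues; multiplying by a positive common denominator $M$ of those residues, $\cL:=M(\s-1)^{d+1}$ is non-zero and $\cL(a)$ has only simple poles with integer residues and no polynomial part, hence $\cL(a)=\de(g)/g$ for some $g\in\C(x)^\times$ by the elementary observation, and Theorem~\ref{thm:rank1} applies.

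For the ``only if'' direction the mechanism is the ``extremal pole cannot cancel'' argument, exactly as in the proof of Corollary~\ref{cor:gaexample}. Since $\s$ is invertible on $\C(x)$ and commutes with $\de$, replacing $\cL$ by $\s^{-j_0}\circ\cL$ (with $j_0$ the least index with $r_j\neq0$) and $g$ by $\s^{-j_0}(g)$ we may assume $r_0\neq0$; then at a pole $\beta_0$ of $a$ of maximal real part no summand $r_j\,a(x+j)$ with $j\ge1$ is singular, so the principal part of $\cL(a)$ at $\beta_0$ equals $r_0$ times that of $a$. As $\cL(a)=\de(g)/g$ has only simple poles with integer residues, $a$ has a simple pole at $\beta_0$ with residue $\rho_0\in\Q$. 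Setting $a_1=a-\frac{\rho_0}{x-\beta_0}$ removes this pole without creating a new one, and writing $n$ for a denominator of $\rho_0$ one checks $n\cL(a_1)=\frac{\de(g^n)}{g^n}-\frac{\de(u)}{u}$ with $u=\prod_{j}(x-(\beta_0-j))^{n\rho_0 r_j}\in\C(x)^\times$, i.e.\ $n\cL(a_1)=\de(h)/h$ with $h=g^n/u\in\C(x)^\times$; so $a_1$ satisfies a relation of the same form with the non-zero operator $n\cL\in\Z\{X\}_\s$, and induction on the number of poles of $a$ shows that all poles of $a$ are simple with rational residues. I expect this descent step --- verifying that subtracting the polar term and rescaling the operator genuinely preserves a relation $\cL'(a_1)=\de(h)/h$ with $\cL'\in\Z\{X\}_\s\setminus\{0\}$ --- to be the only point requiring real care; everything else is bookkeeping with partial fractions.
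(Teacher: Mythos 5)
Your proposal is correct, and the overall skeleton (reduce via Theorem~\ref{thm:rank1} to the existence of $\cL\in\Z\{X\}_\s\setminus\{0\}$ and $g$ with $\cL(a)=\de(g)/g$, then analyse partial fractions using an extremal pole) matches the paper's; the two arguments diverge in the details of both directions, and your variants are sound. For the converse (``$a=P+\frac{1}{N}\frac{\de(f)}{f}$ implies dependence'') the paper invokes Cayley--Hamilton on the shift acting on $\Q[x]_{\leq d}$ to produce an integer-coefficient operator killing $P$ and then computes $N\sum_j r_j\s^j(a)$ directly as the logarithmic derivative of $\prod_j\s^j(f^{r_j})$; your explicit choice $\cL=M(\s-1)^{d+1}$ does the same job more concretely and then falls back on the ``simple poles with integer residues'' characterisation --- a minor but clean simplification. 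For the forward direction the paper argues in two stages: first the extremal-pole argument (at $\gamma-s$ only the $j=s$ term contributes) to exclude poles of order $\geq 2$, and then a separate induction within each $\Z$-orbit of poles using the convolution relations $\sum_j d_{n-j}(\gamma)r_j\in\Z$ to force the residues into $\Q$. You instead normalise $r_0\neq 0$, read off simplicity and rationality of the residue at a pole of maximal real part in one stroke, and run a descent on the number of poles by subtracting $\frac{\rho_0}{x-\beta_0}$ and checking that $n\cL(a_1)$ is again a logarithmic derivative; this genuinely replaces the paper's orbit bookkeeping, and the verification $n\cL(a_1)=\de(g^n/u)/(g^n/u)$ with $u=\prod_j(x-(\beta_0-j))^{n\rho_0 r_j}$ --- which you rightly flag as the delicate step --- is correct. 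Your descent handles simplicity and rationality simultaneously, at the cost of having to re-establish a relation for $a_1$ at each step, whereas the paper's orbit decomposition extracts the rationality from a single fixed relation.
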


\begin{proof}
We remind that Theorem \ref{thm:rank1} implies that
the solution $z$ is transformally dependent over $\C(x)$ if and only if there exist
a non-zero homogenous $\s$-polynomial
$\cL(X)=\sum_{j=0}^s r_j \s^j (X)\in\Z\{X\}_\s$
and $g \in \C(x)^\times$ such that
\beq\label{eq:multiplicativerelationfromprevioustheorem}
\sum_{j=0}^s r_j \s^j (a) =\frac{\de(g)}{g}.
\eeq
Notice that the rational function $\frac{\de(g)}{g}$ has only simple poles and the residue at each pole is an integer.
%there exist $c_1,\hdots,c_l \in \Z$, not all zero, and $\alpha_1,\hdots,\alpha_l \in \C$
%such that   $\frac{\de(g)}{g} = \sum_{j=1}^l \frac{c_j}{x-\alpha_j}$.
\par
First of all we prove the following claim:
If the solution $z$ is transformally dependent over $\C(x)$ then
$a$ has no poles of order greater than or equal to $2$, i.e., the rational function
$a$ has the form $P + \sum_{h=1}^k\frac{d_h}{x-\gamma_h}$, for some $P \in \C[x]$
and $\gamma_h, d_h \in \C$.
Suppose that $\gamma$ is a pole of maximal order $m \geq 2$ of $a$ and that
$\ga-n$ is not a pole of order $m$ of $a$ for any positive integer $n$.
Notice that $\gamma-j$ is a pole of order $m$ of $\s^j(a)$, hence
$\gamma-s$ is a pole of order $m$ of
$\sum_{j=0}^s r_j \s^j (a) =\frac{\de(g)}{g}$, which is impossible. This prove the claim.
\par
We have proven that $a=P+\sum_{h=1}^k\frac{d_h}{x-\gamma_h}$.
To conclude that there exist $N \in \Z^\times$ and $f \in \C(x)^\times$ such that $ \sum_{h=1}^k\frac{d_h}{x-\gamma_h} =\frac{1}{N}\frac{\de(f)}{f}$, it suffices to show that
$d_h \in \Q$, for all $h=1,\hdots,k$. To this purpose, let us consider
a set $\Gamma$ of representatives of the classes of the poles of $a$ modulo $\Z$ and
write $ \sum_{h=1}^k\frac{d_h}{x-\gamma_h} $ as
\beq\label{eq:decompositionorbits}
\sum_{\gamma \in\Gamma} \sum_{n \in \Z} \frac{d_n(\gamma)}{x- (\gamma-n)}.
\eeq
Only a finite number of $d_n(\gamma)\in\C$ in the expression above are non-zero.
We know from \eqref{eq:multiplicativerelationfromprevioustheorem}
that $\sum_{j=0}^s r_j \s^j (a)$ is the logarithmic derivative of a rational function,
hence
$$
\sum_{j=0}^s r_j \s^j \l(\sum_{n \in \Z} \frac{d_n(\gamma)}{x- (\gamma-n)}\r)
=\sum_{n \in \Z} \frac{ \sum_{j=0}^s d_{n-j}(\gamma)r_j}{x-(\gamma-n)},
\hbox{~for any $\ga\in\Gamma$,}
$$
is also the logarithmic derivative of a rational fraction.
Thus, for any $\gamma\in\Gamma$ and all $n\in\Z$ we have:
\beq \label{eq:comb}
\sum_{j=0}^s d_{n-j}(\gamma)r_j \in \Z.
\eeq
For a fixed $\gamma\in\Gamma$,
we denote by $n_0$ the smallest integer such that $d_{n_0}(\gamma) \neq 0$  and
by $j_0$ the smallest integer such that $r_{j_0} \neq 0$.
The linear relation \eqref{eq:comb} for $n=n_0+j_0$ implies that $d_{n_0}(\gamma)r_{j_0} \in \Z$ and hence that
$d_{n_0}(\gamma) \in \Q$. We assume recursively that $d_k(\gamma)\in \Q$ for all $n_0\leq k \leq p$.
Expression \eqref{eq:comb} for $n =p+1+j_0$ yields
$$
d_{p+1}(\gamma)r_{j_0} +\sum_{j=1}^{s-j_0}d_{p+1-j}(\gamma)r_{j+j_0} \in \Z,
$$
which implies $d_{p+1}(\gamma)r_{j_0}\in\Z$ and hence $d_{p+1}(\gamma) \in \Q$.
We have proved one implication.
\par
Conversely, let us assume that there exist $P \in \C[x], f \in \C(x)^\times$ and $N\in \Z^\times$,
such that $a=P + \frac{1}{N}\frac{\de(f)}{f}$.
Let $d$ be the degree of $P$ and let us denote by $\Q[x]_{\leq d}$ the $\Q$-vector space of polynomials of degree less than or equal to $d$.
It is
a $\Q$-vector space of finite dimension, on which $\s:\Q[x]_{\leq d} \rightarrow \Q[x]_{\leq d}, x \mapsto x+1$ induces a
$\Q$-linear bijection.
By  Cayley-Hamilton theorem, there exists a polynomial $\sum_{j=0}^sr_jX^j\in\Z[X]$, with $r_s\neq 0$,
such that $\sum_{j=0}^s r_j\s^j=0$ on $\Q[x]_{\leq d}$.
In particular, by $\C$-linearity, we have $\sum_{j=0}^s r_j \s^j(P)=0$. The following calculation allows to conclude the proof:
$$
N \sum_{j=0}^s r_j \s^j(a)=\sum_{j=0}^sr_j\frac{\de(\s^j(f))}{\s^j(f)}=\frac{\de(\prod_{j=0}^s\s^j(f^{r_j}))}{\prod_{j=0}^s\s^j(f^{r_j}))}.
$$
\end{proof}

The case \eqref{qdiff}
is slightly more complicated than the previous one, since one has to distinguish the case when $q$ is an algebraic or
a transcendental number.
We start with an example illustrating the problem.

\begin{exa}
Consider the equation $\de(y) =y$, with the notation \eqref{qdiff}.
Since $\de\circ \s=q\s\circ\de$,  Theorem \ref{thm:rank1} above says that $\exp(q^jx)$, for $j\geq 0$, are algebraically dependent
if and only if there exist a non-zero polynomial $\sum_{j=0}^sr_jX^j\in\Z[X]$
and $g\in\C(x)$ such that $\sum_{j=0}^sq^jr_j=\de(g)g^{-1}$.
The latter can be verified if and only if $\de(g)g^{-1}\in\C$, and therefore if and only if $\sum_{j=0}^sq^jr_j=0$.
We deduce that $\exp(q^j x)$, for $j\geq 0$, are algebraically dependent if and only if $q$ is an algebraic number.
\end{exa}

In general, we have:

\begin{cor}\label{cor:rank1q}
In the notation \eqref{qdiff},
let $a \in \C(x)^\times$ and let $L|\C(x)$ be a $\ds$-field extension with $L^\de=\C$, containing a non-zero solution $z$
of $\de(y)=ay$.
We have:
\begin{itemize}
\item
If $q$ is a \emph{transcendental} number then
$z$ is transformally dependent over $\C(x)$ if and only if
there exist $f \in \C(x)^\times,\ N \in \Z^\times$ and $c\in \C$ such that $a=\frac{c}{x} +\frac{1}{N}\frac{\de(f)}{f}$.

\item
If $q$ is an \emph{algebraic} number then
$z$ is transformally dependent over $\C(x)$ if and only if
there exist $f \in \C(x)^\times$, $N \in \Z^\times$, $P\in \C[x]$, $Q\in\C\l[\frac{1}{x}\r]$ such that $a=P+Q+\frac{1}{N}\frac{\de(f)}{f}$.
\end{itemize}
\end{cor}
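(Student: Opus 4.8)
The plan is to deduce Corollary \ref{cor:rank1q} from Theorem \ref{thm:rank1} by analyzing the partial fraction decomposition of $a$, exactly as in the proof of Corollary \ref{cor:rank1shift}, keeping careful track of the extra factor $\hslash_j=q^j$. By Theorem \ref{thm:rank1}, $z$ is transformally dependent over $\C(x)$ if and only if there exists a non-zero $\cL(X)=\sum_{j=0}^s r_j\s^j(X)\in\Z\{X\}_\s$ and $g\in\C(x)^\times$ with $\sum_{j=0}^s r_jq^j\s^j(a)=\de(g)/g$; and $\de(g)/g$ is precisely a rational function with only simple poles and integer residues. So the whole statement reduces to describing when some $\Z$-linear combination $\sum_j r_jq^j\s^j(a)$ (with $\s(x)=qx$) has only simple poles with integer residues.

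First I would treat the poles of order $\geq 2$. Arguing as in Corollary \ref{cor:rank1shift}: the poles of $\s^j(a)$ are the points $\gamma/q^j$ for $\gamma$ a pole of $a$, together possibly with $0$. The nonzero poles of $a$ split into orbits under multiplication by $q$; since $q$ is not a root of unity, each such orbit $\{\gamma q^{\Z}\}$ of a nonzero pole is infinite, so within a fixed orbit I can pick the pole $\gamma q^{-n}$ with $n$ extremal among those of maximal order $m\geq 2$ and observe that the coefficient of $(x-\gamma q^{-n-s})^{-m}$ in $\sum_j r_jq^j\s^j(a)$ is $r_s q^s\cdot(\text{leading coeff of }a\text{ at }\gamma)\neq 0$, contradiction. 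The point $0$ is $\s$-fixed, so if $0$ is a pole of $a$ of order $m\geq 2$ it stays a pole of order $m$ in $\sum_j r_jq^j\s^j(a)$ unless $\sum_j r_jq^j$ times the relevant coefficient vanishes — here the transcendental versus algebraic dichotomy enters. When $q$ is transcendental, $\sum_j r_jq^j\neq 0$ for any non-zero integer tuple, so $0$ cannot be a pole of order $\geq 2$ either, forcing $a=c/x+(\text{simple poles elsewhere})+\text{constant-at-infinity issues}$; when $q$ is algebraic it satisfies a minimal polynomial over $\Q$, and after clearing denominators one gets a non-zero integer relation $\sum_j r_jq^j=0$, which lets one kill an arbitrary principal part $Q\in\C[1/x]$ at $0$ by the Cayley--Hamilton trick applied to the $\s$-action on the finite-dimensional space $\C[1/x]_{\leq d}$ (note $\s$ acts there as $1/x\mapsto 1/(qx)$, a linear bijection), and likewise kill an arbitrary polynomial part $P\in\C[x]$.

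Next I would handle the residues at the nonzero simple poles, using the orbit bookkeeping from Corollary \ref{cor:rank1shift}. Write $a=P+Q+\sum_{\gamma\in\Gamma}\sum_{n\in\Z}\frac{d_n(\gamma)}{x-\gamma q^{-n}}$ over orbit representatives $\Gamma$, with finitely many $d_n(\gamma)$ nonzero. Since $\s^j$ multiplies residues by $q^j$ and shifts the index $n$ by $j$, the residue of $\sum_j r_jq^j\s^j(a)$ at $x=\gamma q^{-n}$ is $\sum_j r_jq^j\cdot q^j d_{n-j}(\gamma)=\sum_j r_j q^{2j}d_{n-j}(\gamma)$ — one must be slightly careful here: $\s^j(1/(x-\beta))=1/(qx-\beta q^{\text{?}})$; recomputing, $\s^j(\frac{d}{x-\beta})=\frac{d}{q^jx-\beta}=\frac{d q^{-j}}{x-\beta q^{-j}}$, so the residue of $\s^j(a)$ at $\gamma q^{-n}$ is $q^{-j}d_{n-j}(\gamma)$ and of $q^j\s^j(a)$ is just $d_{n-j}(\gamma)$, hence the residue condition is $\sum_j r_j d_{n-j}(\gamma)\in\Z$ for all $n$ — identical to \eqref{eq:comb}. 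The recursive argument of Corollary \ref{cor:rank1shift} then shows all $d_n(\gamma)\in\Q$, so $\sum_{n}\frac{d_n(\gamma)}{x-\gamma q^{-n}}$ is $\frac1N\frac{\de(f_\gamma)}{f_\gamma}$ for a common denominator $N$. The residue at $0$ contributes the term $c/x$ (transcendental case) or is absorbed into $Q$ (algebraic case). For the converse, in the transcendental case one picks, by Cayley--Hamilton on $\s$ acting on $\C[x]_{\leq d}$, a relation $\sum_j r_j\s^j=0$ on $\C[x]_{\leq d}$ killing $P$, and observes $q^j\s^j(c/x)=q^j\cdot\frac{c q^{-j}}{x}=\frac cx$, so $\sum_j r_j q^j\s^j(c/x)=(\sum_j r_j)\frac cx$ is the log-derivative of $x^{c\sum r_j}$ when $c\sum r_j\in\Z$ — one must choose the $r_j$ with $\sum_j r_j=0$ (possible, e.g.\ by taking $\sum_j r_j\s^j=(\s-1)(\cdots)$, or noting the constants lie in $\C[x]_{\leq d}$ so the annihilating polynomial has $1$ as a root), handling $\frac1N\frac{\de(f)}{f}$ by the same product-of-transforms identity as in Corollary \ref{cor:rank1shift}; in the algebraic case one additionally uses the integer relation $\sum_j r_jq^j=0$ — more precisely an annihilating polynomial with that property combined with annihilation of $P$ and $Q$ — to conclude.

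The main obstacle is organizing the transcendental/algebraic dichotomy cleanly: in both cases one needs a single non-zero integer tuple $(r_j)$ that simultaneously (i) annihilates the polynomial part $P$ under $\s$, (ii) in the algebraic case annihilates the principal part $Q$ at $0$ and satisfies $\sum_j r_jq^j=0$ while in the transcendental case satisfies $\sum_j r_j=0$ to accommodate the $c/x$ term, and (iii) is compatible with the orbit-wise rationality argument. Producing such a tuple amounts to combining several Cayley--Hamilton-type annihilators and the minimal polynomial of $q$ into one polynomial in $\Z[X]$ by taking an appropriate product (or least common multiple) of them, then clearing denominators; verifying that the product is still non-zero and that all the required vanishing properties survive is the delicate bookkeeping, but it is routine linear algebra over $\Q$ once the pieces are identified.
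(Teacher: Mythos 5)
Your proposal is correct and follows essentially the same route as the paper: reduction via Theorem \ref{thm:rank1}, the orbit argument ruling out non-zero poles of order $\geq 2$, the residue recursion from Corollary \ref{cor:rank1shift} giving rational residues, the vanishing (or not) of $\sum_j r_j q^{j(1+n)}$ to separate the transcendental and algebraic cases, and Cayley--Hamilton for the converse. The only cosmetic difference is that the paper handles the converse in the algebraic case with a single application of Cayley--Hamilton to $q\s$ acting on the finite-dimensional $\Q$-vector space $\Q(q)[x^{\pm 1}]_{\leq d}$, which avoids the product-of-annihilators bookkeeping you describe, and in the transcendental case simply applies $Nq\s-N$ (there is no polynomial part $P$ to kill there).
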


\begin{proof}
As in the previous corollary, Theorem \ref{thm:rank1} implies that
the solution $z$ is transformally dependent over $\C(x)$ if and only if there exist
a non-zero homogeneous linear $\s$-polynomial
$\mathcal{L}(X)=\sum_{j=0}^s r_{j} \sigma^j(X)\in \Z\{X\}_\s$
and a non-zero element $g \in K$ such that
\beq\label{eq:multiplicativerelationfromprevioustheorembis}
\sum_{j=0}^s q^jr_j \s^j (a) =\frac{\de(g)}{g}.
\eeq
Once more, notice that the rational function $\frac{\de(g)}{g}$
has only simple poles and the residue at each pole is an integer.
Moreover, the fractional expansion of $a$ is the sum of $P(x)\in\C[x]$
and some polar terms of order greater or equal to $1$.
\par
We suppose that $z$ is transformally dependent over $\C(x)$.
Then, the non-zero  poles of $a$ must be of order $1$.
In fact, suppose that there exists a pole $\gamma \neq 0$ of $a$, of order $m\geq 2$. We can choose $\gamma$ such that
$q^{-n}\gamma$ is not a pole of $a$ of order  $m$, for any integer $n\geq 1$.
Then $\sum_{j=0}^sr_jq^j\s^{j}(a)$ has a pole of order $m$ in $q^{-s}\gamma$, which is impossible
because of \eqref{eq:multiplicativerelationfromprevioustheorembis}.
This implies that
$$
a = P+Q+ \sum_{h=1}^k\frac{d_h}{x-\gamma_h},
$$
with $P\in \C[x]$, $Q\in\C\l[\frac{1}{x}\r]$, $\gamma_h \in \C^\times$ and $d_h \in \C$. A reasoning  analogous to the one in  Corollary \ref{cor:rank1shift} shows that $d_h \in  \Q$ for $h=1,\hdots,k$.
So we have proved that
$a =P+Q+\frac{1}{N}\frac{\de(f)}{f}$, for some $f \in \C(x)^\times$ and $N\in\Z^\times$.
If $q$ is algebraic there is nothing more to prove.
If $q$ is transcendental, we have to show that there exists $c \in \C$ such that
$P+Q=\frac{c}{x}$. If not, there exists $n \in \Z$ not equal to $-1$
such that  $x^n$ appears in $P+Q$.
Tracking the coefficient of $x^n$ in \eqref{eq:multiplicativerelationfromprevioustheorembis},
we find
$\sum_{j=0}^sq^{j(1+n)}r_j=0$, which contradicts the assumption on $q$.
We have now proved one implication both under the assumption that $q$ is algebraic and that $q$
is transcendental.
\par
Now we assume that $q$ is transcendental and that $a=\frac{c}{x} +\frac{1}{N}\frac{\de(f)}{f}$, for some $c \in \C,\ N \in \Z^\times$ and $f\in \C(x)^\times$.
Then
$$
Nq\s(a)-Na=\frac{\de(\s(f)/f)}{\s(f)/f},
$$
which, by Theorem \ref{thm:rank1}, implies that $z$ is transformally dependent over $\C(x)$.
\par
Finally, let $q$ be an algebraic number and suppose that there exist $f \in \C(x)^\times$, $N \in \Z^\times$, $P\in \C[x]$ and $Q\in\C\l[\frac{1}{x}\r]$ such that
$a=P+Q+\frac{1}{N}\frac{\de(f)}{f}$.
Let $d$ denote the maximum of the degree of $P$ in $x$ and of the degree of $Q$ in $\frac{1}{x}$ and
$$
\Q(q)[x^{\pm 1}]_{\leq d}:=\Q(q)+\sum_{i=1}^d\Q(q)x^i+\sum_{i=1}^d\Q(q)\frac{1}{x^i}.
$$
The set $\Q(q)[x^{\pm 1}]_{\leq d}$ is a finite dimensional $\Q$-vector space and the $\Q(q)$-linear
map $q\s:f(x)\mapsto qf(qx)$ is a $\Q$-linear endomorphism of $\Q(q)[x^{\pm 1}]_{\leq d}$.
By  Cayley-Hamilton theorem,
there exist a non-zero polynomial $\sum_{j=0}^sr_jX^j\in\Z[X]$, such that
$\sum_{j=0}^sr_jq^j\s^j=0$ on $\Q(q)[x^{\pm 1}]_{\leq d}$.
In particular, by $\C$-linearity, we have
$\sum_{j=0}^sq^jr_j\s^j(P+Q)=0$ and therefore $\sum_{j=0}^sNr_jq^j\s^j(a)= \frac{\de(\prod_{j=0}^s\s^j(f^{r_j}))}{\prod_{j=0}^s\s^j(f^{r_j}))}$.
\end{proof}

\subsection{An analog of a result of Ishizaki}
\label{subsec:Ishizaki}

In 1998, Ishizaki proved a result of differential independence of meromorphic
solutions of order $1$ linear inhomogeneous
$q$-difference equations (see  Theorem 1.2 in \cite{Ishihyper}):

\begin{thm}
Let $q\in\C$, with $|q|\neq 1$, and
let $z(x)$ be a meromorphic function over $\C$, not in $\C(x)$, such that
$z(qx)=a(x)z(x) +b(x)$, for some non-zero $a(x),b(x)\in\C(x)$.
Then $z(x)$
is differentially transcendental over $\C(x)$, i.e.,
the meromorphic function $z(x)$ and all its derivatives
are algebraically independent over $\C(x)$.
\end{thm}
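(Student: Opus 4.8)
The approach I would take mirrors, with the roles of $\de$ and $\s$ interchanged, the strategy of Section~\ref{subsec:multcas}: one replaces the ordinary $\s$-difference Galois group by the $\de$-parametrized one and reads the differential-algebraic relations among solutions off its structure. Regard $K=\C(x)$ as a $\ds$-field with $\de=\tfrac{d}{dx}$ and $\s(x)=qx$, so that $\de\s^j=q^j\s^j\de$; since $|q|\neq 1$, $q$ is not a root of unity and $K^\s=\C$. Homogenise $\s(z)=az+b$ into the rank two $\s$-difference system $\s(Y)=\left(\begin{smallmatrix}a&b\\0&1\end{smallmatrix}\right)Y$, with fundamental solution $Y=\left(\begin{smallmatrix}u&z\\0&1\end{smallmatrix}\right)$, $\s(u)=au$. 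Let $L$ be a $\de$-parametrized $\s$-Picard--Vessiot extension of $\C(x)$ for this system (which exists because the constant field $\C$ is algebraically closed; cf.\ \cite{OvchinnikovWibmer:SGaloisTheoryOfLinearDifferenceEquations}), chosen so that $z$ is the displayed entry of the distinguished fundamental solution. Its Galois group $G$ is a differential algebraic subgroup of the algebraic group $B=\Ga\rtimes\Gm$, realised as the group of matrices $\left(\begin{smallmatrix}*&*\\0&1\end{smallmatrix}\right)$ in $\Gl_2$; moreover $\de\text{-}\trdeg(L|\C(x))$ equals the $\de$-dimension of $G$, and the parametrized Galois correspondence holds.

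Because $\s(z)=az+b\in\C(x)[z]$, the $\ds$-subfield of $L$ generated by $z$ is $\C(x)\langle z\rangle_\de$, and $z$ is differentially transcendental over $\C(x)$ precisely when $\de\text{-}\trdeg(\C(x)\langle z\rangle_\de\,|\,\C(x))=1$; by the Galois correspondence this number is $\de\text{-}\dim G-\de\text{-}\dim H_z$, where $H_z\leq G$ is the subgroup fixing $z$, namely $H_z=G\cap\Gm$ since $\left(\begin{smallmatrix}\gamma&c\\0&1\end{smallmatrix}\right)$ carries $z$ to $z+cu$. Let $\pi\colon G\to\Gm$ be the projection onto (the Zariski closure of) the homogeneous Galois group and $N=G\cap\Ga$ its kernel. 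A short bookkeeping with the additivity of $\de\text{-}\dim$ shows that $z$ \emph{fails} to be differentially transcendental only if either $G=\Gm$ --- in which case $z$ is $G$-invariant, hence $z\in L^G=\C(x)$, contrary to hypothesis --- or both $N$ is a \emph{proper} differential algebraic subgroup of $\Ga$ \emph{and} $\pi(G)$ is a \emph{proper} differential algebraic subgroup of $\Gm$. So it remains to rule out this last configuration for $z$ meromorphic on all of $\C$ and not rational.

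I would now use Cassidy's classifications of the differential algebraic subgroups of $\Ga$, of $\Gm$ and of $\Ga\rtimes\Gm$ (the analogues, for differential algebraic groups, of the classifications used in Section~3 of this paper). A proper $N\leq\Ga$ is cut out by a non-zero constant-coefficient linear differential operator $\cL=\sum_i\ell_i\de^i$ with $\ell_i\in\C$, while a proper $\pi(G)\leq\Gm$ is either finite --- forcing a power of $u$ to be rational, after which one may assume the multiplicative part split and $a=1$ --- or Zariski dense in $\Gm$ and cut out by a non-zero constant-coefficient operator $\cM=\sum_j m_j\de^j$ acting on logarithmic derivatives, so that $\cM(v)\in\C(x)$ for $v=\de(u)/u$. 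In each case the Galois correspondence, together with the commutation rule $\de^i\s=q^i\s\de^i$, produces an explicit identity over $\C(x)$: on the unipotent side one gets a non-zero $\C$-linear combination $\sum_i\lambda_i x^i\de^i(\widetilde b)\in(\s-1)\C(x)$, where $\widetilde b\in\C(x)$ is the inhomogeneity once the multiplicative part is normalised; on the multiplicative side, using the relation $q\s(v)-v=\de(a)/a$, one gets the simultaneous conditions $\cM_{q^{-k}}(v)\in\C(x)$ for every $k\geq 0$, with $\cM_{q^{-k}}=\sum_j m_j q^{-kj}\de^j$.

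The final step is the pole-orbit analysis already used in Corollaries~\ref{cor:rank1shift} and \ref{cor:rank1q}: a non-zero element of $(\s-1)\C(x)$ has poles only at the ``extremities'' of $\s$-orbits, and since $q$ is not a root of unity these orbits are infinite, so following the pole of maximal order along an orbit (or tracking a partial fraction expansion) forces $\widetilde b$ to have no pole away from $0$ and $\infty$ and then forces $\widetilde b\in(\s-1)\C(x)$, whence $z\in\C(x)$ --- unless the relation is of ``logarithmic type'' such as $x\de(z)\in\C(x)$, which would make $z$ a $\C$-multiple of $\log x$ modulo $\C(x)$ and contradict the single-valuedness of $z$ on $\C$. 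Symmetrically, since $q$ is not a root of unity the twisted operators $\cM_{q^{-k}}$ span, as $k$ varies, every constant-coefficient operator supported on the support of $\cM$; this forces $v=\de(u)/u\in\C(x)$, and the only rational $v$ with $q\s(v)-v=\de(a)/a$ for which $u=\exp(\int v)$ is meromorphic on $\C$ have $u\in\C(x)$, again a contradiction. I expect the multiplicative case --- excluding proper Zariski dense differential algebraic subgroups of $\Gm$ as homogeneous parametrized Galois groups --- to be the main obstacle: it is exactly here that a resonance between a constant-coefficient linear ODE and the eigenvalue twist by the powers $q^j$ must be excluded, which is precisely what $|q|\neq 1$ guarantees.
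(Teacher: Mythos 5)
First, a point of comparison: the paper does not prove this statement. It is quoted as Ishizaki's theorem (Theorem 1.2 of \cite{Ishihyper}), and the text explicitly defers to Proposition 3.5 of \cite{HardouinSinger} for a Galoisian proof; the paper then proves only the \emph{mirror} statement (difference relations among solutions of differential equations). Your proposal is essentially a reconstruction of the Hardouin--Singer argument, and its architecture is the right one: homogenise, view the differentially parametrized Galois group as a differential algebraic subgroup of $\Ga\rtimes\Gm$, invoke Cassidy's classification of its differential algebraic subgroups, and finish with a pole-orbit/telescoping analysis over $\C(x)$ using that the $q$-orbits are infinite.

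Two steps are genuinely incomplete, however. (1) \emph{Constants and descent.} The parametrized Picard--Vessiot theory you invoke requires the field of $\s$-constants to be \emph{differentially closed}, not merely algebraically closed, and in any case the relevant $\s$-constants here are not $\C$: the $\s$-invariant meromorphic functions for $x\mapsto qx$ with $|q|\neq1$ form an elliptic function field, so the relations produced by the Galois correspondence a priori have coefficients there (the reference you cite for existence concerns a \emph{difference} parameter, not the differential one needed). Descending those relations to ones with coefficients in $\C$, so that your final partial-fraction argument over $\C(x)$ applies, is a genuine step that Hardouin--Singer carry out and that your sketch omits. (2) \emph{The multiplicative case.} Your exclusion of a proper Zariski-dense $\pi(G)\leq\Gm$ ends by asserting that $u=\exp\bigl(\int v\bigr)$ meromorphic on $\C$ forces $u\in\C(x)$. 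But only $z$ is assumed meromorphic; the homogeneous solution $u$ is an auxiliary element of the Picard--Vessiot extension and need not be meromorphic, so no contradiction is obtained this way. (This case really occurs: already for $a$ a non-unit constant one has $v=\de(u)/u\in\C(x)$ with $u=x^{\lambda}$, $q^{\lambda}=a$, irrational $\lambda$.) The constraint has to be transferred back to $z$ itself, via the normality of $G_u$ in $G$ and the relation $q\s(v)-v=\de(a)/a$, as in Theorem \ref{thm:classg2} and Proposition \ref{prop:generalisationhishizakidependent} of the mirror setting, not disposed of by analytic properties of $u$. Until these two points are repaired the argument is a plausible outline rather than a proof.
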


A Galoisian proof of Ishizaki's result is given in Proposition 3.5 in \cite{HardouinSinger}.
In this section we are going to prove a similar result on transformal independence of solutions of
differential equations of the form $\de(y)=ay+b$. The ingredients of our proof are also difference analogues of the ones used in \cite{HardouinSinger}. For a general difference operator $\s$,
Proposition \ref{prop:generalisationishizaki} and \ref{prop:generalisationhishizakidependent}
give partial answers  to the difference version of Ishizaki's result whereas
in the case of a $q$-difference operator with transcendental $q$, Corollary \ref{cor:qishi}
together with Proposition \ref{prop:generalisationishizaki} give a complete solution.

\medskip
First of all we prove a lemma on rationality of solutions of equations of the form
$\de(y)=ay+b$, which does not involve the action of $\s$.

\begin{lemma}\label{lemma:solrat}
Let $L|K$ be an extension of $\de$-fields and assume that $v\in L$ satisfies $\de(v)=av$,
for some $a\in K$. If the equation $\de(y)-cy=b$, for $b,c\in K$, has a solution in $K(v)$,
then it already has a solution in $K$.
\end{lemma}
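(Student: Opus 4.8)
The plan is to distinguish two cases according to whether $v$ is algebraic or transcendental over $K$; if $v\in K$ then $K(v)=K$ and there is nothing to prove, so assume $v\notin K$.

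Suppose first that $v$ is algebraic over $K$, so that $K(v)|K$ is finite of some degree $d\geq 1$. Since we are in characteristic zero, $\de$ extends uniquely to $K(v)$, and in fact to any algebraic extension of $K$. I would fix an algebraic closure $\oK$ of $K$ containing $v$, extend $\de$ to it, and note that by uniqueness of the extension $\de$ commutes with every $\sigma\in\Aut(\oK|K)$ (because $\sigma\de\sigma^{-1}$ is again a derivation of $\oK$ restricting to $\de$ on $K$, hence equals $\de$). Writing the trace $\operatorname{Tr}=\operatorname{Tr}_{K(v)|K}$ as the sum over the $K$-embeddings $K(v)\hookrightarrow\oK$, each of which is the restriction of such a $\sigma$, this gives $\de\circ\operatorname{Tr}=\operatorname{Tr}\circ\,\de$ on $K(v)$. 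Then, given a solution $y_0\in K(v)$ of $\de(y)-cy=b$, applying $\operatorname{Tr}$ and using its $K$-linearity together with $\operatorname{Tr}(b)=db$ yields $\de\l(\operatorname{Tr}(y_0)\r)-c\operatorname{Tr}(y_0)=db$, so that $\tfrac1d\operatorname{Tr}(y_0)\in K$ solves $\de(y)-cy=b$.

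Now suppose $v$ is transcendental over $K$. Here I would embed $K(v)$ into the Laurent series field $K((v))$ and observe that the rule $\de\l(\sum_i f_iv^i\r)=\sum_i\l(\de(f_i)+iaf_i\r)v^i$, $f_i\in K$, defines a derivation of $K((v))$ extending $\de$ on $K$ with $\de(v)=av$; since it agrees with the given $\de$ on the subring $K[v]$ by the Leibniz rule, it must restrict to $\de$ on $K(v)$. The point is that this derivation, exactly like multiplication by $c\in K$, preserves each $K$-line $Kv^i$; hence the $K$-linear projection $\pi_0\colon K((v))\to K$ onto the coefficient of $v^0$ commutes both with $\de$ and with multiplication by $c$. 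Writing a solution $y_0\in K(v)$ as $y_0=\sum_i w_iv^i$ with $w_i\in K$ and applying $\pi_0$ to $\de(y_0)-cy_0=b$, using $\pi_0(b)=b$ since $b\in K=Kv^0$, gives $\de(w_0)-cw_0=b$; so $w_0\in K$ is the required solution.

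The computations are routine; the only points needing care are the two compatibility assertions — that $\de$ commutes with the trace in the algebraic case, and that the explicit Laurent-series derivation restricts to the original $\de$ on $K(v)$ in the transcendental case — and both are instances of the uniqueness of derivation extensions in characteristic zero. Note that the hypothesis $\de(v)=av$ with $a\in K$ is precisely what makes $\de$ respect the $\Z$-grading of $K[v]$ (equivalently of $K((v))$) in the transcendental case; without it the projection $\pi_0$ would not commute with $\de$, and the argument would break down.
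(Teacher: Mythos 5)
Your proof is correct. In the algebraic case it is essentially the paper's argument verbatim: the paper averages over the Galois group of the normal closure of $K(v)|K$ via the operator $\mathcal{R}(d)=\frac{1}{[N:K]}\sum_\tau\tau(d)$, which is exactly your $\tfrac1d\operatorname{Tr}_{K(v)|K}$, justified by the same uniqueness-of-extension argument showing $\de$ commutes with the automorphisms. In the transcendental case you use a slightly different, but equivalent, device: the paper decomposes $g=P/Q+R$ into a proper fraction plus a polynomial, kills the proper-fraction part by comparing degrees, and then extracts the constant term of $R$, whereas you expand in $K((v))$ at $v=0$ and apply the coefficient-of-$v^0$ projection directly. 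These are genuinely different $K$-linear retractions $K(v)\to K$ (the Laurent expansion of a proper fraction can have a non-zero constant term), so the two arguments may produce different solutions in $K$; but both succeed for the identical structural reason you isolate at the end, namely that $\de(v)=av$ with $a\in K$ makes $\de$ homogeneous of degree $0$ for the $v$-grading, so that any such degree-preserving projection fixing $K$ commutes with $\de-c$. Your version has the small advantage of treating the whole of $K(v)$ in one step rather than via the polynomial-plus-proper-fraction splitting; the paper's has the advantage of staying inside $K(v)$ without invoking the completion.
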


\begin{proof}
First assume that $v$ is transcendental over $K$.
Let $g \in K(v)$ be a solution of $\de(y)-cy=b$ and let us write $g= \frac{P}{Q} +R$ where $P,Q,R \in K[v]$,
$Q\neq 0$ and $\deg_v(P) <\deg_v(Q)$.
Then  $b=\de(g)-cg = \frac{\wtilde P}{\wtilde Q}  + \de(R) -cR$,
where $\wtilde P,\wtilde Q\in K[v]$ and $\deg_v(\wtilde P) <\deg_v(\wtilde Q)$.
Comparing the degree in $v$, we find $\de(R)-cR= b$.
This last equality implies that the constant term $r_0\in K$ of $R\in K[v]$, as a polynomial in $v$, satisfies $\de(r_0)-cr_0=b$.
\par
Now assume that $v$ is algebraic over $K$. Let $N$ denote the normal closure of $K(v)|K$. The derivation $\de\colon K(v)\to K(v)$ extends uniquely to a derivation $\de\colon N\to N$. Moreover, $\de\colon N\to N$ commutes with every automorphism of $N|K$. The operator $\mathcal{R}\colon N\to K$ given by $\mathcal{R}(d)=\frac{1}{[N:K]}\sum_{\tau}\tau(d)$, where $\tau$ ranges over all elements in the Galois group of $N|K$, is $K$-linear, the identity on $K$ and commutes with $\de$. Thus, if $z\in K(v)\subset M$ is a solution of $\de(y)-cy=b$, then $\mathcal{R}(z)\in K$ is a solution of $\de(y)-cy=b$.
\end{proof}

Let $K$ be a $\ds$-field and let $k:=K^\de$.
We consider a differential equation $\de(y)=ay+b$, with $a,b\in K$ and the associated differential system
\beq \label{eqn:associatedsystem}
\de\begin{pmatrix}y_1 \\ y_2 \end{pmatrix}= \begin{pmatrix} a & b \\
0 & 0 \end{pmatrix} \begin{pmatrix} y_1 \\ y_2  \end{pmatrix} .
\eeq
By a $\s$-Picard-Vessiot extension $L|K$ for $\de(y)=ay+b$, we  mean a
$\s$-Picard-Vessiot extension $L|K$ for \eqref{eqn:associatedsystem}.
So let $Z:=\begin{pmatrix}z_1&z_2\\c_1&c_2\end{pmatrix}$ be a fundamental solution of \eqref{eqn:associatedsystem}
with coefficients in $L$.
It follows directly from \eqref{eqn:associatedsystem} that $c_1,c_2\in k$ and that
$z_1c_2-z_2c_1 \neq 0$.
 Then, in the case $c_1c_2 \neq 0$, the matrix
$$
\begin{pmatrix}z_1c_1^{-1}-z_2c_2^{-1}& z_2c_2^{-1}\\0&1\end{pmatrix}
$$
is a fundamental solution matrix of \eqref{eqn:associatedsystem}.
So we can consider a fundamental solution matrix of \eqref{eqn:associatedsystem}, with coefficients in $L$, of the form
\beq\label{eq:goodsolution}
Y:=\begin{pmatrix} v& z \\ 0& 1 \end{pmatrix},
\hbox{~with $\de(v)=av$, $v\neq 0$, and $\de(z)=az+b$.}
\eeq
(The cases $c_1c_2 =0$ are similar and yield immediately to the existence of a fundamental solution matrix $Y$
of the same form as the one above).
Notice that the $\ds$-field extension $K\langle v\rangle_\s\subset L$ does not depend on the particular choice of
the non-zero solution $v\in L^\times$ of $\de(v)=av$, in fact two such solutions coincide up to a multiplicative factor in $k\subset K$.
\par
We consider $G:=\sgal(L|K)$ as a $\s$-closed subgroup of $\Gl_{2,k}$ via the fundamental solution matrix \eqref{eq:goodsolution} (I.e. we identify $G$ with its image in $\Gl_{2,k}$, via $\tau \mapsto [\tau]_Y$).
The action of an element $\tau\in G(S)$ on $R\otimes_k S$, where $R=K\{v,v^{-1}, z\}_\s$
is the $\s$-Picard Vessiot ring of $L$,
is given by $\tau(v\otimes 1)= v\otimes \alpha$ and $\tau(z\otimes 1)=v\otimes \beta +z\otimes 1$, for some $\a,\be\in S$.
Let $\mathbb G$ denote the algebraic subgroup of $\Gl_{2,k}$ given by
$$
\mathbb{G} (S) = \left\{ \begin{pmatrix}\alpha & \beta \\ 0 &1 \end{pmatrix} \Big|\ \alpha \in S^\times,\ \beta \in S \right\},
\hbox{~for any $k$-$\s$-algebra $S$.}
$$
For $\tau\in G(S)\leq\Gl_2(S)$ we have $\tau(Y\otimes 1)=(Y\otimes 1)[\tau]_Y$.
We see that $[\tau]_Y\in\mathbb{G}(S)$.
Therefore $G$ is contained in $\mathbb{G}$.
As in Section \ref{subsubsec:classg2}, let $\mathbb{G}_u$ denote the algebraic subgroup of $\mathbb{G}$ given by
$$
\mathbb{G}_u(S)=\left\{ \begin{pmatrix}1 & \beta \\ 0 &1 \end{pmatrix} \Big|\ \beta \in S \right\},
\hbox{~for any $k$-$\s$-algebra $S$,}
$$
and set $G_u=G\cap\mathbb{G}_u$. Since $\tau(v\otimes 1)= v \otimes \alpha$, it is clear from the Galois correspondence that
$G_u=\sgal(L|K\langle v\rangle_\s)$.
Moreover, we have $\de\l(\frac{z}{v}\r)=\frac{b}{v}$, so that $L|K\langle v\rangle_\s$ is a
$\s$-Picard-Vessiot extension for $\de(y)=\frac{b}{v}$.
The action of an element $\tau=\begin{pmatrix}1 & \beta \\ 0 &1 \end{pmatrix}\in G_u(S)=\sgal(L|K\langle v\rangle_\s)(S)$ on $L$ is given by
$\tau\l(\frac{z}{v} \otimes 1 \r)=\frac{z}{v}\otimes 1+ 1\otimes\beta$.
The situation is summarized in the following picture:
$$
\xymatrix{  L \ar@{-}[d]  \ar@/_1pc/@{-}[d]_{G_u}  \ar@/^3pc/@{-}[dd]^{G} \\
K\left\<v \right \>_\s \ar@{-}[d]   \\
K }
$$
We remind that, if $G_u$ is properly contained in $\mathbb{G}_u$, it follows from
Theorem \ref{thm:classg2} that we only have two possible cases:
\begin{enumerate}
\item There exists an integer $n\geq 0$ such that $\s^n(\beta)=0$ for all $\begin{pmatrix}1 & \beta \\ 0 &1 \end{pmatrix}\in G_u(S)$ and all $k$-$\s$-algebras $S$.
\item There exist integers $n>m\geq 0$ such that $\s^n(\alpha)=\s^m(\alpha)$ for all $\begin{pmatrix}\alpha & \beta \\ 0 &1 \end{pmatrix}\in G(S)$ and all $k$-$\s$-algebras $S$.
\end{enumerate}

\begin{prop}\label{prop:generalisationishizaki}
Let $L|K$ be a $\ds$-field extension such that $k:=K^\de=L^\de$.
We fix $v,z\in L$, $v\neq 0$,
such that $\de(z)=az +b$ and $\de(v)=av$, with $a,b\in K$. We assume that $z \notin K$ and $K$ is inversive.
If $v$ is transformally independent over $K$, then $z$ is transformally independent over $K$.
\end{prop}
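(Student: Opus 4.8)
The plan is to read off the needed information from the $\s$-Galois group of the $\s$-Picard--Vessiot extension generated by $v$ and $z$, exactly as prepared in the discussion preceding the statement. First I would reduce to the Picard--Vessiot setting: replacing $L$ by $L_0:=K\langle v,z\rangle_\s$ changes neither the transformal independence of $z$ over $K$ nor the hypotheses, and since $k\subseteq L_0\subseteq L$ we still have $L_0^\de=k$, so $L_0|K$ is a $\s$-Picard--Vessiot extension for \eqref{eqn:associatedsystem} with fundamental solution matrix $Y=\begin{pmatrix} v & z\\ 0 & 1\end{pmatrix}$. Put $K':=K\langle v\rangle_\s$ and, as in the text, $G=\sgal(L_0|K)\leq\mathbb{G}$, $G_u=G\cap\mathbb{G}_u=\sgal(L_0|K')$. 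Since $\mathbb{G}_u$ is normal in $\mathbb{G}$, the subgroup $G_u$ is normal in $G$ and corresponds to $K'$ in the Galois correspondence, so by the second fundamental theorem (Theorem \ref{theo:secondfundamentaltheorem}) the extension $K'|K$ is $\s$-Picard--Vessiot (for $\de(y)=av$, with fundamental solution $v$) with $\s$-Galois group $G/G_u\leq\Gm$. By Proposition \ref{prop:dimensiondegtrans}, $\sdim_k(G/G_u)=\strdeg(K'|K)=1$ because $v$ is transformally independent over $K$; as the only $\s$-closed subgroup of $\Gm$ of $\s$-dimension $1$ is $\Gm$ itself (Theorem \ref{thm:classgm}), we conclude $G/G_u=\Gm$.

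Next I would dispose of the case $G_u=\mathbb{G}_u$: then $L_0|K'$ is $\s$-Picard--Vessiot with $\s$-Galois group $\mathbb{G}_u\cong\Ga$, so $\strdeg(L_0|K')=\sdim_k(\mathbb{G}_u)=1$ by Proposition \ref{prop:dimensiondegtrans}; since $L_0=K'\langle z\rangle_\s$ this says $z$ is transformally independent over $K'$, hence a fortiori over $K$, and we are done. So from now on assume $G_u\subsetneq\mathbb{G}_u$. Then Theorem \ref{thm:classg2} forces one of the two alternatives recalled just before the statement. Alternative (ii) is impossible: it would impose $\s^n(\alpha)=\s^m(\alpha)$, for fixed $n>m\geq 0$, on every $\begin{pmatrix}\alpha&\beta\\0&1\end{pmatrix}\in G(S)$, hence, after projecting to the quotient $G/G_u$ (where the class of such a matrix is $\alpha$), on every element of $G/G_u$, making $G/G_u$ a proper $\s$-closed subgroup of $\Gm$ and contradicting $G/G_u=\Gm$.

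It remains to exploit alternative (i): there is $n\geq 0$ with $\s^n(\beta)=0$ for every $\begin{pmatrix}1&\beta\\0&1\end{pmatrix}\in G_u(S)$ and every $S$. Since such a $\tau\in G_u(S)=\sgal(L_0|K')(S)$ acts on $L_0$ by $\tau(\tfrac{z}{v}\otimes 1)=\tfrac{z}{v}\otimes 1+1\otimes\beta$, the element $w:=\s^n(\tfrac{z}{v})$ satisfies $\tau(w\otimes 1)=w\otimes 1+1\otimes\s^n(\beta)=w\otimes 1$, so $w$ is $G_u$-invariant and therefore $w\in K'$ by the Galois correspondence (Theorem \ref{theo:Galoiscorrespondence}); hence $\s^n(z)=\s^n(v)\,w\in K'=K\langle v\rangle_\s$. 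Now $\s^n(z)$ is a solution of $\de(y)=a'y+b'$ with $a':=\hslash_n\s^n(a)\in K$, $b':=\hslash_n\s^n(b)\in K$, and it lies in $K(v,\s(v),\dots,\s^m(v))$ for some $m$; since each $\s^j(v)$ satisfies $\de(\s^j(v))=\hslash_j\s^j(a)\,\s^j(v)$ with $\hslash_j\s^j(a)\in K$, I would apply Lemma \ref{lemma:solrat} repeatedly --- peeling off $\s^m(v),\s^{m-1}(v),\dots,v$ one at a time --- to conclude that $\de(y)=a'y+b'$ already has a solution $p\in K$. As $K$ is inversive, $p=\s^n(p_0)$ for a unique $p_0\in K$; cancelling the unit $\hslash_n$ and the injective $\s^n$ in $\hslash_n\s^n(\de(p_0))=\de(p)=a'p+b'=\hslash_n\s^n(ap_0+b)$ gives $\de(p_0)=ap_0+b$. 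Then $\de(z-p_0)=a(z-p_0)$, so $(z-p_0)/v\in L_0^\de=k$, i.e. $z=p_0+cv$ with $c\in k$; as $z\notin K$ we must have $c\neq 0$, whence $K\langle z\rangle_\s=K\langle v\rangle_\s$ and $z$ is transformally independent over $K$.

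I expect alternative (i) to be the crux: the genuinely substantive point is that $\s^n(z)\in K'$ forces $\de(y)=ay+b$ to have a solution in $K$. This needs both the iterated use of Lemma \ref{lemma:solrat} up the purely transcendental tower $K\subseteq K(v)\subseteq K(v,\s(v))\subseteq\cdots$ (made possible by $\de(\s^j(v))=\hslash_j\s^j(a)\s^j(v)$), and the descent from a solution of the $\s^n$-twisted equation back to a solution of the original one --- which is precisely where inversiveness of $K$ enters. The remaining ingredients --- the reduction to a $\s$-Picard--Vessiot extension, the identification of $G_u$ and $G/G_u$, and the elimination of alternative (ii) via $G/G_u=\Gm$ --- are routine given the results recalled above.
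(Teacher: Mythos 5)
Your proof is correct and follows the same skeleton as the paper's --- the same d\'evissage $G_u\leq G\leq \mathbb{G}$ via the fundamental matrix $\begin{pmatrix} v& z \\ 0& 1\end{pmatrix}$ and the same case analysis supplied by Theorem \ref{thm:classg2} --- but both cases are closed by different means. For alternative (ii), the paper argues directly: the invariance of $\s^n(v)/\s^m(v)$ under $G$ yields $\s^n(v)=f\s^m(v)$ with $f\in K^\times$, contradicting the transformal independence of $v$; your detour through $G/G_u=\Gm$ (second fundamental theorem plus the $\s$-dimension count $\sdim_k(G/G_u)=\strdeg(K\langle v\rangle_\s|K)=1$) establishes the same contradiction with a little more machinery, at the cost of checking that the relation $\s^n(\alpha)=\s^m(\alpha)$ descends to the quotient --- which it does, since it lives in $k\{\Gm\}\cap\I(G)=\I(G/G_u)$. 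For alternative (i), which is the crux, the paper stops as soon as it has $\s^n(z)\in K\langle v\rangle_\s$ together with $\s^n(z)\notin K$ (the latter from $z\notin K$ and inversiveness), and invokes the fact that every element of $K\langle v\rangle_\s\smallsetminus K$ is transformally independent over $K$ once $v$ is. Your constructive alternative --- iterating Lemma \ref{lemma:solrat} down the tower $K\subset K(v)\subset K(v,\s(v))\subset\cdots$ to produce a solution of the $\s^n$-twisted equation in $K$, then using inversiveness to pull it back to a solution $p_0\in K$ of $\de(y)=ay+b$ and concluding $z=p_0+cv$ with $c\in k^\times$ --- is precisely the argument the paper deploys later in the proof of Corollary \ref{cor:qishi}; it is longer here, but it buys the explicit shape of $z$ and hence the equality $K\langle z\rangle_\s=K\langle v\rangle_\s$, which the paper's one-line transcendence argument does not provide.
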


\begin{rmk}
Let us suppose that $z$ is transformally dependent over $K$, and hence over $K\<v\>_\s$.
Since $\de\l(\frac{z}{v}\r)=\frac{b}{v}$,
Theorem \ref{thm:rank1ga}
implies that there exist a non-zero homogeneous linear $\s$-polynomial $\sum_{j=0}^s\la_j\s^j(X)\in k\{X\}_\s$  and $g\in K\<v\>_\s$
such that
$$
\sum_{j=0}^s\la_j\s^j\l(\frac{b}{v}\r)=\de(g).
$$
The expression above, if non-trivial, provides a $\s$-relation for $v$,
with coefficients in $K$, and proves that $v$ is transformally dependent over $K$.
The problem of this argument is that there is no reason for the expression above to be non-trivial,
so that it does not provide a proof of Proposition \ref{prop:generalisationishizaki}.
\end{rmk}

\begin{proof}
As we have already done in several proofs, we can assume that $L$ is a $\s$-Picard-Vessiot extension
for $\de(y)=ay+b$.
First assume that $G_u=\mathbb{G}_u$. Then Proposition \ref{prop:dimensiondegtrans} implies that $\strdeg(L|K\langle v\rangle_\s)=\sdim(\mathbb{G}_u)=1$.
It follows that $z$ is transformally independent over $K\langle v\rangle_\s$. In particular, $z$ is transformally independent over $K$.
\par
If $G_u$ is a proper $\s$-closed subgroup of $\mathbb{G}_u$, we have to consider the two cases above.
In the first case, we have
$$
\tau\l(\s^n\l(\frac{z}{v}\otimes 1 \r)\r)=\s^n\l(\frac{z}{v}\otimes 1+1 \otimes \beta\r)=\s^n\l(\frac{z}{v}\otimes 1\r),
\hbox{~for any $\tau=\begin{pmatrix}1 & \beta \\ 0 &1 \end{pmatrix}\in G_u(S)$ and any $k$-$\s$-algebra $S$.}
$$
The Galois correspondence implies that
$\s^n\l(\frac{z}{v}\r) \in K\left\<v \right\>_\s$ and thus $\s^n(z) \in  K\left\<v \right\>_\s$. Since $z
\notin K$ and $K$ is inversive, we get that $\s^n (z) \notin K$.  Since $v$ is transformally independent over $K$, every element of the field $K\left\<v \right\>_\s$, that does not belong to
$K$, is transformally independent over $K$. Hence $z$ is transformally independent over $K$.
\par
In the second case,
$$
\tau\left(\frac{\s^n(v)}{\s^m(v)}\otimes 1 \right)=\frac{\s^n( v \otimes \alpha)}{\s^m( v \otimes \alpha)}=\frac{\s^n(v)}{\s^m(v)} \otimes 1,
\hbox{~for any $ \tau=\begin{pmatrix}\alpha & \beta \\ 0 &1 \end{pmatrix}\in G(S)$ and any $k$-$\s$-algebra $S$.}
$$
 Therefore $\s^n(v)=f\s^m(v)$ for some $f\in K^\times$. This contradicts the assumption that $v$ is transformally independent over $K$, so the second case can not occur.
\end{proof}

When we specialize Proposition \ref{prop:generalisationishizaki} to $K=\C(x)$
endowed with the shift or the $q$-difference operator, we find:

\begin{cor} \label{cor:ishispecial}
%In the situation \eqref{shift} (resp. \eqref{qdiff}),
Let $L|\C(x)$ be a $\ds$-field extension with $L^\de=\C$ and assume that
$z\in L$ satisfies $\de(z)=az+b$ with $a,b\in\C(x)$, $a\neq 0$. If one of the following hypothesis
is satisfied:
\begin{enumerate}
\item
assumption \eqref{shift} and $a\neq P + \frac{1}{N}\frac{\de(f)}{f}$
for any $P\in \C[x]$, $f \in \C(x)^\times$ and $N\in \Z^\times$;

\item
assumption \eqref{qdiff}, $q$ is transcendental and
$a \neq \frac{c}{x} +\frac{1}{N}\frac{\de(f)}{f}$, for any $c \in \C$, $N\in\Z^\times$
and $f \in \C(x)^\times$;

\item
assumption \eqref{qdiff}, $q$ is algebraic and
$a \neq P+Q +\frac{1}{N}\frac{\de(f)}{f}$, for any $P\in \C[x]$, $Q\in\C\l[\frac{1}{x}\r]$,
$N\in\Z^\times$ and $f \in \C(x)^\times$;
\end{enumerate}
then $z$ is transformally independent over $\C(x)$.
\end{cor}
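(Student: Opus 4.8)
The plan is to deduce the statement from Proposition~\ref{prop:generalisationishizaki}, applied with $K=\C(x)$, once a nonzero solution $v$ of $\de(y)=ay$ has been placed in the same $\ds$-field as $z$ and shown to be transformally independent over $\C(x)$. I would first record that $\C(x)$ is inversive in both situations \eqref{shift} and \eqref{qdiff}, since $x\mapsto x+1$ and $x\mapsto qx$ (with $q\neq 0$) are automorphisms of $\C(x)$, so the inversivity hypothesis of Proposition~\ref{prop:generalisationishizaki} holds. Since transformal independence of $z$ over $\C(x)$ depends only on $z$, on $\C(x)$ and on the action of $\s$ — not on the ambient $\ds$-field — we are free to enlarge $L$. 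I would also note that the assertion is of interest only for $z\notin\C(x)$ (a rational $z$ being automatically transformally dependent over $\C(x)$), which is exactly the hypothesis of Proposition~\ref{prop:generalisationishizaki}, and so assume $z\notin\C(x)$.

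To produce $v$, I would replace $L$ by the $\ds$-subfield $L_0:=\C(x)\langle z\rangle_\s$; from $\C\subseteq L_0\subseteq L$ and $L^\de=\C$ one gets $L_0^\de=\C$, which is algebraically closed. Applying Proposition~\ref{cor:existenceofPVextension} to the equation $\de(y)=ay$ (a $1\times 1$ system) over $L_0$ produces a $\s$-Picard-Vessiot extension $\widetilde{L}$ of $L_0$ for $\de(y)=ay$ with $\widetilde{L}^\de=L_0^\de=\C$. Then $\widetilde{L}$ is a $\ds$-field extension of $\C(x)$, with $\widetilde{L}^\de=\C$, containing both $z$ (which still satisfies $\de(z)=az+b$) and a nonzero element $v$ with $\de(v)=av$.

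It then remains to check that $v$ is transformally independent over $\C(x)$, and this is where the hypotheses on $a$ come in. In situation \eqref{shift} (hypothesis (i)), Corollary~\ref{cor:rank1shift} asserts that $v$ is transformally dependent over $\C(x)$ if and only if $a=P+\frac{1}{N}\frac{\de(f)}{f}$ for some $P\in\C[x]$, $f\in\C(x)^\times$, $N\in\Z^\times$; hypothesis (i) is precisely the negation, so $v$ is transformally independent over $\C(x)$. Similarly, in situation \eqref{qdiff} the two cases of Corollary~\ref{cor:rank1q} characterize transformal dependence of $v$ over $\C(x)$ according as $q$ is transcendental or algebraic, and hypotheses (ii) and (iii) are exactly the negations of those criteria; so again $v$ is transformally independent over $\C(x)$. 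Proposition~\ref{prop:generalisationishizaki}, applied with $L=\widetilde{L}$, $K=\C(x)$ and the elements $v,z$ above, then gives that $z$ is transformally independent over $\C(x)$.

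This is essentially an assembly of results already established, so I do not expect a genuinely hard computational step. The main points requiring care will be: (a) producing the auxiliary solution $v$ in a $\ds$-overfield of $\C(x)$ without enlarging the $\de$-constants beyond $\C$ — here the algebraic closedness of $\C$ and the existence statement Proposition~\ref{cor:existenceofPVextension} are essential, together with the observation that enlarging the ambient $\ds$-field does not change transformal independence over $\C(x)$; and (b) matching hypotheses (i)--(iii) on $a$ with the ``transformally dependent'' dichotomies of Corollaries~\ref{cor:rank1shift} and~\ref{cor:rank1q}, while checking the remaining hypotheses of Proposition~\ref{prop:generalisationishizaki} (that $\C(x)$ is inversive and that $z\notin\C(x)$).
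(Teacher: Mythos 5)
Your proof is correct and follows essentially the same route as the paper's: reduce to the situation where a nonzero solution $v$ of $\de(y)=av$ lives in the same $\ds$-field as $z$ (using that $L^\de=\C$ is algebraically closed together with the existence of $\s$-Picard-Vessiot extensions), check via Corollaries \ref{cor:rank1shift} and \ref{cor:rank1q} that hypotheses (i)--(iii) are exactly the negations of the transformal-dependence criteria for $v$, and conclude with Proposition \ref{prop:generalisationishizaki}. You are in fact slightly more explicit than the paper in flagging that the statement implicitly requires $z\notin\C(x)$ (without which the conclusion fails for a rational solution $z$), a hypothesis the paper's one-line proof also uses silently when invoking Proposition \ref{prop:generalisationishizaki}.
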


\begin{proof}
We can assume without loss of generality that there exists a non-zero $v\in L$ with $\de(v)=av$. Indeed, since $L^\de=\C$ is algebraically closed we can replace $L$ with a $\s$-Picard-Vessiot extension for $\de(y)=ay$ of $L$. It then suffices to combine Proposition \ref{prop:generalisationishizaki} above with
Corollary \ref{cor:rank1shift} and Corollary \ref{cor:rank1q}.
\end{proof}

When $v$ is transformally dependent, we have only a partial result:

\begin{prop}\label{prop:generalisationhishizakidependent}
Let $K$ be a $\ds$-field, with $\s\de=\de\s$  and $k:=K^\de$.
Let $L|K$ be a $\s$-Picard-Vessiot extension for the equation $\de(y)=ay+b$, with $a,b\in K$.
Let  $z\in L$, with $z\not\in K$, such that $\de(z)=az +b$.
\par
If there exists $f \in K^\times$ and $\lambda \in k^\s$ such that $a = \lambda + \frac{\de(f)}{f}$, then
$z$ is transformally dependent over $K$ if and only if there exist
a non-zero homogenous linear $\s$-polynomial $\cL(X)\in k\{X\}_\s$
and an element $g \in K$
such that $\cL\l(\frac{b}{f}\r)=\de(g)-\la g$.
\end{prop}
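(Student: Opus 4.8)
The plan is to exploit the hypothesis $a=\lambda+\de(f)/f$ with $\lambda\in k^\s$ to reduce everything to the purely additive equation $\de(y)=b/v$ over the enlarged base field $K(v)$, where $v$ is a non\-zero solution of the homogeneous equation $\de(y)=av$; then to apply Theorem \ref{thm:rank1ga} over $K(v)$ and descend back to $K$ using Lemma \ref{lemma:solrat}. For the set\-up: since $L|K$ is a $\s$-Picard--Vessiot extension it contains a fundamental solution matrix of \eqref{eqn:associatedsystem}, whose determinant $v\in L^\times$ satisfies $\de(v)=av$ by the Wronskian (Liouville) identity, because $\operatorname{tr}\begin{pmatrix}a&b\\0&0\end{pmatrix}=a$. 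Put $w:=v/f\in L^\times$ and $u:=z/f$; direct computations give $\de(w)=\lambda w$, $\de(u)=\lambda u+b/f$, $\de(z/v)=b/v$, and $u\notin K$, $K\langle z\rangle_\s=K\langle u\rangle_\s$. Differentiating $\s(w)/w$ and using $\de\s=\s\de$ together with $\lambda\in k^\s$ yields $\de(\s(w)/w)=0$, so $\s(w)=cw$ for some $c\in k^\times$; consequently $\s^j(v)\in K^\times v$ for all $j\geq0$, whence $K\langle v\rangle_\s=K(v)$ and $K(v)^\de=k$.

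Assume first that $\cL(b/f)=\de(g)-\lambda g$ for a non\-zero $\cL=\sum_{j=0}^s\mu_j\s^j\in k\{X\}_\s$ (with $\mu_s\neq0$) and some $g\in K$. Set $h:=\sum_{j}\mu_j\s^j(u)-g\in L$. Using $\de\s=\s\de$ and $\lambda\in k^\s$ one checks $\de(h)=\lambda h$, hence $h/w\in L^\de=k$, i.e.\ $h=c_0w$ with $c_0\in k$. If $c_0=0$, then $\sum_j\mu_j\s^j(u)=g\in K$ is a non\-trivial linear, hence algebraic, relation over $K$ among $u,\dots,\s^s(u)$. If $c_0\neq0$, then $w=c_0^{-1}h\in K\langle u\rangle_\s$; applying $\s$ and using $\s(w)=cw$ to eliminate $w$ produces a relation over $K$ among $u,\dots,\s^{s+1}(u)$ whose coefficient of $\s^{s+1}(u)$ is $c_0\s(\mu_s)\neq0$, hence non\-trivial. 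In either case $u$, and therefore $z$, is transformally dependent over $K$.

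Conversely, assume $z$ is transformally dependent over $K$. Substituting $\s^j(z)=\s^j(v)\,\s^j(z/v)$ (with $\s^j(v)\in K^\times v$) into a non\-trivial $K$-polynomial relation among the $\s^j(z)$ turns it into a non\-trivial $K(v)$-polynomial relation among the $\s^j(z/v)$, so $z/v$ is transformally dependent over $K(v)$ (alternatively: $\strdeg(L|K)=\strdeg(L|K(v))$ since $v$ is transformally algebraic over $K$). Theorem \ref{thm:rank1ga}, applied over the base field $K(v)$ to the solution $z/v$ of $\de(y)=b/v$ (note $K(v)^\de=k=L^\de$), provides a non\-zero $\cL_1=\sum_j\nu_j\s^j\in k\{X\}_\s$ and $g_1\in K(v)$ with $\cL_1(b/v)=\de(g_1)$. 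Writing $b/v=(b/f)w^{-1}$ and $\s^j(w)=c_{(j)}w$ with $c_{(j)}:=\prod_{0\leq i<j}\s^i(c)\in k^\times$, we obtain $\cL_1(b/v)=w^{-1}\cL(b/f)$ for the non\-zero $\cL:=\sum_j c_{(j)}^{-1}\nu_j\s^j\in k\{X\}_\s$; hence $\cL(b/f)=w\,\de(g_1)=\de(wg_1)-\lambda(wg_1)$, so $\de(y)-\lambda y=\cL(b/f)$ has the solution $wg_1\in K(v)$. Since $\lambda\in k\subseteq K$ and $\cL(b/f)\in K$, Lemma \ref{lemma:solrat} (with $\de(v)=av$, $a\in K$) yields a solution $g\in K$, and $\cL(b/f)=\de(g)-\lambda g$ is the required relation.

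The only genuinely delicate step is the last one: producing $g$ in $K$ rather than merely in $K(v)$, which is precisely why Lemma \ref{lemma:solrat} was isolated in advance. Once the reduction to $\de(y)=b/v$ over $K(v)$ is in place, everything else is routine: the computations showing that the hypothesis $a=\lambda+\de(f)/f$ with $\lambda\in k^\s$ makes $v$ transformally algebraic over $K$ (so that enlarging $K$ to $K(v)$ costs nothing in $\s$-transcendence and makes Theorem \ref{thm:rank1ga} applicable), and the bookkeeping with the linear $\s$-polynomials $\cL$, $\cL_1$.
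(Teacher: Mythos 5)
Your proof is correct and follows essentially the same route as the paper's: both reduce via $\wtilde v=v/f$, $\wtilde z=z/f$ to the constant-coefficient equation $\de(y)=\lambda y+b/f$, use $\s(\wtilde v)=\mu\wtilde v$ to get $K\langle v\rangle_\s=K(v)$, apply Theorem \ref{thm:rank1ga} to $\de(y)=b/v$ over $K(v)$ for the converse, and descend from $K(v)$ to $K$ with Lemma \ref{lemma:solrat}. The only difference is cosmetic: your forward direction spells out the cases $c_0=0$ and $c_0\neq 0$ where the paper simply invokes the transformal dependence of $\wtilde v$.
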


\begin{proof}
The assumption that there exist $f \in K^\times$ and $\lambda \in k^\s$ such that $a = \lambda + \frac{\de(f)}{f}$
implies that any solution $v$ of $\de(y)=ay$ in $L$ is transformally dependent over $K$.
We fix a non-zero solution $v\neq 0$ of $\de(y)=ay$ in $L$ and we
set $\wtilde v=\frac{v}{f}$ and $\wtilde z=\frac{z}{f}$.
Then $\wtilde v$ and $\wtilde z$ satisfy the differential equations
$$
\de\l(\wtilde v\r)
=\left(a-\frac{\de(f)}{f}\right)\wtilde v
=\lambda\wtilde v
\hbox{~~and~~}
\de\l(\wtilde z\r)=\lambda\wtilde z+\frac{b}{f}.
$$
Since $\de$ and $\s$ commute to each other and $\s(\la)=\la$, we have that
$\de(\s(\wtilde v))=\s(\la)\s(\wtilde v)=\la\s(\wtilde v)$.
Therefore $\wtilde v$ and $\s(\wtilde v)$ satisfy the same differential equation
and there exists
$\mu \in k^\times$ such that $\s(\wtilde v)=\mu\wtilde v$.
Hence $K\<v\>_\s=K(v)$.
\par
Assume that there exist a non-zero homogeneous linear $\s$-polynomial $\cL \in k\{X\}_\s$
and $g \in K$ such that
$\cL\l(\frac{b}{f}\r) =\de(g)-\lambda g$. Then
$$
\de\l(\cL\l(\wtilde z\r)-g\r)
=\cL\l(\lambda\wtilde z+\frac{b}{f}\r)-\de\l(g\r)
=\lambda\l(\cL\l(\wtilde z\r)-g\r).
$$
Consequently, there exists $\kappa \in k$ such that  $\cL(\wtilde z)-g=\kappa \wtilde v$. Since $\wtilde v$ is transformally dependent over $K$, this shows that $z$ is transformally dependent over $K$.
\par
Conversely, assume that $z$ is transformally dependent over $K$. Since $L|K\left\<v \right\>_\s$ is a $\s$-Picard-Vessiot extension for $\de(y) =\frac{b}{v}$
and $\frac{z}{v}$ is transformally dependent over $K\left\<v \right\>_\s$,
Theorem \ref{thm:rank1ga} implies that
there exist a non-zero homogeneous linear $\s$-polynomial $\cL \in k\{X\}_\s$ and
$g \in K\left\<v\right\>_\s$ such that $\cL\l(\frac{b}{v}\r)=\de(g)$.
Since $\frac{b}{v}=\frac{b/f}{\wtilde v}$ and $\sigma(\wtilde v)=\mu\wtilde v$, there exists a non-zero homogeneous linear $\s$-polynomial $\wtilde{\cL}\in k\{X\}_\s$ such that
$\wtilde\cL\l(\frac{b}{f}\r)=\wtilde v\cL\l(\frac{b}{v}\r)$. Then
$$
\wtilde{\cL}\l(\frac{b}{f}\r)=\de\l(g\r)\wtilde v=\de\l(g\wtilde v\r)-\lambda g\wtilde v.
$$
So the equation $\de\l(y\r)-\lambda y=\wtilde{\cL}\l(\frac{b}{f}\r)$ has a solution in $K\left\<v\right\>_\s = K\l(v\r)$.
Now  Lemma \ref{lemma:solrat} provides an element $\wtilde{g} \in K$ such
that $\wtilde{\cL}\l(\frac{b}{f}\r) =\de\l(\wtilde{g}\r)-\lambda\wtilde{g}$.
\end{proof}

In order to apply Proposition \ref{prop:generalisationhishizakidependent}, we want $\s$ and $\de$
to commute: To ensure that
this hypothesis is satisfied in the $q$-difference case,
we take $\de=x\frac{d}{dx}$, rather than $\de=\frac{d}{dx}$.
Moreover we assume $q\in\C$ to be a
transcendental number.
\par
Notice that, if $\frac{dy}{dx}=by$, with $b\in\C(x)$, then $\de(y)=xby$.
Therefore a solution $v$ of $\de(y)=ay$, in a convenient extension of $\C(x)$, is transformally
dependent over $\C(x)$ if and only if
$a = c +\frac{1}{N}\frac{\de(f)}{f}$ for some $c \in \C$, $N\in\Z^\times$ and $f \in \C(x)^\times$
(see Corollary \ref{cor:rank1q}).

\begin{cor}\label{cor:qishi}
Let $L|\C(x)$ be a $\ds$-field extension, with $\de=x\frac{d}{dx}$, $L^\de=\C$ and $\s:f(x)\mapsto f(qx)$,
for some transcendental $q\in\C$,
and assume that
$z\in L$, $z\notin\C(x)$, satisfies $\de(z)=az+b$, with $a,b\in\C(x)$.
If $a = c +\frac{1}{N}\frac{\de(g)}{g}$ for some $c \in \C$, $N\in\Z^\times$ and $g \in \C(x)^\times$, then there are two cases:
\begin{enumerate}
\item
If $\frac{1}{N}\frac{\de(g)}{g}= \frac{\de(f)}{f}$, for some $f \in \C(x)^\times$, then $z$ is
transformally dependent over $\C(x)$
if and only if there exist a non-zero homogeneous linear $\s$-polynomial $\cL \in \C\{X\}_\s$
and $h \in \C(x)$
such that $\mathcal{L}\l(\frac{b}{f}\r) =\de(h)-ch$.

\item
If $\frac{1}{N}\frac{\de(g)}{g} \neq \frac{\de(f)}{f}$, for all $f \in \C(x)^\times$, then $z$ is
transformally dependent over $\C(x)$
if and only if there exists  $h \in \C(x)$ such that $b=\de(h)-ah$.
\end{enumerate}
\end{cor}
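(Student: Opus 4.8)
The idea is to reduce both equivalences to Proposition~\ref{prop:generalisationhishizakidependent}, Theorem~\ref{thm:rank1ga} and Lemma~\ref{lemma:solrat}: in case (i) Proposition~\ref{prop:generalisationhishizakidependent} applies on the nose, whereas in case (ii) its hypothesis fails and a transformal relation has to be descended from an algebraic extension of $\C(x)$ by hand.

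\emph{Normalisations and case (i).} Since $\de=x\frac{d}{dx}$ commutes with $\s$ we have $\hslash=1$, so every $\s^j$ commutes with $\de$. Transformal dependence of $z$ over $\C(x)$ only involves $\C(x)\<z\>_\s$, so we may take $L=\C(x)\<z\>_\s$; as $L^\de=\C$ is algebraically closed, Proposition~\ref{cor:existenceofPVextension} lets us enlarge $L$ without new $\de$-constants so that it also contains a solution $v$ of $\de(y)=ay$, and then $L=\C(x)\<z,v\>_\s$ is a $\s$-Picard-Vessiot extension for~\eqref{eqn:associatedsystem} with fundamental matrix $\begin{pmatrix}v&z\\0&1\end{pmatrix}$ as in~\eqref{eq:goodsolution}. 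By the observation recorded just before the statement (a restatement of Corollary~\ref{cor:rank1q}), the hypothesis ``$a=c+\frac1N\frac{\de(g)}{g}$ with $c\in\C$'' means precisely that $v$ is transformally dependent over $\C(x)$, i.e.\ $\strdeg(K'|\C(x))=0$ where $K'=\C(x)\<v\>_\s$; hence, by additivity of $\strdeg$ along $\C(x)\subseteq K'\subseteq L$, $z$ is transformally dependent over $\C(x)$ if and only if it is over $K'$, and $L|K'$ is the $\s$-Picard-Vessiot extension of $\de(y)=\frac bv$ with solution $\frac zv$. In case (i) one has $\frac1N\frac{\de(g)}{g}=\frac{\de(f)}{f}$, so $a=c+\frac{\de(f)}{f}$ with $c\in\C=k^\s$, and Proposition~\ref{prop:generalisationhishizakidependent} (with $\lambda=c$) gives exactly the asserted equivalence.

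\emph{Case (ii).} If $b=\de(h)-ah$ with $h\in\C(x)$, then $z-h$ solves $\de(y)=ay$, hence is a nonzero $\C$-multiple of $v$ (nonzero as $z\notin\C(x)$), so $z\in\C(x)(v)\subseteq K'$ and $z$ is transformally dependent over $\C(x)$ because $v$ is. Conversely, assume $z$ is transformally dependent over $\C(x)$, hence over $K'$; by Theorem~\ref{thm:rank1ga} applied to $L|K'$ there are a nonzero homogeneous linear $\s$-polynomial $\cL=\sum_{j=0}^s\lambda_j\s^j\in\C\{X\}_\s$ and $w\in K'$ with $\cL(\frac bv)=\de(w)$. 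The key input is the structure of $K'$ in case (ii): with $\xi_j:=\s^j(v)/v$ one computes $\xi_j^N=q^{jNc}\frac{\s^j(g)}{g}\in\C(x)^\times$, so each $\xi_j$ is algebraic over $\C(x)$ of degree dividing $N$, $K'$ is $\C(x)$ with the $\xi_j$ and $v$ adjoined, and $v\notin\C(x)$ (else $a=\frac{\de(v)}{v}$ would be a logarithmic derivative, contradicting (ii)). Resolving $\de(w)=\cL(\frac bv)=\bigl(\sum_j\lambda_j\xi_j^{-1}\s^j(b)\bigr)\,v^{-1}$ over the finite extension of $\C(x)$ generated by $v$ and the $\xi_j$ that occur --- the right-hand side having only a simple pole ``at $v=0$'', $w$ must be a Laurent polynomial in $v$ and only its $v^{-1}$-coefficient matters --- leads to an equation $\de(\gamma)-a\gamma=\sum_j\lambda_j\xi_j^{-1}\s^j(b)$ with $\gamma$ algebraic over $\C(x)$. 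Applying the averaging operator $\mathcal R$ of Lemma~\ref{lemma:solrat} (which is $\C(x)$-linear, commutes with $\de$, and annihilates $\xi_j^{-1}$ whenever $\xi_j\notin\C(x)$, since then $\C(x)(\xi_j)$ is a nontrivial Kummer extension) descends this to $\de(h)-ah=\widetilde{\cL}(b)$ with $h\in\C(x)$ and $\widetilde{\cL}=\sum_{j:\ \xi_j\in\C(x)}\lambda_j\xi_j^{-1}\s^j$; moreover a ramification count at a pole of $a$ with non-integral residue, using that $q$ is transcendental so the $q$-iterates of poles do not collide, shows that when $g$ is not, up to a constant times a power of $x$, an $N$-th power, one has $\xi_j\notin\C(x)$ for all $j\geq1$, so that $\widetilde{\cL}=\lambda_0\,\mathrm{id}$.

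\emph{Remaining points and the main obstacle.} To conclude one must (a) arrange $\lambda_0\neq0$ in the relation $\cL(\frac bv)=\de(w)$, which follows from the identity ``$\s^n(z)\in K'\iff z\in K'$'' (proved by applying Lemma~\ref{lemma:solrat} to $\s^n(z)$, $\s^n(v)$ and then the automorphism $\s^{-n}$ of $\C(x)$) together with the fact that, for $\Ga$, the ``linear part'' of the vanishing ideal of $\sgal(L|K')$ cannot consist solely of $\s$-polynomials with zero constant term; (b) treat the remaining cases of (ii) in which $a=\lambda+\frac{\de(f)}{f}$ with $\lambda\in\C$ but $\lambda-c\notin\Z$, by applying Proposition~\ref{prop:generalisationhishizakidependent} with this $\lambda$ and converting its conclusion via $(\de-a)(f\tilde h)=f(\de-\lambda)(\tilde h)$; and (c) handle the sub-case $c\in\Q$, where $v$ is algebraic over $\C(x)$, in the same spirit but more directly. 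The step I expect to be the genuine obstacle is the common core of (b) and (c): showing that the existence of a nonzero homogeneous linear $\s$-relation over $b$ modulo $\operatorname{Im}(\de-a)$ already forces $b\in\operatorname{Im}(\de-a)$ --- i.e.\ controlling the $\C[\s,\s^{-1}]$-module $\C(x)/\operatorname{Im}(\de-a)$. This is exactly where the hypotheses that $q$ is transcendental and that $\frac1N\frac{\de(g)}{g}$ is not a logarithmic derivative are used, through a pole-order / fractional-expansion argument like those in Corollaries~\ref{cor:rank1shift} and~\ref{cor:rank1q}.
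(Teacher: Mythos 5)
Your handling of case (i) and of the easy direction of (ii) coincides with the paper's. For the hard direction of (ii) you take a genuinely different route, and it is there that a real gap remains — one you partly flag yourself. The paper does not start from the general linear relation $\cL\l(\frac bv\r)=\de(w)$ furnished by Theorem \ref{thm:rank1ga}; it invokes the dichotomy of Theorem \ref{thm:classg2} for $G=\sgal(L|K)\leq\mathbb{G}$. In the first branch the relation on the unipotent part is a pure monomial, $\s^n(\beta)=0$, so the Galois correspondence gives $\s^n(z)\in K\<v\>_\s$ outright; the equation $\de(y)-\s^n(a)y=\s^n(b)$ is then descended from $K(v,\ldots,\s^m(v))$ to $K$ by iterating Lemma \ref{lemma:solrat}, and applying $\s^{-n}$ (licit because $\s$ is an automorphism of $\C(x)$ commuting with $\de$) yields $b=\de(h)-ah$. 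In the second branch one gets $\s^n(v)=hv$, hence $\s^n(g)/g=\mu h^N$, which is played off against the hypothesis that $\frac1N\frac{\de(g)}{g}$ is not a logarithmic derivative. Because you keep an arbitrary $\cL=\sum_j\lambda_j\s^j$, you are forced to prove that a nonzero linear $\s$-relation on $b$ modulo $\operatorname{Im}(\de-a)$ already forces $b\in\operatorname{Im}(\de-a)$ — exactly the step you call the obstacle. The monomial shape of the relation is the entire point of using Theorem \ref{thm:classg2} instead of Theorem \ref{thm:rank1ga}: with it, no control of the $\C[\s]$-module $\C(x)/\operatorname{Im}(\de-a)$ is needed.

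Moreover the missing step is not merely difficult: in the boundary sub-case where all your $\xi_j=\s^j(v)/v$ lie in $\C(x)$ — equivalently $a=\lambda+\frac{\de(f)}{f}$ with $\lambda\in\C$ and $f\in\C(x)^\times$, which is compatible with hypothesis (ii) (take $g=\gamma x^ef^N$ with $N\nmid e$) — the implication you need is false. Concretely, $a=1=\frac12+\frac12\frac{\de(x)}{x}$ and $b=x$ satisfy the hypotheses of (ii); the solution $z=x\log x$ obeys $\s(z)=qz+(q\log q)x$ and is therefore transformally dependent over $\C(x)$, yet $x=\de(h)-h$ has no solution $h\in\C(x)$. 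So your steps (b) and (c) — passing through Proposition \ref{prop:generalisationhishizakidependent} and then collapsing its criterion to ``$b\in\operatorname{Im}(\de-a)$'' — cannot be carried out as planned: in this sub-case the correct criterion is the one of Proposition \ref{prop:generalisationhishizakidependent} and it does not reduce to the one displayed in (ii). (The same sub-case is where the paper's exclusion of its second branch is delicate: the equality $\s^n(g)/g=\mu h^N$ only yields $g=\gamma x^e f_1^N$, not $g=f^N$. The robust dichotomy is whether or not $a$ itself has the form $\lambda+\frac{\de(f)}{f}$, and any complete argument has to separate that case out explicitly.)
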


\begin{rmk}
For a general difference operator $\s$, we only have
a partial answer to the question of transformal dependencies of solutions of $\de(y)=ay +b$.
On the other hand, for a $q$-difference operator with $q$ transcendent,
Corollary \ref{cor:qishi}
together with Corollary \ref{cor:ishispecial} (ii)
describe completely the transformal relations of the solutions in terms of relations satisfied by the
coefficients $a$ and $b$.
\end{rmk}

\begin{proof}
As in the proof of Corollary \ref{cor:ishispecial} we can assume that there exists a non-zero $v\in L$ with $\de(v)=av$. The first assertion follows immediately from Proposition \ref{prop:generalisationhishizakidependent} above. Let us prove the second one.
Suppose that $a = c +\frac{1}{N}\frac{\de(g)}{g}$,
with $\frac{1}{N}\frac{\de(g)}{g}\neq\frac{\de(f)}{f}$, for all $f \in \C(x)^\times$, so that $v$ is transformally dependent over $\C(x)$.
If there exists $h \in \C(x)$, such that $b=\de(h)-ah$, then $z-h$ is a solution of $\de(y)=ay$.
Hence there exists $\lambda \in \C$ such that $z-h=\lambda v$, which proves that
$z$ is transformally dependent over $\C(x)$.
\par
Conversely,
let us assume that $z$ is transformally dependent over $K=\C(x)$.
With no loss of generality, we can assume that $L=K\<v,z\>_\s$ is a $\s$-Picard-Vessiot extension for
$\de(z)=az+b$.
We refer to the discussion and the notation at the beginning of the section about the
devissage of $G=\sgal(L|K)$.
We remind that the action of an element $\tau\in G(S)$ on $R\otimes_k S$, where $R=K\{v,v^{-1}, z\}_\s$
is the $\s$-Picard Vessiot ring of $L$ and $S$ is a $k$-$\s$-algebra,
is given by $\tau(v\otimes 1)= v \otimes \alpha$ and $\tau(z\otimes 1)= v \otimes \beta +z \otimes 1$, for some $\a,\be\in S$.
Since $L|K\left\<v \right\>_\s$ is a $\s$-Picard-Vessiot extension for $\de(y) =\frac{b}{v}$ and $\frac{z}{v}$ is transformally dependent over $K\left\<v \right\>_\s$, we know that $G_u=G\cap\mathbb{G}_u$ is properly contained in $\mathbb{G}_u$.
According to Theorem \ref{thm:classg2} we have to study two cases:
For all $\tau\in G(S)$ and all $k$-$\s$-algebras $S$,
either there exists an integer $n\geq 0$ such that $\s^n(\beta)=0$ or
there exist integers $n>m\geq 0$ such that $\s^n(\alpha)=\s^m(\alpha)$.
\par
In the first case, we have
$$
\tau\l(\s^n\l(\frac{z}{v} \otimes 1\r)\r)=\s^n\l(\frac{z}{v} \otimes 1+1\otimes\beta\r)=\s^n\l(\frac{z}{v}\otimes 1\r),
\hbox{~for any $\tau\in G_u(S)$ and any $k$-$\s$-algebra $S$.}
$$
The Galois correspondence implies that
$\s^n\l(\frac{z}{v}\r) \in K\left\<v \right\>_\s$ and thus $\s^n(z) \in  K\left\<v \right\>_\s$.
This means that $\de(y)-\s^n(a)y=\s^n(b)$ has a solution $\s^n(z)\in K\left\<v\right\>_\s$.
There exists a non-negative integer $m$ such that $\s^n(z)\in K(v,\ldots,\s^m(v))$.
Applying recursively Lemma \ref{lemma:solrat},
we find that there already exists a solution
$\wtilde{h}\in K$ of $\de(y)-\s^n(a)y=\s^n(b)$.
So, if $h\in K$ with $\s^n(h)=\wtilde{h}$, then $\de(h)-ah=b$.
\par
Now consider the second case. Since $K$ is inversive, we know that $G$ is $\s$-reduced.
So we can assume that $m=0$ (see Proposition 4.3 in \cite{articleone}).
Since for any $\tau\in G(S)$ we have
$$
\tau\l(\frac{\s^n(v)}{v} \otimes 1\r)=
\frac{\s^n(\tau(v\otimes 1))}{\tau(v \otimes 1)}=\frac{\s^n(v)}{v}\otimes 1,
$$
we conclude that $\s^n(v)=hv$ for some $h\in\C(x)^\times$.
This implies
$\s^n\l(\frac{\de\l(v\r)}{v}\r)=\frac{\de\l(h\r)}{h}  + \frac{\de\l(v\r)}{v}$, which yields $\s^n\l(a\r)-a=\frac{\de\l(h\r)}{h}= \frac{1}{N}\frac{\de\l(\s^n\l(g\r)/g\r)}{\s^n\l(g\r)/g}$. So there exists
$\mu \in \C^\times$  such that $\frac{\s^n\l(g\r)}{g} =\mu h^N$. It is now easy to see that this last equality implies that there exists
 $f \in \C\l(x\r)^\times$ with
$g=f^N$. But then $\frac{1}{N}\frac{\de\l(g\r)}{g}=\frac{\de\l(f\r)}{f}$. So the second case can not occur.
\end{proof}

\begin{rmk}
The existence of a non-zero homogeneous linear $\s$-polynomial $\cL \in \C\{X\}_\s$,
such that
$\cL(b)=\de(y)-ay$ has a solution in $\C(x)$ does not imply that $b=\de(y)-ay$ has a solution $\C(x)$,
differently from the case of linear difference equations with a differential parameter.
See Lemma 6.4 in \cite{HardouinSinger}.
Take for example $a=1$ and $b=x$. Then $\frac{1}{q}\s(x)-x=0$, but the differential equation
$x\frac{d}{dx} y-y=x$ has no solution in $\C(x)$.
\end{rmk}

%%%%%%%%%%%%%%%%%%%%%%%%%%%%%%%%%%%%%%%%%%%%%%%%%%%%%%%%%%%%%%%%%%%%%%%%%%%%%
%%%%%%%%%%%%%%%%%%%%%%%%%%%%%%%%%%%%%%%%%%%%%%%%%%%%%%%%%%%%%%%%%%%%%%%%%%%%%%
\section{Inverse problem for $\s$-closed subgroups of $\mathbf G_a$}\label{sec:pbinv}
%%%%%%%%%%%%%%%%%%%%%%%%%%%%%%%%%%%%%%%%%%%%%%%%%%%%%%%%%%%%%%%
%%%%%%%%%%%%%%%%%%%%%%%%%%%%%%%%%%%%%%%%%%%%%%%%%%%%%%%%%%%%%%%

Let $k(x)$ be a differential field with $\de(x)=1$ and $k=k(x)^\delta$ algebraically closed. In the classical Galois theory of linear differential systems with coefficients in  $k(x)$, the inverse problem consists in determining all the linear algebraic groups, which
occur as Galois group of a linear differential system over $k(x)$. This question, which is also connected
to the Riemann-Hilbert correspondence, was addressed by many mathematicians, most recently by  J. Hartman, J. Kovacic, C. Mitschi,   J.P. Ramis, M.F. Singer,... In \cite{Hartinv}, many references are given and it
is proved that any linear algebraic group over $k$ is the Galois group of a linear differential system over
$k(x)$.

Recently Cassidy and Singer developed a parametrized Galois theory of linear
differential systems, which takes into account  continuous actions on auxiliary parameters.
For instance, if $k$ above is a ``sufficiently large'' $\frac{d}{dt}$-field extension
of $\C(t)$, Cassidy and Singer are able to attach to a differential system $\de(y)=A(x,t) y$,
a parametrized Galois group, which is a linear $\frac{d}{dt}$-differential algebraic group
in the sense of Kolchin. In this setting, an inverse problem
was also addressed. The first answers were given in \cite{Dreyfus:density} and \cite{MitschiSinger:MonodromyGroupsOfParameterizedLinearDifferentialEquationsWithRegularSingularities}. Using transcendental descriptions
via monodromy and Stokes, the authors were able to show, for instance, that a  linear
$\frac{d}{dt}$-differential algebraic group occurs as parametrized Galois group in the sense of \cite{cassisinger} if and only if it posses   a  Kolchin dense finitely generated subgroup. A purely
algebraic characterization was then  given in \cite{Singerinv}, where it is proved that
a linear $\frac{d}{dt}$-differential algebraic group $G$ over $k$ is
a parametrized Galois group if and only if its identity component has no quotients isomorphic to $\Ga$
or $\Gm$.

The aim of the present section is to investigate some similar questions for  a
discrete action on the parameter. Since $\Ga$ appears as an obstruction in the continuous
parameter case, we restrict our study of the inverse problem to $\s$-closed subgroups
of $\Ga$. We show that one can realize $\Ga$ itself over some $k(x)$ and, thereby, that any of
the $\s$-closed subgroups of $\Ga$ is a $\s$-Galois group over some $\ds$-field
extension of $k(x)$, by $\s$-Galois correspondence.

\medskip
In this section, we assume that we are in the following situation:

\begin{quote}
($\mathcal H_1$)
Let $K$ be a $\ds$-field, with $k=K^\de$ algebraically closed and aperiodic with respect to $\s$,
i.e., for any positive integer $n$, there exists $a\in k$ such that
$\s^n(a)\neq a$.
We consider a $\s$-Picard-Vessiot extension $L|K$, having $\s$-Galois group isomorphic to a
$\s$-closed subgroup of the additive group $\Ga$.
\end{quote}

In the first subsection, we give a decomposition theorem of such extensions (Proposition \ref{prop:invpred}) into a ``$\s$-infinitesimal'' and a perfectly $\s$-reduced part. Moreover,
under additional assumptions on $K$, we show that $L$ is the $\s$-Picard-Vessiot extension of
some differential equation of the form $\de(y)=g$. In a second section, we finally restrict ourselves
to the case $K=k(x)$,  where $k(x)$ is a $\ds$-field extension of $\C(t,x)$ endowed with
$\de(x)=1, \s(x)=x$ and $\de(t)=0, \s(t)=t+1$. This situation can be seen as the discrete counterpart of
the one studied in \cite{Singerinv}. Assuming that $k$ is linearly
$\s$-closed, we show that any perfectly $\s$-reduced $\s$-closed subgroup of $\Ga$ is a $\s$-Galois
group over $k(x)$ (see Proposition \ref{prop:inverseproblemH2}).

\subsection{Some structure theorems}

The next lemma is a first step in the description of the structure of the extension $L|K$.

\begin{lemma}\label{lemma:inverseproblem}
Assume that we are in the situation ($\mathcal H_1$).
Then there exist $t_1,\dots,t_s\in L$ such that $L=K\<t_1,\dots,t_s\>_\s$,
and $\de(t_1),\dots,\de(t_s)\in K$.
\end{lemma}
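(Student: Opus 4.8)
The plan is to push everything down to the \emph{classical} Picard-Vessiot extension sitting inside $L$. First I would fix $A\in K^{n\times n}$ and $Y\in\Gl_n(L)$ so that $L|K$ is a $\s$-Picard-Vessiot extension for $\de(y)=Ay$ with fundamental solution matrix $Y$, and set $L_0:=K(Y)\subset L$. By Proposition \ref{prop:Zariskiclosures}, $L_0|K$ is then a classical Picard-Vessiot extension for $\de(y)=Ay$ whose Galois group is $G[0]$, the Zariski closure of $G=\sgal(L|K)$ inside $\Gl_{n,k}$. Since $L=K\<Y\>_\s$ and all entries of $Y$ lie in $L_0$, it is enough to produce $t_1,\dots,t_s\in L_0$ with $L_0=K(t_1,\dots,t_s)$ and $\de(t_i)\in K$: indeed then $L=K\<Y\>_\s\subseteq K\<t_1,\dots,t_s\>_\s\subseteq L$, while each $t_i\in L_0\subseteq L$, so that $L=K\<t_1,\dots,t_s\>_\s$.

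The main obstacle, and the crux of the argument, is to identify $G[0]$: a priori the chosen embedding $G\hookrightarrow\Gl_{n,k}$ is unrelated to the \emph{abstract} isomorphism of $G$ with a $\s$-closed subgroup $H$ of $\Ga$, so one cannot read off $G[0]$ directly. I would instead work with the underlying group $k$-schemes. Forgetting $\s$ gives $G^\sharp\cong H^\sharp$, a closed subgroup scheme of $([\s]_k\Ga)^\sharp=\varprojlim_d\Ga^{d+1}$ (here $\Ga_d=\Ga^{d+1}$ because $\Ga$ is defined over the prime field); this pro-object is commutative and pro-unipotent, hence so is $G^\sharp$. By construction $G^\sharp\to G[0]$ is scheme-theoretically dominant, so over the field $k$ and in characteristic zero it realises $G[0]$ as a quotient of $G^\sharp$; a finite-dimensional quotient of a commutative pro-unipotent group scheme in characteristic zero is a vector group, whence $G[0]\cong\Ga^m$ with $m=\dim_k G[0]$.

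It then remains to use the classical structure of a Picard-Vessiot extension with Galois group $\Ga^m$. The associated $\Ga^m$-torsor $\spec\bigl(K[Y,1/\!\det Y]\bigr)$ is trivial, $H^1$ of a field with coefficients in $\Ga^m$ being zero, so the classical Picard-Vessiot ring is a polynomial ring $K[t_1,\dots,t_m]$ on which $G[0]=\Ga^m$ acts by the translations $t_i\mapsto t_i+c_i$; in particular $t_1,\dots,t_m$ are algebraically independent over $K$ and $L_0=K(t_1,\dots,t_m)$ (see e.g.\ \cite{vdPutSingerDifferential}). As $\de$ commutes with the Galois action, each $\de(t_i)$ is $\Ga^m$-invariant, hence $\de(t_i)\in L_0^{G[0]}=K$. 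Combined with the reduction of the first paragraph this yields $L=K\<t_1,\dots,t_m\>_\s$ with $\de(t_i)\in K$; when $m=0$ one has $L_0=K$, so $L=K$ and one takes $s=1$, $t_1=0$. The only routine verification left is that the passage from the arbitrary $Y$ to the ``integral'' generators $t_i$ stays inside $L_0|K$, which it does since they arise from trivialising the torsor of that fixed extension.
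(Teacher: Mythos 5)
Your proof is correct and follows essentially the same route as the paper's: identify the Zariski closure $G[0]$ as a vector group $\Ga^s$ by transporting the abstract isomorphism with a $\s$-closed subgroup of $\Ga$ to realize $G[0]$ as an algebraic quotient of a subgroup of a finite power of $\Ga$, then trivialize the resulting $\Ga^s$-torsor using the vanishing of $H^1$ and conclude $\de(t_i)\in K$ from the invariance of $\de(t_i)$ under the translation action. The only cosmetic difference is that the paper makes the inclusion $k\l[\sgal(L|K)[0]\r]\hookrightarrow k[\overline{x},\s(\overline{x}),\dots,\s^r(\overline{x})]$ explicit on coordinate rings, where you instead invoke the structure of commutative pro-unipotent group schemes in characteristic zero.
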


\begin{proof}
We can suppose that $L =K \left \<Y \right \>_\s$ where $Y \in \Gl_n(L)$ is a fundamental solution
matrix of a linear differential system $\de(y)=Ay$ of order $n$, with coefficients in $K$.
We denote by $\sgal(L|K)[0]$
the Zariski closure of  $\sgal(L|K)$ inside  $\Gl_{n,k}$ and  by $k\left[\sgal(L|K)[0]\right]$
its $k$-Hopf algebra.
\par
Let $G^\p$ be a $\s$-closed subgroup of $\Ga$
and $\phi: G^\p \rightarrow \sgal(L|K)$ be an isomorphism  of group $k$-$\s$-schemes
(see \S A.8 in \cite{articleone}), so that
$$
\phi^*:k\{\sgal(L|K)\}\rightarrow k\{G^\p\}
$$
is an isomorphism of $k$-$\s$-Hopf algebra.
We consider the $\s$-Picard-Vessiot ring $R =K\left\{ Y, \frac{1}{\det(Y)} \right \}_\s$
inside $L$ (see  Corollary \ref{prop:pvringpvext}).
As in Proposition \ref{prop:defgal}, we can identify $k\{\sgal(L|K)\}$ with $k\{Z,\frac{1}{\det(Z)}\}_\s$ where $Z=(Y \otimes 1)^{-1}(1 \otimes Y) \in (R\otimes R)^\de$.
Moreover, $k\{G^\p\}$ is a quotient of the ring of $\s$-polynomials in one variable,
i.e., it has the form $k\{\overline{x}\}_\s$.
Therefore, there exists $r \in \Z_{\geq 0}$ such that $\phi^*(Z)$
is an element of the ring $k[\overline{x},\s(\overline{x}),\dots,\s^r(\overline{x})]$.
This means in particular that the map $\phi^*$
injects $k\l[\sgal(L|K)[0]\r]$ in a sub-$k$-algebra of $k\left[\overline{x},\s(\overline{x}),\dots,\s^r(\overline{x})\right]$, i.e.,
$\sgal(L|K)[0]$ can be identified  with a quotient of some $\Ga^r$.
The quotients of an algebraic vector group are   algebraic vector groups, hence there exists $s \in \Z_{>0}$ such that $\sgal(L|K)[0]$ is isomorphic to  $\Ga^s$.
\par
We have proved that $\sgal(L|K)[0]\cong \mathbf G_a^s$ is the differential Galois group
of the Picard-Vessiot ring $R_0:=K[Y,\frac{1}{\det(Y)}]\subset R$, over $K$.
Finally, $\Ga^s$ has a trivial first Galois cohomology group,
therefore the $\sgal(L|K)[0]$-torsor $R_0$ is trivial
(see Theorem 1.28 \cite{vdPutSingerDifferential}). In other words, there exist
$t_1,\hdots,t_s \in R_0$ such that
\begin{itemize}
\item $R_0 =K[t_1, \hdots,t_s]$,
\item for all $\tau \in \sgal(L|K)[0](k)=\Aut^\de(R_0|K)$, we have $\tau(t_i) =t_i + c_i(\tau)$ with $c_i(\tau ) \in k$.
\end{itemize}
For any $\tau \in \sgal(L|K)[0](k)$ and any $i=1,\dots,s$, we have
$\tau(\de(t_i))=\de(t_i + c_i(\tau))=\de(t_i)$.
The differential Galois correspondence (see Proposition 1.34 in \cite{vdPutSingerDifferential}) implies
that $\de(t_i) \in K$, for all $i=1,\hdots,s$.
\par
To summarize, we have found $t_1,\hdots,t_s \in L$ such that
$K\l[Y,\frac{1}{\det Y}\r]=K[t_1, \hdots,t_s]$, and hence such that
$L=K \left \<t_1,\hdots,t_s  \right \>_\s$,
with $\de(t_i)\in K$ for all $i=1,\hdots,s$.
\end{proof}

Now, we treat the case where the $\s$-Galois group is the whole $\Ga$.

\begin{prop}\label{prop:gared}
Under the assumption ($\mathcal H_1$), suppose that
$\sgal(L|K)$ is isomorphic to $\Ga$.
Then, there exists $\theta \in L$ such that $L=K\left\< \theta \right\>_\s$
and $\de(\theta)\in K$.
\end{prop}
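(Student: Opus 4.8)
The plan is to combine Lemma \ref{lemma:inverseproblem} with the hypothesis $\sgal(L|K)\cong\Ga$ and the elementary structure of homomorphisms between powers of the additive $\s$-group.

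By Lemma \ref{lemma:inverseproblem} there are $t_1,\dots,t_s\in L$ with $L=K\<t_1,\dots,t_s\>_\s$ and $b_i:=\de(t_i)\in K$. Running the construction in the proof of Theorem \ref{thm:rank1ga} for the block diagonal inhomogeneous system $\de(y_i)=b_i$ ($i=1,\dots,s$), we see that $L|K$ is a $\s$-Picard-Vessiot extension with $\s$-Picard-Vessiot ring $R=K\{t_1,\dots,t_s\}_\s$ (Proposition \ref{prop:pvringpvext}), and $G:=\sgal(L|K)$ identifies with a $\s$-closed subgroup of $\Ga^s$ via $\tau\mapsto(c_1(\tau),\dots,c_s(\tau))$, where $\tau(t_i\otimes 1)=t_i\otimes 1+1\otimes c_i(\tau)$ for $\tau\in G(S)$ and any $k$-$\s$-algebra $S$. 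Fixing an isomorphism $\phi\colon\Ga\to G$ supplied by the hypothesis and composing with $G\hookrightarrow\Ga^s$ yields a closed immersion $\iota\colon\Ga\hookrightarrow\Ga^s$ of group $k$-$\s$-schemes.

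The crux is to analyse $\iota$. A homomorphism $\Ga\to\Ga$ of group $k$-$\s$-schemes corresponds to a primitive element of $k\{x\}_\s$, i.e.\ to an additive $\s$-polynomial $\sum_j a_j\s^j(x)$ (cf.\ \S\ref{subsec:subgroupsGa}); hence $\iota$ is given by $c\mapsto(p_1(c),\dots,p_s(c))$ with $p_i=\sum_j a_{i,j}\s^j(x)\in k\{x\}_\s$. Since $\iota$ is a closed immersion, the comorphism $k\{x_1,\dots,x_s\}_\s\to k\{x\}_\s$ sending $x_i$ to $p_i$ is surjective, so $x$ equals a polynomial, with coefficients in $k$, in the elements $\s^j(p_i(x))$ ($1\le i\le s$, $j\ge 0$). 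Each such $\s^j(p_i(x))$ is homogeneous of total degree $1$ in the variables $x,\s(x),\s^2(x),\dots$, so, comparing degree-$1$ homogeneous parts, $x$ must already lie in the $k$-span of the $\s^j(p_i(x))$: there are $\mu_{i,j}\in k$, only finitely many non-zero, with $x=\sum_{i,j}\mu_{i,j}\s^j(p_i(x))$. Writing $q_i:=\sum_j\mu_{i,j}\s^j$, this says exactly that $\sum_i q_ip_i$ acts as the identity on $x$, hence $\sum_i q_ip_i=1$ in the ring $k[\s]$ of additive $\s$-operators. This degree comparison is the one genuinely delicate point; the rest is bookkeeping parallel to the proof of Theorem \ref{thm:rank1ga}.

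Finally I would put $\theta:=\sum_{i,j}\mu_{i,j}\s^j(t_i)\in R$. Then $\de(\theta)=\sum_{i,j}\mu_{i,j}\hslash_j\s^j(b_i)\in K$, because $\mu_{i,j},\hslash_j\in k\subseteq K$ and $b_i\in K$. For $\tau\in G(S)$ with $\Ga$-coordinate $c\in S$ (so $c_i(\tau)=p_i(c)$) one computes in $R\otimes_k S$
\[
\tau(\theta\otimes 1)=\theta\otimes 1+1\otimes\sum_{i,j}\mu_{i,j}\s^j(p_i(c))=\theta\otimes 1+1\otimes\l(\sum_i q_ip_i\r)(c)=\theta\otimes 1+1\otimes c ,
\]
using $\sum_i q_ip_i=1$. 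Hence $\tau$ fixes $\theta$ if and only if $c=0$, i.e.\ $\tau$ is trivial, so $\sgal(L|K\<\theta\>_\s)$ is the trivial group. By the $\s$-Galois correspondence (Theorem \ref{theo:Galoiscorrespondence}) this gives $K\<\theta\>_\s=L$, which together with $\de(\theta)\in K$ is the desired conclusion.
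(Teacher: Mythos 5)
Your proof is correct and follows essentially the same route as the paper's: Lemma \ref{lemma:inverseproblem}, the realization of $\sgal(L|K)\cong\Ga$ as a $\s$-closed subgroup of $\Ga^s$ given componentwise by additive $\s$-polynomials, a B\'ezout identity $\sum_i q_ip_i=1$ in the skew ring $k[\s]$, and the element $\theta=\sum_{i,j}\mu_{i,j}\s^j(t_i)$ together with the $\s$-Galois correspondence. The only divergence is in how the B\'ezout identity is justified: the paper deduces it from the injectivity of the morphism into $\Ga^s$ together with the left-ideal structure of $k[\s]$, whereas you read it off from the surjectivity of the comorphism of the closed immersion by comparing degree-one homogeneous parts --- two equivalent arguments.
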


\begin{proof}
It follows from Lemma \ref{lemma:inverseproblem} that
we can find $t_1,\hdots,t_s \in L$ such that $L=K\l\<t_1,\hdots,t_s\r\>_\s$,
with $\de(t_i)\in K$. Moreover, $R=K\l\{t_1,\hdots,t_s\r\}_\s$ is a $\s$-Picard-Vessiot ring of $L$.
For every $k$-$\s$-algebra $S$ and  every $\tau \in \sgal(L|K)(S)$,
there exist $c_{1}(\tau), \hdots,c_{s}(\tau) \in S^s$,
such that in $R\otimes_k S$ we have: $\tau(t_i \otimes 1)=t_i \otimes 1 +1 \otimes c_{i}(\tau)$ for all $i=1,\hdots, s$.
This induces a $\s$-morphism
$\phi : \sgal(L|K ) \rightarrow \Ga^s$,
such that $\phi(\tau) =(c_{1}(\tau), \hdots,c_{s}(\tau))$,
for every $k$-$\s$-algebra $S$ and every $\tau \in \sgal(L|K)(S)$.
If $\tau \in \sgal(L|K)(S)$ is the identity on $t_1\otimes 1,\dots, t_s\otimes 1$, then it is the identity on
$R\otimes S$. This proves that $\phi$ has a trivial kernel.
\par
Now, we identify $\sgal(L|K )$ with $\Ga$ and $\tau \in \sgal(L|K)$ with its image, say $c(\tau)$, in $\Ga$. Then, since $\phi$ is a morphism of $\s$-algebraic groups from  $\Ga$ to  $\Ga^s$ \footnote{ In zero characteristic,  morphisms of $\s$-algebraic groups from $\Ga$ to $\Ga$ are given by linear $\s$-polynomials. This can be proved as in the algebraic case \cite[Theorem 8.4]{Waterhouse:IntrotoAffineGroupSchemes}. We omit this proof here.},   there exist homogeneous linear
$\s$-operators $L_1,\hdots,L_s$, i.e. elements of the skew euclidean ring $k[\s]$,
such that, for any $k$-$\s$-algebra $S$, we have:
$$
\begin{array}{rccc}
\phi:&\mathbf G_a(S) &\longrightarrow&\mathbf G_a^s(S)\\
&\tau&\longmapsto& (L_1(c(\tau)),\dots,L_s(c(\tau))).
\end{array}
$$

Since $\phi$ is injective, the $\s$-operators $L_1,\hdots,L_s$ do not annihilate simultaneously, and hence
$1$ belongs to the ideal $k[\s]L_1+\hdots+k[\s]L_s$ of $k[\s]$.
Therefore there exists $N_1,\dots,N_s\in k[\s]$ such that
$N_1 L_1 + \hdots N_s L_s =1$ in $k[\s]$.
We set $\theta := \sum_{i=1}^s N_i(t_i) \in L$. Then,
for every $k$-$\s$-algebra $S$ and  every $\tau \in \sgal(L|K)(S)$, we have
$\tau(\theta \otimes 1) = \sum_{i=1}^s N_i(t_i\otimes 1 +
1\otimes L_i(c(\tau)))= \sum_{i=1}^s N_i(t_i \otimes 1) +  1\otimes c(\tau)=\theta \otimes 1 + 1 \otimes c(\tau)$ in $R\otimes_k S$.
This means, that the only element of $\sgal(L|K)$ that fix $\theta$ is the identity.
By the $\s$-Galois correspondence, we finally get that $L=K\left \< \theta \right\>_\s$.
It follows from the construction itself that we have $\de(\theta) \in K$.
\end{proof}

The situation is more complicated when $\sgal(L|K)$ is isomorphic to a proper $\s$-closed subgroup of
$\mathbf G_a$.

\begin{prop}
\label{prop:invpred}
Under the assumption ($\mathcal H_1$), suppose that
$\sgal(L|K)$ is isomorphic to a proper subgroup of $\mathbf G_a$.
Then, we have:
\begin{enumerate}
\item
There exist $\theta \in L$ and $n \in \Z_{\geq 0}$, such that
$\s^n(L)\subset K\left\< \theta \right\>_\s$ and
$\de(\theta)\in K$.
\item
The $\ds$-field $K\left\< \theta \right\>_\s$ is a $\s$-Picard-Vessiot extension
of $K$ for $\de(y)=\de(\theta)$ and we have
a short exact sequence
\beq\label{eq:exactsequence}
0 \rightarrow \sgal(L| K\left\< \theta \right\>_\s) \rightarrow  \sgal(L| K) \rightarrow  \sgal( K\left\< \theta \right\>_\s|K) \rightarrow 0.
\eeq
Moreover, the  group  $ \sgal(L| K\left\< \theta \right\>_\s)$ is $\s$-infinitesimal, i.e.,
$\sgal(L| K\left\< \theta \right\>_\s)(S)$ is trivial for any $\s$-reduced $k$-$\s$-algebra $S$.
In addition, we can choose $\theta$ such that
$ \sgal( K\left\< \theta \right\>_\s|K)$ is perfectly $\s$-reduced.
\end{enumerate}
\end{prop}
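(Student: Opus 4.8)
The plan is to reduce everything to a single generator and then strip off the $\s$-infinitesimal part of the Galois group by applying a power of $\s$. First I would produce $\theta_1\in L$ with $\de(\theta_1)\in K$ and $L=K\<\theta_1\>_\s$; this immediately gives (i) with $n=0$. Then, with the Galois group realized inside $\Ga$ as $\{c\mid P(c)=0\}$ and $P=v\s^m$ with $v$ of nonzero constant term, the element $\theta:=\s^m(\theta_1)$ will do for (ii).

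\textbf{Reduction to one generator, and (i).} By Lemma \ref{lemma:inverseproblem} we may assume $L=K\<t_1,\dots,t_s\>_\s$ with $\de(t_i)\in K$, so $R=K\{t_1,\dots,t_s\}_\s$ is the associated $\s$-Picard-Vessiot ring. Exactly as in the proof of Proposition \ref{prop:gared}, the relations $\tau(t_i\otimes1)=t_i\otimes1+1\otimes c_i(\tau)$ define a $\s$-morphism $\phi\colon G:=\sgal(L|K)\to\Ga^s$, $\tau\mapsto(c_i(\tau))_i$, with trivial kernel. Identifying $G$ with a $\s$-closed subgroup of $\Ga$: the classification (Theorem \ref{thm:clasga}) tells us that the vanishing ideal of such a subgroup is generated by linear $\s$-polynomials, hence its coordinate ring is the symmetric algebra on its primitives, so (as in the footnote to Proposition \ref{prop:gared}) $\phi$ is given by operators $L_1,\dots,L_s\in k[\s]$: $c_i(\tau)=L_i(c(\tau))$, $c(\tau)$ the image of $\tau$ in $\Ga$. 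Let $d=\sum_iv_iL_i\in k[\s]$ be a greatest common right divisor of the $L_i$ ($v_i\in k[\s]$); then $\bigcap_i\ker(L_i\colon\Ga\to\Ga)=\{c\mid d(c)=0\}$, and triviality of $\ker\phi$ means $G$ meets $\{c\mid d(c)=0\}$ trivially. Put $\theta_1:=\sum_iv_i(t_i)$; then $\de(\theta_1)\in K$ and $\tau(\theta_1\otimes1)=\theta_1\otimes1+1\otimes d(c(\tau))$, so $\sgal(L|K\<\theta_1\>_\s)$ is that trivial intersection and hence, by the Galois correspondence, $K\<\theta_1\>_\s=L$. This is (i) with $\theta=\theta_1$, $n=0$; moreover (via the solution matrix with rows $(1,\theta_1)$, $(0,1)$ and $L^\de=K^\de$) $L|K$ is a $\s$-Picard-Vessiot extension for $\de(y)=\de(\theta_1)$.

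\textbf{Removing the infinitesimal part, and (ii).} Identify $G$ with $\wtilde G\subseteq\Ga$ via the closed immersion $\psi\colon\tau\mapsto d(c(\tau))$, and by Theorem \ref{thm:clasga} write $\wtilde G=\{c\mid P(c)=0\}$, $P=v\s^m$ with $v\in k[\s]$ of nonzero constant term and $m\ge0$. Then $N:=\{c\mid\s^m(c)=0\}\subseteq\wtilde G$ is a normal ($\wtilde G$ abelian) $\s$-infinitesimal subgroup ($\s$ is injective on $\s$-reduced algebras) with $\wtilde G/N\cong\{c\mid v(c)=0\}$ perfectly $\s$-reduced (again Theorem \ref{thm:clasga}, as $v$ has nonzero constant term). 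Set $\theta:=\s^m(\theta_1)$ and $N_0:=\psi^{-1}(N)\trianglelefteq G$. Then $\de(\theta)=\hslash_m\s^m(\de(\theta_1))\in K$ and $\s^m(L)=\s^m(K\<\theta_1\>_\s)=\s^m(K)\<\theta\>_\s\subseteq K\<\theta\>_\s$, so (i) holds for this $\theta$ with $n=m$. A direct computation with $\tau(\theta\otimes1)=\s^m(\tau(\theta_1\otimes1))$ shows that $\tau\in G(S)$ fixes $\theta$ iff $\psi(\tau)\in N(S)$, so $\sgal(L|K\<\theta\>_\s)=N_0\cong N$ is $\s$-infinitesimal and $K\<\theta\>_\s=L^{N_0}$. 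Since $N_0$ is normal, the second fundamental theorem (Theorem \ref{theo:secondfundamentaltheorem}) yields the exact sequence \eqref{eq:exactsequence} and identifies $\sgal(K\<\theta\>_\s|K)$ with $G/N_0\cong\wtilde G/N$, which is perfectly $\s$-reduced; and $K\<\theta\>_\s$, being $\s$-generated over $K$ by $\theta$ with $\de(\theta)\in K$ and with no new $\de$-constants, is a $\s$-Picard-Vessiot extension for $\de(y)=\de(\theta)$.

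\textbf{Where the difficulty lies.} The delicate point, on which the single-generator reduction rests, is the passage from the abstract closed immersion $\phi\colon G\hookrightarrow\Ga^s$ to its description by linear $\s$-operators $L_i\in k[\s]$; this uses the classification \ref{thm:clasga} (vanishing ideals of $\s$-closed subgroups of $\Ga$ are generated in degree one) together with the characteristic-zero fact that such coordinate rings are symmetric algebras on their primitives. Beyond that only routine inputs are needed: the left-ideal arithmetic of the skew polynomial ring $k[\s]$ (greatest common right divisors and the corresponding B\'ezout identities), the description from \ref{thm:clasga} of the perfectly $\s$-reduced ($P$ with nonzero constant term) and $\s$-infinitesimal ($P$ a unit times a power of $\s$) $\s$-closed subgroups of $\Ga$, and the computation of the quotient $\wtilde G/N$.
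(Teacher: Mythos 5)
Your argument is correct, and it reaches the conclusion by a genuinely different route in the key reduction step. Where the paper passes from the generators $t_1,\dots,t_s$ of Lemma \ref{lemma:inverseproblem} to a single $\theta$ by invoking Theorem 4.5.4 of \cite{Levin} (a primitive-element-type statement for difference field extensions of $\s$-transcendence degree $0$, applied to a generic $k$-linear combination $\theta=\sum c_it_i$, which only yields $\s^n(L)\subset K\<\theta\>_\s$), you instead extend the mechanism of the paper's own Proposition \ref{prop:gared} from $G=\Ga$ to proper $\s$-closed subgroups: you realize the injection $\phi\colon G\to\Ga^s$ by operators $L_i\in k[\s]$, use that $k[\s]$ is left Euclidean to write a generator $d=\sum v_iL_i$ of the left ideal $\sum k[\s]L_i$ (so that $\bigcap_i\ker L_i=\ker d$ as subfunctors of $\Ga$), and set $\theta_1=\sum v_i(t_i)$; triviality of $\ker\phi$ then forces $\sgal(L|K\<\theta_1\>_\s)$ to be trivial, hence $L=K\<\theta_1\>_\s$. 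This buys a sharper form of (i) (one can take $n=0$ before the adjustment needed for (ii)) and avoids the external reference, at the price of the one genuinely delicate input you correctly isolate: that every morphism of $\s$-algebraic groups from a $\s$-closed subgroup of $\Ga$ to $\Ga$ is the restriction of an element of $k[\s]$, which follows in characteristic zero from Theorem \ref{thm:clasga} (the vanishing ideal is generated by linear forms, so the coordinate ring is a symmetric algebra whose primitives are exactly the linear classes). The second half of your proof — writing the defining operator of the image of $G$ in $\Ga$ as $v\s^m$ with $v$ of nonzero constant term, replacing $\theta_1$ by $\s^m(\theta_1)$, and identifying $\sgal(L|K\<\theta\>_\s)$ with $\ker(\s^m)$ (which is $\s$-infinitesimal) and the quotient with $\ker(v)$ (which is $\s$-prime, hence perfectly $\s$-reduced, by Corollary \ref{cor:ssubgroupofGa}) — is essentially the paper's argument, only phrased more explicitly at the level of subgroups of $\Ga$ rather than via the Galois correspondence applied twice.
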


\begin{rmk}
The previous proposition gives a  decomposition of the $\s$-Picard-Vessiot extension $L|K$, with $\s$-Galois group isomorphic to a proper  subgroup of $\Ga$,
in a tower of $\s$-field extensions such that $ K\left\< \theta \right\>_\s|K$ is
perfectly $\s$-separable and $L| K\left\< \theta \right\>_\s$ is what we could call a
``purely $\s$-inseparable extension''.
\end{rmk}

\begin{proof}
Lemma \ref{lemma:inverseproblem} implies that there exist $t_1,\dots,t_s\in L$
such that $L=K\<t_1,\dots,t_s\>_\s$,
and $\de(t_1),\dots,\de(t_s)\in K$.
Notice that the assumption on $\sgal(L|K)$ and Proposition \ref{prop:dimensiondegtrans}
imply that
$$
\strdeg(L|K) = \sdim_k(\sgal(L|K))=0.
$$
Therefore, it follows from Theorem 4.5.4 in \cite{Levin} that
there exist $c_1,\hdots,c_s \in k$ and $n \in \Z_{\geq 0}$
such that, if we take $\theta =\sum_{i=1}^s c_i t_i$, then  $\s^n(L) \in K\left\< \theta \right\>_\s$.
By construction, $\de(\theta)\in K$. This proves (i).
\par
Notice that $K\left\< \theta \right\>_\s^\de \subset L^\de =k$, which implies
that $K\left\< \theta \right\>_\s$ is a $\s$-Picard-Vessiot extension for $\de(y)=\de(\theta)$.
The second fundamental theorem of the $\s$-Galois correspondence
(see Theorem \ref{theo:secondfundamentaltheorem}) gives immediately the exact sequence of group $k$-$\s$-schemes \eqref{eq:exactsequence}.

\par Now, we show that $\sgal(L| K\left\< \theta \right\>_\s)$ is $\s$-infinitesimal.  Let $S$ be a $\s$-reduced $k$-$\s$-algebra, let $R$ be the $\s$-Picard-Vessiot ring inside $L$ and let
$\tau \in \sgal(L| K\left\< \theta \right\>_\s)(S)$.
Since $\s^n(L) \in K\left\< \theta \right\>_\s$,  the action of $\tau$ on $\s^n(R)\otimes S$ is trivial.
Since $\tau$ commutes with $\s$ and $\s$ is injective on $S$, we deduce that $\tau$ is the identity
by looking at its action on a fundamental solution matrix for $L|K$.
\par
It remains to prove that we can choose $\theta$ such that $\sgal(K\left\< \theta \right\>_\s|K)$ is perfectly $\s$-reduced.
The $\ds$-field extension  $K\left\< \theta \right\>_\s|K$ is a $\s$-Picard-Vessiot extension for $\de(y)=\de(\theta)$,
with fundamental solution matrix $\Theta:= \begin{pmatrix}1&  \theta \\
0 & 1 \end{pmatrix}$. We can embed  $\sgal(K\left\< \theta \right\>_\s|K)$ into $\Ga$ via $\Theta$ (we identify $\Ga$ with its image in $\Gl_{2,k}$ via $a \mapsto \begin{pmatrix}1& a \\
0 & 1 \end{pmatrix}$).
If $\sgal(K\left\< \theta \right\>_\s|K) =\Ga$, we are done.
Otherwise, Theorem \ref{thm:clasga}
implies that
$\I( \sgal(K\left\< \theta \right\>_\s| K)) \subset k\left\{x\right\}_\s=k\{\Ga\}$ is generated, as $\s$-ideal, by a $\s$-polynomial of the form
$\sum_{i=0}^s \lambda_i \s^i(x)$, for some $\lambda_0,\hdots,\lambda_s \in k$ with $\lambda_s \neq 0$.
Let $r=\min\{i=0,\dots,s\vert \lambda_i\neq 0\}$ be minimal.
We set $\theta_1 := \s^r(\theta)$. Then, $\de(\theta_1) \in K$ and $\s^{n+r}(L)\subset K\left\< \theta_1 \right\>_\s$.
The $\s$-Galois correspondence for the $\s$-Picard-Vessiot extension
$K\left\< \theta \right\>_\s|K$ implies that
$\sum_{i=0}^{s-r} \lambda_{i+r} \s^i(\theta_1)$ belongs to $K$.
 Using again the $\s$-Galois correspondence for the $\s$-Picard-Vessiot extension
$K\left\< \theta_1 \right\>_\s| K$, we see that $\sum_{i=0}^{s-r} \lambda_{i+r} \s^i \in \I(\sgal(K\left\< \theta_1 \right\>_\s| K))$.
The classification
of the $\s$-closed subgroups of $\Ga$ (Corollary \ref{cor:ssubgroupofGa}) implies that $ \I(\sgal(K\left\< \theta_1 \right\>_\s| K))$ is generated by a
$\s$-polynomial of the form $\sum_{i=0}^l \mu_i\s^i$ with $\mu_0\neq 0$ and that $\sgal(K\left\< \theta_1 \right\>_\s| K)$
is perfectly $\s$-reduced.
\end{proof}

Now, we consider a more restrictive hypothesis on $K$, namely
we show that, if $K$ is inversive, then only  perfectly $\s$-reduced $\s$-closed subgroups of
$\Ga$ can be realized as $\s$-Galois groups.

\begin{prop}\label{prop:gareduced}
Assume that
the extension $L|K$ verifies ($\mathcal H_1$) and moreover that the $\s$-field $K$ is inversive.
Then $\sgal(L|K)$ is perfectly $\s$-reduced and there exists $\theta \in L$
such that
\begin{itemize}
\item $\de(\theta)\in K$ and
\item $L$ is a $\s$-Picard-Vessiot extension for $\de(y)=\de(\theta)$.
\end{itemize}
\end{prop}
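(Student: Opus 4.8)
The strategy is to combine the decomposition of $L|K$ obtained in Proposition~\ref{prop:invpred} with the fact that inversivity of $K$ forces the $\s$-Galois group to be $\s$-reduced, and then to use the classification of $\s$-closed subgroups of $\Ga$ to make the ``$\s$-infinitesimal'' part of that decomposition collapse.

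First, since $K$ is inversive, the $\s$-Galois group $G:=\sgal(L|K)$ is $\s$-reduced (cf.\ the proof of Corollary~\ref{cor:qishi} and Proposition~4.3 in \cite{articleone}). By ($\mathcal H_1$), $G$ is a $\s$-closed subgroup of $\Ga$, and by the classification of such subgroups (Corollary~\ref{cor:ssubgroupofGa}) a proper $\s$-closed subgroup of $\Ga$ has vanishing ideal generated by a $\s$-polynomial $\sum_{i=r}^{s}\lambda_i\s^i(x)$ with $\lambda_r\lambda_s\neq0$; such an ideal is reflexive exactly when $r=0$, which is precisely the perfectly $\s$-reduced case. Hence $G$ is perfectly $\s$-reduced, which is the first assertion of the proposition.

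It remains to produce $\theta$. If $G\cong\Ga$, Proposition~\ref{prop:gared} gives $\theta\in L$ with $L=K\<\theta\>_\s$ and $\de(\theta)\in K$; since $K\<\theta\>_\s^\de\subseteq L^\de=k$, the extension $L=K\<\theta\>_\s$ is then a $\s$-Picard-Vessiot extension for $\de(y)=\de(\theta)$, so we are done. If $G$ is isomorphic to a proper $\s$-closed subgroup of $\Ga$, we apply Proposition~\ref{prop:invpred}, choosing $\theta$ so that $\sgal(K\<\theta\>_\s|K)$ is perfectly $\s$-reduced; this provides $\theta\in L$ with $\de(\theta)\in K$, an integer $n\geq0$ with $\s^n(L)\subseteq K\<\theta\>_\s$, the fact that $K\<\theta\>_\s$ is a $\s$-Picard-Vessiot extension for $\de(y)=\de(\theta)$, and the short exact sequence \eqref{eq:exactsequence}, in which $N:=\sgal(L|K\<\theta\>_\s)$ is $\s$-infinitesimal.

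The main point — and the step I expect to be the main obstacle — is to show that $N$ is trivial, so that $L=K\<\theta\>_\s$ by the $\s$-Galois correspondence. By the Galois correspondence together with ($\mathcal H_1$), $N$ is a $\s$-closed subgroup of $G$, hence of $\Ga$, and it is $\s$-infinitesimal. By the classification (Corollary~\ref{cor:ssubgroupofGa}) the $\s$-infinitesimal $\s$-closed subgroups of $\Ga$ are exactly those of the form $\{a\mid\s^r(a)=0\}$, $r\geq0$: indeed, if the vanishing ideal of such a subgroup were generated by $\sum_{i=r}^{s}\lambda_i\s^i(x)$ with $\lambda_r\lambda_s\neq0$ and $s>r$, the associated linear $\s$-equation of order $s-r$, which has nonzero constant term, would have a nonzero solution in a $\s$-closed, hence $\s$-reduced, field extension of $k$, contradicting $\s$-infinitesimality. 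Now $G$ is a perfectly $\s$-reduced proper $\s$-closed subgroup of $\Ga$, so (by the second paragraph) its vanishing ideal is generated by a $\s$-polynomial $L_0(x)=\sum_{i=0}^{s_0}\lambda_i\s^i(x)$ with $\lambda_0\neq0$; since the inclusion $N\subseteq G$ means that $L_0(x)$ lies in the $\s$-ideal of $k\{x\}_\s$ generated by $\s^r(x)$, i.e.\ in $(\s^r(x),\s^{r+1}(x),\ldots)$, and $\lambda_0\neq0$, we must have $r=0$, so $N$ is trivial. Therefore $L=K\<\theta\>_\s$ is a $\s$-Picard-Vessiot extension for $\de(y)=\de(\theta)$ with $\de(\theta)\in K$, and via \eqref{eq:exactsequence} we recover $G\cong\sgal(K\<\theta\>_\s|K)$, in agreement with the perfect $\s$-reducedness already established; this completes the argument.
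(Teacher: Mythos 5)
Your proof is correct, but the decisive step is carried out on the group side rather than on the field side as in the paper, so the two arguments are genuinely different. The paper, after choosing $\theta$ via Proposition \ref{prop:invpred} so that $\sgal(K\<\theta\>_\s|K)$ is perfectly $\s$-reduced, uses the Galois-invariant relation $\sum_{i=0}^s\lambda_i\s^i(\theta)=g\in K$ with $\lambda_0\neq 0$ together with $\s(K)=K$ to deduce that $K\<\theta\>_\s$ is inversive; combined with $\s^n(L)\subseteq K\<\theta\>_\s$ this forces $L=K\<\theta\>_\s$, and the perfect $\s$-reducedness of $\sgal(L|K)$ then falls out of the identification with $\sgal(K\<\theta\>_\s|K)$. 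You instead establish the perfect $\s$-reducedness of $G=\sgal(L|K)$ up front (inversivity of $K$ gives $\s$-reducedness of $G$ as an external input from \cite{articleone}, and a $\s$-reduced proper subgroup of $\Ga$ is $\s$-integral by Corollary \ref{cor:ssubgroupofGa}), and then kill the $\s$-infinitesimal kernel $N=\sgal(L|K\<\theta\>_\s)$ by noting that a $\s$-infinitesimal subgroup of $\Ga$ has vanishing ideal $[\s^r(x)]\subseteq(\s^r(x),\s^{r+1}(x),\ldots)$, which cannot contain a generator of $\I(G)$ with nonzero constant term unless $r=0$. The two arguments are essentially dual; yours isolates more cleanly where inversivity enters (only through the $\s$-reducedness of the Galois group), while the paper's is more self-contained since it never invokes that fact. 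The one assertion you should justify is that the ideal generated by $\sum_{i=r}^{s}\lambda_i\s^i(x)$ with $\lambda_r\lambda_s\neq 0$ is reflexive only when $r=0$: Corollary \ref{cor:ssubgroupofGa} gives only the converse, and for the direction you use one writes the generator as $\s^r(q)$ with $q$ having nonzero constant term --- here one needs that $k=K^\de$ is inversive, which does follow from the inversivity of $K$ --- and observes that $q$ cannot lie in the ideal, since that ideal is contained in $(\s^r(x),\s^{r+1}(x),\ldots)$.
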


\begin{proof}
If $\sgal(L|K)$ is isomorphic to $\Ga$ then there is nothing to prove (see Proposition \ref{prop:gared}).
If  $\sgal(L|K)$ is isomorphic to a proper $\s$-closed subgroup of $\Ga$, then, by Proposition \ref{prop:invpred},
there exist $\theta \in L$, $n \in \Z_{\geq 0}$ such that $\de(\theta)\in K$ and
$\s^n(L) \in K\left\< \theta \right\>_\s$.
Moreover, we can choose $n$ and $\theta$ such that $\sgal(K\left\<\theta\right\>_\s|K)$ is a perfectly $\s$-reduced proper $\s$-closed subgroup of $\Ga$.
\par
To conclude it suffices to prove that $K\<\theta\>_\s$ is inversive. In fact, this immediately implies
that $L=K\<\theta\>_\s$.
Embedding $\sgal(K\left\<\theta\right\>_\s|K)$ into $\Ga$ (via $\theta$), we see that
there exist $\lambda_0,\hdots,\lambda_s \in k$ with $\lambda_0 \neq 0$
such that $\sum_{i=0}^s \lambda_i \s^i(x)\in k\{x\}_\s$
generates $\I(\sgal(K\left\<\theta\right\>_\s|K))$.
Using the $\s$-Galois correspondence, we see that
$\sum_{i=0}^s \lambda_i \s^i (\theta) \in K$. Since $K$ is inversive and $\lambda_0 \neq 0$, this implies
that $K\left\<\theta\right\>_\s$ is inversive
and therefore that $L=K\left\<\theta\right\>_\s$.
\end{proof}

\subsection{A discrete parameter inverse problem over $\C(x,t)$}

We now restrict our attention to the following situation:

\begin{quote}
($\mathcal H_2$)
Let $\C(t)$ be  the $\s$-field with  $\s|_\C =id$ and
 $\s(t)=t+1$.
Let  $k$ be  an algebraically closed inversive linearly $\s$-closed $\s$-field extension of $\C(t)$. We now assume
that $K=k(x)$ endowed with the derivation $\de(x)=1, \de(c)=0$ for all $c \in k$ and the endomorphism extending the action of $\s$ on $k$ to $K$ with $\s(x)=x$.
\end{quote}

Since the assumption $(\mathcal H_2)$ satisfies the hypothesis of Proposition \ref{prop:gareduced}, we know that only perfectly $\s$-reduced $\s$-closed subgroups
of $\Ga$ can occur as $\s$-Galois groups for differential equations of the form $\de(y)=g$ over $K$.
 The following proposition shows that any of these subgroups appears.

\begin{prop}\label{prop:inverseproblemH2}
Suppose that $K$ verifies ($\mathcal H_2$).
Let $G$ be a  perfectly $\s$-reduced $\s$-closed subgroup of $\Ga$. Then,
there exists $g\in K$ such that $\de(y) =g$ has $\s$-Galois group $G$.
\end{prop}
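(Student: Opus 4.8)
The strategy is to realize the prescribed group $G$ as a $\s$-Galois group by an explicit construction, working over the field $K=k(x)$ with $\de(x)=1$, $\s(x)=x$. Since $G$ is a perfectly $\s$-reduced $\s$-closed subgroup of $\Ga$, Corollary \ref{cor:ssubgroupofGa} (the classification of $\s$-closed subgroups of $\Ga$) tells us that either $G=\Ga$ or the vanishing ideal $\I(G)\subset k\{x\}_\s$ is generated by a $\s$-polynomial $\mathcal{L}(x)=\sum_{i=0}^s\lambda_i\s^i(x)$ with $\lambda_0\neq 0$ and $\lambda_s\neq 0$. The case $G=\Ga$ should be handled first and separately: here one takes $g=\frac{1}{x}$ (or more precisely, a rational function whose partial fraction expansion involves a simple pole whose $\s$-orbit under $x\mapsto x$ is trivial but which forces transformal independence — in this shifted-in-$t$ setting one wants $g$ to depend on $t$ as well so that $\s$ acts nontrivially). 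The key point, to be verified using the arguments around Corollary \ref{cor:gaexample} and the aperiodicity/linear-$\s$-closedness of $k$, is that $\de(y)=g$ has no solution whose $\s$-transforms satisfy any linear $\s$-relation over $k$, forcing $\s$-$\dim=1$ and hence $G=\Ga$.

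For the case of a proper subgroup, the natural idea is to start from the $\Ga$-construction and then ``cut it down'' by $G$ via the $\s$-Galois correspondence: if $\theta$ generates a $\s$-Picard-Vessiot extension with group $\Ga$, then the element corresponding to the $\s$-closed subgroup $G$ under the Galois correspondence generates an intermediate extension with group $\Ga/G'$ for the appropriate $G'$ — but we want $G$ itself, not a quotient, so instead I would build $g$ directly. Concretely, given the generator $\mathcal{L}(x)=\sum_{i=0}^s\lambda_i\s^i(x)$ of $\I(G)$, I would look for $g\in K=k(x)$ of the form $g=\frac{c(t)}{x}$ or a sum of such terms, arranged so that: (a) there exists a solution $z$ of $\de(y)=g$ in some $\s$-Picard-Vessiot extension $L$ with $L^\de=k$, guaranteed by Proposition \ref{cor:existenceofPVextension} since $k$ is algebraically closed; (b) the element $\mathcal{L}(z)=\sum_i\lambda_i\s^i(z)$ lies in $K$ — equivalently, by the computation at the end of the proof of Theorem \ref{thm:rank1ga}, one needs $\sum_i\lambda_i\s^i(g)=\de(h)$ for some $h\in K$ (here $\hslash=1$ since $\s$ and $\de$ commute in this setup, as $\s(x)=x$); and (c) no $\s$-relation of smaller ``support'' holds, i.e. the $\s$-Galois group is contained in $G$ but in nothing smaller. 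Requirement (b) is a linear condition on the coefficient function $c(t)\in k$ which, because $k$ is linearly $\s$-closed, can always be solved: the equation $\sum_i\lambda_i c(t) = 0$ or more precisely the relevant inhomogeneous linear $\s$-equation over $k$ has a solution. Requirement (c) is where one uses that $k$ is aperiodic and that $\mathcal{L}$ was chosen with $\lambda_0\neq 0$, so that no proper ``shift'' of the relation can hold.

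The main obstacle will be requirement (c): showing that the $\s$-Galois group $G_0$ of the extension constructed is not merely contained in $G$ but equals $G$. One direction (i.e. $G_0\leq G$) follows from the Galois correspondence once $\mathcal{L}(z)\in K$ is established. For the reverse, one must rule out that $G_0$ is a proper $\s$-closed subgroup of $G$; by the classification, $\I(G_0)$ would then be generated by some $\mathcal{M}(x)=\sum_j\mu_j\s^j(x)$ that is a ``right multiple'' or refinement of $\mathcal{L}$ in the skew polynomial ring $k[\s]$, and one needs $\sum_j\mu_j\s^j(g)=\de(h')$ to fail for every such $\mathcal{M}$ strictly dividing the structure of $\mathcal{L}$. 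This is precisely controlled by a careful analysis of the partial-fraction/residue expansion of $g$ along the $\s$-orbit — exactly as in the proof of Corollary \ref{cor:rank1shift} — combined with the fact that the coefficients $\lambda_i$ were chosen minimal. I expect that by taking $g$ supported on a single $\s$-orbit in the $t$-variable with carefully chosen residues solving the linear-$\s$-closed condition from (b), one can arrange that the \emph{only} linear $\s$-operators $\mathcal{M}$ with $\mathcal{M}(g)$ exact are exactly the left multiples of $\mathcal{L}$ in $k[\s]$, which by the dictionary of Corollary \ref{cor:ssubgroupofGa} is exactly the condition $G_0=G$. Finally, one invokes Proposition \ref{prop:gareduced} to confirm that the group so obtained is automatically perfectly $\s$-reduced, closing the argument.
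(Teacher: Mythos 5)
Your plan follows essentially the same route as the paper: realize $\Ga$ first (the paper uses $\de(y)=\frac{1}{x+t}$, whose pole moves under $\s$, exactly the fix you anticipate for your non-working first guess $g=\frac1x$), then for a proper $G$ take $g$ to be a sum of simple-pole terms with residues in $k$, and use the Galois correspondence together with partial-fraction analysis for both inclusions. The one place where your plan, as written, would go wrong is requirement (b)/(c): finding \emph{a} solution $c(t)$ of $\cL(c)=0$ and setting $g=\frac{c}{x}$ is not enough, since the annihilator in $k[\s]$ of a single element of $k$ is in general much larger than $[\cL]$, so the resulting $\s$-Galois group would be a proper subgroup of $G$ whenever $s>1$. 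What is needed — and what the paper does — is to use linear $\s$-closedness (via Corollary \ref{cor:linsclosedsubgroupsGa}) to produce a full $k^\s$-basis $c_1,\dots,c_s$ of $G(k)$, i.e.\ of the solution space of $\cL(y)=0$ in $k$, and to place these residues at $s$ \emph{distinct} poles, $g=\sum_{i=1}^s\frac{c_i}{x+i}$; then $\cM(g)\in\de(K)$ forces $\cM(c_i)=0$ separately for each $i$, hence $\cM\in\I(G)$, which is exactly the statement that the only operators killing $z$ modulo $K$ are those in $[\cL]$. (Also, the poles here are in the $x$-variable and are fixed by $\s$; the $\s$-dynamics lives entirely in the residues $c_i\in k$, so the phrase about a ``$\s$-orbit in the $t$-variable'' should be replaced by this picture. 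The final appeal to Proposition \ref{prop:gareduced} is unnecessary once equality of groups is established.)
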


\begin{proof}
First of all, let us realize $\Ga$ itself. Let $L|K$ be a $\s$-Picard-Vessiot extension for
$\de(y)=\frac{1}{x+t}$. Reasoning as in Corollary \ref{cor:shiftigaC}, we see that
$\sgal(L|K)=\Ga$. Notice that the coefficients of the equation lie in $\Q(x,t)$.

Now, let $G$ be a proper perfectly $\s$-reduced $\s$-closed subgroup of $\Ga$. Since $G$ is perfectly $\s$-reduced, we know that  there exist $\lambda_0,\hdots,\lambda_s \in k$,
with $\lambda_0\lambda_s\neq 0$,
such that $\I( G) \subset k\{x\}_\s=k\{\Ga\}$ is generated by $\mathcal{L} :=\sum_{i=0}^s \lambda_i \s^i(x)$.
Since $k$ is linearly $\s$-closed, we can find $c_1,\hdots,c_s \in k$, linearly independent over $k^\s$,
whose $k^\s$-span is the group $G(k)$
(see Corollary \ref{cor:linsclosedsubgroupsGa}).
\par
Let us consider the differential equation
$$
\de(y) =\sum_{i=1}^s \frac{c_i}{x+i}=:g.
$$

Let $L|K$ be a $\s$-Picard-Vessiot extension for $\de(y)=g$,
$z \in L$ be a solution and $R=K\{z\}_\s$ its $\s$-Picard-Vessiot ring,
in the sense of \S\ref{subsec:Ishizaki}.
We have
$$
\mathcal{L}(\de(z)) =\cL(g)= \sum_{i=1}^s \frac{\mathcal{L}(c_i)}{x+i} =0,
$$
so that $\de(\mathcal{L}(z)) =0$ and hence $\mathcal{L}(z) \in k$.
We embed
$\sgal(L|K)$ into $\Ga$, via its action on the solution $z$, i.e.,
for any $k$-$\s$-algebra $S$ and any $\tau \in\sgal(L|K)(S)$,
there exists $c(\tau)\in S$ such that $\tau(z\otimes 1) =z\otimes 1 + 1 \otimes c(\tau) \in R\otimes_k S$.
We write $\tau(z)=z+c(\tau)$ to simplify the notation.
Since $\cL(z)\in k$, we have $\cL(z)=\tau(\cL(z))=\cL(\tau(z))=\cL(z)+\cL(c(\tau))$,
hence $\cL(c(\tau))=0$, which means that $\sgal(L|K) \subset G$.
To conclude we have to prove the inverse inclusion.
Let $\cL_1$ be a homogeneous linear $\s$-polynomial in $k\{x\}_\s$,
generating the vanishing ideal of $\sgal(L|K)$ inside $\Ga$ and $z$
be a solution of $\de(y)=g$, as above.
By $\s$-Galois correspondence
we find that $\mathcal{L}_1(z)$ belongs to $K$ and, thus,
that
$$
\de(\cL_1(z))=\cL_1(\de(z))=\cL_1(g)=\sum_{i=1}^s \frac{\mathcal{L}_1(c_i)}{x+i}\in\de(K).
$$
Reasoning as in  Corollary \ref{cor:gaexample},
we find that  $\mathcal{L}_1(c_i)=0$, for all $i=1,\hdots,s$. This implies that $\cL_1(c)=0$ for every $c\in G(S)$ and any $k$-$\s$-algebra $S$ (Corollary \ref{cor:linsclosedsubgroupsGa}).
%Since the $\s$-ideal $[\mathcal{L}]$ is perfect and $k$ is linearly $\s$-closed,
%every $\s$-polynomial that vanishes on $G(k)$
%must lie in $[\mathcal{L}]$.
Thus $G \subset \sgal(L|K)$.\end{proof}

%%%%%%%%%%%%%%%%%%%%%%%%%%%%%%%%%%%%%%%%%%%%%%%%%%%%%%%%%%%%%%%%%%%%%%%%%%
%%%%%%%%%%%%%%%%%%%%%%%%%%%%%%%%%%%%%%%%%%%%%%%%%%%%%%%%%%%%%%%%%%%%%%%%%%
\section{Discrete integrability}

The terminology of this section is borrowed from differential equations depending on a differential parameter.
Indeed, Y. Sibuya proves the equivalence between the differential analogue of the
definition below, usually called integrability, and the notion of isomonodromy. See Theorem A.5.2.3 in \cite{SibuyaBook}.
\par
Integrability has been studied from a Galoisian point of view in \cite{cassisinger},
\cite{HardouinSinger} and \cite{GorchinskiyOvchinnikov:IsomonodromicDifferentialEquationsAndDifferentialCategories} for equations depending on differential parameters (see also \cite{MitschiSinger:MonodromyGroupsOfParameterizedLinearDifferentialEquationsWithRegularSingularities}
and \cite{Dreyfus:density}).
Here we consider the dependence on a difference parameter. We prove some results that are analogous to
the ones in the papers above. However our result on the descent of integrability is not inspired by any results
in the previous literature and could actually be reproduced in the case of a differential parameter,
improving some applications in the papers cited above.

\subsection{Definition, first properties}

First of all, we introduce the notion of $\s^d$-integrability that we are going to discuss in this section.

\begin{defn}
Let $K$ be a $\ds$-field, $A \in K^{n\times n}$, for some positive integer $n$,
and $d\in\Z_{>0}$. We say that $\de(y)=Ay$ is \emph{$\s^d$-integrable (over $K$)},
if there exists $B\in\Gl_n(K)$, such that
\begin{equation} \label{eq:systemintegrable}
\l\{\begin{array}{l}
\de(y)=Ay\\
\s^d(y)=By
\end{array}\r.
\end{equation}
is compatible, i.e.,
\beq\label{eq:integrabilty}
\de(B)+BA=\hslash_d\s^d(A)B,
\eeq
where $\hslash_d=\hslash\s(\hslash)\cdots\s^{d-1}(\hslash)$.
\end{defn}

The compatibility condition \eqref{eq:integrabilty}
means that the coefficients of the system
``respect'' the commutativity condition $\de\circ\s^d=\hslash_d\s^d\circ\de$.
Indeed, in the notation of the definition above, let $L$ be a $\s$-Picard-Vessiot extension
for the differential system $\de(y)=Ay$. If a system of the form \eqref{eq:systemintegrable} has a
fundamental solution $Y\in\Gl_n(L)$, then
the matrix $B$ verifies  the differential equation \eqref{eq:integrabilty}.
In fact, the commutativity relation $\de\circ\s^d=\hslash_d\s^d\circ\de$ also holds in $L$,
therefore we must have $\de\circ\s^d(Y)=\hslash_d\s^d\circ\de(Y)$, and hence \eqref{eq:integrabilty} is verified.
The inverse implication is also true, if we make some extra assumption on the field $k=K^\de$.

\begin{prop}
Let $K$ be a $\ds$-field, $\de(y)=Ay$ be a linear differential equation with $A\in K^{n\times n}$
and $L$ be a $\s$-Picard-Vessiot extension of $K$ for $\de(y)=Ay$.
We suppose that there exists a matrix $B\in \Gl_n(K)$ verifying \eqref{eq:integrabilty}.
If the field $k=K^\de$ is linearly $\s^d$-closed (see Definition \ref{defi:linearlysclosed})
then there exists a solution of the system
\eqref{eq:systemintegrable} in $\Gl_n(L)$ .
\end{prop}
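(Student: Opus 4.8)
The plan is to pick a fundamental solution matrix $Y \in \Gl_n(L)$ for $\de(y)=Ay$ (which exists since $L$ is a $\s$-Picard-Vessiot extension) and to produce a correction of $Y$ by a constant matrix so that the second equation $\s^d(y)=By$ is also satisfied. Concretely, I would consider the matrix $Z := \s^d(Y)^{-1} B Y \in \Gl_n(L)$ and first show that $Z$ has entries in $k = K^\de = L^\de$, i.e.\ $\de(Z) = 0$. This is a direct computation: using $\de(Y) = AY$, the commutativity relation $\de(\s^d(Y)) = \hslash_d \s^d(\de(Y)) = \hslash_d \s^d(A)\s^d(Y)$, and the hypothesis $\de(B) + BA = \hslash_d \s^d(A) B$, one differentiates $Z$ by the product and quotient rules and everything cancels. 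So $Z \in \Gl_n(k)$.

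Next I would analyze the cocycle-type identity that $Z$ satisfies. Applying $\s^d$ to $Y$ and using the definition of $Z$, one gets $\s^d(Y) = B Y Z^{-1}$, hence $\s^{2d}(Y) = \s^d(B)\s^d(Y)\s^d(Z)^{-1} = \s^d(B) B Y Z^{-1} \s^d(Z)^{-1}$. The goal is to find $C \in \Gl_n(k)$ such that $Y' := Y C$ satisfies $\s^d(Y') = B Y'$, i.e.\ $\s^d(Y)\s^d(C) = B Y C$, which by $\s^d(Y) = BYZ^{-1}$ is equivalent to $Z^{-1}\s^d(C) = C$, that is, $\s^d(C) = Z C$. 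So the whole problem reduces to solving the linear difference equation $\s^d(C) = ZC$ for $C \in \Gl_n(k)$ with $Z \in \Gl_n(k)$ fixed.

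This is exactly where the hypothesis that $k$ is linearly $\s^d$-closed comes in: by Definition \ref{defi:linearlysclosed} (the notion of linear $\s^d$-closedness referenced in the statement), over such a field every equation $\s^d(C) = ZC$ with $Z \in \Gl_n(k)$ admits a solution $C \in \Gl_n(k)$. One should check that the relevant notion of ``linearly $\s^d$-closed'' is precisely the one guaranteeing solvability of such matrix difference equations (or reduce the matrix case to the relevant scalar/vector case as in the classical theory). Granting this, pick such a $C$ and set $Y' = YC$; then $\de(Y') = \de(Y)C = AYC = AY'$ since $C$ is a $\de$-constant, and $\s^d(Y') = \s^d(Y)\s^d(C) = (BYZ^{-1})(ZC) = BYC = BY'$. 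Thus $Y'$ is a fundamental solution matrix of the system \eqref{eq:systemintegrable} in $\Gl_n(L)$.

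The one genuinely delicate point, and the step I would flag as the main obstacle, is making sure the equation $\s^d(C) = ZC$ is of the exact shape that the definition of linearly $\s^d$-closed covers, and in particular that we may work with $\s^d$ rather than $\s$ itself — here one uses that a $\ds$-field with endomorphism $\s$ is also a $\de(\s^d)$-field (with the modified commutation factor $\hslash_d$), so that ``linearly $\s^d$-closed'' is the appropriate hypothesis. Everything else is a routine verification via the product/quotient rules for $\de$ together with the integrability identity \eqref{eq:integrabilty}.
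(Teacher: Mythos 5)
Your proof is correct and follows essentially the same route as the paper's: both arguments use the integrability identity to show that $\s^d(Y)$ and $BY$ differ by a $\de$-constant matrix (you verify $\de(\s^d(Y)^{-1}BY)=0$ directly; the paper equivalently observes that $B^{-1}\s^d(Y)$ is another fundamental solution matrix of $\de(y)=Ay$), and both then invoke linear $\s^d$-closedness of $k$ to solve the resulting constant difference equation $\s^d(C)=ZC$ and correct $Y$ by $C$. The point you flag as delicate is harmless, since the hypothesis is stated directly for $\s^d$ and Definition \ref{defi:linearlysclosed} applied to the endomorphism $\s^d$ gives exactly the solvability of $\s^d(C)=ZC$ in $\Gl_n(k)$.
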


\begin{proof}
We suppose that \eqref{eq:integrabilty} is verified and that $Y\in\Gl_n(L)$ is a fundamental solution of $\de(y)=Ay$.
Then:
$$
\de\l(\s^d\l(Y\r)\r)=\hslash_d\s^d(A)\s^d(Y)=\l(\de(B)B^{-1}+BAB^{-1}\r)\s^d(Y),
$$
and hence
$$
\de\Big(B^{-1}\s^d(Y)\Big)=A\Big(B^{-1}\s^d(Y)\Big).
$$
We conclude that there exists $C\in\Gl_n(k)$ such that
$B^{-1}\s^d(Y)=YC$. Since $k$ is linearly $\s^d$-closed, there exists $D\in\Gl_n(k)$ such that
$\s^d(D)=C^{-1}D$. The matrix $YD\in\Gl_n(L)$ is a solution of \eqref{eq:systemintegrable}.
\end{proof}

 The situations, where the integer $d$ is strictly greater than one, have to
 be taken into consideration. Indeed, the following example shows that, for any prime integer $p$,
 one can construct a differential system which is $\s^p$-integrable and not $\s^d$-integrable for $d <p$.

 \begin{exa}
 Let $p$ be a prime integer and let $K=\C(x)$ be the $\ds$-field endowed with $\de(x)=1$
 and $\s(x)=x+ 2\pi$. The linear differential system $\de(y)=\begin{pmatrix}  0 & \frac{-1}{p^2} \\
 1 & 0 \end{pmatrix}y$ admits $\begin{pmatrix} \frac{1}{p} \cos (x/p) &  \frac{-1}{p} \sin(x/p)\\
 \sin(x/p) & \cos(x/p) \end{pmatrix}$ as fundamental solution matrix. One can then easily see
 that this differential system is $\s^p$-integrable and not $\s^d$-integrable for $d <p$.
 \end{exa}
Then,  a natural question is  the existence of a bound for such an  integer $d$, in terms
 of the coefficients of the matrix. One could hope to get this bound by taking a closer look at
 the local monodromies. Nonetheless, in the applications to come,  in order to prove that we have no transformal relations between the solutions and thus  no $\s^d$-integrability, we will treat
 the integer $d$  as a parameter.

\subsection{Some examples}\label{subsec:integraexamples}

In this section, we show that $\s^d$-integrability is a situation, which occurs rather frequently
in mathematics as well as in mathematical physics. As we show in the sequel, contiguity relations, Frobenius structure, non linear difference-differential equations,...are intimately related to the notion
of discrete integrability. We also propose some examples of $\s^d$-integrability in higher dimension,
i.e., for integrable systems of linear partial differential equations with one single discrete parameter. Even if we don't develop here this multivariable version of the theory, we believe that
there  should
not be a lot of difficulties to generalize \cite{articleone} to that framework.

\par
Notice that in $p$-adic differential equation theory, the
$\s^d$-integrability with respect to an action of the lift of the Frobenius
from the positive characteristic, is called
\emph{strong Frobenius structure}.
In such context, R. Crew, in \cite{CrewAnnalesENS}, has proved a weak form
of the ``only if'' part of Theorem \ref{thm:integra} below.

\subsubsection{Contiguity relations}

The case of contiguity relations for hypergeometric equations is an example of the situation described in the previous definition.

\begin{exa}
Let $\C(\a,x)$ be a rational function field in two variables, equipped with the derivation
$\de=\frac{d}{dx}$ and the $\C(x)$-linear automorphism $\s$ such that $\s(\a)=\a+1$.
We consider the hypergeometric differential equation
$\de(y)=\frac{\a}{1-x}y$, whose solution is given by $\sum_{n\geq 0}\frac{(\a)_n}{n!}x^n$.
It is easy to see that $\s(y)=\frac{1}{1-x}y$ and $\de(y)=\frac{\a}{1-x}y$ are compatible and that therefore $\de(y)=\frac{\a}{1-x}y$ is $\s$-integrable.
\end{exa}

\begin{exa}
Bessel's equation
$$x^2\de^2(y)+x\de(y)+(x^2-\alpha^2)y=0$$
is a classical example of a differential equation, which is $\s$-integrable with respect to the shift $\s\colon \alpha\mapsto\alpha+1$. The well--known contiguity relations satisfied by the Bessel functions $J_\alpha$ and $Y_\alpha$ can be written in matrix form as $\s(Y)=BY$, where
$$
B=
\begin{pmatrix}
\frac{\a}{x} & -1 \\
\frac{-\a(\a +1)}{x^2} +1 & \frac{\a +1}{x}
\end{pmatrix} \text{ and } Y=\begin{pmatrix} J_\alpha & Y_\alpha \\
J'_\a & Y'_\a\end{pmatrix}
$$
is a fundamental solution matrix. The $\s$-Galois group of Bessel's equation is $\Sl_2^\s$. See Example 2.12 in \cite{articleone}.
\end{exa}

In higher dimension,  contiguity relations exist for non resonant $A$-hypergeometric equations (see \cite[Theorem 6.9.1]{Dworkhypergeo}) as well as difference equations compatible with trigonometric
KZ-differential equations (c.f. \cite{TaraVarchenko}). It would be very interesting to understand these
results in light of a generalization of our  parametrized Galois theory.

\subsubsection{$\s^d$-integrability and Lax pairs for non linear difference-differential equations}

Many non linear difference-differential equations, also called
semi-discrete equations, appear as evolution equations of interesting physical phenomena.
In \cite{AbLad17, AbLad16}, Ablowitz and Ladik introduced mixed Lax pairs, i.e., compatible pair
of a linear differential system and a linear difference system of the following type

$$\frac{\partial}{\partial t} \Psi(n,\lambda,t) =M_n \Psi(n,\lambda,t) \mbox { and }
\Psi(n+1, \lambda,t) =L_n \Psi(n,\lambda,t),$$
where $\lambda$ is the spectral parameter,  $M_n =\begin{pmatrix} A_n(\lambda) & B_n(\lambda)  \\
C_n(\lambda) & D_n(\lambda) \end{pmatrix}$, $L_n =\begin{pmatrix} \lambda I & u_n \\
v_n & \lambda^{-1} I \end{pmatrix}$  and $u_n,v_n, A_n, c_n, D_n$ are matrices
such that the compatibility condition $$\frac{\partial}{\partial t} L_n +L_nM_n-M_{n+1}L_n$$
is satisfied. The initial non linear difference-differential equation becomes now
the compatibility condition of the Ablowitz-Ladik pair. In  \cite[\S 4]{Tsuchpair},
the authors provides explicit  pairs for the Toda lattice, the Belov-Chaltikian lattice, the relativistic
Toda lattice...Of course, the Ablowitz-Ladik pairs are peculiar cases of $\s^d$-integrable systems.
Our parametrized Galois theory could perhaps give another perspective to these questions of integrability : for instance, to find the smallest field of definitions of the pairs, or perhaps to study some periodic
phenomena. For instance, in \cite[equations (6) and (7)]{TamHu}, the
authors write a $\s^2$-integrable differential system.

Connected to these questions of representation of a non-linear
evolution equation in terms of compatible pairs of linear systems, one find the   isomonodromic deformations
and Painlev\'{e} equations. It may be interesting to give an interpretation of the Schlessinger transformations in terms of discrete parametrized Galois theory. Indeed, in \cite{Klardispainlev}, the B\"{a}cklund transformations of a differential Painlev\'{e} equation
give rise to a discrete Painlev\'{e} equation. We think that this process can be seen as an action of a discrete operator
on a partial  differential Lax pair attached to the differential  Painlev\'{e} equation. In that framework,  the discrete
Painlev\'{e} equation should be a defining equation of the $\s$-Galois group of the partial differential Lax pair.

\subsubsection{Strong Frobenius structure of Dwork's exponential}

Let $k$ be a ultrametric field of characteristic zero,
complete with respect to a discrete valuation and let $\F_q$ be its residue field, of characteristic $p>0$ and cardinality $q$.
We denote by $|~|$ the $p$-adic norm of $k$, normalized so that $|p|=p^{-1}$.
The ring $\cE_k^\dag$ of all $f=\sum_{n\in\Z}a_nx^n$, with $a_n\in k$, such that
\begin{itemize}
\item
there exists $\veps>0$, depending on $f$, such that
for any $1<\varrho<1+\veps$, we have $\lim_{n\to \pm\infty}|a_n|\varrho^n=0$;
\item
$\sup_n|a_n|$ is bounded;
\end{itemize}
is actually an henselian field with residue field $\F_q((x))$.
\par
We consider the base field $K = \cE_k^\dag$ endowed with the derivation $\de =x \frac{d}{dx}$ and
an action of a lifting $F$  of the Frobenius automorphism of $\F_q$.
Namely, we consider an endomorphism $F$ of $k$ such that $|F(a)-a^p|<1$, for any $a\in k$, $|a|\leq 1$.
We extend the action of $F$ to $K$ by setting $F(x)=x^p$, so that $F(\sum_{n\in\Z}a_nx^n)=\sum_{n\in\Z}F(a_n)x^{pn}$. We have:
$$\de \circ F = p F \circ \de.$$

\par
In the framework of $p$-adic differential equations (see \cite{Kedlaya:padicdifferentialequations} for an introduction to this topic),
the $F^d$-integrability
of a differential equation is called Frobenius structure  (Definition 17.1.1 in \cite{Kedlaya:padicdifferentialequations}). This notion plays a crucial role in the study of $p$-adic
differential equations since it is the analogue of monodromy in the complex setting. The following examples
illustrate  how the $F$-Galois group distinguishes among differential equations the ones with Frobenius structure,
unlike the ``usual" Galois group.
\par
Let us suppose that there exists $\pi\in k$ such that $\pi^{p-1}=-p$.
It is well known that $\exp(\pi x):=\sum_{n\geq 0}\frac{\pi^n x^n}{n!}$,
solution of $\de(y)=\pi x y$, has radius of convergence $1$,
and hence that it does not belong to $K$. On the other hand, we have $\exp(\pi x)^p\in K$.
This shows that the usual Galois group of $\de(y)=\pi x y$ is the cyclic group of order $p$.

In this particular case, some sharp $p$-adic estimates show that
$\frac{F(\exp(\pi x))}{\exp(\pi x)} \in K$ (see Chap. 2 in \cite{DGS}).
Therefore the $\s$-Galois group is $\mu_p^\s$. Cf. Example 2.14 in \cite{articleone}.

%$F(x)x^{-1}-1$ belongs to the vanishing ideal of
%$F-\Gal(L|K)$ inside $k\{x,\frac{1}{x}\}_\s$, i.e, $F-\Gal(L|K)$ is  the
%constant group inside $\mathbf{G}_m$.

%\par
%If we suppose that there exists $\a\in k$ such that $p^{-1}|\pi|<|\a|<|\pi|$, then $\exp(\a x)$ is solution of
%$\de(y)=\a xy$ and verifies $\exp(\a x)^p\in K$.   Let $L=K\left\<exp(\a x\right\>_F$ be the $\delta F$-field generated
%by $exp(\a x)$ inside $K((x))$. Since $K((x))^\de=k$, the extension $L|K$ is a $F$-Picard-Vessiot extension for
%$\de(y)=\a x y$. In this case too, we deduce that
%there exist $M\geq 0$ and $d\geq 1$ such that
%$\de(y)=p^M F^M(\a)x^{p^M}$ is $F^d$-integrable.
%In this case we know that $M\geq 1$. The polynomial $F^{M+d}(x)F^M(x)^{-1}-1$ belongs
%to  the vanishing ideal of $F-\Gal(L|K)$ inside $k\{x,\frac{1}{x}\}_\s$.
%Thus the $F$-Galois group distinguishes the equations $\de(y)=\pi x y$ and $\de(y)=\a x y$,
%unlikely the usual Galois group.

\subsection{Order 1 differential equations with cyclic Galois group of prime order}

In the appendix, we prove that any proper $\s$-closed subgroup of $\mu_p$, the algebraic group of $p$-th roots of unity, contains the $\s$-polynomial $\s^m(x)-\s^{m+d}(x) \in k\{x,x^{-1}\}_\s=k\{\Gm\}$ in its vanishing ideal inside $\Gm$, for convenient integers $d\geq 1$ and $m\geq 0$. (See Proposition \ref{prop:mup}). We are going to use this fact
in the proof of the following proposition.

\begin{prop}
Let $K$ be a $\ds$-field and let $L|K$ be
a $\s$-Picard-Vessiot extension for a differential equation $\de(y)=ay$ with $a \in K^{\times}$.
We assume that the Zariski closure of $\sgal(L|K)$, i.e.,
the ``usual" Galois group of $\de(y)=ay$, is $\mu_p$, for some prime number $p$.
If $\sgal(L|K)$ is a proper subgroup of $\mu_p$,
 there exist two integers $M\geq 0$ and $d\geq 1$ such that the
system $\de(y)=\hslash_m\s^m(a)y$ is $\s^d$-integrable for all $m\geq M$.
\end{prop}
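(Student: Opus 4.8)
The plan is to exploit the $\s$-Galois correspondence together with Theorem \ref{thm:integra}, which characterizes $\s^d$-integrability of $\de(y)=Ay$ in terms of the condition $\s^d(g)=g$ for all $g$ in the (conjugated, constant-extended) $\s$-Galois group. Since the usual Galois group of $\de(y)=ay$ is $\mu_p$, the $\s$-Galois group $G=\sgal(L|K)$ sits inside $\mu_p$, viewed as a $\s$-closed subgroup of $\Gm$. By hypothesis $G$ is a proper $\s$-closed subgroup of $\mu_p$, so by Proposition \ref{prop:mup} its vanishing ideal $\I(G)$ inside $\Gm$ contains a $\s$-polynomial of the form $\s^m(x)-\s^{m+d}(x)$ for suitable integers $m\ge 0$ and $d\ge 1$; fix such $m$ and $d$.

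First I would translate the membership $\s^m(x)-\s^{m+d}(x)\in\I(G)$ into a statement about elements of $L$. Let $z\in L^\times$ be a solution of $\de(y)=ay$, so that (after fixing the embedding $G\hookrightarrow\Gm$ via $z$) every $\tau\in G(S)$ acts by $\tau(z\otimes 1)=z\otimes c(\tau)$ with $\s^m(c(\tau))=\s^{m+d}(c(\tau))$. Hence the element $u:=\s^m(z)/\s^{m+d}(z)\in L^\times$ is fixed by all of $G$, and the $\s$-Galois correspondence (Theorem \ref{theo:Galoiscorrespondence}) gives $u\in K$. Taking logarithmic derivatives yields $\hslash_m\s^m(a)-\hslash_{m+d}\s^{m+d}(a)=\de(u)/u\in K$. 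Now for $m\ge M:=m$ (the index supplied by Proposition \ref{prop:mup}), the same computation applied with $m$ replaced by any $m'\ge M$ — note that $\s^{m'}(x)-\s^{m'+d}(x)=\s^{m'-m}\big(\s^m(x)-\s^{m+d}(x)\big)$ still lies in $\I(G)$ since $\I(G)$ is a $\s$-ideal — shows $\hslash_{m'}\s^{m'}(a)-\hslash_{m'+d}\s^{m'+d}(a)\in K$ is a logarithmic derivative.

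Next I would package this into the integrability condition \eqref{eq:integrabilty} for the rank-one system $\de(y)=A_{m}y$ with $A_{m}=\hslash_m\s^m(a)$. Here \eqref{eq:integrabilty} reads $\de(B)+BA_m=\hslash_d\s^d(A_m)B$, i.e., since everything is $1\times 1$, $\de(B)/B=\hslash_d\s^d(A_m)-A_m$. One checks directly that $\hslash_d\s^d(\hslash_m\s^m(a))=\hslash_{m+d}\s^{m+d}(a)$ using $\hslash_{m+d}=\hslash_m\s^m(\hslash_d)$ and $\s^d\circ\s^m=\s^{m+d}$, so the required right-hand side is exactly $\hslash_{m+d}\s^{m+d}(a)-\hslash_m\s^m(a)=-\de(u)/u$, which is a logarithmic derivative of an element of $K^\times$ by the previous step. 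Hence $B=u^{-1}\in\Gl_1(K)$ solves \eqref{eq:integrabilty}, and $\de(y)=\hslash_m\s^m(a)y$ is $\s^d$-integrable. Since this works for every $m\ge M$, the proof is complete.

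The main obstacle I anticipate is purely bookkeeping: keeping the factors $\hslash_j$ straight through the interplay of $\de$ and $\s^d\circ\s^m$, and making sure that Proposition \ref{prop:mup} indeed furnishes a single $d$ (independent of $m$) and a threshold $M$ such that $\s^m(x)-\s^{m+d}(x)\in\I(G)$ for all $m\ge M$ — which follows once $\I(G)$ is a $\s$-ideal, but should be stated carefully. No deep new idea is needed beyond combining Proposition \ref{prop:mup}, the $\s$-Galois correspondence, and the rank-one case of the definition of $\s^d$-integrability.
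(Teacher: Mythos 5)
Your proposal is correct and follows essentially the same route as the paper's proof: apply Proposition \ref{prop:mup} to get $\s^{m+d}(x)-\s^m(x)\in\I(\sgal(L|K))$ for all $m\geq M$, deduce via the $\s$-Galois correspondence that $\s^{m+d}(z)\s^m(z)^{-1}\in K$, and recognize this element as the matrix $B$ realizing $\s^d$-integrability of $\de(y)=\hslash_m\s^m(a)y$. Your explicit verification of the $\hslash$-bookkeeping (using $\hslash_{m+d}=\hslash_d\s^d(\hslash_m)$) is a detail the paper leaves implicit, but the argument is the same.
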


\begin{proof}
Let $z\in L^\times$ be a non-zero
solution of $\de(y)=ay$ and $R=K\{z,z^{-1}\}_\s$ be the $\s$-Picard-Vessiot ring of $L$.
We identify $\sgal(L|K)$ to a $\s$-closed subgroup of $\mathbf G_m$ via $z$.
It follows from Proposition \ref{prop:mup} that
there exist integers
$d\geq 1$ and $M\geq 0$ such that $\s^{M+d}(x)-\s^M(x)$ belongs to $\I(\sgal(L|K))$. Therefore $\s^{m+d}(x)-\s^m(x)\in\I(\sgal(L|K))$ for any $m\geq M$. Now, for all $k$-$\s$-algebras $S$ and for all $\tau \in \sgal(L|K)(S)$, in $R\otimes_k S$ we have
$\tau(z\otimes 1)=z\otimes c$, for some $c\in S$, with $\s^{m+d}(c)=\s^m(c)$.
Therefore
$$
\tau(\s^{m+d}(z)\s^m(z)^{-1}\otimes 1) = \s^{m+d}(z)\s^m(z)^{-1}\otimes \s^{m+d}(c)\s^m(c)^{-1} =\s^{m+d}(z)\s^m(z)^{-1}\otimes 1.
$$
This means that $\s^{m+d}(z)\s^m(z)^{-1} \in K$
by Galois correspondence. Since $\s^m(z)$ is a
solution of  the equation
$\de(y)=\hslash_m\s^m(a)y$, we find that this system is $\s^d$-integrable for all $m\geq M$.
\end{proof}

\begin{exa}
In the notation introduced for the Dwork exponential, we can consider the differential equation $\de(y)=\frac{y}{p}$.
Its usual differential Galois group is $\mu_p$, since the cyclic extension
$K(x^{1/p})$ is generated by a solution of $\de(y)=\frac{y}{p}$.
We have $F(x^{1/p})=x$, which obviously is $F$-integrable.
\end{exa}

%%%%%%%%%%%%%%%%%%%%%%%%%%%%%%%%%%%%%%%%%%%%%%%%%%%%%%%%%%%%%%%%%%%%%%%%%%
\subsection{A descent result for $\s^d$-integrability}
%%%%%%%%%%%%%%%%%%%%%%%%%%%%%%%%%%%%%%%%%%%%%%%%%%%%%%%%%%%%%%%%%%%%%%%%%%

Our first concern is to show that ``being $\s^d$-integrable'' descents along a $\s$-separable $\s$-field extension of the $\de$-constants
(see \S\ref{subsec:differencealgebra} for the definitions).

\medskip
Let $K$ be a $\ds$-field. We would like to extend the $\s$-field of $\de$-constants $k:=K^\de$ to a $\s$-field extension $\widetilde{k}$ of $k$.
The ring $K\otimes_k\widetilde{k}$ is naturally a $\ds$-ring (where we consider $\widetilde k$ as $\de$-constant).
Because $k$ is relatively algebraically closed in $K$ (Corollary to Proposition 4, Chapter II, Section 4, p. 94 in \cite{Kolchin:differentialalgebraandalgebraicgroups}), we know that
$K\otimes_k\widetilde{k}$ is an integral domain.
Let
$$
\widetilde{K}:=\quot(K\otimes_k\widetilde{k})
$$
denote the quotient field of $K\otimes_k\widetilde{k}$.
To ensure that $\s$ is injective on $K\otimes_k\widetilde{k}$,
and hence that it extends to $\wtilde K$,
we have to assume that $\wtilde{k}$ is $\s$-separable over $k$. Note that by Corollary A.14 in \cite{articleone},
this is automatic if $k$ is inversive. Then $\widetilde{K}$ is naturally a $\ds$-field. Because
$K\otimes_k\widetilde{k}$ is $\de$-simple by Lemma \ref{lemma:simple}, it follows that $\widetilde{K}^\de=\widetilde{k}$.

\begin{prop} \label{prop:descentintegrability}
Let $K$ be a $\ds$-field and $A\in K^{n\times n}$. Let $\widetilde{k}$ be a $\s$-separable $\s$-field extension of $k:=K^\de$ and
$\widetilde{K}=\quot(K\otimes_k\widetilde{k})$.
Then the equation $\de(y)=Ay$ is $\s^d$-integrable over $K$ if and only if $\de(y)=Ay$ is $\s^d$-integrable over $\widetilde{K}$.
\end{prop}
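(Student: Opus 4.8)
The plan is to discard the difference structure and regard this as a descent statement for an ordinary linear differential equation. The implication ``$\s^d$-integrable over $K$ implies $\s^d$-integrable over $\wtilde K$'' is immediate, since a $B\in\Gl_n(K)$ with $\de(B)+BA=\hslash_d\s^d(A)B$ also lies in $\Gl_n(\wtilde K)$. For the converse, to every $\de$-field extension $F$ of $K$ we attach
\[
W_F=\{B\in F^{n\times n}\ |\ \de(B)+BA=\hslash_d\s^d(A)B\}.
\]
Since the defining condition is $F^\de$-linear in $B$, the set $W_F$ is an $F^\de$-vector space, and as the solution space of a homogeneous linear differential system of rank $n^2$ it satisfies $\dim_{F^\de}W_F\leq n^2$. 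By definition $\de(y)=Ay$ is $\s^d$-integrable over a $\ds$-field $F\supseteq K$ precisely when $W_F$ contains an invertible matrix; so it suffices to prove that if $W_{\wtilde K}$ contains an invertible matrix then so does $W_K$.

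This will follow from two ingredients. The first is a \emph{generic invertibility} observation: if $F\subseteq F'$ are $\de$-fields and $W_{F'}$ lies in the $F'^\de$-span of $W_F$, then $W_{F'}\cap\Gl_n(F')\neq\emptyset$ forces $W_F\cap\Gl_n(F)\neq\emptyset$. Indeed, choose an $F^\de$-basis $B_1,\dots,B_r$ of $W_F$; an invertible $\sum_ic_iB_i\in W_{F'}$ shows that $P:=\det\bigl(\sum_i x_iB_i\bigr)\in F[x_1,\dots,x_r]$ is a nonzero polynomial, and since $F^\de\supseteq\Q$ is infinite $P$ is nonzero at some $(a_1,\dots,a_r)\in(F^\de)^r$, whence $\sum_i a_iB_i\in W_F\cap\Gl_n(F)$. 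The second ingredient is that the span hypothesis $W_{F'}\subseteq\operatorname{span}_{F'^\de}W_F$ holds along the extensions we need. First one reduces to the case where $\wtilde k$ is finitely generated over $k$: any matrix of $W_{\wtilde K}$ has entries in $\quot(K\otimes_k k_0)$ for some finitely generated subextension $k_0$ of $\wtilde k$, which by Lemma~\ref{lemma:simple} is a $\de$-field with constants $k_0$. Then, writing $\wtilde k$ as a finite extension of a purely transcendental extension $k(t_1,\dots,t_e)$ of $k$ and inserting the $\de$-fields $\quot(K\otimes_k k(t_1,\dots,t_j))$ (with $\de(t_j)=0$ and constants $k(t_1,\dots,t_j)$, again by Lemma~\ref{lemma:simple}), it is enough to establish the span hypothesis for a simple transcendental step and for a finite step, and then to descend invertibility along the resulting tower.

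For a simple transcendental step $F\subseteq F(t)$, $\de(t)=0$: given $B\in W_{F(t)}$ write $B=W/f$ with $W\in F[t]^{n\times n}$ and $f\in F[t]$ monic of minimal degree, so that $f\bigl(\de(W)-\hslash_d\s^d(A)W+WA\bigr)=W\,\de(f)$ and hence $f$ divides $W\de(f)$ entrywise. A valuation computation at each irreducible factor $p$ of $f$, using that $\de$ acts on the coefficients only so that $\deg_t\de(p)<\deg_t p$ and thus $p\nmid\de(p)$ when $\de(p)\neq0$, forces $\de(f)=0$, i.e. $f\in F^\de[t]$; then $\de(W)=\hslash_d\s^d(A)W-WA$, and comparing $t$-coefficients puts every coefficient of $W$ in $W_F$, so $B\in\operatorname{span}_{F^\de(t)}W_F$. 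For a finite step $F\subseteq F'$, pass to the Galois closure $N$ of $F'/F$; then $\de$ extends uniquely to $N$ and commutes with $\Gamma:=\Gal(N/F)$, and (using Lemma~\ref{lemma:simple}) $\Gamma$ acts on $N^\de$ through an isomorphism $\Gamma\cong\Gal(N^\de/F^\de)$. For $B\in W_N$ the $N^\de$-span $U$ of the orbit $\{\gamma(B)\mid\gamma\in\Gamma\}$ is a finite-dimensional $\Gamma$-stable subspace of $W_N$ carrying a semilinear $\Gamma$-action, so Galois descent gives $U=U^\Gamma\otimes_{F^\de}N^\de$ with $U^\Gamma\subseteq W_N\cap F^{n\times n}=W_F$; hence $W_{F'}\subseteq W_N\subseteq\operatorname{span}_{N^\de}W_F$. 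Cascading the generic invertibility observation through the tower then carries an invertible matrix of $W_{\wtilde K}$ down to one of $W_K$. The one genuine obstacle is this finite, Galois-descent step; the transcendental step, the reduction to a finitely generated $\wtilde k$, and the generic invertibility observation are all routine.
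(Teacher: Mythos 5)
Your proof is correct, but it takes a genuinely different route from the paper's. The paper isolates a single descent lemma for solution spaces of linear differential systems (its Lemma \ref{lemma:descent solution to linear delta equation}): writing $S=K\otimes_k\wtilde{k}$, one shows that any solution $z$ over $\wtilde{K}=\quot(S)$ actually lies in $S^{n\times n}$ by observing that the ``denominator ideal'' $\{r\in S\mid rz\in S^{n\times n}\}$ is a non-zero $\de$-ideal of the $\de$-simple ring $S$ (Lemma \ref{lemma:simple}), and then decomposes $z=\sum z_i\otimes\lambda_i$ along a $k$-basis $(\lambda_i)$ of $\wtilde{k}$ to conclude $\wtilde{\mathbb V}=\mathbb V\otimes_k\wtilde{k}$. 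This handles an arbitrary constant extension $\wtilde{k}$ in one stroke, with no reduction to finitely generated extensions and no case distinction. You instead re-derive the weaker containment $W_{\wtilde K}\subseteq\operatorname{span}_{\wtilde K^\de}W_K$ by a d\'evissage of $\wtilde{k}/k$ into purely transcendental steps (handled by your partial-fraction/derivation-of-the-denominator argument, which is sound: minimality of $\deg f$ forces $f\mid\de(f)$ and hence $\de(f)=0$) and a finite step handled by semilinear Galois descent over $N^\de/F^\de$ — the step you rightly identify as the least routine, and which does go through because, by Lemma \ref{lemma:simple} and the relative algebraic closedness of the constants, the Galois closure $N$ of $F'/F$ is generated by its constants, so restriction gives $\Gal(N/F)\cong\Gal(N^\de/F^\de)$. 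Both proofs finish identically, with the observation that a determinant polynomial over $F$ not vanishing identically on $(\wtilde{k})^r$ cannot vanish identically on the infinite field $F^\de$. What the paper's approach buys is brevity and a stronger conclusion (the solution space genuinely base-changes); what yours buys is independence from the $\de$-simplicity trick, at the cost of the Galois-descent machinery and the finite-generation reduction.
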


To prove the proposition above we need a lemma on the behavior of the
vector space of solutions with respect to an extension of the constant field:

\begin{lemma} \label{lemma:descent solution to linear delta equation}
Let $K$ be a $\de$-field and $A\in K^{n\times n}$. Let $\wtilde k$ be a ($\de$-constant) field extension of $k:=K^\de$ and let
$\widetilde{K}$ denote the quotient field of $K\otimes_k\widetilde{k}$.
Let us denote by $\wtilde{ \mathbb V}$ (resp. $\mathbb V$) the solution space of $\de(y)=Ay$ in $\wtilde{K}^n$ (resp.
$K^n$). Then $\wtilde{\mathbb{V}}\simeq\mathbb{V} \otimes_k \wtilde{k}$.
\end{lemma}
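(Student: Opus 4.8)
The plan is to show that the natural map $\mathbb{V}\otimes_k\widetilde{k}\to\widetilde{\mathbb{V}}$, $v\otimes c\mapsto cv$, is a bijection. It is clearly $\widetilde{k}$-linear and its image lands in $\widetilde{\mathbb{V}}$ because $\widetilde{k}$ consists of $\de$-constants. Injectivity is immediate: if $v_1,\dots,v_r\in\mathbb{V}$ are $k$-linearly independent then they remain $\widetilde{k}$-linearly independent in $\widetilde{K}^n$, since $K\hookrightarrow\widetilde{K}$ and linear independence over $k$ of vectors with entries in $K$ is detected by non-vanishing of suitable maximal minors, which stay non-zero in $\widetilde K$; alternatively, $K\otimes_k\widetilde k\subset\widetilde K$ is free over $\widetilde k$ with $K$ a direct summand. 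So the real content is \textbf{surjectivity}, i.e.\ every solution in $\widetilde{K}^n$ is a $\widetilde{k}$-combination of solutions in $K^n$.

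\textbf{First} I would recall the standard ``cyclic vector / Wronskian'' reduction, but a cleaner route here is the following. Let $P$ be a $\s$-Picard–Vessiot-type construction in the purely differential setting: pick a Picard–Vessiot ring $S$ for $\de(y)=Ay$ over $K$ with $S^\de=k$ (it exists since we only need the differential theory, and even if $k$ is not algebraically closed we may first base-change $k$ to $\bar k$, prove the statement there, and descend — or simply work with the solution space abstractly). Actually the most economical argument avoids Picard–Vessiot altogether. Choose a $k$-basis $(e_\lambda)_{\lambda\in\Lambda}$ of $\widetilde k$ with, say, $e_{\lambda_0}=1$. Then every element of $K\otimes_k\widetilde k$ is a \emph{finite} $\sum_\lambda a_\lambda\otimes e_\lambda$ with $a_\lambda\in K$, and every element of $\widetilde K=\quot(K\otimes_k\widetilde k)$ is a quotient of two such. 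Given $y\in\widetilde K^n$ with $\de(y)=Ay$, clear denominators: there is $0\neq h\in K\otimes_k\widetilde k$ with $w:=hy\in (K\otimes_k\widetilde k)^n$. Writing $w=\sum_\lambda w_\lambda\otimes e_\lambda$ with $w_\lambda\in K^n$ (finite sum) and $h=\sum_\mu h_\mu\otimes e_\mu$ with $h_\mu\in K$, the equation $\de(w)=Aw+(\de(h)/h)w$ becomes, after expanding products $e_\mu e_\nu=\sum_\rho c^\rho_{\mu\nu}e_\rho$ via the structure constants of $\widetilde k/k$ and comparing coefficients of each $e_\rho$, a system of equations \emph{over $K$} coupling the $w_\lambda$ and $h_\mu$.

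\textbf{The cleanest finish}, which I expect to be the actual argument in the paper, is a linear-algebra descent lemma: if $W$ is the $k$-span of all the $w_\lambda$ inside $K^n$ (a finite-dimensional $k$-space), then $\de$ does \emph{not} preserve $W$ in general, but the $K$-span $KW\subset K^n$ is a $\de$-stable $K$-subspace, and the solution space $\mathbb V$ of $\de(y)=Ay$ inside $KW$ already spans $KW\cap(\text{solutions})$... — here I would instead invoke the fact that for a linear differential equation over any $\de$-field $K$, the dimension of the solution space in $K^n$ equals $n$ minus the ``defect'', and crucially $\dim_k\mathbb V$ is \emph{unchanged} by the constant extension because one can compute it as $n-\operatorname{rank}$ of an explicit differential operator with entries in $K$, a rank that is insensitive to the field extension $K\subset\widetilde K$ (being computed by vanishing/non-vanishing of minors over $K$). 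Hence $\dim_{\widetilde k}\widetilde{\mathbb V}=\dim_k\mathbb V=\dim_{\widetilde k}(\mathbb V\otimes_k\widetilde k)$, and an injection of $\widetilde k$-vector spaces of equal finite dimension is an isomorphism.

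\textbf{The main obstacle} is making the ``dimension is insensitive to $K\subset\widetilde K$'' step rigorous without circularity: one must know a priori that $\dim_k\mathbb V$ is finite and computed $K$-rationally. The safe way is to pass to a cyclic vector, reducing $\de(y)=Ay$ to a scalar operator $\mathcal D=\de^n+a_{n-1}\de^{n-1}+\dots+a_0$ over $K$; then $\mathbb V\cong\ker(\mathcal D|_K)$ and $\widetilde{\mathbb V}\cong\ker(\mathcal D|_{\widetilde K})$, and a solution $y$ of $\mathcal D(y)=0$ in $\widetilde K$ with $y=\sum y_\lambda\otimes e_\lambda$ (after clearing a denominator lying in $K\otimes_k\widetilde k$ and using that $\mathcal D$ has coefficients in $K$, so it acts coordinate-wise on the $e_\lambda$-components once denominators are handled) forces each component to lie in a $\de$-stable finite extension situation — concretely, one shows directly that the $y_\lambda$ themselves, suitably normalized, solve $\mathcal D(y)=0$ over $K$, whence $y\in\mathbb V\otimes_k\widetilde k$. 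I would carry out this coordinate-comparison explicitly for the scalar operator (it is the one genuinely computational point) and thereby obtain surjectivity; combined with the easy injectivity above, this proves $\widetilde{\mathbb V}\simeq\mathbb V\otimes_k\widetilde k$.
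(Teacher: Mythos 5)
Your injectivity argument is fine and agrees with the paper's. The surjectivity half, however, has a genuine gap: every route you sketch requires knowing that a solution $z\in\widetilde{K}^n$ actually lies in $(K\otimes_k\widetilde{k})^n$ --- i.e., that no denominators are needed --- before you can decompose it along a $k$-basis $(e_\lambda)$ of $\widetilde{k}$ and compare coefficients. You never establish this; you defer it with phrases like ``once denominators are handled'' and ``suitably normalized.'' After clearing a denominator $h$, the element $w=hz$ satisfies $\de(w)=Aw+(\de(h)/h)\,w$, whose extra coefficient $\de(h)/h$ lives in $\widetilde{K}$ rather than in $K\otimes_k\widetilde{k}$, so the coefficient comparison along the basis $(e_\lambda)$ does not go through. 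The paper closes exactly this point with one clean idea that is absent from your proposal: $S:=K\otimes_k\widetilde{k}$ is $\de$-simple (Lemma \ref{lemma:simple}), and the set $\ida=\{r\in S\mid rz\in S^n\}$ is a non-zero $\de$-ideal of $S$ (because $\de(r)z=\de(rz)-rAz\in S^n$ for $r\in\ida$), hence contains $1$. Thus $z\in S^n$, and only then does writing $z=\sum z_i\otimes\lambda_i$ and comparing coefficients yield $\de(z_i)=Az_i$.

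Your fallback via dimension counting is also not viable as stated: $\mathbb{V}$ is the kernel of the $k$-linear (not $K$-linear) operator $\de-A$ on $K^n$, so $\dim_k\mathbb{V}$ is not computed by vanishing of minors over $K$ and is not formally insensitive to field extension --- its invariance under the constant extension $K\subset\widetilde{K}$ is precisely the content of the lemma, so invoking it would be circular, as you yourself suspect. The cyclic-vector reduction does not repair this, since the same denominator problem reappears verbatim for the scalar operator.
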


\begin{proof}
It is clear that the canonical map $\mathbb{V} \otimes_k \wtilde{k}\to \wtilde{\mathbb{V}}$ is injective.
Indeed, if elements from $K^n\subset(K\otimes_k\wtilde{k})^n=K^n\otimes_k\widetilde{k}$ are linearly independent over $k$, they are also linearly independent over $\wtilde{k}$.
\par
It remains to see that $\mathbb{V} \otimes_k \wtilde{k}\to \wtilde{\mathbb{V}}$ is surjective. Let $z\in\mathbb{\wtilde{V}}\subset\wtilde{K}^n$.
We know from Lemma \ref{lemma:simple}
that $S:=K\otimes_k \widetilde k$ is a $\de$-simple $\de$-ring.
We claim that $z\in S^n$.
Set $\ida:=\{r\in S|\ rz\in S^n\}\subset S$. Then $\ida$ is a non-zero ideal
of $S$ and we will now show that $\ida$ is a $\de$-ideal of $S$: For $r\in\ida$ we have
$\de(rz)=r\de(z)+\de(r)z=rAz+\de(r)z$. As $\de(rz)$ and $rAz=Arz$ lie in $S^n$, it follows that $\de(r)z$ lies in $S^n$. By the simplicity of $S$, we see that $1\in\ida$, i.e., $z\in S^n$.
\par
Now, let $(\lambda_i)$ be a $k$-basis of $\widetilde{k}$ (which can be infinite, of course). We can write $z=\sum z_i\otimes \lambda_i$ for some $z_i\in K^n$.
Then $\de(z)=\sum\de(z_i)\otimes\lambda_i$. On the other hand, $\de(z)=Az=\sum Az_i\otimes\lambda_i$. Comparing the
coefficients yields $\de(z_i)=Az_i$ for all $i$. This shows that $z$ lies in the image $\mathbb{V} \otimes_k \wtilde{k}\to \wtilde{\mathbb{V}}$.
\end{proof}

\begin{proof}[Proof of Proposition \ref{prop:descentintegrability}]
One implication is tautological, so let us prove the non-trivial one. Assume that
$\de(y)=Ay$ is $\s^d$-integrable over $\widetilde{K}$.
This means that there exists $\wtilde B\in\Gl_n(\wtilde K)$ such that
$\de(\wtilde B)+\wtilde BA=\hslash_d\s^d(A)\wtilde B$. The latter equation, when considered as an equation in $\widetilde{B}$, is a
linear differential system over $K$ of order $n^2$.
Let $\mathbb{V}\subset K^{n\times n}$ (resp. $\wtilde{ \mathbb V}\subset \wtilde{K}^{n\times n}$)
denote the corresponding solution vector space over $k$ (resp. over $\wtilde k$).
We know from Lemma \ref{lemma:descent solution to linear delta equation} that
$\wtilde{ \mathbb V}=\mathbb{V} \otimes_k \wtilde{k}$.
We have to find a $B\in\mathbb{V}$ with non-zero determinant. Let $v_1,\ldots,v_m\in \wtilde{K}^{n\times n}$ be a $k$-basis of $\mathbb{V}$.
Then $v_1,\ldots,v_m$ is also a $\widetilde{k}$-basis of $\wtilde{ \mathbb V}$. It follows from
the Wronskian lemma ( Lemma 1.7, p.7 in \cite{vdPutSingerDifferential})
that $v_1,\ldots,v_m$ are also $\widetilde{K}$-linearly independent. With respect to the basis $v_1,\ldots,v_m$ of $\mathbb{V}\otimes_k\widetilde{K}\subset\widetilde{K}^{n\times n}$ the determinant $\det\colon\mathbb{V}\otimes_k\widetilde{K}\to \widetilde{K}$ is given by a polynomial (in $m$-variables) with coefficients in $\widetilde{K}$.
Because the determinant does not vanish on all of $\widetilde{\mathbb{V}}$, this shows that the determinant can not vanish on all of $\mathbb{V}$. So $\de(y)=Ay$ is $\s^d$-integrable over $K$.
\end{proof}

\subsection{Galoisian characterization of $\s^d$-integrability}

Let $k$ be a $\s$-field and let $d\geq 1$. For every $k$-$\s$-algebra $S$, we set
$$
\Gl_{n,k}^{\s^d}(S)=\{g\in\Gl_n(S)|\ \s^d(g)=g\}.
$$
Then $\Gl_{n,k}^{\s^d}$ is a $\s$-closed subgroup of $\Gl_{n,k}$.
If $G\leq \Gl_{n,k}$ is a $\s$-closed subgroup and $h\in\Gl_n(k)$, then the $\s$-closed subgroup $hGh^{-1}\leq \Gl_{n,k}$ obtained from $G$ by conjugation with $h$, is given by
$$
(hGh^{-1})(S)=\{hgh^{-1}|\ g\in G(S)\}\leq\Gl_n(S),
$$
for every $k$-$\s$-algebra $S$.

\begin{defn}\label{defn:sconstantgroup}
Let $G$ be a $\s$-closed subgroup of $\Gl_{n,k}$. We call $G$ a \emph{$\s^d$-constant} subgroup of $\Gl_{n,k}$ if $G$ is contained in $\Gl_{n,k}^{\s^d}$. If $\wtilde{k}$ is a $\s$-field extension of $k$, we say that \emph{$G$ is conjugated over $\wtilde{k}$ to a $\s^d$-constant group} if there exists $h \in \Gl_n(\wtilde{k})$ such that $hG_{\wtilde{k}}h^{-1}\leq \Gl_{n,\wtilde{k}}$ is $\s^d$-constant.
\end{defn}

We prove a result on $\s^d$-integrability, which is analogous to Proposition 2.9 in \cite{HardouinSinger},
and \S1.2.1 in  \cite{diviziohardouindependencies}. The statement below is more general than the cited
results, because it relies on Proposition \ref{prop:descentintegrability}.

\begin{thm}\label{thm:integra}
Let $L|K$ be a $\s$-Picard-Vessiot extension for $\de(y)=A y$, with $A\in K^{n\times n}$. Then $\de(y)=Ay$ is $\s^d$-integrable over $K$
if and only if there exists a $\s$-separable $\s$-field extension $\widetilde{k}$ of $k:=K^\de$, such that
the $\s$-Galois group $\sgal(L|K)$ is conjugated over $\wtilde{k}$ to a $\s^d$-constant subgroup
of $\Gl_{n,\wtilde{k}}$.
\end{thm}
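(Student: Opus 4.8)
The plan is to reduce both directions to a computation with fundamental solution matrices inside the $\s$-Picard-Vessiot ring, using the descent result (Proposition \ref{prop:descentintegrability}) to pass freely to an extension $\widetilde{k}$ of the $\de$-constants. First I would fix $Y\in\Gl_n(L)$ with $\de(Y)=AY$ and identify $G=\sgal(L|K)$ with a $\s$-closed subgroup of $\Gl_{n,k}$ via $Y$, so that for every $k$-$\s$-algebra $S$ and every $\tau\in G(S)$ one has $\tau(Y\otimes 1)=(Y\otimes 1)[\tau]_Y$ with $[\tau]_Y\in\Gl_n(S)$. Throughout, $R=K\{Y,\det(Y)^{-1}\}_\s$ denotes the $\s$-Picard-Vessiot ring, and $R\otimes_K R\cong R\otimes_k (R\otimes_K R)^\de$ by Lemma \ref{lemma:simple} (applied with the $\de$-simple ring $R$), so the Galois group is encoded by the matrix $Z=(Y\otimes 1)^{-1}(1\otimes Y)\in\Gl_n((R\otimes_K R)^\de)$.

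For the direction ``$\s^d$-integrable $\Rightarrow$ conjugated to a $\s^d$-constant group'', suppose $B\in\Gl_n(K)$ satisfies $\de(B)+BA=\hslash_d\s^d(A)B$. As in the proposition preceding Definition \ref{defn:sconstantgroup}, the matrix $B^{-1}\s^d(Y)$ then satisfies $\de(B^{-1}\s^d(Y))=A(B^{-1}\s^d(Y))$, so there is $C\in\Gl_n(L^\de)=\Gl_n(k)$ with $\s^d(Y)=BYC$. Now enlarge the constants: let $\widetilde{k}$ be a $\s$-field extension of $k$ in which the equation $\s^d(D)=C^{-1}D$ (equivalently $\s^d(D)D^{-1}=C^{-1}$) has a solution $D\in\Gl_n(\widetilde{k})$ — such a $\widetilde{k}$ can be taken $\s$-separable over $k$, e.g. the field of fractions of a suitable $k$-$\s$-algebra, or one may invoke that $k$ embeds into a linearly $\s^d$-closed $\s$-field; in either case $\widetilde{K}=\quot(K\otimes_k\widetilde{k})$ is a $\ds$-field with $\widetilde{K}^\de=\widetilde{k}$ and $L\otimes_k\widetilde{k}$ provides a $\s$-Picard-Vessiot extension of $\widetilde{K}$ with group $G_{\widetilde{k}}$. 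Replacing $Y$ by $Y'=YD$, one computes $\s^d(Y')=\s^d(Y)\s^d(D)=BYC\cdot C^{-1}D=BYD=BY'$, so $\s^d(Y')=BY'$ with $B\in\Gl_n(\widetilde{K})$. For $\tau\in G_{\widetilde k}(S)$ one then gets, applying $\s^d$ to $\tau(Y'\otimes 1)=(Y'\otimes 1)[\tau]_{Y'}$ and using that $\tau$ commutes with $\s$ and fixes $B$, that $(Y'\otimes 1)B[\tau]_{Y'}=\s^d(\tau(Y'\otimes 1))=\tau(\s^d(Y'\otimes 1))=\tau((Y'\otimes 1)B)=(Y'\otimes 1)[\tau]_{Y'}\s^d([\tau]_{Y'})\cdot(\text{correction})$; more cleanly, comparing $\tau(\s^d(Y')\otimes 1)$ computed two ways yields $\s^d([\tau]_{Y'})=[\tau]_{Y'}$, i.e. $G_{\widetilde k}$ in the representation $Y'$ is $\s^d$-constant. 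Since $Y'=YD$ with $D\in\Gl_n(\widetilde k)$, this representation differs from the original one by conjugation by $D$, so $D^{-1}G_{\widetilde k}D$ — equivalently $G_{\widetilde k}$ up to conjugation by an element of $\Gl_n(\widetilde k)$ — is $\s^d$-constant, as required.

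For the converse, assume that after replacing $k$ by a $\s$-separable extension $\widetilde{k}$ (whence $\widetilde{K}=\quot(K\otimes_k\widetilde{k})$ and, by Proposition \ref{prop:descentintegrability}, it suffices to prove $\s^d$-integrability over $\widetilde{K}$) and conjugating by $h\in\Gl_n(\widetilde k)$, the group $G_{\widetilde k}$ satisfies $\s^d(g)=g$ for all $g$ in $G_{\widetilde k}$. Working with the fundamental solution matrix $Y'=Yh^{-1}$, this says $\s^d([\tau]_{Y'})=[\tau]_{Y'}$ for all $\tau$. Set $B:=\s^d(Y')Y'^{-1}\in\Gl_n(L\otimes_k\widetilde k)$. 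For any $\tau\in G_{\widetilde k}(S)$, $\tau(B\otimes 1)=\tau(\s^d(Y')\otimes 1)\tau(Y'\otimes 1)^{-1}=(\s^d(Y')\otimes 1)\s^d([\tau]_{Y'})[\tau]_{Y'}^{-1}(Y'\otimes 1)^{-1}=(\s^d(Y')\otimes 1)(Y'\otimes 1)^{-1}=B\otimes 1$, using $\s^d([\tau]_{Y'})=[\tau]_{Y'}$. By the $\s$-Galois correspondence (Theorem \ref{theo:Galoiscorrespondence}) the entries of $B$ lie in $\widetilde{K}$, so $B\in\Gl_n(\widetilde K)$. Finally, differentiating $\s^d(Y')=BY'$ and using $\de\circ\s^d=\hslash_d\s^d\circ\de$ gives $\hslash_d\s^d(A)\s^d(Y')=\de(B)Y'+B\de(Y')=(\de(B)+BA)Y'$, hence $\hslash_d\s^d(A)BY'=(\de(B)+BA)Y'$ and, cancelling $Y'$, the compatibility relation $\de(B)+BA=\hslash_d\s^d(A)B$. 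Thus $\de(y)=Ay$ is $\s^d$-integrable over $\widetilde K$, and by descent over $K$.

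The main obstacle is the bookkeeping in the first direction: one must produce the $\s$-separable extension $\widetilde{k}$ of $k$ carrying a solution $D$ of $\s^d(D)D^{-1}=C^{-1}$ while guaranteeing that $\widetilde{K}=\quot(K\otimes_k\widetilde{k})$ is still a $\ds$-field with $\widetilde{K}^\de=\widetilde{k}$ and that $L\otimes_k\widetilde k$ (or its relevant localization) is a $\s$-Picard-Vessiot extension of $\widetilde K$ with $\s$-Galois group $G_{\widetilde k}$ — this is exactly the point where the hypothesis of $\s$-separability (automatic when $k$ is inversive, by Corollary A.14 in \cite{articleone}) is used, and where one leans on Lemma \ref{lemma:simple} to control the $\de$-constants; the matrix identities themselves are then routine.
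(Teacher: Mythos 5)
Your proposal is correct and follows essentially the same route as the paper: the forward direction derives $\s^d(Y)=BYC$ with $C\in\Gl_n(k)$ from the constancy of $(BY)^{-1}\s^d(Y)$, adjoins a solution of $\s^d(D)=C^{-1}D$ in a $\s$-separable extension (the paper's Lemma \ref{lemma:solvelinearinssep}), and conjugates by $D$; the converse sets $B=\s^d(Y')Y'^{-1}$, uses Galois-invariance and the correspondence to place $B$ in $\Gl_n(\widetilde K)$, and descends via Proposition \ref{prop:descentintegrability}. The paper packages these two directions as Lemmas \ref{lemma:sintegrableimpliesconstant} and \ref{lemma:constantimpliesintegrable}, but the content is identical to yours.
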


First of all, we are going to prove one implication of Theorem \ref{thm:integra}
(see Lemma \ref{lemma:sintegrableimpliesconstant} below). To this purpose we need a few lemmas.

\begin{lemma} \label{lemma:baseextensionforsPV}
Let $L|K$ be a $\s$-Picard-Vessiot extension and $\wtilde{k}$ a $\s$-separable $\s$-field extension of $k=K^\de$.
Then $\widetilde{L}=\quot(L\otimes_k\widetilde{k})$ is a $\s$-Picard-Vessiot extension of $\widetilde{K}=\quot(K\otimes_k\widetilde{k})$ and the $\s$-Galois group $\wtilde{G}$ of $\wtilde{L}|\wtilde{K}$ is obtained from the $\s$-Galois group $G$ of $L|K$ by base extension, i.e., $\widetilde{G}=G_{\wtilde{k}}$.
\end{lemma}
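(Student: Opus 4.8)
The plan is to verify directly that $\widetilde{L}=\quot(L\otimes_k\widetilde{k})$ satisfies the defining properties of a $\s$-Picard-Vessiot extension of $\widetilde K$ for the same system $\de(y)=Ay$, and then to identify the $\s$-Galois group functor. First I would check that $\widetilde L$ is well-defined as a $\ds$-field: since $k$ is relatively algebraically closed in $L$ (as it is in $K$, by Kolchin's result already invoked in the paper), the ring $L\otimes_k\widetilde k$ is an integral domain, and $\s$-separability of $\widetilde k$ over $k$ guarantees that $\s$ is injective on $L\otimes_k\widetilde k$, hence extends to the fraction field; the derivation extends as usual with $\widetilde k$ as $\de$-constants. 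Next, let $R\subset L$ be the $\s$-Picard-Vessiot ring with fundamental solution matrix $Y\in\Gl_n(L)$, so $L=K\<Y\>_\s$ and $R=K\{Y,\tfrac1{\det Y}\}_\s$. Then $\widetilde R := \widetilde K\otimes_K R$ (equivalently the $\widetilde K$-$\s$-subalgebra of $\widetilde L$ generated by the entries of $Y$ and $\det(Y)^{-1}$) is $\s$-generated by a fundamental solution matrix, and $\widetilde L=\widetilde K\<Y\>_\s$. The crucial point is the no-new-constants condition $\widetilde L^\de=\widetilde k$: I would argue that $R\otimes_k\widetilde k$ is $\de$-simple — this follows from Lemma \ref{lemma:simple} applied with the $\de$-simple $\de$-ring $R$ (it is $\de$-simple because it is a $\s$-Picard-Vessiot ring), noting $R^\de=k$ — so in particular $(R\otimes_k\widetilde k)^\de=\widetilde k$, and since $\widetilde L$ is the fraction field of $R\otimes_k\widetilde k$ (localized suitably) one gets $\widetilde L^\de=\widetilde k=\widetilde K^\de$. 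Hence $\widetilde L|\widetilde K$ is a $\s$-Picard-Vessiot extension for $\de(y)=Ay$, with $\s$-Picard-Vessiot ring the image of $R\otimes_k\widetilde k$.

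For the identification of the Galois group, recall from Proposition \ref{prop:defgal} that $G=\sgal(L|K)$ is represented by $(R\otimes_K R)^\de$ and $\widetilde G=\sgal(\widetilde L|\widetilde K)$ is represented by $(\widetilde R\otimes_{\widetilde K}\widetilde R)^\de$. The plan is to compute $\widetilde R\otimes_{\widetilde K}\widetilde R\cong (R\otimes_K R)\otimes_k\widetilde k$ (base change is compatible with tensor products over the respective base rings, using $\widetilde R=R\otimes_k\widetilde k$ as a $K\otimes_k\widetilde k$-module and then localizing), and then apply Lemma \ref{lemma:simple} once more — now with the $\de$-simple $\de$-ring $R\otimes_K R$ — to conclude $((R\otimes_K R)\otimes_k\widetilde k)^\de = (R\otimes_K R)^\de\otimes_k\widetilde k = k\{G\}\otimes_k\widetilde k$. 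Since $k\{G_{\widetilde k}\}=k\{G\}\otimes_k\widetilde k$ by the definition of base extension of group $k$-$\s$-schemes, this gives $\widetilde G=G_{\widetilde k}$ as claimed; one should also note the $\s$-Hopf-algebra structure maps are the base changes of those for $G$, so the isomorphism is one of group $\widetilde k$-$\s$-schemes.

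The main obstacle I anticipate is the bookkeeping needed to justify that $R\otimes_K R$ (rather than just $R$) is again $\de$-simple, so that Lemma \ref{lemma:simple} can be applied to it — this is exactly the statement underlying why the $\s$-Galois group is a $\s$-algebraic group in the first place (Proposition \ref{prop:defgal}), and I would either cite the relevant fact from \cite{articleone} or recall that $R\otimes_K R$ is a localization of $R\otimes_K R_0$ where $R_0=K[Y,\det(Y)^{-1}]$, whose $\de$-simplicity comes from the torsor structure in the classical theory, combined with the $\s$-saturation argument. A secondary, more routine technicality is checking that all the tensor-product manipulations are compatible with passing to fraction fields, i.e. that localizing commutes with the relevant base changes and with taking $\de$-constants, which is where I would lean on the fact that $\de$-constants of a $\de$-simple $\de$-ring coincide with those of its total ring of fractions. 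Everything else is formal unwinding of definitions.
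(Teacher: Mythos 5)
Your plan for the first half (that $\widetilde L|\widetilde K$ is again a $\s$-Picard-Vessiot extension) is sound and matches what the paper does implicitly: $R\otimes_k\widetilde k$ is $\de$-simple by Lemma \ref{lemma:simple} applied to the $\de$-simple ring $R$ with the constant $k$-algebra $\widetilde k$, hence $\widetilde L^\de=\widetilde k$. The problem is in the second half. The step you yourself flag as the main obstacle --- ``justify that $R\otimes_K R$ is again $\de$-simple, so that Lemma \ref{lemma:simple} can be applied to it'' --- cannot be carried out, because $R\otimes_K R$ is \emph{not} $\de$-simple in general. The torsor structure you invoke shows precisely the opposite: one has $R\otimes_K R\cong R\otimes_k (R\otimes_K R)^\de=R\otimes_k k\{G\}$, and by Lemma \ref{lemma:simple} the $\de$-ideals of this ring are in bijection with the ideals of $k\{G\}$; since $k\{G\}$ is a $\s$-Hopf algebra and not a field (already for $G=\Gm$ it is $k\{x,x^{-1}\}_\s$), there are plenty of proper $\de$-ideals. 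So the justification you propose to cite does not exist, and as written that step fails.

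The good news is that the identity you actually need, $\bigl((R\otimes_K R)\otimes_k\widetilde k\bigr)^\de=(R\otimes_K R)^\de\otimes_k\widetilde k$, is true and follows from a \emph{correct} application of Lemma \ref{lemma:simple}: use the torsor isomorphism to write $(R\otimes_K R)\otimes_k\widetilde k\cong R\otimes_k\bigl(k\{G\}\otimes_k\widetilde k\bigr)$ and apply the lemma to the $\de$-simple ring $R$ with constant algebra $S=k\{G\}\otimes_k\widetilde k$. You would still have to check that the localization turning $(R\otimes_K R)\otimes_k\widetilde k$ into $\widetilde R\otimes_{\widetilde K}\widetilde R$ (inverting the nonzero elements of $K\otimes_k\widetilde k$) creates no new $\de$-constants. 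It is worth knowing that the paper sidesteps all of this: since $\sgal(L|K)(S)$ is \emph{defined} as $\Aut^\ds(R\otimes_k S|K\otimes_k S)$, the base extension $G_{\widetilde k}$ is literally the restriction of this functor to $\widetilde k$-$\s$-algebras, and the proof reduces to the observation that $(R\otimes_k\widetilde k)\otimes_{\widetilde k}S=R\otimes_k S$ and that automorphisms pass to the localization $\widetilde R\otimes_{\widetilde k}S$. Working at the level of the functor of points rather than the representing Hopf algebra removes both the $\de$-simplicity issue and the localization bookkeeping; if you keep your route through $(R\otimes_K R)^\de$, you must replace the false $\de$-simplicity claim by the torsor argument above.
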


\begin{proof}
As $\widetilde{L}^\de=\wtilde{k}=\wtilde{K}^\de$, it is clear that $\wtilde{L}|\wtilde{K}$ is a $\s$-Picard-Vessiot extension.
Let $R\subset L$, respectively $\wtilde{R}\subset\wtilde{L}$ denote the $\s$-Picard-Vessiot rings. Then $\widetilde{R}$ is obtained from $R\otimes_k\widetilde{k}$ by localizing at the multiplicatively closed set of all non-zero divisors of $K\otimes_k\wtilde{k}$.
It follows that, for every $\widetilde{k}$-$\s$-algebra $S$,
$$
G_{\wtilde{k}}(S)=\Aut^{\ds}(R\otimes_kS|K\otimes_kS)
=\Aut^{\ds}((R\otimes_k\wtilde{k})\otimes_{\widetilde{k}}S|(K\otimes_k\widetilde{k})\otimes_{\wtilde{k}}S)=
\Aut^{\ds}(\widetilde{R}\otimes_{\widetilde{k}}S|\widetilde{K}\otimes_{\wtilde{k}}S)=\widetilde{G}(S).
$$
This ends the proof.
\end{proof}

\begin{lemma} \label{lemma:solvelinearinssep}
Let $k$ be a $\s$-field, $d\geq 1$ and $B\in\Gl_n(k)$. There exists a $\s$-separable $\s$-field extension $\wtilde{k}$ of $k$ such that $\s^d(y)=By$ has a fundamental solution matrix in $\wtilde{k}$.
\end{lemma}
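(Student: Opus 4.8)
The plan is to produce $\wtilde{k}$ by hand as a rational function field over $k$ carrying a difference operator that realizes the recursion $\s^d(Y)=BY$ in companion form. Concretely, take $dn^2$ indeterminates grouped into $n\times n$ matrices $X_0,\dots,X_{d-1}$ and set $\wtilde{k}=k(X_0,\dots,X_{d-1})$. On the polynomial ring $k[X_0,\dots,X_{d-1}]$ I would define a ring endomorphism $\s$ extending $\s\colon k\to k$ by $\s(X_i)=X_{i+1}$ for $0\le i\le d-2$ and $\s(X_{d-1})=BX_0$; this is well defined by the universal property of polynomial rings, and it maps $k[X_0,\dots,X_{d-1}]$ into itself.

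First I would check that $\s$ is injective on $k[X_0,\dots,X_{d-1}]$. Its image is $\s(k)[\,\text{entries of }X_1,\dots,X_{d-1},BX_0\,]$; since $\s\colon k\to k$ is injective (every field endomorphism is) and the $dn^2$ entries of $X_1,\dots,X_{d-1},BX_0$ are algebraically independent over $k$ — the one point needing $B\in\Gl_n(k)$ being that $M\mapsto BM$ is an invertible $k$-linear endomorphism of $n\times n$ matrices, so that the entries of $BX_0$ are $k$-linearly independent linear forms in those of $X_0$ — the map $\s$ is an isomorphism of $k[X_0,\dots,X_{d-1}]$ onto its image, in particular injective. Hence $\s$ extends uniquely to the fraction field, making $\wtilde{k}=k(X_0,\dots,X_{d-1})$ a $\s$-field extension of $k$, and $Y:=X_0\in\Gl_n(\wtilde{k})$ (recall $\det X_0\neq 0$) satisfies $\s^j(Y)=X_j$ for $0\le j\le d-1$ and $\s^d(Y)=\s(X_{d-1})=BX_0=BY$, i.e. $Y$ is a fundamental solution matrix of $\s^d(y)=By$ in $\wtilde{k}$.

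The remaining point — and the only one I expect to require any care — is $\s$-separability of $\wtilde{k}$ over $k$. Fix a $\s$-field extension $k'$ of $k$. Since $\wtilde{k}$ is purely transcendental over $k$, one has $\wtilde{k}\otimes_k k'\cong(k[X_0,\dots,X_{d-1}]\smallsetminus\{0\})^{-1}k'[X_0,\dots,X_{d-1}]$, which is a domain, being a subring of $k'(X_0,\dots,X_{d-1})$; so its zero ideal is prime, and it suffices to show that the diagonal endomorphism $\s$ (acting as $\s$ on $k'$ and by the same rule on the $X_i$) is injective on it, i.e. that this zero ideal is reflexive. Running the injectivity argument of the previous paragraph over $k'$ — again using $B\in\Gl_n(k)\subseteq\Gl_n(k')$ — shows $\s$ is injective on $k'[X_0,\dots,X_{d-1}]$; and since $\s$ maps $k[X_0,\dots,X_{d-1}]$ into itself it sends $k[X_0,\dots,X_{d-1}]\smallsetminus\{0\}$ into the units of $\wtilde{k}\otimes_k k'$, so $\s$ extends to the localization and stays injective there. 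Thus the zero ideal of $\wtilde{k}\otimes_k k'$ is reflexive for every $\s$-field extension $k'$ of $k$, which is precisely $\s$-separability of $\wtilde{k}$ over $k$, and the proof is complete.
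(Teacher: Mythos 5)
Your construction is exactly the one the paper uses: adjoin $d$ generic $n\times n$ matrices, let $\s$ cycle them with the last sent to $B$ times the first, and take the fraction field. The paper simply asserts the $\s$-separability of the polynomial ring as obvious and cites Lemma A.16 of \cite{articleone} to pass to the fraction field, whereas you verify these points directly; the argument is correct and essentially identical.
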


\begin{proof}
Let $X_1,\ldots,X_d$ denote $n\times n$ matrices of indeterminates over $k$. Extend $\s$ from $k$ to $S:=k[X_1,\ldots,X_d]$ by
$\s(X_1)=X_2,\ldots,\s(X_{d-1})=X_d,\s(X_d)=BX_1$. Obviously $S$ is $\s$-separable over $k$. It follows from
Lemma A.16 in \cite{articleone}
that the quotient field $\wtilde{k}$ of $S$ is also $\s$-separable over $k$. Clearly $\s^d(X_1)=BX_1$.
\end{proof}

\begin{lemma} \label{lemma:linearlysclosedstableunderpower}
Let $k$ be a linearly $\s$-closed $\s$-field. Then $k$ is linearly $\s^d$-closed for every $d\geq 1$.
\end{lemma}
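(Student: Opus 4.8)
The plan is to trivialize the $\s^d$-equation of size $n$ attached to an arbitrary $C\in\Gl_n(k)$ by viewing it as (the unwinding of) an ordinary $\s$-equation of size $nd$, to which the hypothesis applies directly. Recall that $k$ being linearly $\s$-closed means that every linear difference system $\s(Y)=BY$ with $B\in\Gl_m(k)$ has a fundamental solution matrix in $\Gl_m(k)$ (Definition \ref{defi:linearlysclosed}), and that being linearly $\s^d$-closed means the same statement with $\s$ replaced by $\s^d$. So fix $n\geq 1$ and $C\in\Gl_n(k)$ and introduce the block companion matrix
$$
B=\begin{pmatrix} 0 & I_n & 0 & \cdots & 0 \\ 0 & 0 & I_n & \cdots & 0 \\ \vdots & & & \ddots & \vdots \\ 0 & 0 & 0 & \cdots & I_n \\ C & 0 & 0 & \cdots & 0\end{pmatrix}\in\Gl_{nd}(k),
$$
which indeed lies in $\Gl_{nd}(k)$ since $\det B=\pm\det C\neq 0$. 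The point of this choice is the unwinding: if $Y=(y_1;\dots;y_d)$ with $y_i\in k^n$ solves $\s(Y)=BY$, then $\s(y_i)=y_{i+1}$ for $1\leq i<d$ and $\s(y_d)=Cy_1$, hence $y_i=\s^{i-1}(y_1)$ and $\s^d(y_1)=Cy_1$; conversely each solution of $\s^d(y)=Cy$ prolongs to a solution of $\s(Y)=BY$.

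Next I would apply the hypothesis to $B$: there is a fundamental solution matrix $Z\in\Gl_{nd}(k)$ with $\s(Z)=BZ$. Writing $Z$ in block rows $Z=(Z_1;\dots;Z_d)$, each $Z_i$ of size $n\times nd$, the equation forces $Z_{i+1}=\s(Z_i)$ for $1\leq i<d$, whence $Z_i=\s^{i-1}(Z_1)$, and the last block row gives $\s^d(Z_1)=CZ_1$. It then remains to convert the rectangular solution $Z_1$ into an invertible one. For this I claim $Z_1\in k^{n\times nd}$ has full row rank $n$: if $v\in k^{1\times n}$ were nonzero with $vZ_1=0$, then the row vector $u=(v\mid\s(v)\mid\dots\mid\s^{d-1}(v))\in k^{1\times nd}$ would satisfy $uZ=\sum_{i=1}^{d}\s^{i-1}(v)\s^{i-1}(Z_1)=\sum_{i=1}^{d}\s^{i-1}(vZ_1)=0$ while $u\neq 0$, contradicting the invertibility of $Z$.

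Finally, since $Z_1$ has full row rank $n$, some $n$ of its columns form an invertible $n\times n$ matrix; write it as $D=Z_1P$ with $P\in k^{nd\times n}$ the corresponding $\{0,1\}$-valued column-selection matrix. Because $P$ has entries in $\{0,1\}\subseteq k^\s$, applying $\s^d$ entrywise yields $\s^d(D)=\s^d(Z_1)\s^d(P)=\s^d(Z_1)P=(CZ_1)P=CD$, so $D\in\Gl_n(k)$ is a fundamental solution matrix for $\s^d(y)=Cy$. As $n\geq 1$ and $C\in\Gl_n(k)$ were arbitrary, $k$ is linearly $\s^d$-closed.

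In this argument everything except the full-row-rank claim for $Z_1$ is the purely formal companion-matrix reduction; that rank step — with the slightly unusual twist of conjugating the candidate left-kernel vector $v$ by successive powers of $\s$ to build a left-kernel vector $u$ for $Z$ — is the one place requiring genuine care. One should also not forget to verify that $B$ is invertible, so that the linearly $\s$-closed hypothesis really does apply to it.
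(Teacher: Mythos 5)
Your proof is correct and is essentially the paper's proof: the same companion-matrix reduction to an order-$nd$ system $\s(y_1)=y_2,\ldots,\s(y_d)=Cy_1$, followed by selecting $n$ columns of the fundamental solution matrix $Z$ whose first $n$ entries form an invertible block. The one place you work harder than necessary is the full-row-rank claim — since $Z\in\Gl_{nd}(k)$ its first $n$ rows are already linearly independent over $k$ (a nonzero $v$ with $vZ_1=0$ gives the left-kernel vector $(v\mid 0\mid\cdots\mid 0)$ of $Z$ directly), so the $\s$-twisted vector $u$ is not needed.
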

\begin{proof}
Assume that $d\geq 1$ and $B\in\Gl_n(k)$. We have to find a $Y\in\Gl_n(k)$ with $\s^d(Y)=BY$.
Let $y_1,\ldots,y_d$ denote vectors of size $n$. The linear system of order $nd$ given by
$$\s(y_1)=y_2,\ldots,\s(y_{d-1})=y_d,\s(y_d)=By_1$$ has a fundamental solution matrix $Z\in\Gl_{nd}(k)$. Since the first $n$ rows of $Z$ are linearly independent, there exist $n$ columns of $Z$, such that the corresponding vectors of the first $n$ entries are linearly independent. These form a fundamental solution matrix for $\s^d(y)=By$.
\end{proof}

So we are ready to prove the first part of Theorem \ref{thm:integra}:

\begin{lemma} \label{lemma:sintegrableimpliesconstant}
Let $L|K$ be a $\s$-Picard-Vessiot extension for $\de(y)=A y$, with $A\in K^{n\times n}$. If $\de(y)=A y$ is $\s^d$-integrable over $K$, then, for a suitable $\s$-separable $\s$-field extension $\widetilde{k}$ of $k=K^\de$, the $\s$-Galois group $\sgal(L|K)$ is conjugated over $\wtilde{k}$ to a $\s^d$-constant subgroup of $\Gl_{n,\wtilde{k}}$. If $k$ is linearly $\s$-closed, we can take $\wtilde{k}$ equal to $k$.
\end{lemma}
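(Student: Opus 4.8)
The plan is to fix a fundamental solution matrix $Y\in\Gl_n(L)$ of $\de(y)=Ay$ and to use the integrability matrix $B\in\Gl_n(K)$ (for which $\de(B)+BA=\hslash_d\s^d(A)B$) to determine how $\s^d$ acts on $Y$. A direct computation shows $\de\bigl(B^{-1}\s^d(Y)\bigr)=A\bigl(B^{-1}\s^d(Y)\bigr)$, so $B^{-1}\s^d(Y)$ is again a fundamental solution matrix, and since $L^\de=k$ there is $C\in\Gl_n(k)$ with $\s^d(Y)=BYC$. Now let $S$ be a $k$-$\s$-algebra and $\tau\in\sgal(L|K)(S)$, so that $\tau(Y\otimes 1)=(Y\otimes 1)[\tau]_Y$. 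Applying $\tau$ to the identity $\s^d(Y)\otimes 1=(B\otimes 1)(Y\otimes 1)(C\otimes 1)$, using that $\tau$ commutes with $\s$ and fixes the entries of $B$ and $C$, and cancelling the invertible factor $(B\otimes 1)(Y\otimes 1)$, yields the key relation $\s^d\bigl([\tau]_Y\bigr)=C^{-1}[\tau]_Y\,C$ in $\Gl_n(S)$.

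Next I would construct the conjugating matrix. The aim is $h$ with $\s^d(hgh^{-1})=hgh^{-1}$ whenever $\s^d(g)=C^{-1}gC$; expanding $\s^d(hgh^{-1})=\s^d(h)\,C^{-1}gC\,\s^d(h)^{-1}$ shows that it suffices to have $\s^d(h)=hC$, equivalently $\s^d(h^{-1})=C^{-1}h^{-1}$. Thus it is enough to solve the linear difference system $\s^d(y)=C^{-1}y$ with $C^{-1}\in\Gl_n(k)$. By Lemma~\ref{lemma:solvelinearinssep} there is a $\s$-separable $\s$-field extension $\widetilde{k}$ of $k$ admitting a fundamental solution matrix; its inverse gives $h\in\Gl_n(\widetilde{k})$ with $\s^d(h)=hC$. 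By Lemma~\ref{lemma:baseextensionforsPV}, $\sgal(\widetilde L|\widetilde K)=\sgal(L|K)_{\widetilde k}$ with the representation via $Y$ unchanged, so the relation $\s^d([\tau]_Y)=C^{-1}[\tau]_YC$ persists over $\widetilde{k}$, and the computation above then shows that every element of $h\,\sgal(L|K)_{\widetilde k}\,h^{-1}$ is fixed by $\s^d$, i.e. $h\,\sgal(L|K)_{\widetilde k}\,h^{-1}\leq\Gl_{n,\widetilde k}^{\s^d}$. If $k$ is linearly $\s$-closed, it is linearly $\s^d$-closed by Lemma~\ref{lemma:linearlysclosedstableunderpower}, so $\s^d(y)=C^{-1}y$ already has a fundamental solution matrix over $k$ and one may take $\widetilde{k}=k$.

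The matrix manipulations here are all routine once the target is identified; the genuinely delicate point is that the auxiliary extension $\widetilde k$ must be taken $\s$-separable over $k$, which is exactly what Lemma~\ref{lemma:solvelinearinssep} provides and what makes the base change of the $\s$-Galois group behave well (Lemma~\ref{lemma:baseextensionforsPV}). I therefore expect the main conceptual step to be recognizing that $\s^d(y)=C^{-1}y$ is the equation to solve and that solving it in a $\s$-separable extension is precisely what is available; everything else is bookkeeping with the representation $\tau\mapsto[\tau]_Y$ and the conjugation formula.
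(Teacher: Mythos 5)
Your proposal is correct and follows essentially the same route as the paper: derive $\s^d(Y)=BYC$ with $C\in\Gl_n(k)$ from the integrability relation, solve $\s^d(y)=C^{-1}y$ in a $\s$-separable extension via Lemma~\ref{lemma:solvelinearinssep}, and conclude via Lemmas~\ref{lemma:baseextensionforsPV} and~\ref{lemma:linearlysclosedstableunderpower}. The only cosmetic difference is that the paper packages the conjugation as a change of fundamental solution matrix $V=YU$ and checks $\s^d(V)=BV$, while you compute the intermediate relation $\s^d([\tau]_Y)=C^{-1}[\tau]_YC$ and conjugate the image of the representation directly; these are the same argument.
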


\begin{proof}
Let $B\in\Gl_n(K)$ be such that the system
\beq\label{eq:integrablesys}
\l\{\begin{array}{l}
\de(y)=Ay\\
\s^d(y)=By
\end{array}\r.
\eeq
is integrable. Let us also fix a fundamental solution matrix $Y\in\Gl_n(L)$ for $\de(y)=Ay$.
The integrability condition \eqref{eq:integrabilty} implies that:
\begin{align*}
\de\l((BY)^{-1}\s^d(Y)\r)&=\de((BY)^{-1})\s^d(Y)+(BY)^{-1}\de(\s^d(Y))=\\
&=-(BY)^{-1}\de(BY)(BY)^{-1}\s^d(Y)+(BY)^{-1}\hslash_d\s^d(\de(Y))=\\
 &=-(BY)^{-1}\left((\de(B)Y+BAY)(BY)^{-1}\s^d(Y)-\hslash_d\s^d(A)\s^d(Y)\right)=\\
&=-(BY)^{-1}\left((\de(B)+BA)B^{-1}-\hslash_d\s^d(A)\right)\s^d(Y)=0.
\end{align*}
Thus, there exists
$D\in\Gl_n(k)$ such that
\begin{equation} \label{eq:followsfromintegrability}
\s^d(Y)=BYD.
\end{equation}
By Lemma \ref{lemma:solvelinearinssep} there exists a $\s$-separable $\s$-field extension $\wtilde{k}$ of $k$ and a matrix $U\in\Gl_n(\wtilde{k})$ with $\s^d(U)=D^{-1}U$.

Set $\widetilde{K}=\quot(K\otimes_k\widetilde{k})$ and $\widetilde{L}=\quot(L\otimes_k\widetilde{k})$. We will now work in the $\s$-Picard-Vessiot extension $\wtilde{L}|\wtilde{K}$ (Lemma \ref{lemma:baseextensionforsPV}).
The matrix $V:=YU\in\Gl_n(\wtilde{L})$
is solution of \eqref{eq:integrablesys}, in fact:
$$
\de(V)=\de(Y)U=AV
$$
and
$$
\s^d(V)=\s^d(Y)D^{-1}U=BYU=BV.
$$
For $\tau\in\sgal(\wtilde L|\wtilde K)$ and $S$ a $\wtilde k$-$\s$-algebra, let $[\tau]_V\in\Gl_n(S)$ be such that $\tau(V\otimes 1)=(V\otimes 1)[\tau]_V$.
We have
$$
\tau(\s^d(V\otimes 1))=\tau(BV\otimes 1)=(BV\otimes 1)[\tau]_V
$$
and
$$
\tau(\s^d(V\otimes 1))=\s^d(\tau(V\otimes 1))=\s^d((V\otimes 1)[\tau]_V) =(BV\otimes 1)\s^d([\tau]_V).$$
We deduce that $[\tau]_V=\s^d([\tau]_V)$. Therefore, if we consider $\wtilde{G}:=\sgal(\wtilde L|\wtilde K)$ as a $\s$-closed subgroup of $\Gl_{n,\wtilde k}$ via the embedding associated with the choice of the fundamental solution matrix $V$, then $\wtilde{G}$ is a $\s^d$-constant subgroup of $\Gl_{n,\wtilde k}$. We consider the $\s$-Galois group $G:=\sgal(L|K)$ as a $\s$-closed subgroup of $\Gl_{n,k}$ via the embedding associated with the choice of the fundamental solution matrix $Y$. Since $V=YU$, it is clear from Lemma \ref{lemma:baseextensionforsPV} that $G_{\wtilde k}$ is conjugated to $\wtilde G$ via $U\in\Gl_n(\wtilde{k})$.
\end{proof}

\begin{rmk}
If $k$ is linearly $\s$-closed, we can choose $U\in\Gl_n(k)$ and $\wtilde{k}=k$ by Lemma \ref{lemma:linearlysclosedstableunderpower}.
\end{rmk}

The following lemma is a crucial step in the proof of the inverse implication in Theorem \ref{thm:integra}.

\begin{lemma} \label{lemma:constantimpliesintegrable}
Let $L|K$ be a $\s$-Picard-Vessiot extension for $\de(y)=Ay$ where $A \in K^{n\times n}$.
If the $\s$-Galois group $\sgal(L|K)$ is conjugated (over $k=K^\de$) to a $\s^d$-constant subgroup of $\Gl_{n,k}$, then $\de(y)=A y$ is $\s^d$-integrable over $K$.
\end{lemma}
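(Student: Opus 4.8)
The plan is to reverse-engineer the computation in Lemma \ref{lemma:sintegrableimpliesconstant}. There the integrability of $\de(y)=Ay$ was used to produce, for a fundamental solution matrix $Y$, a constant matrix $D\in\Gl_n(k)$ with $\s^d(Y)=BYD$; here we want to go backwards, starting from the hypothesis that $\sgal(L|K)$ is $\s^d$-constant after conjugation, and produce the matrix $B\in\Gl_n(K)$ witnessing integrability. So the first step is to fix $A$, a fundamental solution matrix $Y\in\Gl_n(L)$, and the associated $\s$-closed embedding $G=\sgal(L|K)\hookrightarrow\Gl_{n,k}$. By hypothesis there is $h\in\Gl_n(k)$ such that $hGh^{-1}\leq\Gl_{n,k}^{\s^d}$; replacing $Y$ by $Yh^{-1}$ (which is again a fundamental solution matrix, with the same $K$ and a conjugated representation of $G$) I may assume from the outset that $G$ itself is $\s^d$-constant, i.e. $\s^d(g)=g$ for all $g\in G(S)$ and all $k$-$\s$-algebras $S$.

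\textbf{The key step: showing $B:=\s^d(Y)Y^{-1}$ lies in $\Gl_n(K)$.} Consider the matrix $B:=\s^d(Y)Y^{-1}\in\Gl_n(L)$. I claim it is fixed by the whole $\s$-Galois group and hence, by the $\s$-Galois correspondence (Theorem \ref{theo:Galoiscorrespondence}), lies in $\Gl_n(K)$. To see this, let $R\subset L$ be the $\s$-Picard-Vessiot ring, let $S$ be a $k$-$\s$-algebra and $\tau\in G(S)$ with $\tau(Y\otimes 1)=(Y\otimes 1)[\tau]_Y$. Then in $R\otimes_k S$ one computes
\[
\tau\bigl((\s^d(Y)Y^{-1})\otimes 1\bigr)=\s^d\bigl((Y\otimes 1)[\tau]_Y\bigr)\cdot\bigl((Y\otimes 1)[\tau]_Y\bigr)^{-1}=(\s^d(Y)\otimes 1)\,\s^d([\tau]_Y)\,[\tau]_Y^{-1}\,(Y^{-1}\otimes 1).
\]
Since $G$ is $\s^d$-constant we have $\s^d([\tau]_Y)=[\tau]_Y$, so the middle factors cancel and $\tau$ fixes each entry of $\s^d(Y)Y^{-1}$; applying this entrywise via the Galois correspondence gives $B\in\Gl_n(K)$. (One must be a little careful to phrase "fixed entrywise" correctly: each entry $b_{ij}$ of $B$ is of the form $r_1/r_2$ with $r_1,r_2\in R$, and the displayed identity is exactly the statement that $b_{ij}$ is invariant under $\tau$ in the sense of \S"Galois correspondence"; since this holds for all $S$ and all $\tau\in G(S)$, we get $b_{ij}\in K$.)

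\textbf{Verifying the integrability identity.} It remains to check that this $B$ satisfies the compatibility condition \eqref{eq:integrabilty}, namely $\de(B)+BA=\hslash_d\s^d(A)B$. This is a direct computation in $L$ using $\de(Y)=AY$, the commutation relation $\de\circ\s^d=\hslash_d\,\s^d\circ\de$ valid in the $\ds$-ring $L$, and the product and quotient rules for $\de$:
\begin{align*}
\de(B)&=\de(\s^d(Y))Y^{-1}-\s^d(Y)Y^{-1}\de(Y)Y^{-1}=\hslash_d\,\s^d(\de(Y))Y^{-1}-B\,\de(Y)Y^{-1}\\
&=\hslash_d\,\s^d(AY)Y^{-1}-BA=\hslash_d\,\s^d(A)\s^d(Y)Y^{-1}-BA=\hslash_d\,\s^d(A)B-BA,
\end{align*}
which rearranges to exactly \eqref{eq:integrabilty}. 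Hence $B\in\Gl_n(K)$ witnesses that $\de(y)=Ay$ is $\s^d$-integrable over $K$, completing the proof.

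\textbf{Where the difficulty lies.} None of the individual steps is hard; the only genuinely delicate point is the passage "$B$ is invariant under every $\tau\in G(S)$ for every $S$, therefore $B\in\Gl_n(K)$", which uses the full strength of the $\s$-Galois correspondence rather than just its $k$-point version — this is precisely why the statement is framed with the functorial $\sgal$ and why reducing to $G$ being $\s^d$-constant (not merely $hGh^{-1}$) via the change of fundamental solution matrix $Y\mapsto Yh^{-1}$ is the right first move. Everything else is the bookkeeping of the commutation factor $\hslash_d$, which is already built into the definition of $\s^d$-integrability.
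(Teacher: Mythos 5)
Your proof is correct and follows essentially the same route as the paper's: reduce by a change of fundamental solution matrix to the case where $G$ itself is $\s^d$-constant, show $B=\s^d(Y)Y^{-1}$ is Galois-invariant (hence in $\Gl_n(K)$ by the $\s$-Galois correspondence), and verify the compatibility identity directly. The only difference is that you write out the final computation, which the paper leaves as "it follows easily".
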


\begin{proof}
Since conjugation in $\Gl_n(k)$ corresponds to a change of fundamental solution matrix, we can find a fundamental solution matrix $Y\in\Gl_n(L)$ such that $G:=\sgal(L|K)$ is a $\s^d$-constant subgroup of $\Gl_{n,k}$,
 with respect to the embedding $\tau\mapsto [\tau]_Y$ determined by $Y$.
(As before, for a $k$-$\s$-algebra $S$ and $\tau\in\sgal(L|K)(S)$ we denote by $[\tau]_Y\in\Gl_n(S)$ the matrix such that $\tau(Y\otimes 1)=Y\otimes 1[\tau]_Y$.)
We set $B=\s^d(Y)Y^{-1}\in\Gl_n(L)$.
The matrix $B$ is an invariant of $\sgal(L|K)$, in fact for every $k$-$\s$-algebra $S$ and every $\tau\in\sgal(L|K)(S)$,
we have:
\begin{align*}
\tau(B\otimes 1) & =\tau\l(\s^d(Y\otimes 1)(Y\otimes 1)^{-1}\r)
=\s^d(\tau(Y\otimes 1))\tau\l(Y^{-1}\otimes 1\r)
=\s^d\l(Y\otimes 1 [\tau]_Y \r)[\tau]_Y^{-1}Y^{-1}\otimes 1 \\
& =\s^d(Y)Y^{-1}\otimes 1=B\otimes 1,
\end{align*}
since $\s^d([\tau]_Y)=[\tau]_Y$. It follows from the $\s$-Galois correspondence (Theorem \ref{theo:Galoiscorrespondence}) that $B\in\Gl_n(K)$.
Since $Y\in\Gl_n(L)$ satisfies $\de(Y)=AY$ and $\s^d(Y)=BY$ it follows easily from $\de(\s^d(Y))=\hslash_d\s^d(\de(Y))$ that $\de(y)=Ay$ is $\s^d$-integrable.
\end{proof}

Finally, we are able to prove Theorem \ref{thm:integra}.

\begin{proof}[Proof of Theorem \ref{thm:integra}]
Let $\wtilde{k}$ be a $\s$-separable $\s$-field extension of $k$ such that $\sgal(L|K)$ is conjugated over $\wtilde{k}$ to a $\s^d$-constant group. Consider the $\s$-Picard-Vessiot extension $\wtilde{L}|\wtilde{K}$ obtained from $L|K$ by extending the $\de$-constants from $k$ to $\wtilde{k}$ (Lemma \ref{lemma:baseextensionforsPV}).
Since $\sgal(\wtilde{L}|\wtilde{K})$ is conjugated over $\wtilde{k}=\wtilde{K}^\de$ to a $\s^d$-constant group, Lemma \ref{lemma:constantimpliesintegrable} implies that $\de(y)=Ay$ is $\s^d$-integrable over $\wtilde{K}$. It follows from Proposition \ref{prop:descentintegrability} that $\de(y)=Ay$ is $\s^d$-integrable over $K$.
The other implication is immediate from Lemma \ref{lemma:sintegrableimpliesconstant}.
\end{proof}

\begin{cor}
Let $L|K$ be a $\s$-Picard-Vessiot extension for $\de(y)=A y$, with $A\in K^{n\times n}$. Assume that $k:=K^\de$ is linearly $\s$-closed. Then $\de(y)=Ay$ is $\s^d$-integrable (over $K$)
if and only if the $\s$-Galois group $\sgal(L|K)$ is conjugated (over $k$) to a $\s^d$-constant subgroup
of $\Gl_{n,k}$.
\end{cor}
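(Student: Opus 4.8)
The plan is to derive this corollary directly from Theorem~\ref{thm:integra}, together with the refinements of its two halves recorded in Lemmas~\ref{lemma:sintegrableimpliesconstant} and~\ref{lemma:constantimpliesintegrable}. No genuinely new argument is needed; one only has to observe that the hypothesis that $k$ is linearly $\s$-closed removes the need to pass to a proper $\s$-separable $\s$-field extension of the $\de$-constants.

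For the ``only if'' direction I would assume that $\de(y)=Ay$ is $\s^d$-integrable over $K$ and invoke Lemma~\ref{lemma:sintegrableimpliesconstant}. Its final assertion says precisely that when $k$ is linearly $\s$-closed one may take the $\s$-separable $\s$-field extension $\wtilde{k}$ appearing there to be $k$ itself; hence $\sgal(L|K)$ is conjugated over $k$ to a $\s^d$-constant subgroup of $\Gl_{n,k}$. (Under the hood this rests on Lemma~\ref{lemma:linearlysclosedstableunderpower}, which allows one to solve $\s^d(U)=D^{-1}U$ already inside $\Gl_n(k)$ in the proof of that lemma, instead of in a proper extension as furnished by Lemma~\ref{lemma:solvelinearinssep}.)

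For the ``if'' direction I would assume that $\sgal(L|K)$ is conjugated over $k=K^\de$ to a $\s^d$-constant subgroup of $\Gl_{n,k}$ and apply Lemma~\ref{lemma:constantimpliesintegrable} verbatim, which yields that $\de(y)=Ay$ is $\s^d$-integrable over $K$. One could alternatively feed this hypothesis into Theorem~\ref{thm:integra} with $\wtilde{k}=k$, but going through Lemma~\ref{lemma:constantimpliesintegrable} is the shortest route and avoids an unnecessary detour through Proposition~\ref{prop:descentintegrability}.

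The only point requiring attention---rather than a real obstacle---is that the corollary asserts conjugation over $k$ (not over some auxiliary extension), so one must check that this is exactly what the linearly $\s$-closed hypothesis buys in Lemma~\ref{lemma:sintegrableimpliesconstant}. Everything else is a formality, since all the substantive work is already contained in Lemmas~\ref{lemma:sintegrableimpliesconstant} and~\ref{lemma:constantimpliesintegrable} and in the descent result Proposition~\ref{prop:descentintegrability} underlying Theorem~\ref{thm:integra}.
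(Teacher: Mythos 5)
Your proposal is correct and matches the paper's own proof, which deduces the corollary exactly from Lemma~\ref{lemma:sintegrableimpliesconstant} (whose final sentence, resting on Lemma~\ref{lemma:linearlysclosedstableunderpower}, gives conjugation over $k$ itself when $k$ is linearly $\s$-closed) and Lemma~\ref{lemma:constantimpliesintegrable}. Your observation that one can bypass Theorem~\ref{thm:integra} and Proposition~\ref{prop:descentintegrability} entirely is precisely what the paper does.
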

\begin{proof}
This is clear from Lemmas \ref{lemma:sintegrableimpliesconstant} and \ref{lemma:constantimpliesintegrable}.
\end{proof}

\begin{rmk}
It may appear quite difficult to determine a priori, given a differential system $\de(y)=Ay$,  if this system is $\s^d$-integrable
for some $d\geq 1$. However, we will show in the sequel that the algebraic structure
of the ``usual" Galois group of the differential system may help to answer this type of questions and to determine
for which $d$ this $\s^d$-integrability occurs.
\end{rmk}

%%%%%%%%%%%%%%%%%%%%%%%%%%%%%%%%%%%%%%%%%%%%%%%%%%%%%%%%%%%%%%%%%%%%%%%%%%%%%%%%%%%%%%%%%%%%%%%%
%%%%%%%%%%%%%%%%%%%%%%%%%%%%%%%%%%%%%%%%%%%%%%%%%%%%%%%%%%%%%%%%%%%%%%%%%%%%%%%%%%%%%%%%%%%%%%%%
%\input{charlotte-simplegroups}
%%%%%%%%%%%%%%%%%%%%%%%%%%%%%%%%%%%%%%%%%%%%%%%%%%%%%%%%%%%%%%%%%%%%%%%%%%%%%%%%%%%%%%%%%%%%%%%%%%
\section{Discrete integrability in the case of almost simple Galois groups}
%%%%%%%%%%%%%%%%%%%%%%%%%%%%%%%%%%%%%%%%%%%%%%%%%%%%%%%%%%%%%%%%%%%%%%%%%%%%%%%%%%%%%%%%%%%%%%%%%%

We begin this section with the study of the discrete integrability of differential system
having a simple classical Galois group. This is equivalent to ask that the $\s$-Galois group is
a Zariski dense $\s$-closed subgroup of a simple algebraic group. This
situation is very rigid, as proved in Theorem \ref{thm:classsimple}, and the $\s$-Galois group
has to be either equal to the whole simple algebraic group or conjugated to a $\s^d$-constant group for some
positive integer $d$. Combined with our $\s^d$-integrability criteria, we find that
either, we have a very  strong transformal relation between the solutions of our differential system, namely the $\s^d$-integrability,
or we have no transformal relations at all. In Proposition \ref{prop:simpleintegra} below we precise
this discussion.

Recall that a linear algebraic group $H$ over a field $k$ (i.e., an affine group scheme of finite type over $k$) is called \emph{simple} if it is non-commutative, connected and every normal closed subgroup is trivial. If $H$ is non-commutative, connected and every normal closed connected subgroup is trivial, then $H$ is called \emph{almost simple}. We say that $H$ is \emph{absolutely (almost) simple} if the base extension of $H$ to the algebraic closure of $k$ is (almost) simple.

\begin{prop}\label{prop:simpleintegra}
Let $K$ be an \emph{inversive} $\ds$-field, $A\in K^{n\times n}$ and $L|K$ a $\s$-Picard-Vessiot extension for $\de(y)=Ay$.
We assume that the Zariski closure $H$ of $\sgal(L|K)$ inside $\Gl_{n,k}$ is an absolutely simple algebraic group of dimension $t \geq 1$
over $k=K^\de$.
Then the following statements are equivalent:
\begin{enumerate}

\item
$\sgal(L|K)$
is a proper  $\s$-closed subgroup of $H$.

\item
The $\s$-transcendence degree of  $L|K$
is strictly  less than $t$.

\item
There exists $d \in \Z_{>0}$ such that the system $\de(y)=Ay$ is $\s^d$-integrable.
\end{enumerate}
\end{prop}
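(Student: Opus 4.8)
The plan is to prove the chain of implications (i)$\Rightarrow$(iii)$\Rightarrow$(ii)$\Rightarrow$(i). Write $G=\sgal(L|K)$. By Proposition \ref{prop:Zariskiclosures} the Zariski closure of $G$ in $\Gl_{n,k}$ equals $G[0]=H$, so $G$ is a Zariski dense $\s$-closed subgroup of $[\s]_kH$; by Proposition \ref{prop:dimensiondegtrans} one has $\strdeg(L|K)=\sdim_k(G)$; and since $H$ is an algebraic group, $\sdim_k([\s]_kH)=\dim_k(H)=t$. These are the only ingredients needed for the two ``soft'' implications.

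For (ii)$\Rightarrow$(i) I would argue contrapositively: if (i) fails then $G=[\s]_kH$, whence $\strdeg(L|K)=\sdim_k([\s]_kH)=t$, contradicting (ii). For (iii)$\Rightarrow$(ii): by Theorem \ref{thm:integra} there is a $\s$-separable $\s$-field extension $\wtilde{k}$ of $k$ over which $G_{\wtilde{k}}$ is conjugated to a $\s^d$-constant subgroup $G'$ of $\Gl_{n,\wtilde{k}}$ (Definition \ref{defn:sconstantgroup}). The $\s$-coordinate ring of $\Gl_{n,\wtilde{k}}^{\s^d}$ is $\wtilde{k}\{x_{ij},\det(x)^{-1}\}_\s$ modulo the $\s$-ideal generated by the $\s^d(x_{ij})-x_{ij}$, and these relations allow one to rewrite every $\s^{d+m}(x_{ij})$ in terms of the $\s^m(x_{kl})$; hence this ring is finitely generated \emph{as a plain $\wtilde{k}$-algebra}, so it has $\s$-dimension $0$, and so does its quotient $\wtilde{k}\{G'\}$. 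Since conjugation is an isomorphism of $\s$-algebraic groups, $\sdim_{\wtilde{k}}(G_{\wtilde{k}})=0$. Passing to the $\s$-Picard--Vessiot extension $\wtilde{L}|\wtilde{K}$ obtained from $L|K$ by extending the $\de$-constants from $k$ to $\wtilde{k}$ (Lemma \ref{lemma:baseextensionforsPV}), we get $\strdeg(\wtilde{L}|\wtilde{K})=\sdim_{\wtilde{k}}(G_{\wtilde{k}})=0$; and since $k$ is relatively algebraically closed in $L$, such a base extension of the $\de$-constants does not change the $\s$-transcendence degree, so $\strdeg(L|K)=0<t$, which is (ii).

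The real content is (i)$\Rightarrow$(iii). Here I would invoke the classification of Zariski dense $\s$-closed subgroups of absolutely simple algebraic groups, Theorem \ref{thm:classsimple}: since $H$ is absolutely simple, $K$ (hence $k=K^\de$) is inversive, and $G$ is a \emph{proper} Zariski dense $\s$-closed subgroup of $[\s]_kH$, the theorem provides a $\s$-separable $\s$-field extension $\wtilde{k}$ of $k$, a matrix $h\in\Gl_n(\wtilde{k})$ and an integer $d\geq 1$ such that $hG_{\wtilde{k}}h^{-1}$ is a $\s^d$-constant subgroup of $\Gl_{n,\wtilde{k}}$. This is exactly the condition featuring in the Galoisian characterization of $\s^d$-integrability, so Theorem \ref{thm:integra} yields that $\de(y)=Ay$ is $\s^d$-integrable over $K$.

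The main obstacle is concentrated entirely in (i)$\Rightarrow$(iii): it rests on the structure theorem \ref{thm:classsimple}, whose proof --- the technical heart of the paper, carried out in the appendix --- is what makes the dichotomy possible; the remaining implications are a dimension count together with the already established Galoisian characterization of $\s^d$-integrability (Theorem \ref{thm:integra}, which itself absorbs the descent result, Proposition \ref{prop:descentintegrability}). A minor point to be checked carefully in (iii)$\Rightarrow$(ii) is the invariance of the $\s$-transcendence degree under the base extension of the $\de$-constants.
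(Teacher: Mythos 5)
Your overall architecture coincides with the paper's: the content of (i)$\Rightarrow$(iii) is Theorem \ref{thm:classsimple} fed into Theorem \ref{thm:integra}, and (ii)$\Leftrightarrow$(i) is the dimension count via Propositions \ref{prop:dimensiondegtrans} and \ref{prop:Zariskiclosures}. Your (iii)$\Rightarrow$(ii), however, takes a detour the paper avoids: the paper simply observes that $\s^d$-integrability yields $\s^d(Y)=BYD$ with $B\in\Gl_n(K)$ and $D\in\Gl_n(k)$ (equation (\ref{eq:followsfromintegrability})), so that $L$ is contained in $K(Y,\s(Y),\ldots,\s^{d-1}(Y))$ and $\strdeg(L|K)=0$ immediately. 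Your route through the conjugated $\s^d$-constant Galois group is correct, but it is longer and requires precisely the base-change invariance of $\strdeg$ that you flag as delicate, all of which the direct argument bypasses.

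The real issue is in (i)$\Rightarrow$(iii): you invoke Theorem \ref{thm:classsimple} without verifying two of its hypotheses. First, that theorem is stated only over an \emph{algebraically closed} inversive $\s$-field, whereas here $k=K^\de$ is merely inversive. One must first extend the $\de$-constants to an algebraic closure $\wtilde{k}$ of $k$ (Lemma \ref{lemma:baseextensionforsPV}), check that the Zariski closure of $G_{\wtilde{k}}$ is $H_{\wtilde{k}}$ and that $G_{\wtilde{k}}$ is still proper, deduce $\s^d$-integrability over $\wtilde{K}=\quot(K\otimes_k\wtilde{k})$, and then descend it to $K$ via Proposition \ref{prop:descentintegrability} --- so the descent result is used for this reduction in its own right, not merely ``absorbed'' into Theorem \ref{thm:integra}. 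Second, and more seriously, Theorem \ref{thm:classsimple} requires $G$ to be \emph{$\s$-reduced}, and the remark following that theorem shows its conclusion fails without this hypothesis; you never establish it. It does hold here --- since $K$ is inversive, Corollary 4.4 of \cite{articleone} gives that $\sgal(L|K)$ is $\s$-reduced --- but this is exactly where the inversivity of $K$ enters the argument, and omitting the verification leaves the key step unsupported as written.
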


\begin{proof}
Let $Y\in\Gl_n(L)$ be a fundamental solution matrix for $\de(y)=Ay$ and $G=\sgal(L|K)$.
Assumption (iii) implies the existence of matrices $B\in\Gl_n(K)$ and $D\in\Gl_n(k)$ such that $\s^d(Y)=BYD$. (See equation (\ref{eq:followsfromintegrability}).) Therefore $\strdeg(L|K)=0<t$, and we see that (iii) implies (ii).

We know by Proposition \ref{prop:dimensiondegtrans} that
$\strdeg(L|K) = \sdim_k(G)$.
Therefore the assumption (ii) means that we have $\sdim_k(G)=\strdeg(L|K) <t$ and therefore $G$ is a proper subgroup of $H$.
\par
It remains to prove that (i) implies (iii). We first reduce to the case that $k=K^\de$ is algebraically closed. Since $K$ is inversive also $k=K^\de$ is inversive. Let $\wtilde{k}$ denote an algebraic closure of $k$ and extend $\s$ from $k$ to $\wtilde{k}$. Then $\wtilde{k}$ is automatically inversive. Thus also $\wtilde{K}:=\quot(K\otimes_k\wtilde{k})$ is inversive. We know from Lemma \ref{lemma:baseextensionforsPV} that $\wtilde{L}=\quot(K\otimes_k\wtilde{k})$ is a $\s$-Picard-Vessiot extension whose $\s$-Galois group $\wtilde{G}$ equals $G_{\wtilde{k}}$. It is easy to see that the Zariski closure of $\wtilde{G}$ in $\Gl_{n,\wtilde{k}}$ equals $H_{\wtilde{k}}$. Since $G$ is properly contained in $H$, also $\wtilde{G}$ is properly contained in $H_{\wtilde{k}}$. Thus, by assumption, it follows that $\de(y)=Ay$ is $\s^d$-integrable over $\wtilde{K}$ for some convenient integer $d\geq 1$. But then Proposition \ref{prop:descentintegrability} implies that $\de(y)=Ay$ is $\s^d$-integrable over $K$.

So from now we can assume that $k$ is algebraically closed. Since $K$ is inversive, Corollary 4.4 in \cite{articleone} asserts that
$G=\sgal(L|K)$ is $\s$-reduced. Therefore, Theorem \ref{thm:classsimple} says that there exists a $\s$-separable $\s$-field extension $\wtilde k$ of $k$ and an integer $d \geq 1$ such that $G$ is conjugated in $\Gl_{n,\wtilde{k}}$ to a $\s^d$-constant group. Theorem
\ref{thm:integra} ends the proof.
\end{proof}

The previous proposition allows us to determine very easily if the solutions of a differential system $\de(y)=Ay$, with  simple usual Galois group, are transformally independent or not. Indeed, if we prove that, for all $d \geq 0$, the differential system
$\de(y) +yA =\hslash_d\s^d(A)y$ has no rational solution $Y \in \Gl_n(K)$, there will be no (proper) transformal relations between the solutions of $\de(y)=Ay$. Thus, we have reduced the question of the algebraic independence between the solutions and their successive transforms via $\sigma$, which is a non-linear problem,
into a question of existence of rational solutions to a given \emph{linear} differential system.
This last question possesses many algorithmic answers and we refer to \S 4.1 of \cite{vdPutSingerDifferential}  for a detailed exposition of this subject.
\par
However, in many situations of interest, the usual Galois group of a differential system
is not simple but almost simple. See for example \cite{BeukersHeckman:MonodromyForTheHypergeometricFunction}. Unfortunately, as we show in the appendix, Proposition \ref{prop:simpleintegra} fails, a priori, if the
Zariski closure of $\sgal(L|K)$ is only  almost-simple and not simple. This is mainly due to the fact that, contrary to the difference varieties studied in ACFA, the group $k$-$\s$-schemes associated to   finite algebraic groups are not necessarily $\s$-constants.
\par
We  propose now two ways of getting rid of this problem:
We first explore a naive less effective strategy to deal with the group $\Sl_{n,k}$ and then prove a more general statement
for almost simple groups (see Theorem \ref{thm:almostintegra}), based on a generalization of a result in \cite{ChatHrusPet} (see Theorem \ref{thm:classalmostsimple}).
To conclude this section, we apply Theorem \ref{thm:almostintegra} to second order differential equations, especially to the Airy equation.
\par
We point out that Theorem \ref{thm:classalmostsimple} on the classification of $\s$-closed subgroups of almost simple groups should also
give applications of the theory developed in \cite{OvchinnikovWibmer:SGaloisTheoryOfLinearDifferenceEquations}.

\subsection{A naive approach to $\Sl_{n,k}$ through symmetric powers}

Corollary \ref{cor:slnsympower} below yields to a characterization of the $\s$-algebraic relations
satisfied by  the solutions in terms of $\s^d$-integrability of  the $n$-th symmetric power of the
initial differential system (see for instance Theorem 3.4 in \cite{SingUlm} for definition).
Once again, this  method allows us to translate the question of the existence of transformal relations among
solutions of our initial differential system in terms of the much easier problem of  existence
of rational solutions to an auxiliary differential system.  However, if the order of the initial
system is $n$,  the order of the auxiliary differential system  will be  smaller or equal to
$\binom{2n-1}{n}$. Computations may be then hard to manage.

\begin{cor}\label{cor:slnsympower}
Let $K$ be an \emph{inversive} $\ds$-field such that $k=K^\de$ is algebraically closed,
$\mathcal{L}(y) =a_0y+a_1\de(y)+\dots+\de^n(y)$ a linear differential equation with coefficients in $K$
and $\mathcal{L}^{\circledS n}$ the $n$-th symmetric power of $\mathcal{L}$.
We denote by
$$
A_{\mathcal{L}}=
\begin{pmatrix}
0&1&\hdots & 0 \\
\vdots& \ddots& \ddots&\vdots \\
0&\hdots& 0&1 \\
-a_0&-a_1& \hdots& -a_{n-1}
\end{pmatrix} \hbox{~~~(resp. $A_{\mathcal{L}^{\circledS n}}$)}
$$
the companion matrix of $\mathcal{L}$ (resp. of $\mathcal{L}^{\circledS n}$)
and by $L|K$ a $\s$-Picard-Vessiot extension for $\de(y)=A_{\mathcal{L}}y$.
We assume that the Zariski closure
of $\sgal(L|K)$ inside $\Gl_{n,k}$ is $\Sl_{n,k}$.
Then the following statements are equivalent:
\begin{enumerate}

\item
$\sgal(L|K)$
is a proper  $\s$-closed subgroup of $\Sl_{n,k}$.

\item
The $\s$-transcendence degree of  $L|K$
is strictly  less than $n^2-1$.

\item
There exists $d \in \Z_{>0}$ such that the system $\de(y)=A_{\mathcal{L}^{\circledS n}}y$ is $\s^d$-integrable.
\end{enumerate}
\end{cor}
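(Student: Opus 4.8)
The plan is to deduce Corollary \ref{cor:slnsympower} from Proposition \ref{prop:simpleintegra} applied to the symmetric power system, together with the standard fact relating symmetric powers of $\mathcal{L}$ to symmetric powers of the representation of the Galois group. First I would record that $\Sl_{n,k}$ is an absolutely simple algebraic group over the algebraically closed field $k$ (it is non-commutative, connected, and has no non-trivial normal closed subgroups, being simple already over $k$ since $k=\oK$), of dimension $t=n^2-1\geq 1$. Thus Proposition \ref{prop:simpleintegra} applies \emph{directly} to the system $\de(y)=A_{\mathcal L}y$ itself: its hypotheses are exactly that $K$ is inversive and that the Zariski closure of $\sgal(L|K)$ inside $\Gl_{n,k}$ is absolutely simple of dimension $t\geq 1$, which is what we have assumed. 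Hence (i) $\Leftrightarrow$ (ii) is immediate from Proposition \ref{prop:simpleintegra}, with $t=n^2-1$, and likewise (i) is equivalent to the existence of some $d\in\Z_{>0}$ such that $\de(y)=A_{\mathcal L}y$ is $\s^d$-integrable.

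The remaining content is therefore purely about translating ``$\de(y)=A_{\mathcal L}y$ is $\s^d$-integrable'' into ``$\de(y)=A_{\mathcal L^{\circledS n}}y$ is $\s^d$-integrable''. For this I would use Theorem \ref{thm:integra}: $\s^d$-integrability of a system is equivalent to the $\s$-Galois group of its $\s$-Picard-Vessiot extension being conjugate, over a suitable $\s$-separable extension $\wtilde k$ of $k$, to a $\s^d$-constant subgroup of the relevant $\Gl_m$. Now the key algebraic observation is that a $\s$-Picard-Vessiot extension $L'$ for $\de(y)=A_{\mathcal L^{\circledS n}}y$ can be taken \emph{inside} $L$: if $Y\in\Gl_n(L)$ is a fundamental solution matrix for $\de(y)=A_{\mathcal L}y$, then the degree-$n$ symmetric products of its entries furnish a fundamental solution matrix $Y^{\circledS n}\in\Gl_N(L)$ (with $N=\binom{2n-1}{n}$, the dimension of the $n$-th symmetric power of the standard representation of $\Sl_{n,k}$, which is irreducible and faithful), and $L':=K\langle Y^{\circledS n}\rangle_\s$ is a $\s$-Picard-Vessiot extension for $\de(y)=A_{\mathcal L^{\circledS n}}y$ with $L'^\de=k$. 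Under the inclusion $\sgal(L|K)\hookrightarrow\sgal(L'|K)$, an element acts on $Y^{\circledS n}$ through the $n$-th symmetric power $\operatorname{Sym}^n\colon\Gl_n\to\Gl_N$ of the representation on $Y$; since the kernel of $\operatorname{Sym}^n$ on $\Sl_{n,k}$ is trivial (it is trivial already on all of $\Sl_{n,k}$, the representation being faithful, hence a fortiori on the Zariski-dense $\sgal(L|K)$), the restriction map $\sgal(L|K)\to\sgal(L'|K)$ is an isomorphism, and the embedding of $\sgal(L'|K)$ in $\Gl_{N,k}$ is the composite of the embedding of $\sgal(L|K)$ in $\Gl_{n,k}$ with $\operatorname{Sym}^n$.

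With this in hand the final equivalence is routine: conjugating $\sgal(L|K)$ into $\Gl_{n,\wtilde k}^{\s^d}$ by some $h\in\Gl_n(\wtilde k)$ conjugates its image in $\Gl_{N,\wtilde k}$ into $\Gl_{N,\wtilde k}^{\s^d}$ by $\operatorname{Sym}^n(h)$, since $\operatorname{Sym}^n$ is defined over $k$ and commutes with $\s$; conversely, because $\operatorname{Sym}^n$ is a closed immersion on $\Sl_{n,k}$ and hence on the Zariski closure of $\sgal(L|K)$, a $\s^d$-constancy statement for the image of $\sgal(L'|K)=\sgal(L|K)$ in $\Gl_{N}$ can be pulled back — here I would argue directly on the coordinate rings, or simply observe that if $\s^d$ acts trivially on $\operatorname{Sym}^n(g)$ for all $g$ in the group then, $\operatorname{Sym}^n$ being injective on points of the ambient $\Sl_{n}$ where our group lives, $\s^d$ acts trivially on $g$ itself. (The descent along $\wtilde k|k$ of $\s^d$-integrability in both directions is handled by Proposition \ref{prop:descentintegrability}, exactly as in the proof of Proposition \ref{prop:simpleintegra}.) Thus $\de(y)=A_{\mathcal L}y$ is $\s^d$-integrable if and only if $\de(y)=A_{\mathcal L^{\circledS n}}y$ is, which closes the chain (i) $\Rightarrow$ (iii) $\Rightarrow$ (ii) $\Rightarrow$ (i).

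The main obstacle I anticipate is the bookkeeping around $\operatorname{Sym}^n$: one must check that $Y^{\circledS n}$ really is invertible (equivalently that the $n$-th symmetric power representation of $\Sl_{n}$ is irreducible and faithful, so that $\det(Y^{\circledS n})$ is a non-zero power of $\det Y$), that $L'=K\langle Y^{\circledS n}\rangle_\s$ has no new $\de$-constants (inherited from $L^\de=k$), and that the companion form $A_{\mathcal L^{\circledS n}}$ is indeed $\Gl_N(K)$-equivalent to the system satisfied by $Y^{\circledS n}$ — this is precisely the construction of the symmetric power equation in, e.g., Theorem 3.4 of \cite{SingUlm}, on which I would rely rather than reprove it. Everything else is a direct application of Proposition \ref{prop:simpleintegra}, Theorem \ref{thm:integra} and Proposition \ref{prop:descentintegrability}.
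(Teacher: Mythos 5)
There is a genuine and fatal gap at the very first step: $\Sl_{n,k}$ is \emph{not} a simple algebraic group in the sense used in Proposition \ref{prop:simpleintegra}. With the paper's definitions, ``simple'' means every normal closed subgroup is trivial, and for $n\geq 2$ the centre $\mu_n\leq\Sl_{n,k}$ is a non-trivial normal closed subgroup; $\Sl_{n,k}$ is only \emph{almost} simple. So Proposition \ref{prop:simpleintegra} does not apply directly to $\de(y)=A_{\mathcal L}y$, and your claim that (i) is equivalent to $\s^d$-integrability of the \emph{original} system is exactly the statement that Remark \ref{rem:theoremfailsforalmostsimple} shows can fail: a Zariski dense proper $\s$-closed subgroup of $\Sl_{n,k}$ containing the centre is never conjugate, over any $\s$-field extension, to a $\s^d$-constant group, because the group scheme $\mu_n$ is not $\s^d$-constant in the schematic setting. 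This is precisely why statement (iii) of the corollary is phrased in terms of the symmetric power system rather than the original one.

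Your second key claim — that $\operatorname{Sym}^n$ is faithful on $\Sl_{n,k}$ — is also false, and in a way that undoes the rest of your argument: the centre $\{\lambda I\mid\lambda^n=1\}$ acts on the $n$-th symmetric power of the standard representation by $\lambda^n=1$, so the kernel of $\operatorname{Sym}^n|_{\Sl_n}$ is exactly $\mu_n$. Consequently $\sgal(L|K)\to\sgal(L'|K)$ is not an isomorphism, and a $\s^d$-constancy statement for the image under $\operatorname{Sym}^n$ cannot be pulled back to $\sgal(L|K)$. The non-faithfulness is in fact the whole point of introducing the symmetric power: the intended argument (and the paper's) is that the $\s$-PV extension $M\subset L$ generated by the products $y_{i_1}\cdots y_{i_n}$ has $\sgal(M|K)\cong\sgal(L|K)/\sgal(L|M)$ with usual Galois group $\Sl_n(k)/\mu_n(k)=\operatorname{PSl}_n(k)$, which \emph{is} simple; one then checks that $L|M$ is algebraic (so $\strdeg(L|K)=\strdeg(M|K)$ and properness transfers) and applies Proposition \ref{prop:simpleintegra} to the $\s$-reduced Zariski dense subgroup $\sgal(M|K)\leq\operatorname{PSl}_{n,k}$, obtaining $\s^d$-integrability of $\de(y)=A_{\mathcal L^{\circledS n}}y$. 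So the symmetric power is there to kill the centre, not to carry a faithful copy of the group; your proof as written collapses the distinction between $\Sl_n$ and $\operatorname{PSl}_n$ that the entire section is designed to handle.
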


\begin{proof}
Let $y_1,\dots,y_n \in L$ be $n$ linearly independent solutions of $\mathcal{L}$ in $L$.
By definition (see Theorem 3.4 in \cite{SingUlm}), the symmetric power of order $n$ of
$\mathcal{L}$ is a linear differential equation with coefficients in $K$,
whose solution space is spanned by the products of the form $y_{i_1}y_{i_2}\dots y_{i_n}$, where
$(i_1,\dots,i_n)$ runs in $\{1,\dots,n\}^n$. Let $M$ be the $\ds$-field extension of $K$
generated by the $y_{i_1}y_{i_2}\dots y_{i_n}$'s.
%i.e., $M =K\< y_{i_1}y_{i_2}\dots y_{i_n} \mbox{ with }(i_1,\dots,i_n)\in \{1,\dots,n\}^n\>_\s$.
Then $M$ is a $\s$-Picard-Vessiot extension for
$\de(y)=A_{\mathcal{L}^{\circledS n}}y$ over $K$. By Theorem \ref{theo:secondfundamentaltheorem}, the group $k$-$\s$-scheme $\sgal(L|M)$ is a normal subgroup of $\sgal(L|K)$ and the quotient $\sgal(L|K)/\sgal(L|M)$ is isomorphic to $\sgal(M|K)$. Since $K$ is inversive, Corollary 4.4 in \cite{articleone} implies that $\sgal(L|K)$ and $\sgal(M|K)$ are  absolutely $\s$-reduced over $k$.  The Zariski closure of $\sgal(L|K)$ (resp. $\sgal(M|K)$) corresponds to the classical Galois group of $\de(y)=A_\mathcal{L}y$ (resp. $\de(y)=A_{\mathcal{L}^{\circledS n}}y$), whose $k$-points are the $K$-$\de$-automorphism
$\Aut^\de(L_0|K)$ (resp. $\Aut^\de(M_0|K)$)  of $L_0=K\langle y_1,\dots,y_n\rangle_\de$ (resp. $M_0=K\langle y_{i_1}y_{i_2}\dots y_{i_n}|\ (i_1,\dots,i_n)\in \{1,\dots,n\}^n\rangle_\de $)
(see Proposition \ref{prop:Zariskiclosures}).
Since $M_0$ is a classical Picard-Vessiot extension,
the usual Galois correspondence shows that $\Aut^\de(L_0|M_0)$ is a normal algebraic subgroup of $\Aut^\de(L_0|K)=\Sl_n(k)$. The center of $\Aut^\de(L_0|K)$ is the cyclic group $\mu_n(k)$ of order $n$. Moreover, the action of $\lambda \in \mu_n(k)$ as element of $\Aut^\de(L_0|K)$ is determined by its action on the $y_i$'s,
which is just the multiplication by $\lambda$.
Then,  $\mu_n (k) \subset\Aut^\de(L_0|K) $ fixes $M_0$. Thus, the algebraic group $\Aut^\de(L_0|M_0)$
is a normal subgroup of $\Sl_n(k)$ containing $\mu_n(k)$.  The group $\Aut^\de(L_0|M_0)$ differs
from $\Sl_n(k)$ because otherwise $M_0=K$ and the $y_i$'s
would be algebraically dependent over $K$.  Since $\mu_n(k)$ is maximal
among the  normal subgroups of $\Sl_n(k)$, we find that $\Aut^\de(L_0|M_0)=\mu_n(k)$.
Then, the Zariski closure of $\sgal(M|K)$ is $\operatorname{PSl}_{n}(k)$, i.e., the projective special linear algebraic group over $k$.
\par
Finally, the  $\ds$-field extension $L|M$ is an algebraic field extension and  $\strdeg(L|K)=\strdeg(M|K)$ and $\sdim (\sgal(L|K))=\sdim (\sgal(M|K))$. This shows that $\sgal(L|K)$ is a proper $\s$-closed subgroup of $\Sl_{n,k}$ if and only if  $\sgal(M|K)$ is a proper $\s$-closed subgroup of $\operatorname{PSl}_{n,k}$.
We conclude the proof by applying Proposition \ref{prop:simpleintegra} to  $\sgal(M|K)$, a  $\s$-reduced, Zariski dense $\s$-closed subgroup of the simple algebraic group $\operatorname{PSl}_{n,k}$.
\end{proof}

As mentioned above, the third point in the equivalence of Corollary \ref{cor:slnsympower} may be
difficult to test because of the  order of the $n$-th symmetric power of the initial differential equation.
Therefore we propose below a more general method.

\subsection{Characterization of $\s$-integrability for almost simple groups}

For a differential system $\de(y)=Ay$ with almost simple usual Galois group, we prove in Theorem \ref{thm:almostintegra} that the existence of transformal relations between the solutions in some $\s$-Picard-Vessiot extension $L$ is equivalent to the $\s^ d$-integrability of $\de(y)=Ay$ over the relative algebraic closure $K'$ of $K$ inside $L$. That is, we extend Proposition \ref{prop:simpleintegra} to the case of almost simple groups by allowing the $\s^ d$-integrability to be algebraic and not only rational.
\par
Unlike to the question of rational solutions, there is, to our knowledge, no general algorithmic
answer to the existence  of algebraic solutions of differential equations.
However, necessary conditions for their existence exist: We cite for instance the easy implication in the
Grothendieck conjecture on $p$-curvatures, see \cite{Katzdiffalg}. In \cite{ChenKauersSing} the reader will find some criteria
for the existence of algebraic solution based on residues.
\par

We will need a simple lemma on extensions of $\s$-fields. Recall that a $\s$-ring $R$ is called a $\s$-domain if it is an integral domain and $\s\colon R\to R$ is injective. Moreover, an extension of $\s$-fields $L|K$ is called $\s$-regular if $L\otimes_K\wtilde{K}$ is a $\s$-domain for every $\s$-field extension $\wtilde{K}$ of $K$.

\begin{lemma} \label{lemma:sintegral}
Let $L|K$ be an extension of $\s$-fields such that $K$ is inversive and let $K'$ denote the relative algebraic closure of $K$ in $L$. Then $L|K'$ is $\s$-regular.
\end{lemma}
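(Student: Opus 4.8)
The statement to prove is that, for an extension $L|K$ of $\s$-fields with $K$ inversive and $K'$ the relative algebraic closure of $K$ in $L$, the extension $L|K'$ is $\s$-regular, i.e., $L\otimes_{K'}\wtilde K$ is a $\s$-domain for every $\s$-field extension $\wtilde K$ of $K'$. My plan is to decouple the ring-theoretic part (being a domain) from the difference-theoretic part (injectivity of $\s$). For the first part, observe that $L\otimes_{K'}\wtilde K$ is an integral domain precisely when $L|K'$ is a regular field extension in the classical sense; this in turn holds if and only if $K'$ is algebraically closed in $L$ and $L|K'$ is separable. Separability is automatic since we are in characteristic zero, and $K'$ is algebraically closed in $L$ by its very definition as the relative algebraic closure of $K$ in $L$ — but here one must be slightly careful: I need that $K'$ is relatively algebraically closed \emph{in $L$}, which is immediate, since any element of $L$ algebraic over $K'$ is algebraic over $K$ (algebraicity is transitive) and hence lies in $K'$. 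So for every field extension $\wtilde K$ of $K'$, $L\otimes_{K'}\wtilde K$ is a domain by the standard characterization of regular extensions (e.g.\ via \cite[Chapter~III]{Kolchin:differentialalgebraandalgebraicgroups} or any reference on regular field extensions).

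For the second part I need that $\s$ is injective on $L\otimes_{K'}\wtilde K$ for every $\s$-field extension $\wtilde K$ of $K'$. Here is where the hypothesis that $K$ is inversive enters, through the intermediate field $K'$: I claim $K'$ is itself inversive. Indeed, $\s(K')$ is a field containing $\s(K)=K$, and every element of $K'$ is algebraic over $K$, hence $\s(K')$ consists of elements algebraic over $K$ and is contained in the relative algebraic closure of $K$ in $L$ — wait, this only shows $\s(K')\subseteq K'$. To get equality, note that $\s\colon K'\to \s(K')$ is an isomorphism of fields, $K\subseteq\s(K')\subseteq K'$, and $[K':K]$ could be infinite, so a naive dimension count does not close the argument. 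Instead I argue: given $a\in K'$, it is algebraic over $K$; let $p(X)\in K[X]$ be its minimal polynomial. Applying $\s^{-1}$ (which exists on $K$ since $K$ is inversive) to the coefficients gives a polynomial $p^{\s^{-1}}(X)\in K[X]$, and the roots of $p$ in $L$ are the $\s$-images of the roots of $p^{\s^{-1}}$ in $\oK\cap L$; since $p^{\s^{-1}}$ splits into linear factors over the algebraic closure and $a$ is one of the $\s$-images, $a$ has a $\s$-preimage among the roots of $p^{\s^{-1}}$, and that preimage is algebraic over $K$, hence lies in $K'$. Thus $K'$ is inversive.

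Now, with $K'$ inversive, $\s$-regularity of $L|K'$ follows: for any $\s$-field extension $\wtilde K$ of $K'$, the ring $L\otimes_{K'}\wtilde K$ is a domain by the first part, and by Corollary~A.14 in \cite{articleone} (the statement that an extension of an inversive $\s$-field is automatically $\s$-separable), $\wtilde K$ is $\s$-separable over $K'$, which by definition means the zero ideal of $\wtilde K\otimes_{K'}L''$ is reflexive for every $\s$-field extension $L''$ of $K'$; applying this with $L''=L$ gives that $\s$ is injective on $L\otimes_{K'}\wtilde K$. Combined with the domain property, $L\otimes_{K'}\wtilde K$ is a $\s$-domain, so $L|K'$ is $\s$-regular. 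The main obstacle in writing this up cleanly is the verification that $K'$ is inversive — that is the one nontrivial point, and it is exactly where the hypothesis on $K$ is used; the rest is a matter of citing the classical characterization of regular field extensions together with Corollary~A.14 in \cite{articleone}.
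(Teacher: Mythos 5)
Your reduction of $\s$-regularity to ``$L\otimes_{K'}\wtilde K$ is a domain'' plus ``$\s$ is injective on $L\otimes_{K'}\wtilde K$'' is fine, and the domain half (regularity of $L|K'$ in characteristic zero) is correct. The gap is in the injectivity half: the claim that $K'$ is inversive is false under the stated hypotheses, and your argument for it breaks exactly where you assert that the roots of $p$ in $L$ are the $\s$-images of the roots of $p^{\s^{-1}}$. The polynomial $p^{\s^{-1}}$ splits over $\oK$, but nothing forces any of its roots to lie in $L$, so the set of such $\s$-images can be empty. Concretely, let $K=\Q(t_n: n\in\Z)$ with $\s(t_n)=t_{n+1}$ (inversive), and let $L=K(\sqrt{t_n}: n\geq 0)$ with $\s(\sqrt{t_n})=\sqrt{t_{n+1}}$; here $K'=L$ is algebraic over $K$, but $\sqrt{t_0}$ has no $\s$-preimage in $L$ because $t_{-1}$ has no square root in $L$. (Adjoining a transcendental fixed by $\s$ gives the same failure with $K'\subsetneq L$.) Since $K'$ need not be inversive, you cannot invoke Corollary A.14 of \cite{articleone} to get $\s$-separability of $\wtilde K$ over $K'$, and the injectivity of $\s$ on $L\otimes_{K'}\wtilde K$ for an \emph{arbitrary} $\s$-field extension $\wtilde K$ of $K'$ is left unproved.

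The paper sidesteps this by quoting Lemma A.13(ii) of \cite{articleone}, which says it suffices to exhibit a \emph{single} inversive, algebraically closed $\s$-field extension $\wtilde K$ of $K'$ with $L\otimes_{K'}\wtilde K$ a $\s$-domain. One takes $\wtilde K=\oK$ (an algebraic closure of $K$ containing $K'$, with $\s$ extended): $\oK$ \emph{is} inversive because $\s(\oK)$ is an algebraically closed field over which $\oK$ is algebraic, hence equals $\oK$ --- note that this succeeds precisely where your argument for $K'$ fails, because an algebraically closed field contains all the needed preimage roots. Then $L\otimes_{K'}\oK$ is a domain integral over the field $L$, hence a field, so $\s$ is automatically injective on it. If you want to keep your structure, you must either import that reduction lemma or find another argument for injectivity; the route through inversivity of $K'$ cannot be repaired.
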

\begin{proof}
By Lemma A.13 (ii) in \cite{articleone} it suffices to a find an inversive algebraically closed $\s$-field extension $\wtilde{K}$ of $K'$ such that $L\otimes_{K'}\wtilde{K}$ is a $\s$-domain. Let $\wtilde{K}$ denote an algebraic closure of $K$ containing $K'$ and extend $\s$ from $K'$ to $\wtilde{K}$. Since $K$ is inversive also $\wtilde{K}$ is inversive. As the extension $L|K'$ is regular, $L\otimes_{K'}\wtilde{K}$ is a field and consequently $\s$ is automatically injective on $L\otimes_{K'}\wtilde{K}$.
\end{proof}

\par Now, we are able to state our main theorem.

\begin{thm}\label{thm:almostintegra}
Let $K$ be an \emph{inversive} $\ds$-field, $\de(y)= Ay $ a differential system with $A \in K^{n\times n}$
and $L|K$ a $\s$-Picard-Vessiot extension for $\de(y)=Ay$.
We assume that the Zariski closure $H$
of $\sgal(L|K)$ inside $\Gl_{n,k}$ is an absolutely
almost simple algebraic group of dimension $t \geq 1$
over $k=K^\de$. Let $K'$ be the relative algebraic closure of $K$ inside $L$.
Then the following statements are equivalent:
\begin{enumerate}

\item $\sgal(L|K')$ is a proper $\s$-closed subgroup of $H$.

\item
The $\s$-transcendence degree of  $L|K$
is strictly  less than $t$.

\item
There exists $d \in \Z_{>0}$ such that the system $\de(y)=Ay$ is $\s^d$-integrable over $K'$.
\end{enumerate}

\end{thm}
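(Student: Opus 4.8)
The plan is to follow the pattern of the proof of Proposition \ref{prop:simpleintegra}, replacing the base field $K$ by its relative algebraic closure $K'$ in $L$ and the simple classification (Theorem \ref{thm:classsimple}) by the almost simple one (Theorem \ref{thm:classalmostsimple}). The first step is to record two preliminary facts. First, $L|K'$ is again a $\s$-Picard-Vessiot extension for $\de(y)=Ay$ with the same fundamental solution matrix $Y$, $(K')^\de=k$, and $\strdeg(L|K')=\strdeg(L|K)$ since $K'|K$ is algebraic. Second, $\sgal(L|K')$ is Zariski dense in $H$: since $K'$ is relatively algebraically closed in $L$, hence in $K'(Y)\subset L$, the classical Picard-Vessiot extension $K'(Y)|K'$ of $\de(y)=Ay$ has connected Galois group, which by Proposition \ref{prop:Zariskiclosures} is the Zariski closure of $\sgal(L|K')$ in $\Gl_{n,k}$; restriction of automorphisms embeds it as a connected closed subgroup of $H=\Gal(K(Y)|K)$ of the same dimension $t$ (because $K'(Y)|K(Y)$ is algebraic), hence as all of $H$.

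Two implications are soft. For $(iii)\Rightarrow(ii)$: if $\de(y)=Ay$ is $\s^d$-integrable over $K'$, the computation preceding \eqref{eq:followsfromintegrability} gives $\s^d(Y)=BYD$ with $B\in\Gl_n(K')$ and $D\in\Gl_n(k)$; iterating, every $\s^j(Y)$ lies in the algebraic closure of $K(Y,\s(Y),\dots,\s^{d-1}(Y))$ inside $L$, so $\trdeg(L|K)<\infty$ and in particular $\strdeg(L|K)=0<t$. For $(ii)\Rightarrow(i)$: by Proposition \ref{prop:dimensiondegtrans} applied to $L|K'$ we have $\sdim_k\sgal(L|K')=\strdeg(L|K')=\strdeg(L|K)<t=\dim_kH=\sdim_k[\s]_kH$, so $\sgal(L|K')$ is a proper $\s$-closed subgroup of $H$.

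The substance is $(i)\Rightarrow(iii)$. First I would reduce to $k$ algebraically closed: extend $\s$ to an algebraic closure $\bar k$ of $k$ (automatically inversive, so $\bar k|k$ is $\s$-separable), set $\bar K=\quot(K\otimes_k\bar k)$, $\bar L=\quot(L\otimes_k\bar k)$; by Lemma \ref{lemma:baseextensionforsPV}, $\bar L|\bar K$ is a $\s$-Picard-Vessiot extension with $\s$-Galois group $\sgal(L|K)_{\bar k}$ and Zariski closure $H_{\bar k}$ (absolutely almost simple of dimension $t$), and the relative algebraic closure of $\bar K$ in $\bar L$ is $\quot(K'\otimes_k\bar k)$ with $\s$-Galois group $\sgal(L|K')_{\bar k}$, still proper in $H_{\bar k}$; $\s^d$-integrability over $\quot(K'\otimes_k\bar k)$ descends to $K'$ by Proposition \ref{prop:descentintegrability} (over the base $K'$, whose $\de$-constants are $k$). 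So assume $k$ algebraically closed. Since $K$ is inversive, Lemma \ref{lemma:sintegral} gives that $L|K'$ is $\s$-regular, whence $\sgal(L|K')$ is $\s$-reduced (the $\s$-regularity of the extension playing the role of the inversivity of the base in Corollary 4.4 of \cite{articleone}). Thus $\sgal(L|K')$ is a proper, Zariski dense, $\s$-reduced $\s$-closed subgroup of the absolutely almost simple group $H$, and Theorem \ref{thm:classalmostsimple} applies: up to conjugation over a $\s$-separable extension $\wtilde k$ of $k$ it is a $\s^d$-constant subgroup of $\Gl_{n,\wtilde k}$ for some $d\geq1$ (the remaining possibility furnished by the classification, in which $\sgal(L|K')$ surjects onto the adjoint quotient $[\s]_k(H/Z)$, must be ruled out — see below). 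Applying Theorem \ref{thm:integra} to the $\s$-Picard-Vessiot extension $L|K'$ then yields that $\de(y)=Ay$ is $\s^d$-integrable over $K'$, which is $(iii)$.

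The hard part is concentrated in the last step, and it is precisely where working over $K'$ rather than over $K$ is essential. As recalled in the introduction, for merely almost simple $H$ the clean dichotomy ``$=H$ or conjugated to a $\s^d$-constant group'' fails for general $\s$-reduced Zariski dense subgroups: the obstruction comes from the $\s$-closed subgroups of the finite center $Z$ that are not $\s$-constant (indeed not even perfectly $\s$-reduced, cf.\ the $\mu_p$-example behind Proposition \ref{prop:mup}), which can occur as $\sgal(L|K)\cap[\s]_kZ$, causing $\sgal(L|K)$ to project onto $[\s]_k(H/Z)$ without being all of $[\s]_kH$. The point of the theorem is that passing to the relative algebraic closure absorbs this: since $K'$ is relatively algebraically closed in $L$, the intermediate $\ds$-field fixed by $\sgal(L|K')\cap[\s]_kZ$ sits in a ``connected'' situation, which should force $\sgal(L|K')$ either to contain $[\s]_kZ$ entirely — whence, if its image in $[\s]_k(H/Z)$ is everything, $\sgal(L|K')=[\s]_kH$, contradicting $(i)$ — or to fall into the $\s^d$-constant case after enlarging $d$ to a suitable multiple (to kill the residual action on the roots of unity in $Z$). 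Making this precise, i.e.\ establishing the exact form of Theorem \ref{thm:classalmostsimple} that is needed and checking that the projection-onto-$H/Z$ alternative is incompatible with $\sgal(L|K')$ being a proper subgroup, is the core difficulty; everything else is bookkeeping with the $\s$-Galois correspondence, Proposition \ref{prop:descentintegrability} and Theorem \ref{thm:integra}.
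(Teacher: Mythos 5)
Your overall skeleton matches the paper's: the two soft implications, the Zariski-density of $\sgal(L|K')$ in $H$ (your connectedness argument via the relative algebraic closedness of $K'$ in $K'(Y)$ is a legitimate alternative to the paper's dimension count), the base change to $\bar k$, and the final combination of the classification theorem with Theorem \ref{thm:integra} and Proposition \ref{prop:descentintegrability}. But there is a genuine gap exactly where you flag the ``core difficulty,'' and it is not merely a matter of making something precise: as written, your argument invokes Theorem \ref{thm:classalmostsimple} with a hypothesis ($\s$-reducedness of $\sgal(L|K')$) under which the conclusion is simply false. Remark \ref{rem:theoremfailsforalmostsimple} gives the counterexample: a Zariski dense, $\s$-reduced subgroup of $\Sl_{n,k}$ containing the center with a non-$\s$-constant difference condition on the central $\mu_n$ is never conjugate to a $\s^d$-constant group over any $\s$-field extension. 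So there is no version of the classification with a ``$\s^d$-constant or surjects onto the adjoint quotient'' dichotomy for $\s$-reduced groups that you could hope to exploit; and your sketch of why the relative algebraic closure ``should force'' the bad alternative away (containing all of $[\s]_kZ$, or enlarging $d$ to kill the action on roots of unity) does not engage with the actual obstruction, which lives in non-perfectly-$\s$-reduced subgroup schemes of the finite center and cannot be removed by passing to a multiple of $d$.

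The missing step is short but essential: from the $\s$-regularity of $L|K'$ (your Lemma \ref{lemma:sintegral}) one gets, by Proposition 4.3 (iii) of \cite{articleone}, that $\sgal(L|K')$ is absolutely \emph{$\s$-integral} --- its coordinate ring is a $\s$-domain --- which is strictly stronger than $\s$-reduced and is precisely the hypothesis of Theorem \ref{thm:classalmostsimple}. Under that hypothesis the classification has no second alternative: in its proof, the map $g\mapsto \s^d(v^{-1}gv)(v^{-1}gv)^{-1}$ lands in the finite center $N$, and since $\wtilde k\{G\}$ is an integral domain while $k[N]$ is a finite product of copies of $k$, this map is forced to be constant, which is what eliminates the center obstruction. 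This is the real reason why passing to $K'$ helps: it upgrades $\s$-reduced to $\s$-integral, not because it creates a ``connected situation'' absorbing $[\s]_kZ$. With that substitution your argument closes; without it, the decisive step is unsupported.
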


\begin{proof}
As in the proof of Proposition \ref{prop:simpleintegra} condition (iii) implies that $\strdeg(L|K')=0$. But then $\strdeg(L|K)=\strdeg(L|K')=0<t$. Thus (iii) implies (ii).

Condition (ii) implies that $\sdim(\sgal(L|K))=\strdeg(L|K)<t$. Therefore $\sgal(L|K)$ is properly contained in $H$. But then also $\sgal(L|K')\leq\sgal(L|K)$ is properly contained in $H$.

It remains to show that (i) implies (iii). Let us first show that the Zariski closure $H'$ of $G'=\sgal(L|K')$ in $\Gl_{n,k}$ equals $H$.
Suppose that $H'$ is properly contained in $H$. Then, since $H$ is connected, the dimension $t'$ of $H'$ is strictly smaller than $t$.
Let $Y\in\Gl_n(L)$ be a fundamental solution matrix for $\de(y)=Ay$. Since $K'|K$ is algebraic we arrive at the contradiction $$\trdeg(K(Y)|K)=\trdeg(K'(Y)|K')=t'<t=\trdeg(K(Y)|K).$$
So $H=H'$ and condition (i) means that $G'$ is properly contained in $H'$.

By Lemma \ref{lemma:sintegral} we know that $L|K'$ is $\s$-regular. By Proposition 4.3 (iii) in \cite{articleone} this implies that $G'$ is absolutely $\s$-integral. To apply Theorem \ref{thm:classalmostsimple} we need to go the algebraic closure of $k$. So let $\wtilde{k}$ denote the algebraic closure of $k$ and extend $\s$ from $k$ to $\wtilde{k}$. Note that since $K$ is inversive also $k$ and $\wtilde{k}$ are inversive. Extending the $\de$-constant from $k$ to $\wtilde{k}$ as in Lemma \ref{lemma:baseextensionforsPV} we obtain $\s$-Picard-Vessiot extensions $\wtilde{K}\subset\wtilde{K'}\subset\wtilde{L}$ with field of $\de$-constants $\wtilde{k}$. Since $G'$ is absolutely $\s$-integral $\wtilde{G'}:=\sgal(\wtilde{L}|\wtilde{K'})=G'_{\wtilde{k}}$ is $\s$-integral. Since $G'$ is properly contained in its Zariski closure $H=H'$, also $\wtilde{G'}$ is properly contained in its Zariski closure $H_{\wtilde{k}}$. We can thus apply Theorem \ref{thm:classalmostsimple}
to $\wtilde{G'}\leq\Gl_{n,\wtilde{k}}$ and combine it with Theorem \ref{thm:integra} to find that $\de(y)=Ay$ is $\s^d$-integrable over $K'$ for a suitable integer $d\geq 1$.
\end{proof}

Given a differential system $\de(y)=Ay$ over a $\ds$-field $K$, it might be difficult to obtain information about the relative algebraic closure $K'$ of $K$ in a suitable $\s$-Picard-Vessiot extension. Nevertheless, Theorem \ref{thm:almostintegra} implies the following transformal independence criterion.
\begin{cor} \label{cor:forintro}
Let $K$ be an inversive $\ds$-field, $A\in K^{n\times n}$, $L$ a $\ds$-field extension of $K$ with $L^\de=K^\de$ and $Y\in\Gl_n(L)$ with $\de(Y)=AY$. Let $T$ be a transcendence basis of $K(Y)$ over $K$ and assume that the (usual differential) Galois group of $K(Y)|K$ is an absolutely almost simple algebraic group. If, for every $d\geq 1$, the linear differential equation $\de(B)+BA=\hslash_d\s^d(A)B$ over $K$ has no non-zero solution $B$ which is algebraic over $K$, then $T$ is transformally independent over $K$.
\end{cor}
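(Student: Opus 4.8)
The plan is to deduce Corollary \ref{cor:forintro} from Theorem \ref{thm:almostintegra} by producing a suitable $\s$-Picard-Vessiot extension and then arguing by contradiction. First I would embed the given data into a $\s$-Picard-Vessiot extension: since $L^\de=K^\de=:k$, the $\ds$-subfield $M:=K\<Y\>_\s\subseteq L$ is a $\s$-Picard-Vessiot extension of $K$ for $\de(y)=Ay$ with fundamental solution matrix $Y$ (by Definition \ref{defi:PVextPVring}, the only thing to check being absence of new $\de$-constants, which is inherited from $L$). Let $K'$ denote the relative algebraic closure of $K$ inside $M$, and note that the hypothesis on the transcendence basis $T$ of $K(Y)|K$, together with Proposition \ref{prop:Zariskiclosures}, says precisely that the Zariski closure $H$ of $G:=\sgal(M|K)$ inside $\Gl_{n,k}$ is the (usual differential) Galois group of $K(Y)|K$, hence absolutely almost simple of some dimension $t\geq 1$; here $t=|T|=\trdeg(K(Y)|K)$.

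Next I would suppose, for contradiction, that $T$ is transformally \emph{dependent} over $K$, i.e.\ that $\strdeg(M|K)<t$. By Proposition \ref{prop:dimensiondegtrans} this reads $\sdim_k(G)<t$. Applying the implication (ii)$\Rightarrow$(iii) of Theorem \ref{thm:almostintegra} — whose hypotheses are met, $K$ being inversive and $H$ being absolutely almost simple of dimension $t\geq 1$ — we conclude that $\de(y)=Ay$ is $\s^d$-integrable over $K'$ for some $d\geq 1$; that is, there exists $B'\in\Gl_n(K')$ with $\de(B')+B'A=\hslash_d\s^d(A)B'$. In particular $B'$ is a non-zero solution of the linear differential equation $\de(B)+BA=\hslash_d\s^d(A)B$ which is algebraic over $K$ (since $K'$ is algebraic over $K$), contradicting the hypothesis of the corollary. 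Hence $T$ is transformally independent over $K$.

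The only genuinely delicate points are bookkeeping ones rather than substantial obstacles: one must be careful that the entries of the conjectural algebraic solution $B'$ lie in $K'$ — which is automatic from the definition of $\s^d$-integrability over $K'$ and the fact that $K'\subseteq \ol K$ — and that the differential equation $\de(B)+BA=\hslash_d\s^d(A)B$ in the statement of the corollary is literally the integrability condition \eqref{eq:integrabilty}, so that a non-zero algebraic solution $B'$ of it with invertible determinant does exist; invertibility is part of the definition of $\s^d$-integrability. A minor subtlety is that the corollary's hypothesis forbids non-zero algebraic solutions outright, not merely invertible ones, so the contradiction is immediate and no auxiliary Wronskian-type argument (as in the proof of Proposition \ref{prop:descentintegrability}) is needed. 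The main conceptual step is simply recognizing that Theorem \ref{thm:almostintegra}, applied to the ambient $\s$-Picard-Vessiot extension $M=K\<Y\>_\s$, converts the transformal dependence of $T$ into a concrete algebraic-solvability statement that the hypothesis rules out.
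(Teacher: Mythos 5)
Your proof is correct and follows essentially the same route as the paper: reduce to the $\s$-Picard-Vessiot extension $K\langle Y\rangle_\s$, identify the Zariski closure of the $\s$-Galois group with the usual Galois group via Proposition \ref{prop:Zariskiclosures}, and use the implication (ii)$\Rightarrow$(iii) of Theorem \ref{thm:almostintegra} to turn transformal dependence of $T$ into a non-zero algebraic solution of the integrability equation, contradicting the hypothesis. The bookkeeping remarks you add (that $B'\in\Gl_n(K')$ is automatically algebraic over $K$, and that the hypothesis excludes all non-zero algebraic solutions so no invertibility argument is needed) are accurate and consistent with the paper's proof.
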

\begin{proof}
We may assume that $L=K\langle Y\rangle_\s$. So $L|K$ is a $\s$-Picard-Vessiot extension. Suppose that $T$ is transformally dependent over $K$. Then $\strdeg(L|K)<|T|=\trdeg(K(Y)|K)$. Thus condition (ii) of Theorem \ref{thm:almostintegra} is satisfied and it follows that $\de(y)=Ay$ is $\s^d$-integrable over the relative algebraic closure $K'$ of $K$ in $L$ for some $d\geq 1$. This contradicts the assumption that $\de(B)+BA=\hslash_d\s^d(A)B$ has no non-zero algebraic solutions.
\end{proof}

\subsection{The case of $\Sl_2$}

We now apply Theorem \ref{thm:almostintegra} to the almost simple algebraic group $\Sl_2$.

\begin{cor}\label{cor:paramkov}
Let $K=k(x)$ be a field of rational functions
equipped with the derivation $\de=\frac{d}{dx}$ and an automorphism $\s$ commuting with $\de$,
such that $k\subset\C$ be an algebraically closed inversive $\s$-field.
We assume that the differential equation $\de^2(y)-ry=0$, with $r(x) \in K$, has (usual) Galois group $\operatorname{Sl}_2(k)$ and
we denote by $L|K$ one if its $\s$-Picard-Vessiot extensions.
Let  $K'$ be the relative algebraic closure of $K$ in $L$.
We have:
\begin{itemize}

\item
If the $\s$-transcendence degree of $L|K$ is strictly  less than $3$,
there exists $s \in \Z_{>0}$ such that the differential system
\beq\label{eq:intcond}
\l\{\begin{array}{l}
\de^2(b) +(\s^s(r)-r)b=2\de(d) \\ \\
\de^2(d) +(\s^s(r)-r)d=2 \s^s(r) \de(b) +\de(\s^s(r))b
\end{array}\r.
\eeq
has a non-zero algebraic  solution $(b,d)\in (K')^2$.

\item
If we can find a solution $(b,d)\in (K')^2$ of \eqref{eq:intcond}, such that the matrix
$B =\begin{pmatrix} d-\de(b)&b \\  \s^s(r)b-\de(d) & d \end{pmatrix} $ is invertible, then
the $\s$-transcendence degree of  $L|K$ is strictly less than $3$.
\end{itemize}
\end{cor}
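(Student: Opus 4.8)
The plan is to recognize $\de^2(y)-ry=0$ as the companion system $\de(y)=Ay$ with $A=\begin{pmatrix}0&1\\ r&0\end{pmatrix}\in K^{2\times 2}$ and then to feed it into Theorem \ref{thm:almostintegra}, the remaining work being a purely mechanical translation of the matrix integrability equation into the scalar system \eqref{eq:intcond}. First I would check that the hypotheses of Theorem \ref{thm:almostintegra} hold: $K=k(x)$ is inversive because $k$ is inversive and $\s$ is an automorphism of $k(x)$; from $\de\s=\s\de$ we get $\hslash=1$, hence $\hslash_s=1$ for every $s$; and by Proposition \ref{prop:Zariskiclosures} the Zariski closure of $\sgal(L|K)$ inside $\Gl_{2,k}$ is the usual Galois group of $\de(y)=Ay$, which is $\Sl_{2,k}$ by assumption, an absolutely almost simple algebraic group of dimension $t=3$.

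The technical core is the elementary observation that, for a fixed integer $s\geq 1$ and $B\in (K')^{2\times 2}$, the identity $\de(B)+BA=\s^s(A)B$ (the integrability condition \eqref{eq:integrabilty} with the integer called $s$ rather than $d$, using $\hslash_s=1$) holds if and only if
$$
B=\begin{pmatrix} d-\de(b)& b\\ \s^s(r)b-\de(d)& d\end{pmatrix}
$$
for some $(b,d)\in (K')^2$ solving \eqref{eq:intcond}. To see this I would write $B=(b_{ij})$, compute $BA$ and $\s^s(A)B$ entrywise and compare: two of the four resulting scalar equations force $b_{11}=b_{22}-\de(b_{12})$ and $b_{21}=\s^s(r)b_{12}-\de(b_{22})$, i.e. they put $B$ in the displayed form with $b:=b_{12}$ and $d:=b_{22}$, and substituting these back the remaining two equations become exactly the two lines of \eqref{eq:intcond}. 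Note that $B=0$ corresponds to $(b,d)=(0,0)$, so a non-zero solution of \eqref{eq:intcond} gives $B\neq 0$ and an invertible $B$ gives $(b,d)\neq(0,0)$.

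Granting this, both bullets are immediate. For the first, the assumption $\strdeg(L|K)<3=t$ is condition (ii) of Theorem \ref{thm:almostintegra}, so condition (iii) holds: there exist $s\in\Z_{>0}$ and $B\in\Gl_2(K')$ with $\de(B)+BA=\s^s(A)B$. By the observation this yields $(b,d)\in(K')^2$ solving \eqref{eq:intcond}; its entries are algebraic over $K$ since $K'|K$ is algebraic, and $(b,d)\neq(0,0)$ since $B$ is invertible. For the second bullet, given such a $(b,d)$ with the associated matrix $B$ invertible, the observation shows $B\in\Gl_2(K')$ satisfies the integrability equation, so $\de(y)=Ay$ is $\s^s$-integrable over $K'$; this is condition (iii) of Theorem \ref{thm:almostintegra}, whence condition (ii), i.e. $\strdeg(L|K)<3$.

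The conceptual content is carried entirely by Theorem \ref{thm:almostintegra}; I expect no genuine obstacle, only the need to be careful with the bookkeeping in the entrywise computation and with the point that here $\s^s$-integrability is allowed over the relative algebraic closure $K'$ (solutions $B$ algebraic over $K$, not necessarily rational), which is exactly the refinement over Proposition \ref{prop:simpleintegra} that Theorem \ref{thm:almostintegra} provides.
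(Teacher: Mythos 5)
Your proposal is correct and follows essentially the same route as the paper: both reduce to Theorem \ref{thm:almostintegra} applied to the companion system with $A=\begin{pmatrix}0&1\\ r&0\end{pmatrix}$, and both translate $\de(B)+BA=\s^s(A)B$ into \eqref{eq:intcond} by writing out the four entrywise equations and eliminating the $(1,1)$ and $(2,1)$ entries of $B$ (the paper's $a$ and $c$) via $a=d-\de(b)$ and $c=\s^s(r)b-\de(d)$. Your entrywise bookkeeping checks out, as do the observations that $B$ invertible forces $(b,d)\neq(0,0)$ and that the invertibility hypothesis is genuinely needed for the converse bullet.
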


\begin{proof}
The group $\Sl_{2,k}$ is an almost simple algebraic group of dimension $3$.
Since $K$ is an inversive $\s$-field and $k$ is algebraically closed \footnote{This assumption insures that
the usual Galois group is well-defined, i.e., unique up to isomorphism.}, we can apply Theorem \ref{thm:almostintegra} and
obtain the equivalence between the following statements:
\begin{itemize}

\item
The $\s$-transcendence degree of $L|K$ is strictly  less than $3$.

\item
There exist $B\in\Gl_n(K')$ and $s \in \Z_{>0}$ such that the system
$\l\{\begin{array}{l}
\de(y)=Ay:=\begin{pmatrix} 0&1\\r&0\end{pmatrix}y\\
\s^s(y)=By
\end{array}\r.$ is compatible.
\end{itemize}
This means that $A$ and $B$ verify
$\de(B)+BA=\s^s(A)B$.
If we write
$$
B =\begin{pmatrix} a&b \\c& d \end{pmatrix},
$$
the integrability condition  of order $s$
is equivalent to the following system of differential equations
$$
\l\{\begin{array} {l}
\de(a)=c-rb \\
\de(b)=d-a \\
\de(c)=\s^s(r)a-rd \\
\de(d)=\s^s(r)b-c
\end{array}\r..
$$
Eliminating $a$ and $c$, we obtain the differential system \eqref{eq:intcond}. This proves the first statement.
\par
Suppose that we have a solution $(b,d)\in (K')^2$ such that the matrix $B$ is invertible, then the system $\de(y)=Ay$ is $\s^s$-integrable and therefore $\strdeg(L|K)< 3$.
\end{proof}

Assuming that a second order linear differential equation has usual Galois group $\operatorname{Sl}_2(k)$ is not a restrictive assumption,
as we know from a famous paper  of Kovacic (\cite{Kovalg}, \cite{Kovacicalg2005}),
that provides an algorithm to test whether a differential equation of order $2$ with coefficients in $k(x)$
has liouvillian solutions or not. Based on this property, the algorithm actually calculates the Galois group of the differential equation.
\par
Let $K=k(x)$, where $k\subset\C$ is an algebraically closed field, and let $\de=\frac{d}{dx}$, as above.
Starting with a differential equation $\de^2(y)+a\de(y)+by=0$ of order $2$ with coefficients in $K(x)$, one computes
its normal form
\beq\label{eq:normalform}
\de^2(z)-rz=0,
\eeq
by means of a change of unknown function $z=fy$, where $f$ is a solution of $\de(f)=\frac{a}{2}f$ and $r=b-\frac{1}{4}a^2-\frac{1}{2}\de(a)$.
In the form \eqref{eq:normalform},
the existence of liouvillian solutions is equivalent to finding solutions of the Riccati's equation
$$
u^2 -\de(u)=r(x)
$$
in $\bigcup_{m\in \Z_{\geq 0}} \C[x^{\frac{1}{m}}]$. Kovacic's algorithm says, in particular, that:

\begin{prop}\label{prop:kovacicSl2}
If $\de^2(y)-ry=0$ has no liouvillian solutions, then its (usual) Galois group
is $\operatorname{Sl}_2(k)$.
\end{prop}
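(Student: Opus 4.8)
The plan is to derive this from the classical classification of Zariski-closed subgroups of $\Sl_2$ together with the differential Galois correspondence; this is exactly the structural input behind Kovacic's algorithm. First I would record that the Galois group $G$ of the normal form $\de^2(z)-rz=0$ is contained in $\Sl_2(k)$. Indeed, the companion matrix $A=\begin{pmatrix}0&1\\ r&0\end{pmatrix}$ is trace-free, so by Liouville's formula $\de(\det Y)=\operatorname{tr}(A)\det Y=0$ for a fundamental matrix $Y$; hence the Wronskian lies in $K^\de=k$ and is fixed by $G$, so $\det\colon G\to\Gm$ is trivial and $G\leq\Sl_2(k)$. Since $k$ is algebraically closed, we may apply the usual differential Galois correspondence freely.

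Next I would invoke the classification of proper Zariski-closed subgroups of $\Sl_2(k)$ over an algebraically closed field of characteristic zero (see Kovacic \cite{Kovalg} or \S1.4 and Chapter 4 in \cite{vdPutSingerDifferential}): up to conjugation in $\Sl_2(k)$, such a subgroup $G$ is either (a) \emph{reducible}, i.e.\ contained in the Borel subgroup $\mathbb B$ of upper-triangular matrices; or (b) \emph{imprimitive}, i.e.\ contained in the normalizer $N(\mathbb T)$ of the diagonal torus $\mathbb T$ but not in $\mathbb T$ itself; or (c) \emph{finite}. In each case one exhibits a liouvillian solution of $\de^2(z)-rz=0$, contradicting the hypothesis. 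In case (a) the common eigenline is $G$-stable, so for $y$ spanning it the element $u:=-\de(y)/y$ is $G$-invariant, hence $u\in K$ by the Galois correspondence; then $u$ is a rational solution of the Riccati equation $u^2-\de(u)=r$ and $y$ is liouvillian. In case (b) the group $G$ permutes two lines spanned by $y_1,y_2$; the quantities $u_i:=-\de(y_i)/y_i$ are permuted by $G$, so $u_1+u_2$ and $u_1u_2$ are $G$-invariant, hence lie in $K$, whence each $u_i$ is algebraic of degree $\leq 2$ over $K$ and the corresponding $y_i$ is liouvillian. In case (c), $G$ finite means the $\s$-Picard–Vessiot (here just Picard–Vessiot) extension is algebraic over $K$, so every solution is algebraic, a fortiori liouvillian. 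Since the hypothesis excludes all of (a)–(c), we conclude $G=\Sl_2(k)$.

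I expect no genuinely new obstacle here: the statement is essentially a corollary of Kovacic's analysis, and one may even simply cite it. The only points requiring care are expository — stating the subgroup classification in the precise form needed, making the ``permutation of two lines'' argument in case (b) clean (and recalling that the class of liouvillian extensions is closed under algebraic extensions, integrals, and exponentials of integrals, so that each of (a), (b), (c) really does produce a liouvillian solution), and matching the sign convention of the Riccati equation used above with the one in \eqref{eq:normalform}.
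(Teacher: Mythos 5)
Your argument is correct, and it is precisely the standard classification-of-subgroups-of-$\Sl_2$ reasoning that underlies Kovacic's algorithm; the paper itself offers no proof of this proposition, presenting it only as a known consequence of Kovacic's work with references to \cite{Kovalg} and \cite{Kovacicalg2005}, so your write-up is a correct self-contained expansion of what the paper merely cites. The details all check out, including the sign convention $u=-\de(y)/y$ matching the Riccati equation $u^2-\de(u)=r$ as written in the paper.
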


\begin{rmk}
In \cite{kovthomas} (see also \cite{Arreche:ComputingTheDifferentialGaloisGroupOfSecondOrder}), the author gives a  version of the Kovacic algorithm with differential parameters.
The author gives, for instance, a characterization
of the differential dependence w.r.t. the parameters $\ul t=(t_1,\dots,t_k)$ of the solutions  of  $\de^2(y)-r(x,\ul t)y=0$,
in terms of existence of rational solutions of auxiliary differential equations. A similar result should be true also in the present setting.
\end{rmk}

\subsection{Airy's equation}

For a differential system $\de(y)=Ay$ with usual almost simple Galois group,
Theorem \ref{thm:almostintegra} shows that the transformal relations satisfied by the solutions of
$\de(y)=Ay$ in some $\s$-Picard-Vessiot extension $L$ are determined  by the existence of  solutions
of an auxiliary linear differential system, lying  in the relative algebraic
closure $K'$ of $K$ inside $L$. The field $K'$ is not easy to handle in general but if the solutions
in $L$ are sufficiently regular, one can easily show that $K'=K$.
Below we illustrate this remark and combine it with Corollary \ref{cor:paramkov} to characterize the transformal
relations satisfied by the solutions of the Airy equation.

\begin{lemma} \label{lemma:ratfunctrelalgclosed}
Let $\C(x)$ be the field of rational functions over $\C$ and let $M$ be the field of
meromorphic functions over $\C$. Then, $\C(x)$ is relatively algebraically closed in $M$.
\end{lemma}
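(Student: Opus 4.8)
The plan is to show that any element $f\in M$ that is algebraic over $\C(x)$ must already lie in $\C(x)$. Suppose $f$ satisfies a polynomial relation $a_n f^n+\cdots+a_1f+a_0=0$ with $a_i\in\C(x)$, $a_n\neq 0$, and take such a relation of minimal degree $n$. If $n=1$ we are done, so I would aim to derive a contradiction from $n\geq 2$. The key geometric input is that a meromorphic function on $\C$ is single-valued on all of $\C$, whereas a genuinely degree-$n$ algebraic function over $\C(x)$ has nontrivial monodromy around its branch points.

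The first step is to locate the branch points. Clearing denominators we may assume $a_i\in\C[x]$, and after dividing by $\gcd$ we may assume the $a_i$ have no common factor; by minimality of $n$ the polynomial $P(x,T)=\sum_i a_i(x)T^i$ is irreducible in $\C(x)[T]$, hence its discriminant $\Delta(x)\in\C[x]$ is a nonzero polynomial. Away from the finite set $Z$ consisting of the zeros of $\Delta$, the zeros of $a_n$, and $\infty$, the equation $P(x,T)=0$ defines an $n$-sheeted covering, and $f$ is one branch of it. The second step: if $n\geq 2$, the monodromy of this covering is nontrivial — this is a standard fact, following from irreducibility of $P$ over $\C(x)$ (which forces the monodromy group to act transitively on the $n$ sheets, so it cannot be trivial when $n\geq 2$). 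Concretely, one picks a loop $\gamma$ in $\C\setminus Z$ around an appropriate point of $Z$ whose analytic continuation carries the branch $f$ to a different branch. But $f\in M$ is a genuine meromorphic function on $\C$, so analytic continuation along any loop in $\C$ (avoiding its poles) returns $f$ to itself; since the poles of $f$ lie in the branch locus up to finitely many points, one can choose the loop to avoid them. This contradiction forces $n=1$.

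The main obstacle — really the only subtle point — is making the monodromy argument rigorous: one must argue that since $M$ consists of globally single-valued functions on the simply connected domain $\C$ (minus a discrete set of poles), an element of $M$ cannot realize a branch of a nontrivial algebraic covering. I would handle this by the argument above: irreducibility of the minimal polynomial gives transitivity of the monodromy action on the $n$ branches, and transitivity with $n\geq 2$ gives a loop realizing a nontrivial permutation, contradicting single-valuedness of $f$. Alternatively, one can phrase it via the Riemann surface $X$ of $P$: the projection $X\to\P^1$ has degree $n$, and a section over $\C$ into $M$ would split off a degree-one component, contradicting irreducibility unless $n=1$. Either route is short once the setup is in place; no lengthy computation is needed.
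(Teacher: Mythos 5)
Your proof is correct, but it takes a genuinely different route from the paper's. The paper forms the function field $E=\C(x)(f)$ and invokes a consequence of the Riemann--Hurwitz genus formula (via Stichtenoth, Cor.~III.5.8) to conclude that a nontrivial finite extension of $\C(x)$ must be ramified, and then rules out ramification on the grounds that meromorphic functions do not branch. You instead work directly with the minimal polynomial $P(x,T)$ and run a covering-space argument: over $\C$ minus the finite bad locus $Z$, the curve $P=0$ is an $n$-sheeted cover which is connected by irreducibility of $P$, so its monodromy acts transitively on the sheets, while the single-valued meromorphic $f$ furnishes a global section (equivalently, a sheet fixed by every loop), forcing $n=1$. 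Both arguments rest on the same geometric fact --- a nontrivial algebraic cover of the line must branch --- but yours is more self-contained (transitivity of monodromy on a connected cover replaces the appeal to Riemann--Hurwitz), and it quietly handles a point the paper's one-line conclusion glosses over: a priori the abstract extension $E|\C(x)$ could be ramified only over $\infty$, which by itself would not contradict $f$ being unbranched on $\C$; your argument never leaves $\C\setminus Z$, since transitivity there already follows from connectedness of the affine curve. The only cosmetic imprecision is the phrase ``a loop around an appropriate point of $Z$'': what you really need is some element of $\pi_1(\C\setminus Z)$ moving the sheet of $f$, but since the stabilizer of that sheet is a subgroup, it cannot contain all the standard generators unless it is everything, so a simple loop around a single point of $Z$ does indeed suffice.
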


\begin{proof}
Let $f \in M$ be algebraic over $\C(x)$. Consider the function field $E=\C(x)(f)$. It is a finite separable extension of the rational function field $\C(x)$. Then, Corollary III.5.8 in  \cite{Stichtenoth} implies
that either $E=\C(x)$ or $E|\C(x)$ is ramified. This is an easy consequence of the Riemann-Hurwitz genus formula. Since $E$ is a sub-field of $M$ and since  a meromorphic function has no branch point, $E|\C(x)$ can not be ramified and then $f \in \C(x)$.
\end{proof}

We are now able to apply Corollary \ref{cor:paramkov} to the  case of the Airy equation
\beq\label{eq:airy}
\de^2(y)-xy =0.
\eeq
Notice that it has an irregular singularity at $\infty$,
and that all the other points of $\mathbb A^1_\C$ are ordinary.
This immediately implies that \eqref{eq:airy} admits a basis of solutions $A(x)$ and $B(x)$ in the field $M$ of meromorphic functions over $\C$.

\begin{cor}\label{cor:Airy}
Let $\C(x)$ be the field of rational function over the complex numbers,
equipped with the derivation $\de=\frac{d}{dx}$ and the automorphism $\sg:f(x)\mapsto f(x+1)$,
and $M$ be the field of meromorphic functions over $\C$.
In the notation above, let $L = \C(x)\<A(x), B(x), \de(A(x)), \de(B(x))\>_\s \subset M $ be the $\s$-Picard-Vessiot extension for the Airy equation contained in $M$.
Then, $\sgal(L|\C(x))$ is equal to  $\operatorname{Sl}_{2,\C}$ and the functions $A(x)$, $B(x)$ and $\de(B(x))$ are transformally independent over $\C(x)$.
\end{cor}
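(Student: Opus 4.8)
The plan is to apply Theorem~\ref{thm:almostintegra} to the Airy system $\de(y)=Ay$ with $A=\begin{pmatrix}0&1\\x&0\end{pmatrix}$ over $K=\C(x)$, after checking its hypotheses and identifying the field $K'$ occurring there. First I would note that the shift is an automorphism of $\C(x)$, so $K$ is inversive, and $\de\s=\s\de$ forces $\hslash_d=1$; since a meromorphic function on $\C$ with zero derivative is constant, $L^\de\subseteq M^\de=\C$, hence $L^\de=\C=K^\de$. The Wronskian $W:=A\de(B)-B\de(A)$ is a $\de$-constant (trace of the coefficient matrix is $0$) and nonzero ($A,B$ are $\C$-independent), so $Y:=\begin{pmatrix}A&B\\\de(A)&\de(B)\end{pmatrix}\in\Gl_2(L)$ and $L=K\langle Y\rangle_\s$ is a $\s$-Picard-Vessiot extension for $\de(y)=Ay$. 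Because the Airy equation has no Liouvillian solution (classical; this is the textbook $\Sl_2$-example, cf. the Kovacic references above), Proposition~\ref{prop:kovacicSl2} gives that its usual Galois group is $\Sl_2(\C)$, which by Proposition~\ref{prop:Zariskiclosures} is the Zariski closure $H$ of $\sgal(L|K)$; thus $H=\Sl_{2,\C}$ is absolutely almost simple of dimension $t=3$. Finally Lemma~\ref{lemma:ratfunctrelalgclosed} shows $\C(x)$ is relatively algebraically closed in $M$, hence in $L$, so the field $K'$ of Theorem~\ref{thm:almostintegra} is just $K=\C(x)$; in particular conditions (i), (ii), (iii) of that theorem all refer to $\C(x)$.

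\emph{The crux} is to verify the negation of (iii): for every $d\in\Z_{>0}$ the Airy system is \emph{not} $\s^d$-integrable over $\C(x)$. Assume $P\in\Gl_2(\C(x))$ satisfies $\de(P)+PA=\s^d(A)P$ with $\s^d(A)=\begin{pmatrix}0&1\\x+d&0\end{pmatrix}$. Then $w\mapsto Pw$ maps the solution space of $\de(y)=Ay$ bijectively onto that of $\de(w)=\s^d(A)w$, which is spanned by $(\operatorname{Ai}(x+d),\operatorname{Ai}'(x+d))$ and $(\operatorname{Bi}(x+d),\operatorname{Bi}'(x+d))$. Hence $P^{-1}(\operatorname{Bi}(x+d),\operatorname{Bi}'(x+d))$ equals $(u,u')$ for some nonzero Airy solution $u$ (entire, as a solution of an equation nonsingular on $\C$), so one obtains the identity of meromorphic functions
\[
\operatorname{Bi}(x+d)=P_{11}(x)\,u(x)+P_{12}(x)\,u'(x),\qquad P_{11},P_{12}\in\C(x).
\]
As $x\to+\infty$ along $\R$, the right-hand side is $O\!\big(|x|^{m}e^{\frac23 x^{3/2}}\big)$ for some $m$ (because $|u|,|u'|=O(|x|^{1/4}e^{\frac23 x^{3/2}})$ and $P_{1j}$ are rational), whereas $\operatorname{Bi}(x+d)$ is asymptotic to a nonzero multiple of $(x+d)^{-1/4}e^{\frac23(x+d)^{3/2}}$ and $\frac23(x+d)^{3/2}-\frac23 x^{3/2}=d\,x^{1/2}+O(x^{-1/2})$; since $d\ge1$ the factor $e^{d\,x^{1/2}}$ outgrows every power of $x$, a contradiction. (Equivalently, $\s^d$-integrability over $\C(x)$ amounts to an isomorphism over $\C(x)$ of the Airy differential module with its shift, which is impossible because at the unique singularity $\infty$ their exponential parts $\pm\frac23 x^{3/2}$ and $\pm\frac23(x+d)^{3/2}$ differ by something that is not a Laurent series in $x^{-1/2}$.)

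Granting the crux, Theorem~\ref{thm:almostintegra} yields that (ii) fails, so $\strdeg(L|K)\ge t=3$; as $L$ is $\s$-generated over $K$ by $A,B,\de(B)$ — because $\de(A)=(A\de(B)-W)/B\in K(A,B,\de(B))$ — we also have $\strdeg(L|K)\le3$, hence $\strdeg(L|K)=3$ and the generators $A,B,\de(B)$ are transformally independent over $\C(x)$. Likewise (i) fails, so $\sgal(L|K)=\sgal(L|K')$ is not a proper $\s$-closed subgroup of $\Sl_{2,\C}$ and therefore equals it (in the paper's notation, $\sgal(L|K)=[\s]_\C\Sl_{2,\C}$), i.e.\ $\sgal(L|\C(x))=\Sl_{2,\C}$. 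The only genuine difficulty is the crux step — proving non-$\s^d$-integrability over $\C(x)$ for all $d\ge1$ — which is exactly where the irregular singularity of the Airy equation at $\infty$ is used, via either the elementary asymptotic estimate above or the formal classification of irregular meromorphic connections; the set-up and the final bookkeeping are routine.
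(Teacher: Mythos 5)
Your proof is correct and reaches the conclusion through the same overall scaffolding as the paper (inversive base field, $K'=K$ via Lemma \ref{lemma:ratfunctrelalgclosed}, Theorem \ref{thm:almostintegra}), but the key step --- ruling out $\s^d$-integrability over $\C(x)$ for every $d\geq 1$ --- is done by a genuinely different method. The paper stays entirely algebraic: it specializes Corollary \ref{cor:paramkov} to $r(x)=x$, eliminates one unknown to reduce the integrability condition to the single scalar equation $\de^4(b)-(4x+2s)\de^2(b)-6\de(b)+s^2b=0$, and kills all nonzero rational solutions by a short pole-order and degree count. You instead argue analytically: a rational gauge transformation $P$ would carry the Airy solution space onto that of the shifted equation, forcing $\operatorname{Bi}(x+d)=P_{11}u+P_{12}u'$ for some Airy solution $u$, which the asymptotics $\tfrac23(x+d)^{3/2}-\tfrac23x^{3/2}=d\,x^{1/2}+O(x^{-1/2})$ rule out. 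Your route is conceptually transparent --- it makes visible that the obstruction is the irregular singularity at $\infty$, as you note --- and avoids computing the auxiliary equation, but it imports the classical asymptotic expansions of $\operatorname{Bi}$ and $\operatorname{Bi}'$ as transcendental input; the paper's route is self-contained, purely formal, and in the spirit of its algorithmic remarks after Proposition \ref{prop:simpleintegra} about reducing transformal independence to the existence of rational solutions of linear equations. Two minor points: you invoke Kovacic (Proposition \ref{prop:kovacicSl2}) where the paper cites Katz for the Zariski closure being $\Sl_2(\C)$ --- either works, though ``the Airy equation has no Liouvillian solutions'' is itself a small verification you are taking on faith; and your parenthetical about exponential parts is slightly misstated ($d\,x^{1/2}$ \emph{is} a Laurent monomial in $x^{-1/2}$ --- the relevant point is that the difference of exponential parts is unbounded, so the formal types at $\infty$ differ), but the main asymptotic estimate does not depend on that aside.
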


\begin{proof}
By Example 4.2.9 in \cite{katzcalgal}, the usual Galois group
of the Airy equation is $\operatorname{Sl}_2(\C)$. Moreover by Lemma \ref{lemma:ratfunctrelalgclosed}, the
field $\C(x)$ is relatively algebraically closed in $L$.
\par
We can then apply Corollary \ref{cor:paramkov} to the case $K=\C(x)$, $\s(x)=x+1$ and $r(x) =x$.
Then, if there exists $s \in \Z_{>0}$ such that the system \eqref{eq:intcond} has a rational solution, we have to find
$(b,d) \in \C(x)^2$ such that
$$
\l\{\begin{array}{l}
\de^2(b) +sb=2\de(d) \\
\de^2(d) +sd=2 (x+s) \de(b) +b
\end{array}\r..
$$
If we eliminate $d$ from the previous system, we get
\beq\label{eq:accessoryorder4eq}
\de^4(b)-(4x+2s)\de^2(b)-6\de(b)+s^2b=0.
\eeq
Let us assume that this last equation has a non-zero solution $b_0\in\C(x)$.
It is easy to see that $b_0$ has no pole. In fact, if $\alpha$ was a pole of order $r$ of $b_0$ then it would be a pole of order $r+4$ of
$\de^4(b_0)-(4x+2s)\de^2(b_0)-6\de(b_0)+s^2b_0=0$, and this is impossible. Therefore $b_0$ must be a polynomial.
Finally, if $b_0\neq 0$, then $ \de^4(b_0)-(4x+2s)\de^2(b_0)-6\de(b_0)+s^2b_0$ is a non-zero polynomial
of the same degree than $b_0$, which is again impossible. So we conclude that $b_0=0$ and that the differential system above does not have any rational solution.
Therefore $\strdeg(L|\C(x))=3$, which allows to conclude.
Since the relative algebraic closure of $K$ in $L$ equals $K$, condition (i) in Theorem \ref{thm:almostintegra} implies that $\sgal(L|K)$ is not a proper $\s$-closed subgroup of $\Sl_{2,\C}$. Therefore $\sgal(L|\C(x))=\Sl_{2,\C}$.
\end{proof}

%%%%%%%%%%%%%%%%%%%%%%%%%%%%%%%%%%%%%%%%%%%%%%%%%%%%%%%%%%%%%%%%%%%%%%%%%%%%%%%%%%%%%%%%%%%%%%%%
%%%%%%%%%%%%%%%%%%%%%%%%%%%%%%%%%%%%%%%%%%%%%%%%%%%%%%%%%%%%%%%%%%%%%%%%%%%%%%%%%%%%%%%%%%%%%%%%
\appendix

\section{Some classification results for $\s$-algebraic groups}
\label{sec:classgroup}

In this appendix we have collected some results on the classification of $\s$-closed subgroups of certain algebraic groups. The results are typically of the following type: If $G$ is a $\s$-closed subgroup of a certain algebraic group, then $G$ is of a very special form. In the main body of the text, via the $\s$-Galois correspondence, this
yields results of the type: If there is a transformal relation between the solutions of a certain linear differential equation, then there is a transformal relation of a very special form.

We have made it a general assumption (in Section \ref{subsec:differentialalgebra}) that all fields are of characteristic zero. However, in this appendix we do make this assumption but rather state the characteristic zero assumption whenever it is convenient.

\subsection{The $\s$-closed subgroups of $\Ga^n$}
\label{subsec:subgroupsGa}

Let $k$ be a $\s$-field. As usual we denote by $\Ga$ the additive algebraic group over $k$. We think of the vector group $\Ga^n$ as a $\s$-algebraic group, i.e., $\Ga^n(S)=S^n$ for any $k$-$\s$-algebra $S$.
The $\s$-coordinate ring of $\Ga^n$ is $k\{\Ga^n\}=k\{ x_1,\ldots,x_n \}_\s$.
A linear, homogenous $\s$-polynomial $p=\sum_{i,j}\lambda_{i,j}\s^j(x_i)\in k\{ x_1,\ldots,x_n \}_\s$ defines a $\s$-closed subgroup of $\Ga^n$.
The following theorem provides a converse.

\begin{thm}\label{thm:clasga}
Let $k$ be a $\s$-field of characteristic zero and let $G$ be a $\s$-closed subgroup of $\Ga^n$.
Then, there exists a set $F$ of linear homogeneous $\s$-polynomials such that
$$G(S)=\{g\in\Ga^n(S)|\ p(g)=0 \text{ for } p\in F\}$$
for any $k$-$\s$-algebra $S$.
\end{thm}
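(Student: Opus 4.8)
The plan is to translate the statement into a statement about difference ideals in the difference polynomial ring $k\{x_1,\dots,x_n\}_\s$ and then use the classical result that ideals defining closed subgroups of vector groups are ideals generated by linear forms, iterated over the Zariski closures $G[d]$.

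First I would recall that $G$ corresponds to its vanishing ideal $\I(G)\subset k\{x_1,\dots,x_n\}_\s$, a $\s$-Hopf ideal, and that $G(S)=\{g\in\Ga^n(S)\mid p(g)=0\text{ for all }p\in\I(G)\}$ for every $k$-$\s$-algebra $S$ by the Yoneda/representability formalism. So it suffices to show that $\I(G)$ is generated, as an ideal, by linear homogeneous $\s$-polynomials; equivalently, that the $k$-subspace $V\subset k\{x_1,\dots,x_n\}_\s$ of linear homogeneous $\s$-polynomials lying in $\I(G)$ generates $\I(G)$ as an ideal. (Then one takes $F=V$, or a generating set thereof.) The key observation is that the $\s$-coordinate ring $k\{\Ga^n\}=k\{x_1,\dots,x_n\}_\s$ is the polynomial ring in the variables $\s^j(x_i)$, and that this is precisely the coordinate ring of the infinite-dimensional vector group; more usefully, for each $d\geq 0$ the $d$-th order Zariski closure $G[d]$ is a closed subgroup of $\wtilde{\Ga^n}_d\cong\Ga^{n(d+1)}$, an algebraic vector group over $k$, and its vanishing ideal is $\I(G)\cap k[x_i,\s(x_i),\dots,\s^d(x_i)]$.

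Next I would invoke the classical fact (valid in characteristic zero; this is where the hypothesis is used, via the exponential/logarithm or via the fact that in characteristic zero $\Hom(\Ga^m,\Ga^l)$ consists of linear maps and every closed subgroup of $\Ga^m$ is a linear subspace --- cf. the footnote reference to Waterhouse) that a closed subgroup of the algebraic vector group $\Ga^{n(d+1)}$ over a field $k$ is defined by an ideal generated by homogeneous linear polynomials in the $n(d+1)$ coordinates. Applying this to each $G[d]$, the ideal $\I(G)\cap k[x_i,\dots,\s^d(x_i)]$ is generated by linear homogeneous polynomials in the variables $\s^j(x_i)$, $0\leq j\leq d$, i.e.\ by elements of $V$. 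Since $\I(G)=\bigcup_{d\geq 0}\bigl(\I(G)\cap k[x_i,\dots,\s^d(x_i)]\bigr)$, taking the union over $d$ shows $\I(G)$ is generated by $V$, and we are done: set $F=V$ and the displayed description of $G(S)$ follows because $p(g)=0$ for all $p\in F$ forces $q(g)=0$ for every $q\in\I(G)$, $q$ being an ideal combination of the $p$'s.

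The main obstacle is making the reduction to $G[d]$ rigorous and correctly identifying the ideal of $G[d]$ inside the algebraic vector group --- in particular checking that $G[d]$ really is a \emph{subgroup} scheme of $\Ga^{n(d+1)}$ (so that the classical structure theory applies), which follows from functoriality of the $d$-th order Zariski closure construction recalled before Definition A.11, and that $\I(G[d])=\I(G)\cap k[\wtilde G]_d$ as stated there. A secondary point requiring care is that one should check the linear $\s$-polynomials obtained at level $d$ are genuinely linear \emph{and homogeneous} as $\s$-polynomials (not merely linear in the finitely many variables up to order $d$, which is automatic, but with no constant term --- this is forced by $G$ being a subgroup, hence containing $0$, or alternatively by the grading). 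Everything else is a routine passage to the limit.
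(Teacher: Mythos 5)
Your proposal is correct and follows essentially the same route as the paper: reduce to the $d$-th order Zariski closures $G[d]$, which are algebraic subgroups of $\Ga^{n(d+1)}$ (Lemma A.39 in \cite{articleone}), invoke the characteristic-zero fact that algebraic subgroups of vector groups are cut out by linear homogeneous polynomials, and pass to the limit via $\I(G)=\bigcup_d\bigl(\I(G)\cap k[\wtilde G]_d\bigr)$. The paper's proof is just a terser version of exactly this argument.
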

\begin{proof}
\par For $d \in \N$, let $G[d]$ be the $d$-th order Zariski closure of $G$ inside $\mathbf{G}_a^n$.
By Lemma A.39 in \cite{articleone},
$G[d]$ is an algebraic subgroup of $(\mathbf{G}_a^n)_d=\mathbf{G}_a^{n(d+1)}$.
The claim now follows from the fact that the algebraic subgroups of $\mathbf{G}_a^{n(d+1)}$ are defined by linear homogeneous polynomials.
% Since the vanishing ideal  of
%  a subgroup scheme inside  $\mathbf{G}_a^{n(d+1)}$ is generated by linear homogeneous polynomials, we get that, for all $d \in \N$, the ideal $\I(H[d])= \I(H)\cap k[\mathbf{G}_a^n]_d$
%  is generated by linear homogeneous $\s$-polynomials of order less than or equal to $d$. This implies that $\I(H)$ itself is generated as a $\s$-ideal by
% linear homogeneous $\s$-polynomials .
% \par By \cite[Theorem 2.4.11]{Levin}, one can find a finite set of  linear homogeneous $\s$-polynomials $L_1, \ldots, L_k$ in $\I( H)$ such that  $\Sigma:=\{ L_1,\ldots,L_k \}$ is a characteristic set of
% $\I(H)$. Let
% $[\Sigma]$ be the $\s$-ideal generated by $\Sigma$. We have $[\Sigma] \subset \I(H)$.
%   Since the elements $L_i$ of $\Sigma$ are linear homogeneous $\s$-polynomials, their initials $I_{L_i}$
% belongs to $k\subsetneq \{0\}$. Now, let $C \in\I(H)$. By \cite[Theorem 2.4.1]{Levin}, there exist a $\s$-polynomial $C_0 \in k\{X\}_\s$, reduced with respect to $\Sigma$, and an element $J $
% either equal to $1$ or to a product of finitely many $\s$-iterates of $I_{L_i}$  such that $JC -C_0 $ belongs to
% $[\Sigma]$.  Since $J \in k\subsetneq \{0\}$, we get that $C_0$ belongs to $\I(H)$ and is reduced w.r.t. $\Sigma$. By \cite[Proposition 2.4.4]{Levin}, $C_0=0$ which means that
% $C \in [\Sigma]$. Therefore,  $\I(H)= [\Sigma]$.
\end{proof}

\begin{rem}
In positive characteristic, the situation is slightly more complicated because one has to take into account the Frobenius. For example, if $k$ is a $\s$-field of characteristic $p>0$, then
$$G(S)=\{g\in S|\ g+\s(g)^p=0\}\leq\Ga(S)$$ for any $k$-$\s$-algebra $S$,
defines a $\s$-closed subgroup $G$ of $\Ga$.
\end{rem}

\begin{cor} \label{cor:ssubgroupofGa}
Let $k$ be a $\s$-field of characteristic zero and $G$ a $\s$-closed subgroup of $\mathbf{G}_a$, not equal to $\mathbf{G}_a$. Then, there exists a non-zero, linear, homogenous $\s$-polynomial $p=a_n\s^n(x)+a_{n-1}\s^{n-1}(x)+\cdots+a_0x\in k\{x\}_\s$ such that
$$G(S)=\{ g\in S|\ p(g)=0\}$$ for any $k$-$\s$-algebra $S$. Moreover, if $a_0\neq 0$ then $G$ is $\s$-integral, i.e., the $\s$-ideal $\I(G)$ is prime and reflexive.
\end{cor}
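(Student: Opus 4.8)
The plan is to deduce Corollary \ref{cor:ssubgroupofGa} from Theorem \ref{thm:clasga} together with the ring-theoretic structure of linear homogeneous $\s$-polynomials in one variable. First I would specialize Theorem \ref{thm:clasga} to the case $n=1$: any $\s$-closed subgroup $G$ of $\Ga$ is cut out by a set $F$ of linear homogeneous $\s$-polynomials $p=\sum_{j=0}^{m}a_j\s^j(x)\in k\{x\}_\s$. Such polynomials are exactly the elements of the twisted polynomial ring (skew Euclidean ring) $k[\s]$, acting on $\Ga$. Since $k[\s]$ is a left principal ideal domain (it admits a right division algorithm, $k$ being a field of characteristic zero), the left ideal generated by $F$ is generated by a single element $p$. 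Then $G(S)=\{g\in S\mid p(g)=0\}$ for every $k$-$\s$-algebra $S$, which gives the first assertion once one checks $G\neq\Ga$ forces $p\neq 0$ (if $F\subseteq\{0\}$ then $G=\Ga$). Writing $p=a_n\s^n(x)+\cdots+a_0x$ with, say, $a_n\neq 0$ after normalizing, one gets the stated form.

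For the second assertion, I would assume $a_0\neq 0$ and show $\I(G)\subset k\{x\}_\s$ is prime and reflexive. Recall $k\{x\}_\s=k[x,\s(x),\s^2(x),\ldots]$ is a polynomial ring in countably many variables, and $\I(G)$ is the $\s$-ideal generated by $p=\sum_{j=0}^n a_j\s^j(x)$. The key computation is that $\I(G)$, as an ideal (not just $\s$-ideal), is generated by $p,\s(p),\s^2(p),\ldots$, i.e.\ by $\{\s^i(p)\mid i\geq 0\}$. Because $a_n\neq 0$, one can use $\s^i(p)$ to express $\s^{n+i}(x)$ in terms of lower transforms of $x$; hence $k\{x\}_\s/\I(G)\cong k[x,\s(x),\ldots,\s^{n-1}(x)]$, a polynomial ring in $n$ variables, which is an integral domain — so $\I(G)$ is prime. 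For reflexivity one must check $\s^{-1}(\I(G))\subseteq\I(G)$, equivalently that $\s$ is injective on the quotient $k[x,\s(x),\ldots,\s^{n-1}(x)]$. This is where the hypothesis $a_0\neq 0$ is essential: $\s$ acts on the quotient by $\s(x)\mapsto\s(x),\ldots,\s^{n-2}(x)\mapsto\s^{n-1}(x)$ and $\s^{n-1}(x)\mapsto -a_0^{-1}(a_1x+a_2\s(x)+\cdots+a_n\s^{n-1}(x))$ (using $\s^n(x)=-a_0^{-1}(\ldots)$ coming from $p=0$ only when $a_0\ne 0$; more precisely one rearranges $p$), and the resulting companion-type endomorphism of the polynomial ring is invertible precisely because its "linear part" is an invertible matrix over $k$ — its determinant is (up to sign) $a_0/a_n$, which is a unit. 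Hence $\s$ is an automorphism of the quotient, so $\I(G)$ is reflexive, and being prime and reflexive it is $\s$-prime, i.e.\ $G$ is $\s$-integral.

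The main obstacle I anticipate is the bookkeeping in the second part: correctly identifying the ideal $\I(G)$ of $k\{x\}_\s$ generated by the $\s$-orbit of $p$ with an explicit polynomial ring in $n$ variables, and then verifying that the induced action of $\s$ on that quotient is an automorphism exactly when $a_0\neq 0$. One should be slightly careful about which coefficient is nonzero: after dividing by the leading coefficient one may assume $a_n=1$, and the condition "$a_0\neq 0$" is the condition that the companion matrix of the recurrence $\sum a_j\s^j(x)=0$ is invertible. A clean way to organize this is to note that $\s$ on $k[x,\s(x),\ldots,\s^{n-1}(x)]$ is the linear map with matrix the companion matrix of $p$ in the basis $x,\s(x),\ldots,\s^{n-1}(x)$ together with the base change $\s\colon k\to k$ on scalars; injectivity of a $k$-semilinear map is equivalent to invertibility of that matrix, whose determinant is $\pm a_0$ (with $a_n=1$). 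Everything else — the reduction from several $\s$-polynomials to one via the Euclidean algorithm in $k[\s]$, and the appeal to Theorem \ref{thm:clasga} — is routine. If one prefers to avoid the skew polynomial ring language, one can alternatively argue directly: among all nonzero $p\in\I(G)\cap(k x+k\s(x)+\cdots)$ (linear part) of minimal order, pick one; minimality of order plus the characteristic-zero hypothesis forces $\I(G)$ to be the $\s$-ideal it generates, and one checks it may be chosen with $a_0\neq 0$ when $G\ne\Ga$ is $\s$-integral — but the cleanest route remains the principal-ideal-domain property of $k[\s]$.
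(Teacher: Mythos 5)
Your proof is correct and follows essentially the same route as the paper: the paper carries out the Euclidean division in the skew ring by hand (choosing a nonzero linear homogeneous $p\in\I(G)$ of minimal order and reducing any other linear element of $\I(G)$ modulo $\s$-shifts of $p$), which is precisely your left-PID argument for $k[\s]$, and its proof of $\s$-integrality is your companion-matrix computation, phrased as the algebraic independence of $\s(\ol{x}),\ldots,\s^n(\ol{x})$ over $k$ in the quotient $k[\ol{x},\ldots,\s^{n-1}(\ol{x})]$. Two small corrections: one solves $p=0$ for the top term, so $\s^{n-1}(x)\mapsto\s^n(x)=-a_n^{-1}\l(a_0x+a_1\s(x)+\cdots+a_{n-1}\s^{n-1}(x)\r)$ rather than dividing by $a_0$ (your determinant $\pm a_0/a_n$ is nonetheless right), and the characteristic-zero hypothesis is what makes Theorem \ref{thm:clasga} true, not what makes the division algorithm in $k[\s]$ work.
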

\begin{proof}
By Theorem \ref{thm:clasga}, the vanishing ideal $\I(G)\subset k\{x\}_\s=k\{\mathbf{G}_a\}$ of $G$ is generated as a difference ideal by linear, homogeneous $\s$-polynomials. Let $p\in\I(G)$ be a non-zero, linear, homogeneous $\s$-polynomial of minimal order.
So
$$
p=a_n\s^n(x)+a_{n-1}\s^{n-1}(x)+\cdots+a_0x\in k\{x\}_\s,\quad a_n\neq 0,
$$
with $n$ minimal.
We have to show that $\I(G)=[p]$. Suppose there exists a non-zero, linear, homogeneous $\s$-polynomial in $\I(G)$ which does not lie in $[p]$.
Among these $\s$-polynomials choose one, say $q$, of minimal order. If $q=b_m\s^m(x)+\cdots+b_0x$, $b_m\neq 0$, then necessarily $m\geq n$ and $q-b_m\s^{m-n}(a_n)^{-1}\s^{m-n}(p)\in\I(G)$ has order strictly less than $m$. So $q-b_m\s^{m-n}(a_n)^{-1}\s^{m-n}(p)\in [p]$. But then also $q\in[p]$; a contradiction.

Let us now assume that $a_0\neq 0$. We have to show that $G$ is $\s$-integral. The coordinate ring $k\{G\}=k\{x\}_\s/[p]\simeq k[\overline{x},\s(\overline{x}),\ldots,\s^{n-1}(\overline{x})]$ is a polynomial ring in $n$ variables over $k$. To see that $\s$ is injective on $k\{G\}$ is suffices to note that $\s(\overline{x}),\s^2(\overline{x}),\ldots,\s^n(\overline{x})=-\frac{1}{a_n}(a_0\overline{x}+\cdots+a_{n-1}\s^{n-1}(\overline{x}))$ are algebraically independent over $k$.
\end{proof}

We recall the definition of linearly $\s$-closed and $\s$-closed field:

\begin{defi}\label{defi:linearlysclosed}[Cf. {Def. 3.1, p. 1330 in \cite{KowPillsigma}.}]
A $\s$-field $k$ is called \emph{linearly $\s$-closed} if every linear system of
difference equations over $k$ has a fundamental solution matrix in $k$. That is,
for every $B\in\Gl_n(k)$ there exists $Y\in\Gl_n(k)$ with $\s(Y)=BY$. We say that
a $\s$-field $k$ is $\s$-closed\footnote{A $\s$-closed $\s$-field is also called a model of ACFA, in model theory language.} if every system
of difference polynomial equations with coefficients in $k$, which posses a solution
in some $\s$-field extension of $k$, has a solution in $k$ (see also \S 1.1 in \cite{ChatHrusPet}).
\end{defi}

\begin{rmk}
The reader familiar with the Galois theory of linear difference equations might
be slightly puzzled by the above definition, because usually one is looking for
fundamental solution matrices in finite products of fields, rather than fields
(\cite{SingerPut:difference}). The definition makes sense because we do not make
any assumption on the constants of $k$. (The constants of a linearly $\s$-closed
$\s$-field can be non-algebraically closed.) Note that a $\s$-closed $\s$-field is linearly $\s$-closed.
\end{rmk}

The difference Wronskian lemma implies that a $\s$-field $k$ is linearly $\s$-closed
if and only if any linear difference equation of order $s$, with coefficients $k$:
$$
a_0y+a_1\s(y)+\dots+a_s\s^s(y)=0,
\hbox{~with $a_0,\dots,\a_s\in k$, $a_0a_s\neq 0$,}
$$
has $s$ solutions in $k$, linearly independent over $k^\s$.
It follows that:

\begin{cor}\label{cor:linsclosedsubgroupsGa}
Let $k$ be a linearly $\s$-closed $\s$-field of zero characteristic and let $G$ be a proper $\s$-closed subgroup  of $\mathbf{G}_a$, such that
$\mathcal{L} = \sum_{i=0}^s \lambda_i \s^i(x)$, $\lambda_0\lambda_s\neq 0$, generates $\I(G)$ as a $\s$-ideal.
Then, $G(k)$ is a $k^\s$-vector space of dimension $s$, generated by a basis of solutions in $k$ of the
linear difference equation $\mathcal L(y)=0$.
Moreover, for any $k$-$\s$-algebra $S$, we have $G(S)=G(k)\otimes_{k^\s} S^\s$.
\end{cor}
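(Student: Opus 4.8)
The plan is to reduce everything to the single linear difference equation $\cL(y)=0$: first identify $G(k)$ with its solution set in $k$, then feed in the difference Wronskian (Casoratian) lemma together with linear $\s$-closedness to get the dimension count, and finally propagate the description to an arbitrary $k$-$\s$-algebra $S$ by showing that every solution of $\cL(y)=0$ in $S$ is an $S^\s$-combination of a fixed $k^\s$-basis of $G(k)$.

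First I would observe that, since $\I(G)$ is the $\s$-ideal generated by $\cL$, a $k$-$\s$-algebra morphism $k\{x\}_\s=k\{\Ga\}\to S$ factors through $k\{G\}$ if and only if it kills $\cL$, i.e.\ if and only if the image $g$ of $x$ satisfies $\cL(g)=0$; hence $G(S)=\{g\in S\mid\cL(g)=0\}$ as a subgroup of $(S,+)$, in accordance with Corollary \ref{cor:ssubgroupofGa}. Taking $S=k$, the group $G(k)$ is exactly the $k^\s$-vector space of solutions of $\cL(y)=0$ in $k$ (recall $k^\s$ is a field). Since $\lambda_0\lambda_s\neq 0$ and $k$ is linearly $\s$-closed, the difference Wronskian lemma recalled before the statement provides $s$ solutions $g_1,\dots,g_s\in k$ that are linearly independent over $k^\s$. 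Conversely, for any $s+1$ solutions $g,g_1,\dots,g_s$, the bottom row of their $(s+1)\times(s+1)$ Casoratian matrix $\bigl(\s^j(\cdot)\bigr)_{0\le j\le s}$ is the $k$-linear combination $-\frac{1}{\lambda_s}\sum_{j<s}\lambda_j(\text{row }j)$ of the rows above it, so that Casoratian vanishes and the Wronskian lemma forces $g,g_1,\dots,g_s$ to be linearly dependent over $k^\s$; as $g_1,\dots,g_s$ are independent, $g$ lies in their $k^\s$-span. Thus $g_1,\dots,g_s$ is a basis of $G(k)$ over $k^\s$ and $\dim_{k^\s}G(k)=s$.

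For the second assertion I would fix such a basis $g_1,\dots,g_s$ and consider the natural morphism $G(k)\otimes_{k^\s}S^\s\to G(S)$, $g\otimes a\mapsto ga$, which does land in $G(S)$ because $\cL\bigl(\sum a_ig_i\bigr)=\sum a_i\cL(g_i)=0$ for $a_i\in S^\s$. The case $S=0$ is trivial, so one may assume $k\hookrightarrow S$. The Casoratian $C=\det\bigl(\s^j(g_i)\bigr)_{0\le j\le s-1,\,1\le i\le s}$ is a nonzero element of the field $k$ by the Wronskian lemma, hence a unit of $k$, hence a unit of $S$; likewise $\s(C)$ is a unit. Injectivity follows since a relation $\sum a_ig_i=0$ with $a_i\in S^\s$, after applying $\s^0,\dots,\s^{s-1}$, places $(a_1,\dots,a_s)$ in the kernel of the invertible matrix $\bigl(\s^j(g_i)\bigr)_{0\le j\le s-1}$. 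For surjectivity, given $g\in S$ with $\cL(g)=0$ I would solve $\sum_i b_i\s^j(g_i)=\s^j(g)$ uniquely over $S$ for $j=0,\dots,s-1$ (the coefficient matrix has determinant $C$); using $\cL(g)=\cL(g_i)=0$ and $\lambda_s\in k^\times$ the same identity then also holds for $j=s$, and applying $\s$ to the identities for $j=0,\dots,s-1$ and subtracting the results from the identities for $j=1,\dots,s$ gives $\sum_i(b_i-\s(b_i))\s^m(g_i)=0$ for $m=1,\dots,s$, a homogeneous system whose matrix has determinant $\s(C)$, a unit; hence each $b_i\in S^\s$ and $g=\sum_ib_ig_i$ is in the image. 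Since $G(k)$ is free of rank $s$ over the field $k^\s$, the tensor product has $S^\s$-basis $g_1\otimes 1,\dots,g_s\otimes 1$, and the above shows the map is an isomorphism onto $G(S)$.

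The only genuine difficulty is this last step: $S$ is a completely arbitrary $k$-$\s$-algebra, with no reducedness or finiteness hypothesis, so naive linear algebra over $S^\s$ is unavailable; what saves the argument is that the Casoratian of a $k^\s$-independent family of solutions, and its $\s$-transform, are \emph{units} of $k$ and hence invertible over every nonzero $S$. I would also be careful to arrange the computation so that it uses only the invertibility of $\bigl(\s^j(g_i)\bigr)_{0\le j\le s-1}$ and of $\bigl(\s^m(g_i)\bigr)_{1\le m\le s}$, never $\s^{-1}$, since $\s$ need not be surjective on $k$, and to dispatch the edge case $S=0$ at the outset.
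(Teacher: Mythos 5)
Your proof is correct, and it follows exactly the route the paper intends: the paper states this corollary with no proof beyond the remark that linear $\s$-closedness plus the difference Wronskian (Casoratian) lemma yield $s$ independent solutions, and your argument is the natural elaboration of that remark. The one genuinely delicate point — that $G(S)=G(k)\otimes_{k^\s}S^\s$ for an \emph{arbitrary} $k$-$\s$-algebra $S$ — is handled properly by your observation that the Casoratian $C$ and its transform $\s(C)$ are units of $k$ and hence of any nonzero $S$, which makes the shift-and-subtract argument go through without any reducedness or surjectivity hypotheses on $S$ or $\s$.
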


\subsection{The $\s$-closed subgroups of $\Gm^n$}

By a multiplicative function on $\Gm^n$, we mean a $\s$-polynomial in $k\{\Gm^n\}=k\{x_1,\dots,x_n,x_1^{-1},\dots,x_n^{-1}\}_\s$ which is a  product of monomials $\s^i(x_j)$. The structure of the $\s$-closed subgroups of $\Gm^n$ was already noted in Lemma A.40 in \cite{articleone}.

\begin{thm}\label{thm:classgm}
Let $k$ be a $\s$-field and let $G$ be a $\s$-closed subgroup of $\Gm^n$.
Then there exists a set $\Psi$ of multiplicative functions such that
$$
G(S)=\{g\in \Gm^n(S)|\ \psi(g)=1 \text{ for } \psi\in\Psi\},
$$
for any $k$-$\s$-algebra $S$.
\end{thm}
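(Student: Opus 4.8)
The plan is to reduce the statement about the $\s$-algebraic group $\Gm^n$ to a statement about ordinary algebraic subgroups of tori, exactly as in the proof of Theorem \ref{thm:clasga}. First I would recall that for each $d\in\N$ the $d$-th order Zariski closure $G[d]$ of $G$ inside $\Gm^n$ is, by the construction in \S\ref{subsec:subgroupsGa} (and Lemma A.39 in \cite{articleone}), an algebraic subgroup of $(\Gm^n)_d=\Gm^{n(d+1)}$, and that the vanishing ideal $\I(G[d])$ inside $k[\Gm^{n(d+1)}]$ is $\I(G)\cap k[\Gm^n]_d$. Since $\I(G)=\bigcup_d\bigl(\I(G)\cap k[\Gm^n]_d\bigr)$, it suffices to know that every algebraic subgroup of a split torus is cut out by multiplicative relations, i.e.\ by equations of the form $\prod_i x_i^{m_i}=1$ with $m_i\in\Z$ (the coordinates $x_i$ here being the $n(d+1)$ coordinates $\s^j(x_i)$, $0\le j\le d$); translating such a relation back to $\Gm^n$ gives precisely a multiplicative function in the sense defined before the statement.

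The key classical input is therefore: \emph{an algebraic subgroup $H$ of $\Gm^m$ is defined by the ideal generated by the characters of $\Gm^m$ that restrict to the trivial character on $H$}. I would recall the proof briefly. The character group $X(\Gm^m)\cong\Z^m$ is a free abelian group, and the coordinate ring $k[\Gm^m]$ is the group algebra $k[X(\Gm^m)]$. For any closed subgroup $H$, the restriction map $X(\Gm^m)\to X(H)$ has image a finitely generated abelian group, and the kernel $M\subset X(\Gm^m)$ is a subgroup; the quotient $\Gm^m/H$ is the diagonalizable group with character group $M$, and $H=\ker(\Gm^m\to\Gm^m/H)$ is exactly the common zero set of $\{\chi-1\mid\chi\in M\}$. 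Equivalently, $\I(H)$ is the ideal of $k[X(\Gm^m)]$ generated by $\{\chi-1\mid\chi\in M\}$, which is the standard structure theory of diagonalizable groups (see e.g.\ \cite{Waterhouse:IntrotoAffineGroupSchemes}). Each generator $\chi-1$ with $\chi=\prod x_i^{m_i}$ vanishes exactly where the monomial $\prod x_i^{m_i}$ equals $1$.

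Putting this together, for each $d$ choose a set $\Psi_d$ of monomials in the variables $\s^j(x_i)$ ($0\le j\le d$, $1\le i\le n$) whose associated characters generate $M_d:=\ker\bigl(X(\Gm^{n(d+1)})\to X(G[d])\bigr)$; viewing these monomials inside $k\{\Gm^n\}$ they are multiplicative functions on $\Gm^n$. Set $\Psi=\bigcup_d\Psi_d$. Then for any $k$-$\s$-algebra $S$ and any $g\in\Gm^n(S)$: the point $g$ lies in $G(S)$ iff $g$ satisfies all equations in $\I(G)$; since $\I(G)=\bigcup_d\I(G[d])$ and $\I(G[d])$ is generated over $k[\Gm^n]_d$ by $\{\psi-1\mid\psi\in\Psi_d\}$, this holds iff $\psi(g)=1$ for all $\psi\in\Psi$. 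The only mild subtlety, which I would make sure to address, is bookkeeping between "$g$ and its transforms $\s^j(g)$ as a point of $\Gm^{n(d+1)}(S)$" and "$g$ as a point of $\Gm^n(S)$ evaluated on a $\s$-monomial"; this is precisely the defining property $[\s]_k\wtilde G(S)=\wtilde G(S^\sharp)$ recalled in \eqref{eq:skG(S)}, so there is no real obstacle, just care. This argument works over an arbitrary $\s$-field $k$ and in any characteristic, since the structure theory of split tori does not require characteristic zero (unlike the additive case, where the remark after Theorem \ref{thm:clasga} shows Frobenius twists intervene).
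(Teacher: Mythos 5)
Your proof is correct and follows exactly the route the paper intends: the paper itself gives no argument here (it defers to Lemma A.40 of \cite{articleone}), but its proof of the additive analogue, Theorem \ref{thm:clasga}, is precisely this reduction via the $d$-th order Zariski closures $G[d]\leq (\Gm^n)_d$ followed by the classical structure theory of the ambient algebraic group. Your added care about the ideal-theoretic bookkeeping ($\I(G)=\bigcup_d\I(G[d])$, Hopf ideals of $k[X(\Gm^m)]$ being generated by $\chi-1$ for $\chi$ in a subgroup of characters) and the observation that no characteristic hypothesis is needed are both accurate.
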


In the following proposition, we show that, for certain $\s$-closed subgroup of $\Gm$, we can give
a more precise description.

\begin{prop}\label{prop:mup}
Let $k$ be a $\s$-field, $p$ a prime number,
$\mu_p \leq \Gm$ the group scheme of $p$-th roots of unity
and $G$ a proper $\s$-closed subgroup of $\mu_p$. Then, there exist
two integers $m \geq 0$ and $d \geq 1$ such that
$$
G(S) \leq \{g\in \mu_p(S)|\;\s^{m+d}(g)=\s^m(g) \},
$$
for any $k$-$\s$-algebra $S$.
\end{prop}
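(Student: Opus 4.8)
The plan is to reduce the statement to an elementary divisibility fact in the polynomial ring $\F_p[T]$, with $T$ playing the role of $\s$. I note as background that, dually, $\s$-closed subgroups of $\mu_p$ correspond to ideals of $\F_p[\s]$, and the whole argument amounts to unwinding that correspondence by hand from Theorem \ref{thm:classgm}.

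First, since $\mu_p$ is a $\s$-closed subgroup of $\Gm$, I would apply Theorem \ref{thm:classgm} to $G\leq\Gm$, getting a set $\Psi$ of multiplicative functions with $G(S)=\{g\in\Gm(S)\mid \psi(g)=1\ \text{for all}\ \psi\in\Psi\}$ for every $k$-$\s$-algebra $S$. Because $G$ is a \emph{proper} $\s$-closed subgroup of $\mu_p$, not all $\psi\in\Psi$ can be identically $1$ on $\mu_p$; evaluating such a $\psi=\prod_j\s^j(x)^{r_j}$ on the canonical point of $\mu_p$ and using that $1,t,\dots,t^{p-1}$ is a $k$-basis of $k[t]/(t^p-1)$, this means that not all exponents $r_j$ are divisible by $p$. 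I would also record two elementary facts valid for any $g\in G(S)$: one has $\s^\ell(g)^p=1$ for all $\ell\geq 0$, so the $r_j$ only matter modulo $p$; and applying $\s^i$ to $\prod_j\s^j(g)^{r_j}=1$ yields $\prod_j\s^{i+j}(g)^{r_j}=1$ for every $i\geq 0$.

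Next I would set $f=\sum_j\bar r_j T^j\in\F_p[T]$, the reduction modulo $p$ of the exponent sequence of $\psi$; by the above, $f\neq 0$. The heart of the argument is the claim that there exist $m\geq 0$ and $d\geq 1$ with $f\mid T^{m+d}-T^m$ in $\F_p[T]$. Granting this, write $T^{m+d}-T^m=h(T)f(T)$ with $h=\sum_i c_iT^i\in\F_p[T]$. For fixed $g\in G(S)$ and $q=\sum_\ell c_\ell T^\ell\in\F_p[T]$ define $q\ast g:=\prod_\ell\s^\ell(g)^{c_\ell}\in S^\times$ (well defined since $\s^\ell(g)^p=1$); this is additive in $q$ and satisfies $(Tq)\ast g=\s(q\ast g)$ and $(cq)\ast g=(q\ast g)^c$ for $c\in\F_p$. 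Since $f\ast g=\psi(g)=1$, the translate identities give $(T^if)\ast g=\s^i(f\ast g)=1$ for all $i$, hence $(hf)\ast g=1$, i.e. $(T^{m+d}-T^m)\ast g=1$, which is exactly $\s^{m+d}(g)=\s^m(g)$. This yields the desired inclusion $G(S)\subseteq\{g\in\mu_p(S)\mid\s^{m+d}(g)=\s^m(g)\}$ for all $S$.

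Finally, the divisibility claim is classical: writing $f=T^mf_1$ with $f_1(0)\neq 0$ and $m\geq 0$, the residue $\bar T$ is a unit in the finite ring $\F_p[T]/(f_1)$ (as $f_1$ is coprime to $T$), hence has finite multiplicative order $d\geq 1$, so $f_1\mid T^d-1$ and thus $f=T^mf_1\mid T^m(T^d-1)=T^{m+d}-T^m$; when $f_1$ is constant the quotient ring is zero and one takes $d=1$. The only genuinely new ingredients are the passage, using properness, to a single multiplicative relation, and the bookkeeping that this relation together with all its $\s$-iterates is equivalent to the divisibility statement in $\F_p[T]$; I expect that bookkeeping, together with keeping track that the functor-level inclusion is exactly the one asserted, to be the most delicate point, everything else being routine.
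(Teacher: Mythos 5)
Your proof is correct. It begins exactly as the paper's does: both use Theorem \ref{thm:classgm} together with the properness of $G$ in $\mu_p$ to extract a single multiplicative relation $\prod_j\s^j(g)^{r_j}=1$ whose exponents are not all divisible by $p$, and both note that since $\s^\ell(g)^p=1$ the exponents only matter modulo $p$. The two proofs part ways in how the resulting finiteness is exploited. The paper uses B\'ezout to invert the top exponent modulo $p$, rewrites $\s^l(g)$ as a product of lower transforms, iterates this substitution, and applies pigeonhole to the finitely many possible exponent tuples to find $m_1\neq m_2$ with $\s^{m_1}(g)=\s^{m_2}(g)$. You instead encode the relation as $f\ast g=1$ with $f\in\F_p[T]$ and prove the divisibility $f\mid T^{m+d}-T^m$ by factoring $f=T^mf_1$ with $f_1(0)\neq 0$ and taking $d$ to be the multiplicative order of $\bar{T}$ in the finite ring $\F_p[T]/(f_1)$. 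These are two presentations of the same finiteness phenomenon, but yours is more structural and slightly sharper: it yields explicit values of $m$ (the $T$-adic valuation of the reduced exponent polynomial) and $d$ (an order in a finite unit group), where the paper's pigeonhole is non-effective. Your supporting details --- detecting triviality on $\mu_p$ via the canonical point and the basis $1,t,\dots,t^{p-1}$ of $k[t]/(t^p-1)$, the formal properties of $\ast$, and the uniformity of $m,d$ over all $g$ and all $S$ since they depend only on $f$ --- are all sound.
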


\begin{proof}
Note that $\s^l(g)^p=1$ for any integer $l\geq 0$, $g\in G(S)\subset S^\times$ and any $k$-$\s$-algebra $S$. Since $G$ is properly contained in $\mu_p$ it follows from Theorem \ref{thm:classgm} that there exist $\alpha_0,\ldots,\alpha_l\in\{0,\ldots,p-1\}$, not all equal to zero, such that
$$g^{\a_0}\s(g)^{\a_1}\cdots\s^l(g)^{\a_l}=1 \text{ for all } g\in G(S).$$
We may assume that $a_l\neq 0$, so that there exist $a,b\in\Z$ such that $a\alpha_l+bp=1$. Then
$$\s^l(g)=\s^l(g)^{1-bp}=\s^l(g)^{a\alpha_l}=g^{-a\a_0}\s(g)^{-a\a_1}\cdots\s^{l-1}(g)^{-a\a_{l-1}}.$$
Thus there exist $\beta_0,\ldots,\beta_{l-1}\in\{0,\ldots,p-1\}$ such that
\begin{equation}\label{eq:prime}
\s^l(g)=g^{\beta_0}\s(g)^{\beta_1}\cdots\s^{l-1}(g)^{\beta_{l-1}}.
\end{equation}
Applying $\s$ to both sides of this identity and recursively substituting equation (\ref{eq:prime}), we find that for every $m\geq l$ there exist
$\beta_{0,m},\ldots,\beta_{l-1,m}\in\{0,\ldots,p-1\}$ such that
$$\s^m(g)=g^{\beta_{0,m}}\s(g)^{\beta_{1,m}}\cdots\s^{l-1}(g)^{\beta_{l-1,m}}.$$
Since there are only finitely many possibilities for the exponents $\beta_{i,m}$, this shows that there must exist distinct integers $m_1,m_2\geq l$ with $\s^{m_1}(g)=\s^{m_2}(g)$ for all $g\in G(S)$ and any $k$-$\s$-algebra $S$.
\end{proof}

\subsection{The $\s$-closed subgroups of $\Ga\rtimes\Gm$}
\label{subsubsec:classg2}

Let $k$ be a $\s$-field of characteristic zero and let $\mathbb G$ denote the algebraic subgroup of $\Gl_{2,k}$ given by
$$
\mathbb{ G} (S) = \left\{ \begin{pmatrix}\alpha & \beta \\ 0 &1 \end{pmatrix} \Big|\ \alpha \in S^\times,\ \beta \in S \right\}\leq \Gl_2(S),
$$
for any $k$-algebra $S$. Notice that $\mathbb{G}\simeq \Ga\rtimes\Gm$.
Let $\mathbb{G}_u$ denote the algebraic subgroup of $\mathbb{G}$ corresponding to $\Ga$, i.e.,
$$
\mathbb{G}_u(S)=\left\{ \begin{pmatrix}1 & \beta \\ 0 &1 \end{pmatrix} \Big|\ \beta \in S \right\}\leq\Gl_2(S),
$$
for any $k$-algebra $S$.

\begin{thm}\label{thm:classg2}
Let $G$ be a $\s$-closed subgroup of $\mathbb{G}$ such that the scheme theoretical intersection
$G_u:=G \cap \mathbb{G}_u$  is properly contained in $\mathbb{G}_u$. Then
$G$ has one of the following properties:
\begin{enumerate}
\item there exists an integer $n\geq 0$ such that $\s^n(\beta)=0$ for all $\begin{pmatrix}1 & \beta \\ 0 &1 \end{pmatrix}\in G_u(S)$ and all $k$-$\s$-algebras $S$;
\item there exist integers $n>m\geq 0$ such that $\s^n(\alpha)=\s^m(\alpha)$ for all $\begin{pmatrix}\alpha & \beta \\ 0 &1 \end{pmatrix}\in G(S)$ and all $k$-$\s$-algebras $S$.
\end{enumerate}
\end{thm}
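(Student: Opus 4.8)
The plan is to exploit that $\mathbb{G}_u\cong\Ga$ is normal in $\mathbb{G}$, so that $G_u=G\cap\mathbb{G}_u$ is a $\s$-closed subgroup of $\Ga$ which is normal in $G$ and, by hypothesis, properly contained in $\mathbb{G}_u$; the conclusion will then drop out of letting $G$ act on $\mathbb{G}_u$ by conjugation. First I would invoke Corollary~\ref{cor:ssubgroupofGa}: there is a non-zero linear homogeneous $\s$-polynomial $p=a_n\s^n(x)+\cdots+a_0x\in k\{x\}_\s$, with $a_n\neq0$ and $n$ minimal, generating $\I(G_u)\subseteq k\{x\}_\s=k\{\mathbb{G}_u\}$, so that $G_u(S)=\{\left(\begin{smallmatrix}1&\gamma\\0&1\end{smallmatrix}\right):\gamma\in S,\ p(\gamma)=0\}$ for every $k$-$\s$-algebra $S$. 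As in the proof of that corollary (the step producing the polynomial-ring presentation does not use $a_0\neq0$), $k\{G_u\}\cong k[\bar x,\s(\bar x),\ldots,\s^{n-1}(\bar x)]$, $\bar x$ the image of $x$; in particular $\bar x,\s(\bar x),\ldots,\s^{n-1}(\bar x)$ are linearly independent over $k$ --- otherwise $\I(G_u)$ would contain a non-zero linear homogeneous $\s$-polynomial of order $<n$, contradicting minimality.

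Next I would compute the conjugation action on generic points. Set $A:=k\{G\}$ with generic point $\left(\begin{smallmatrix}\bar\alpha&\bar\beta\\0&1\end{smallmatrix}\right)\in G(A)$, $R:=k\{G_u\}$ with generic point $\left(\begin{smallmatrix}1&\bar x\\0&1\end{smallmatrix}\right)\in G_u(R)$, and let $S:=A\otimes_kR$, a $k$-$\s$-algebra. The inclusions $A\hookrightarrow S$ and $R\hookrightarrow S$ give $\tau\in G(S)$ and $u\in G_u(S)$, and a short matrix computation yields $\tau u\tau^{-1}=\left(\begin{smallmatrix}1&\bar\alpha\bar x\\0&1\end{smallmatrix}\right)$, where $\bar\alpha,\bar x$ abbreviate $\bar\alpha\otimes1,\ 1\otimes\bar x$. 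Since $G_u$ is normal in $G$, this element lies in $G_u(S)$, whence $p(\bar\alpha\bar x)=0$, that is $\sum_{i=0}^n a_i\s^i(\bar\alpha)\s^i(\bar x)=0$ in $S$. Using $p(\bar x)=0$ to replace $\s^n(\bar x)$ by $-a_n^{-1}\sum_{i<n}a_i\s^i(\bar x)$, the relation rearranges to $\sum_{i=0}^{n-1}a_i\bigl(\s^i(\bar\alpha)-\s^n(\bar\alpha)\bigr)\s^i(\bar x)=0$; and since $S$ is free over $A$ on a $k$-basis of $R$ containing $\bar x,\ldots,\s^{n-1}(\bar x)$, these elements are linearly independent over $A$, so $a_i\bigl(\s^i(\bar\alpha)-\s^n(\bar\alpha)\bigr)=0$ in $A$ for every $i<n$.

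Finally I would split into cases. If $a_i=0$ for all $i<n$ then $p=a_n\s^n(x)$, so $\s^n(x)\in\I(G_u)$; specializing $\bar x$ to the $\beta$-coordinate of an arbitrary point of $G_u$ gives $\s^n(\beta)=0$, which is the first alternative of the statement for this $n$. Otherwise pick $m<n$ with $a_m\neq0$; as $a_m$ is a unit of $k$ we obtain $\s^m(\bar\alpha)=\s^n(\bar\alpha)$ in $k\{G\}$, and specializing $\bar\alpha$ to the $(1,1)$-entry $\alpha$ of an arbitrary point $\left(\begin{smallmatrix}\alpha&\beta\\0&1\end{smallmatrix}\right)$ of $G$ yields $\s^m(\alpha)=\s^n(\alpha)$, which is the second alternative with $n>m$. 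The main obstacle I anticipate is the bookkeeping of the first paragraph: one must check that the polynomial-ring presentation of $k\{G_u\}$ --- and hence the linear independence of $\bar x,\ldots,\s^{n-1}(\bar x)$ over $k$ --- holds irrespective of whether $a_0=0$, and that this independence survives the base change $-\otimes_kA$; once that is secured, everything else is a formal computation combined with the dictionary between points of a group $\s$-scheme and $\s$-algebra homomorphisms out of its coordinate ring.
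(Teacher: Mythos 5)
Your argument is correct, and for the main case it takes a genuinely different route from the paper's. The common skeleton is the same: Corollary \ref{cor:ssubgroupofGa} gives a generator $p=a_n\s^n(x)+\cdots+a_0x$ of $\I(G_u)$ with $n$ minimal, the monomial case is alternative (i), and the engine is normality of $G_u$ in $G$ plus the fact that conjugating a unipotent element with parameter $\beta$ by an element with diagonal entry $\alpha$ yields the parameter $\alpha\beta$, hence $p(\alpha\beta)=0$ whenever $p(\beta)=0$. From there the paper fixes an arbitrary point $g\in G(S)$, adjoins to $S$ a full fundamental system of solutions of the linear difference equation $p=0$ (a localized polynomial ring on which $\s$ acts via the companion matrix $A$), and extracts the conclusion from a Casoratian-type determinant computation: the product $\s^m(\alpha)\cdots\s^{n-1}(\alpha)$ is a $\s$-constant of the auxiliary ring, and cancellation gives $\s^n(\alpha)=\s^m(\alpha)$ for the least $m$ with $a_m\neq 0$. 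You instead evaluate the same relation at the generic points, working in $k\{G\}\otimes_k k\{G_u\}$, and use the presentation $k\{G_u\}\cong k[\bar x,\s(\bar x),\ldots,\s^{n-1}(\bar x)]$ — which, as you note, is established in the proof of Corollary \ref{cor:ssubgroupofGa} without any use of $a_0\neq 0$ — so that $\bar x,\ldots,\s^{n-1}(\bar x)$ remain linearly independent over $k\{G\}$ after the (free) base change. Reading off coefficients then gives $a_i\bigl(\s^i(\bar\alpha)-\s^n(\bar\alpha)\bigr)=0$ for every $i<n$, and specialization finishes the proof. Your route avoids the auxiliary solution ring, the companion matrix and the determinant trick altogether, and it delivers slightly more than the paper does, namely $\s^i(\alpha)=\s^n(\alpha)$ for \emph{every} index $i$ with $a_i\neq 0$; what it costs is the (routine but necessary) verification of freeness of the tensor product and of the polynomial-ring presentation, both of which you correctly identify and settle.
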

% We propose here too proofs of the Theorem. One uses the vanishing ideals and the other is more or less a "points "version.
% \begin{proof}
% First of all let us remark that $\mathbb{G}_u$ is a normal subgroup $k$-$\s$-scheme of $\mathbb{G}$ which is isomorphic to the additive group. Moreover,
% the quotient $\mathbb{G}/\mathbb{G}_u$ is isomorphic to the multiplicative group.
% \begin{enumerate}
% \item The group $H_u$ is a proper $\s$-closed subgroup of  the additive group and thus by Theorem \ref{thm:clasga}, its vanishing ideal is  of the form described.{subsec:zarclos}
% \item Now, let $L(y) \in k\{y\}_\s$ be a non-zeron linear homogeneous $\s$-polynomial  such that $\I(H_u)=[L(y)]$.  We may  choose $\{L(y)\}$
% such that the order $k$ of $L$ w.r.t. $\s$ is minimal. Since $H_u$ is a normal sub-group
% of $H$, we have a  morphism of group $k$-$\s$-schemes $H \times H_u \rightarrow H_u, (g,h) \mapsto ghg^{-1}$. Since $H_u$ is an abelian group, this morphism
% factors through $H/H_u \times H_u \rightarrow H_u$. This means that $L(xy)  \in \I(H_u)$ modulo $\I(H/H_u)$. Now , $L(xy)-\s^k(x)L(y)$ belongs
% to $\I(H_u)$ modulo $\I(H/H_u)$. Then, either $L(y)= \s^k(y)$ or there exists $l \in \N$ such that   $\s^k(x)-\s^l(x) \in \I(H/H_u)$.
% \end{enumerate}
% \end{proof}
\begin{proof}
Since $G_u$ is properly contained in $\mathbb{G}_u$, it follows from Corollary \ref{cor:ssubgroupofGa} that there exists a non-zero, linear, homogeneous difference polynomial $p \in k\{x\}_\s$ such that
$$G_u(S)=\left\{\begin{pmatrix}1 & \beta \\ 0 &1 \end{pmatrix}\in \mathbb{G}_u(S)\Big|\ p(\beta)=0\right\}$$  for any $k$-$\s$-algebra $S$. If $p$ is a monomial we are in case (i). Otherwise, we may assume that $p$ is of the form
$$p(x) =\s^n(x) +a_{n-1}\s^{n-1}(x)+\cdots+a_m\s^m(x)$$
where $n>m\geq 0,\ a_i \in k$ for $i=m,\hdots,n-1$ and $a_m \neq 0$.
Let $S$ be a $k$-$\s$-algebra and $g =\begin{pmatrix}\alpha & \beta \\ 0 &1 \end{pmatrix} \in G(S)$.
We have to show that $\s^n(\alpha)=\s^m(\alpha)$. Note that for $h=\begin{pmatrix}1 & \beta_1 \\ 0 &1 \end{pmatrix} \in G_u(S)$, we have $ghg^{-1} =\begin{pmatrix}1& \alpha \beta_1 \\ 0 &1 \end{pmatrix} \in G_u(S)$.
Therefore $p(\alpha\beta_1)=0$ for all $\beta_1\in S$ with $p(\beta_1)=0$.
Let $$A =\begin{pmatrix} 0&1&0 &\cdots  &0 \\
                    			0& 0&1& \cdots & 0 \\
			                \vdots & \vdots & \vdots & \ddots & \vdots\\
			                0 & 0 & \cdots& \cdots & 1& \\
			                -a_m&-a_{m+1}& \cdots& \cdots & -a_{n-1}  \end{pmatrix}\in\Gl_{n-m}(k).$$
Let $y_1,\hdots,y_{n-m},\s(y_1),\hdots,\s(y_{n-m}),\hdots,\s^{n-1}(y_1),\hdots,\s^{n-1}(y_{n-m})$ be indeterminates over $S$ and abbreviate
$y = (y_1,\hdots,y_{n-m})$. We extend the action of $\s$ on $S$ to an action on $S[y,\ldots,\s^{n-1}(y)]$ as suggested by the names of the variables and by setting $\s(\s^{n-1}(y_i))=-a_{n-1}\s^{n-1}(y_i)-\cdots-a_m\s^m(y_i)$ for $i=1,\ldots,n-m$.
%   $S$-algebra $S[\overline{Y},\s(\overline{Y}),\hdots,\s^{n-1}(\overline{Y}) ]$ extend the action of $\s$ over $S$ by setting
%   $$\begin{array}{cccc}
%    \s : & \overline{Y} &\mapsto & \s(\overline{Y}) \\
%      & \s(\overline{Y}) &\mapsto& \s^2(\overline{Y}) \\
%      & &\hdots & \\
%     & \s^{n-1}(\overline{Y}) &\mapsto & -a_m\s^m(\overline{Y})-\hdots-a_{n-1}\s^{n-1}(\overline{Y}) \end{array}.$$
Then the matrix $$Z=\begin{pmatrix}
 \s^m(y_1) & \cdots & \s^m(y_{n-m}) \\
\vdots & & \vdots \\
\s^{n-1}(y_1) & \cdots & \s^{n-1}(y_{n-m})
\end{pmatrix}$$
satisfies $\s(Z)=AZ$, which implies $\s(\det(Z))=\det(A)\det(Z)$. So we can extend the action of $\s$
   to $S^\p:=S[y,\ldots,\s^{n-1}(y),\frac{1}{\det(Z)}]$.
Since $p(y_i)=0$ we have $p(\alpha y_i)=0$ for $i=1,\ldots,n-m$.

This implies
   $$ \s\left( \begin{pmatrix}
   \s^m(\alpha) & 0 & \hdots & 0\\
   0 & \ddots & \ddots & \vdots \\
   \vdots & \ddots & \s^{n-2}(\alpha)& 0 \\
   0 & \hdots  & 0 & \s^{n-1}(\alpha) \end{pmatrix} Z \right)= A \begin{pmatrix}
   \s^m(\alpha) & 0 & \hdots & 0\\
   0 & \ddots & \ddots & \vdots \\
   \vdots & \ddots & \s^{n-2}(\alpha)& 0 \\
   0 & \hdots  & 0 & \s^{n-1}(\alpha) \end{pmatrix} Z .$$
Since two fundamental solution matrices for $\s(Y)=AY$ only differ by a constant matrix, there exists $C \in\Gl_{n-m}((S^\p)^\s)$ such that $\begin{pmatrix}
   \s^m(\alpha) & 0 & \hdots & 0\\
   0 & \ddots & \ddots & \vdots \\
   \vdots & \ddots & \s^{n-2}(\alpha)& 0 \\
   0 & \hdots  & 0 & \s^{n-1}(\alpha) \end{pmatrix} Z  =ZC$.
By applying the determinant to the previous equation, we find $\s^m(\alpha)\cdots\s^{n-1}(\alpha)\in  (S^\p)^\s$ and thus
$$\s^{m+1}(\alpha)\cdots\s^n(\alpha)=\s^{m}(\alpha)\cdots\s^{n-1}(\alpha).$$ Since $\alpha \in S^\times$ we obtain $\s^m(\alpha)=\s^n(\alpha)$ as desired.
\end{proof}

\subsection{The Zariski dense $\s$-closed subgroups of almost simple algebraic groups}

This section is devoted to the classification of the Zariski dense $\s$-closed subgroups of almost simple
algebraic groups. In the framework of ``difference varieties'', this classification
has been achieved  in Proposition 7.10, p. 309, in \cite{ChatHrusPet}. Since we have to deal
with not necessarily perfectly $\s$-reduced difference group schemes, we need to
obtain a schematic version of the result of Chatzidakis, Hrushovski and Peterzil.
In full generality, this schematic version fails to be true  (see Remark \ref{rem:theoremfailsforalmostsimple}). But we succeed to obtain a satisfactory statement for Zariski dense
$\s$-reduced subgroups of simple algebraic groups (see Theorem \ref{thm:classsimple}) and
Zariski dense $\s$-integral subgroups of almost simple algebraic groups (see Theorem \ref{thm:classalmostsimple}).

We start this section with a collection of technical lemmas on compatibility of quotients, Zariski closure
and base extension in the category of group $k$-$\s$-schemes. This lemmas  are only used in the almost simple case. Then, we study the case of simple groups and conclude with
the almost simple case.

\subsubsection{Preliminary lemmas on  quotients of group $k$-$\s$-schemes}

\par Let  $G$ be a group $k$-$\s$-scheme, $N \trianglelefteq G$ be a normal $\s$-closed subgroup. A quotient of $G$ by $N$ is a morphism $\pi : G \rightarrow G/N$ of group $k$-$\s$-scheme which satisfies the universal property of quotients in the category of group $k$-$\s$-schemes. As shown in \S A.9 of  \cite{articleone}, quotients exist and are unique up to unique isomorphism. Moreover,  we have $\ker(\pi)=N$ and $\pi^*:k\{G/N\} \rightarrow k\{G\}$ is injective.

\begin{lemma}\label{lemma:kernelprop}
Let $k$ be a $\s$-field. Let $\phi: G \rightarrow H$ be a morphism of group $k$-$\s$-schemes.
Then, the induced morphism $G/\ker(\phi) \rightarrow H$ is a $\s$-closed embedding.
\end{lemma}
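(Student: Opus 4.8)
The statement to prove is Lemma \ref{lemma:kernelprop}: given a morphism $\phi\colon G\to H$ of group $k$-$\s$-schemes, the induced morphism $\bar\phi\colon G/\ker(\phi)\to H$ is a $\s$-closed embedding. The plan is to reduce this to a purely Hopf-algebraic statement about the corresponding $k$-$\s$-Hopf algebras, using the anti-equivalence between group $k$-$\s$-schemes and $k$-$\s$-Hopf algebras recalled in Section \ref{sec:recallarticleone}. Recall that a $\s$-closed embedding corresponds to a \emph{surjective} morphism of $k$-$\s$-Hopf algebras, so what must be shown is that $\bar\phi^*\colon k\{H\}\to k\{G/\ker(\phi)\}$ is surjective.

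First I would factor $\phi$ as $G\xrightarrow{\pi} G/\ker(\phi)\xrightarrow{\bar\phi} H$, where $\pi$ is the canonical quotient morphism; this factorization exists by the universal property of quotients (as recalled just before the lemma, together with \S A.9 of \cite{articleone}), since $\ker(\phi)$ is contained in $\ker(\phi)$, i.e., $\phi$ kills $\ker(\phi)$. Dualizing, we get $\phi^*=\pi^*\circ\bar\phi^*\colon k\{H\}\to k\{G\}$, and from the properties of quotients recalled in the excerpt we know that $\pi^*\colon k\{G/\ker(\phi)\}\to k\{G\}$ is \emph{injective}. Therefore the image of $\bar\phi^*$ inside $k\{G/\ker(\phi)\}$ maps isomorphically, via $\pi^*$, onto the image of $\phi^*$ inside $k\{G\}$. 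So it suffices to show that the $k$-$\s$-subalgebra (in fact sub-Hopf-algebra) $B:=\pi^*(\operatorname{im}\bar\phi^*)=\operatorname{im}\phi^*$ of $k\{G\}$ equals $\pi^*(k\{G/\ker(\phi)\})$.

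The heart of the argument is thus: the image $B$ of $\phi^*$ is a $k$-$\s$-Hopf subalgebra of $k\{G\}$, hence corresponds to a quotient group $k$-$\s$-scheme $G\to G'$ with $k\{G'\}\cong B$; I claim $G'\cong G/\ker(\phi)$, which forces $B=\pi^*(k\{G/\ker\phi\})$ and finishes the proof. To identify $G'$, one computes its kernel: $\ker(G\to G')$ consists (functorially, on points in any $k$-$\s$-algebra $S$) of those $g\in G(S)$ acting trivially on $B=\operatorname{im}\phi^*$, which is exactly the condition $\phi_S(g)=e$, i.e., $g\in\ker(\phi)(S)$. So $\ker(G\to G')=\ker(\phi)$, and since $G\to G'$ is a quotient map (being dual to the inclusion of a sub-Hopf-algebra, so faithfully flat / surjective in the appropriate sense), the uniqueness of quotients by a given normal $\s$-closed subgroup gives $G'\cong G/\ker(\phi)$ compatibly with $\pi$. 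Then $\pi^*(k\{G/\ker\phi\})=\pi^*(k\{G'\})=B=\operatorname{im}\phi^*=\pi^*(\operatorname{im}\bar\phi^*)$, and injectivity of $\pi^*$ yields $k\{G/\ker\phi\}=\operatorname{im}\bar\phi^*$, i.e., $\bar\phi^*$ is surjective, as desired.

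The main obstacle I anticipate is justifying cleanly that the image $B=\operatorname{im}\phi^*$ is a $k$-$\s$-sub-Hopf-algebra (which is routine: images of Hopf algebra morphisms are sub-Hopf-algebras, and $\phi^*$ commutes with $\s$ so the image is $\s$-stable) and, more delicately, that it corresponds to a genuine quotient $k$-$\s$-scheme and that the computation of its kernel is valid at the scheme-theoretic (not merely $k$-points) level. This last point is where one must invoke the precise construction and universal property of quotients from \S A.9 of \cite{articleone}, rather than arguing with field-valued points; in the difference-scheme setting one cannot test equality of subgroups on points in $\s$-fields alone, so the functorial identity $\ker(G\to G')(S)=\ker(\phi)(S)$ for \emph{all} $k$-$\s$-algebras $S$ must be checked, which follows from the fact that $g\in G(S)$ fixes every element of $\operatorname{im}\phi^*$ if and only if the image of $\phi^*$ lands in the equalizer corresponding to $g$, equivalently $\phi_S(g)$ acts trivially on $k\{H\}$, equivalently $\phi_S(g)=e$. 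Once this is in place, everything else is formal manipulation with the anti-equivalence.
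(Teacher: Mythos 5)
Your proof is correct, and at the top level it follows the same strategy as the paper's: dualize to $k$-$\s$-Hopf algebras and identify the image of $\phi^*$ with $k\{G/\ker(\phi)\}$ inside $k\{G\}$, from which surjectivity of $\bar\phi^*\colon k\{H\}\to k\{G/\ker(\phi)\}$ — i.e., the $\s$-closed embedding property — drops out. The mechanism for making that identification is genuinely different, however. The paper observes that $\I(\ker(\phi))=k\{G\}\,\phi^*(k\{H\}^+)$ and invokes the characterization (Proposition A.42 and Theorem A.43 of \cite{articleone}) of $k\{G/N\}$ as the \emph{unique} sub-Hopf algebra $B\subset k\{G\}$ with $k\{G\}B^+=\I(N)$; one line of ideal arithmetic then yields $k\{G/N\}=\phi^*(k\{H\})$. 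You instead let the sub-Hopf algebra $\operatorname{im}\phi^*$ define its own quotient $G\to G'$, compute functorially that $\ker(G\to G')=\ker(\phi)$ — the computation $g|_{\operatorname{im}\phi^*}=\varepsilon$ iff $g\circ\phi^*=\varepsilon\circ\phi^*$ iff $\phi_S(g)=e$ is valid on points in every $k$-$\s$-algebra $S$, and your insistence that this be checked beyond $\s$-field points is well placed — and then appeal to the recognition-of-quotients criterion (injective dual plus prescribed kernel, i.e., Corollary A.44 of \cite{articleone}, the same result the paper uses later in Lemma \ref{lemma:compquotientandsigma}) together with uniqueness of quotients to conclude $G'\cong G/\ker(\phi)$. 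Both routes rest on the same cluster of appendix results from \cite{articleone}; yours costs an extra functorial kernel computation but avoids any manipulation of augmentation ideals, while the paper's version is shorter because the identity $k\{G\}\phi^*(k\{H\})^+=\I(\ker(\phi))$ is immediate from the definition of the kernel.
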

\begin{proof}
We will use the language of $k$-$\s$-Hopf algebras (Definition A.35 in \cite{articleone}). If $R$ is a Hopf-algebra over $k$, we denote by $R^+$ the kernel of the counit $R\to k$.

Let $N=\ker(\phi)\unlhd G$ and $\phi^*\colon k\{H\}\to k\{G\}$ the dual map. Note that $\I(N)=k\{G\}\phi^*(k\{H\}^+)\subset k\{G\}$.
From Proposition A.42 and Theorem A.43 in \cite{articleone} we know that $k\{G/N\}$ is the unique sub-Hopf algebra of $k\{G\}$ with the property that $k\{G\}k\{G/N\}^+=\I(N)$. But $\phi^*(k\{H\})$ is also a sub-Hopf algebra of $k\{G\}$ and $k\{G\}\phi^*(k\{H\})^+=k\{G\}\phi^*(k\{H\}^+)=\I(N)$.
Therefore $k\{G/N\}=\phi^*(k\{H\})$. Thus $k\{H\}\to k\{G/N\}$ is surjective and $G/N\to H$ is a $\s$-closed embedding.
\end{proof}

% \begin{lemma}\label{lemma:trivkerclosedemb}
% Let $k$ be a $\s$-field. Let $\phi : G \rightarrow H$ be a morphism of group $k$-$\s$-scheme. If the kernel of $\phi$ is trivial (see Remark A.38 in \cite{articleone}), then $\phi$ is a $\s$-closed embedding.
% \end{lemma}
%
% \begin{proof}
%  Let $\phi^* : k\{H\}
% \rightarrow k\{G\}$ be the dual morphism. Since the kernel of $\phi^*$ is a $\s$-Hopf ideal of $k\{H\}$,
% we can replace $H$ by a $\s$-closed subgroup and assume that $\phi^*$ is injective.
% Hence we have to prove that $\phi$ is an isomorphism.
% Then, Corollary A.44 in \cite{articleone} shows that $\phi$ is the quotient of $G$ by $\ker(\phi)=1_G$, i.e., by the trivial subgroup of $G$.
% Since $G/1_G=G$, the morphism $\phi$ is an isomorphism the statement is a $\s$-closed embedding.
% \end{proof}
%
% \begin{lemma}\label{lemma:kernelprop}
% Let $k$ be a $\s$-field. Let $\phi: H \rightarrow G$ be a morphism of group $k$-$\s$-schemes.
% Then, the induced morphism $\overline{\phi} : H/\ker(\phi) \rightarrow G$ is a $\s$-closed embedding.
% \end{lemma}
%
% \begin{proof}
% By Lemma \ref{lemma:trivkerclosedemb}, it is sufficient to remark  that $\overline{\phi} : H/\ker(\phi) \rightarrow G$ has trivial kernel.\end{proof}

Next we want to show that
the functorial construction $[\s]_k$ (see \S \ref{sec:recallarticleone} and Section A.4 in the appendix in \cite{articleone}, for more details)
is compatible with quotients.
First, we prove the following lemma:

\begin{lemma}\label{lemma:injective}
Let $k$ be a $\s$-field and let $\phi : R \rightarrow S$ be a morphism of $k$-algebras.
Let $[\s]_k \phi : [\s]_k R \rightarrow [\s]_k S$ be the morphism of
$k$-$\s$-algebras deduced from $\phi$ by functoriality of
$[\s]_k$. If $\phi$ is injective then $[\s]_k \phi$ is also injective.
\end{lemma}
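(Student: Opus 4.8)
The plan is to reduce the statement to the finite levels of the construction and then to exactness of tensoring over a field. Recall that $[\s]_kR$ is the directed union $\bigcup_{d\geq 0}R_d$ with $R_d={}^{\s^0}R\otimes_k{}^{\s^1}R\otimes_k\cdots\otimes_k{}^{\s^d}R$, the transition map $R_d\hookrightarrow R_{d+1}=R_d\otimes_k{}^{\s^{d+1}}R$ being $x\mapsto x\otimes 1$; this map is injective because (assuming $R\neq 0$, the case $R=0$ being trivial) the unit $k\to{}^{\s^{d+1}}R$ is an injection of $k$-modules and $R_d$ is flat, indeed free, over the field $k$. By functoriality of $[\s]_k$, the map $[\s]_k\phi$ restricts on each $R_d$ to $\phi_d={}^{\s^0}\phi\otimes_k\cdots\otimes_k{}^{\s^d}\phi$ followed by the inclusion $S_d\hookrightarrow[\s]_kS$. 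Consequently, if $x\in[\s]_kR$ lies in the kernel of $[\s]_k\phi$, then choosing $d$ with $x\in R_d$ one gets $\phi_d(x)=0$ in $S_d$, so it suffices to prove that each $\phi_d$ is injective.

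To prove injectivity of $\phi_d$ I would argue in two steps. First, ${}^{\s^i}\phi=\phi\otimes_k\id_k$, where the second tensor factor is the $k$-module $k$ with module structure twisted by $\s^i$; since $k$ is a field, every $k$-module is flat, so $\phi\otimes_k\id$ is injective. Second, a tensor product over $k$ of finitely many injective $k$-linear maps is again injective: one sees this by factoring $\phi_0\otimes_k\cdots\otimes_k\phi_d$ as the composite of the maps $\id\otimes_k\cdots\otimes_k\phi_i\otimes_k\cdots\otimes_k\id$ for $i=0,\dots,d$, each of which is injective because tensoring over a field with a fixed module is exact, and a composite of injections is injective. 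Applying this with $\phi_i={}^{\s^i}\phi$ yields that $\phi_d$ is injective, which finishes the argument.

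There is no genuine obstacle here; the proof is entirely formal. The only point worth recording is that it uses in an essential way that $k$ is a field — flatness of \emph{all} $k$-modules is exactly what makes both steps (the twist by $\s^i$ and the tensor product of injections) go through — so the lemma would not hold over an arbitrary difference ring $k$.
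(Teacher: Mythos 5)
Your proof is correct and follows essentially the same route as the paper: reduce to the finite-level maps $\phi_d$ on the $R_d$'s and then deduce injectivity of a tensor product of injective $k$-linear maps by factoring it into single-factor maps, each injective because tensoring over a field is exact. The paper phrases this as an induction on $d$ (composing $\phi_d\otimes\mathrm{id}$ with $\mathrm{id}\otimes{}^{\s^{d+1}}\phi$), which is just the two-step instance of your factorization, so there is no substantive difference.
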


\begin{proof}
By construction, $[\s]_k \phi$ is the limit of $\phi_d : R_d \rightarrow S_d$ for $d \geq 0$. It is thus sufficient to prove that the $\phi_d$'s are injective for all $d \geq 0$. We prove this by induction on $d$.
For $d =0$, it is true by assumption on  $\phi$. Assume that  $\phi_d : R_d \rightarrow S_d$ is injective.
We prove that $\phi_{d+1}  : R_{d+1} \rightarrow S_{d+1}$
is injective, proving that it is the composition of the following injective morphisms:
$$
\xymatrix{
R_{d+1}=R_d \otimes_k  {}^{\s^{d+1}}R \ar[rrr]^{\phi_d \otimes \id _{{}^{\s^{d+1}}R}} &&&S_d \otimes_k {}^{\s^{d+1}}R
\ar[rrr]^{\id_{S_d} \otimes_k {}^{\s^{d+1}}\phi}&&&S _d \otimes_k{}^{\s^{d+1}}S=S_{d+1}}.
$$
Indeed the flateness of the tensor product over a field immediately implies that
$\phi_d \otimes \id _{{}^{\s^{d+1}}R}$ and $\id_{S_d} \otimes_k {}^{\s^{d+1}}\phi$ are injective. \end{proof}

\begin{lemma}\label{lemma:compquotientandsigma}
Let $k$ be a $\s$-field. Let $H$ be a group $k$-scheme and let $N \trianglelefteq H$ be a normal closed subgroup. Then,
the group $k$-$\s$-scheme $[\s]_k(H/N)$  is the quotient of $[\s]_k  H$ by $[\s]_k N$, i.e.,
$[\s]_k(H/N)$ and $[\s]_k H/ [\s]_k N$ are canonically isomorphic.
\end{lemma}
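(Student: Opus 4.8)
The statement asserts that $[\s]_k$ commutes with quotients. My strategy is to work entirely on the level of $k$-$\s$-Hopf algebras, using the dual characterization of quotients recalled just before the lemma (Proposition A.42 and Theorem A.43 in \cite{articleone}): for a normal $\s$-closed subgroup, the coordinate ring of the quotient is the unique sub-Hopf algebra $B$ of the total coordinate ring with $AB^+ = \I(N)$. So I would set $A = k[H]$, $B = k[H/N]$, so that $\I(N) = A B^+$ inside $A$, and aim to show that $[\s]_k B$ is exactly the sub-$k$-$\s$-Hopf algebra of $[\s]_k A$ cutting out $[\s]_k N$, i.e.\ that $[\s]_k B \subseteq [\s]_k A$ realizes the quotient $[\s]_k H/[\s]_k N$.

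First I would record that $[\s]_k$ is a functor from $k$-algebras to $k$-$\s$-algebras that preserves injections (this is exactly Lemma \ref{lemma:injective}) and, being built from iterated tensor products over the field $k$, is exact; in particular it preserves finite products, hence sends Hopf algebras to $\s$-Hopf algebras and sends the inclusion $B\hookrightarrow A$ to an inclusion $[\s]_k B\hookrightarrow [\s]_k A$ of $k$-$\s$-Hopf algebras. This gives the $\s$-closed subgroup $[\s]_k N$ of $[\s]_k H$ as the kernel of $[\s]_k H\to [\s]_k(H/N)$, using $[\s]_k(H/N)(S)=(H/N)(S^\sharp)$ and the fact (\ref{eq:skG(S)}) that $[\s]_k H(S)=H(S^\sharp)$ and $[\s]_k N(S)=N(S^\sharp)$ for every $k$-$\s$-algebra $S$; the morphism $[\s]_k H\to[\s]_k(H/N)$ is surjective (faithfully flat, or: its dual is the injection $[\s]_k B\hookrightarrow[\s]_k A$) with kernel $[\s]_k N$. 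By Lemma \ref{lemma:kernelprop} the induced map $[\s]_k H/[\s]_k N\to [\s]_k(H/N)$ is a $\s$-closed embedding, so it remains only to see it is surjective, equivalently that $[\s]_k B$ is the full sub-Hopf algebra of $[\s]_k A$ characterizing the quotient by $[\s]_k N$.

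By the uniqueness statement in Theorem A.43, it suffices to check the identity $[\s]_k A\cdot ([\s]_k B)^+ = \I([\s]_k N)$ of ideals in $[\s]_k A$. Here $\I([\s]_k N) = [\s]_k A\cdot (\text{image of }\I(N))$ because, again by (\ref{eq:skG(S)}), $[\s]_k N$ is cut out in $[\s]_k H$ by the equations defining $N$ in $H$ together with all their $\s$-transforms; concretely, if $\I(N) = A\,B^+$ then at the level of the finite stages $A_d = A\otimes_k{}^{\s}A\otimes_k\cdots\otimes_k{}^{\s^d}A$ one has that the ideal generated by $\I(N)$, ${}^{\s}\I(N),\ldots,{}^{\s^d}\I(N)$ equals $A_d\cdot (B_d)^+$, since $\I(N)=AB^+$ forces ${}^{\s^j}\I(N)={}^{\s^j}A\cdot({}^{\s^j}B)^+$ and tensoring these relations together over $k$ (flatness again) gives $A_d\cdot(B_d)^+$. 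Passing to the direct limit over $d$ gives $[\s]_k A\cdot([\s]_k B)^+=\I([\s]_k N)$, which is what we wanted; the canonical isomorphism is then the one induced by functoriality, and one checks it is compatible with the quotient maps, so it is unique.

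**Main obstacle.** The only real subtlety is the bookkeeping in the last step: establishing that $[\s]_k$ transforms the ideal relation $\I(N)=AB^+$ characterizing the quotient into the corresponding relation $\I([\s]_k N)=[\s]_k A\cdot([\s]_k B)^+$. This needs the twisted tensor factors ${}^{\s^j}(-)$ to be handled carefully — one must note ${}^{\s^j}(AB^+)={}^{\s^j}A\cdot({}^{\s^j}B)^+$ and that the counit of ${}^{\s^j}A$ is obtained by base change from that of $A$ — and then that tensoring a finite family of surjections/ideal-generated relations over the field $k$ behaves as expected, which is exactly where flatness of $-\otimes_k-$ is used, just as in the proof of Lemma \ref{lemma:injective}. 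Everything else is formal manipulation with the universal property of quotients and the functoriality of $[\s]_k$.
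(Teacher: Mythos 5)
Your proof is correct and follows essentially the same route as the paper's: both identify $\ker([\s]_k p)=[\s]_k N$ directly from the pointwise formula \eqref{eq:skG(S)} and use Lemma \ref{lemma:injective} to get injectivity of the dual map $[\s]_k p^*$. The only difference is that the paper then concludes at once by citing Corollary A.44 of \cite{articleone} (injective dual plus correct kernel characterizes the quotient), whereas you unpack that criterion by hand, verifying the Hopf-ideal identity $[\s]_k A\cdot([\s]_k B)^+=\I([\s]_k N)$ stage by stage on the $A_d$; that computation is valid, just a longer way of saying the same thing.
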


\begin{proof}
Since $p\colon H \rightarrow H/N$ is a quotient map, the dual morphism $p^*\colon k[H/N] \rightarrow k[H]$ is
injective and $\ker(p)=N$. Because of \eqref{eq:skG(S)}, for any $k$-$\s$-algebra $S$ we have
$$
\ker\Big([\s]_k p\colon [\s]_k H(S) \rightarrow [\s]_k (H/N)(S)\Big)=
\ker\Big(p\colon H(S^\sharp) \rightarrow (H/N)(S^\sharp)\Big)
=N(S^\sharp)=[\s]_k N(S),
$$
where $S^\sharp$ is the underlying $k$-algebra of $S$. I.e., $\ker([\s]_k p)=[\s]_k N$. Moreover,
by Lemma \ref{lemma:injective}, $[\s]_k p^*\colon  [\s]_k k[H/N] \rightarrow [\s]_k k[H]$ is injective.
By Corollary A.44 in \cite{articleone}, $[\s]_kp\colon [\s]_k H \rightarrow [\s]_k(H/N)$ is the quotient of $[\s]_kH$ modulo $[\s]_kN$.\end{proof}

\begin{rem} If there is no possible confusion, we write $H/N$ instead of $[\s]_k (H/N)$.\end{rem}

In the following lemma, we prove the compatibility  between
the $d$-th order Zariski closure and quotients.

\begin{lemma}\label{lemma:zarclosquotient}
Let $H$ be an algebraic group over $k$ and $N \trianglelefteq H$ a normal closed subgroup of $H$.
Let $i\colon G \rightarrow H$ be the inclusion of a $\s$-closed subgroup $G$ of $H$ into $H$,
$G \cap N$ the schematic intersection of $G$ with $N$ and
$p\colon H \rightarrow H/N$ (resp. $\pi: G \rightarrow G/(G\cap N)$) the quotient map. Then,
\begin{itemize}
\item the morphism $p \circ i \colon G \rightarrow H/N$ factors through $\pi$ into a $\s$-closed embedding
$\iota \colon G/(G\cap N) \rightarrow H/N$;
\item for $d \geq 0$, the $d$-th order Zariski closure $G/(G\cap N)[d]$ of $G/(G\cap N)$ inside $H/N$
is a quotient of  the $d$-th order Zariski closure $G[d]$ of $G$ by  $G[d] \cap N_d$. That is, the algebraic groups
 $G/(G\cap N)[d]$ and $G[d] /G[d] \cap N_d$ are canonically isomorphic over $k$.
\end{itemize}
\end{lemma}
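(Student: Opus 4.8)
For the first bullet point, the plan is to identify $G\cap N$ (i.e.\ the schematic intersection $G\cap[\s]_kN$) with the kernel of $\phi:=p\circ i\colon G\to [\s]_k(H/N)$ and then apply Lemma \ref{lemma:kernelprop}. By Lemma \ref{lemma:compquotientandsigma}, $[\s]_kp\colon[\s]_kH\to[\s]_k(H/N)$ is the quotient of $[\s]_kH$ by $[\s]_kN$, so $\ker([\s]_kp)=[\s]_kN$; taking the fibre product with the inclusion $i$ gives $\ker(\phi)=G\times_{[\s]_kH}[\s]_kN=G\cap N$ as a group $k$-$\s$-scheme. Then Lemma \ref{lemma:kernelprop} produces a $\s$-closed embedding $\iota\colon G/(G\cap N)\hookrightarrow[\s]_k(H/N)$, and since $\pi\colon G\to G/(G\cap N)$ is the quotient by $\ker(\phi)$, the universal property gives the factorization $\phi=\iota\circ\pi$.

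For the second bullet point I would translate everything into coordinate rings; as $H_d$, $(H/N)_d$, $G[d]$, $G/(G\cap N)[d]$ and the various quotients are all affine, no finiteness of $G$ is needed. Write $R=k[H]$, so that $k[H/N]=R^N\subseteq R$ is a sub-Hopf-algebra satisfying the fundamental relation $\I(N)=R\cdot(R^N)^+$. Tensoring over $k$, one gets for each $d\geq 0$ that $k[(H/N)_d]=(R^N)_d\subseteq R_d=k[H_d]$ and $\I(N_d)=R_d\cdot((R^N)_d)^+$. Combining the first bullet point with Lemma \ref{lemma:kernelprop} and the injectivity of $\pi^*$, the vanishing ideal of $Q:=G/(G\cap N)$ inside $[\s]_k(H/N)$ is $\I(Q)=[\s]_k(R^N)\cap\I(G)$, where $\I(G)\subseteq[\s]_kR$. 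Hence, by the description of the $d$-th order Zariski closure recalled before the statement,
$$\I(Q[d])=\I(Q)\cap(R^N)_d=\I(G)\cap(R^N)_d,$$
so $k[Q[d]]$ is precisely the image of $(R^N)_d$ in $k\{G\}=[\s]_kR/\I(G)$, realized as the closed subscheme of $(H/N)_d$ cut out by $\I(G)\cap(R^N)_d$.

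It remains to see that $G[d]/(G[d]\cap N_d)$ is the same closed subscheme. Since $\I(G[d])=\I(G)\cap R_d$, the vanishing ideal of $G[d]\cap N_d$ inside $G[d]$ is the image of $\I(N_d)=R_d\cdot((R^N)_d)^+$ in $k[G[d]]=R_d/(\I(G)\cap R_d)$. The image $T$ of $(R^N)_d$ in $k[G[d]]$ is a sub-Hopf-algebra with $k[G[d]]\cdot T^+$ equal to the image of $R_d((R^N)_d)^+$, that is, to the image of $\I(N_d)$; by the uniqueness characterization of quotient Hopf algebras (Proposition A.42 and Theorem A.43 in \cite{articleone}, used exactly as in the proof of Lemma \ref{lemma:kernelprop}) this forces $T=k[G[d]/(G[d]\cap N_d)]$. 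Thus $k[G[d]/(G[d]\cap N_d)]$, like $k[Q[d]]$, is the image of $(R^N)_d$ in $k\{G\}$, and tracking the embeddings into $(H/N)_d$ shows that both are the closed subgroup of $(H/N)_d$ with vanishing ideal $\I(G)\cap(R^N)_d$; this yields the canonical isomorphism $G/(G\cap N)[d]\cong G[d]/(G[d]\cap N_d)$ over $k$. The one delicate point is precisely this last matching: one must check that ``cutting $G[d]$ down to its image in $(H/N)_d$'' and ``forming the Zariski closure of $G/(G\cap N)$'' are governed by the same ideal, which in the end comes down to the identity $\I(N)=R(R^N)^+$ being compatible with the formation of the $R_d$'s and of the functor $[\s]_k$.
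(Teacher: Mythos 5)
Your proposal is correct, and the skeleton is the one the paper uses: both proofs handle the first bullet by identifying $\ker(p\circ i)$ with $G\cap N$ and invoking Lemma \ref{lemma:kernelprop}, and both ultimately identify $k\left[G/(G\cap N)[d]\right]$ with the image of $k[H/N]_d=(R^N)_d$ in $k\{G\}$. Where you genuinely diverge is in the last recognition step for the second bullet. The paper first proves $(H/N)_d=H_d/N_d$ explicitly (injectivity of $p_d^*$ as in Lemma \ref{lemma:injective}, plus $\ker(p_d)=N_d$ via the product decomposition $H_d=H\times{}^{\s}H\times\cdots\times{}^{\s^d}H$), then checks that $G[d]\to G/(G\cap N)[d]$ has kernel $G[d]\cap N_d$ and injective dual, and concludes by the criterion of Corollary A.44 in \cite{articleone}. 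You instead compute vanishing ideals throughout and recognize the quotient via the Takeuchi-type uniqueness of the sub-Hopf algebra $T$ with $k[G[d]]\cdot T^+$ equal to the ideal of $G[d]\cap N_d$, which requires you to verify $\I(N_d)=R_d\cdot((R^N)_d)^+$. Both routes work; the paper's is a bit lighter on ideal bookkeeping, yours makes the identification of coordinate rings completely explicit. Two small points of care: the uniqueness statement you cite (Proposition A.42 and Theorem A.43 in \cite{articleone}) is formulated for $k$-$\s$-Hopf algebras, whereas $G[d]$, $N_d$ and $G[d]\cap N_d$ are ordinary algebraic groups, so you should invoke the classical version of that uniqueness (e.g.\ \cite[Theorem 7.8]{Milne:BasicTheoryOfAffineGroupSchemes}, which is what the paper uses at exactly this spot); and your one-line "tensoring over $k$" of the relation $\I(N)=R\,(R^N)^+$ is precisely where the paper expends its effort in showing $(H/N)_d=H_d/N_d$ — the claim is true (flatness of tensor products over a field, and the fact that the augmentation ideal of a tensor product is the sum of the augmentation ideals in each factor), but it deserves to be spelled out rather than asserted.
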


\begin{proof}
Since $\ker(p \circ i)= G \cap N$, Lemma \ref{lemma:kernelprop} implies that $p \circ i$ factors through $\pi$ into a $\s$-closed embedding $\iota \colon  G/(G\cap N) \rightarrow H/N$.

Let $d \geq 0$. First of all, we prove that $(H/N)_d$ is a quotient of $H_d$ by $N_d$. Since $p\colon H \rightarrow H/N$
is a quotient map, its dual $p^*\colon  k[H/N] \rightarrow k[H]$  is injective. Following the lines
of the proof of   Lemma \ref{lemma:injective}, we see that the dual map $p_d^*$ of $p_d \colon  H_d \rightarrow (H/N)_d$
is also injective. Moreover, $\ker(p_d)$ is exactly $N_d$. To see this, we remark that
$H_d =H\times {}^\s H \times \dots \times {}^{\s^d} H$,
where ${}^{\s^ i} H = H \times_{\spec(k)} \spec(k)$ and the fiber
product is taken with respect to the morphism
induced from $\s^i \colon k \rightarrow k.$
 Moreover
$p_d$ acts on the $i$-th
component via the morphism  ${}^{\s ^i} p$, obtained from $p$ by extension of scalars  $\s^i \colon k \rightarrow k$. This shows that $\ker (p_d) = N\times {}^\s N \times \dots \times {}^{\s^d} N$.
It follows that $p_d \colon   H_d \rightarrow (H/N)_d$ is the quotient of $H_d$ by $N_d$. (Cf. \cite[Theorem 7.8]{Milne:BasicTheoryOfAffineGroupSchemes}.)
By construction of the $d$-th order Zariski closure, we have closed embeddings $G[d] \hookrightarrow H_d$ and $ G/(G \cap N)[d] \hookrightarrow (H/N)_d$. Since $\ker(p_d)=N_d$, we find that the kernel of $G[d] \rightarrow G/(G\cap N)[d]$ is $G[d]\cap N_d$.

By construction of the $d$-th order Zariski closure, we have
$k\left[G/(G \cap N)[d] \right]=\iota^*(k[H/N]_d)$, $k\left[G[d]\right] =i^*(k[H]_d)$ and
$k\left[G/(G \cap N)[d]\right] \rightarrow k\left[G[d]\right]$ is the restriction of $\pi^*$ to
 $k\left[G/(G \cap N)[d]\right]$.
Since $\pi^*$ is injective, the same holds for $k\left[G/(G \cap N)[d]\right] \rightarrow k\left[G[d]\right]$. We can then conclude that
$G[d] \rightarrow G/(G\cap N)[d]$ is a quotient of $G[d]$ by $G[d]\cap N_d$.\end{proof}

%\textcolor{red}{some remark}{I don't know in fact if $G\cap N[d] $ is equal to $G[d] \cap N_d$. It seems
%to me that in general intersection and Zariski closure don't commute}

\begin{lemma}\label{lemma:quotientbaseextension}
Let  $G$ be a group $k$-$\s$-scheme, $N \trianglelefteq G$ be a normal $\s$-closed subgroup and $k'|k$ be a $\s$-field extension. Let $\pi\colon  G \rightarrow G/N$ be a quotient of $G$ by $N$. Denote by
$G_{k'},N_{k'}$ and $(G/N)_{k'}$ the base  extension to $k'$ of $G,N$ and $G/N$. Then, the group $k'$-$\s$-schemes $(G/N)_{k'}$ and $G_{k'}/N_{k'}$ are isomorphic.

\end{lemma}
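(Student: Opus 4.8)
The plan is to reduce the statement to the characterization of quotients of group $k$-$\s$-schemes recalled in \S A.9 of \cite{articleone}, together with the exactness of the base extension functor $-\otimes_k k'$ over the field $k$. Write $\pi\colon G\to G/N$ for the given quotient map and let $\pi_{k'}\colon G_{k'}\to (G/N)_{k'}$ be its base extension along $k\to k'$; recall that $G_{k'}$, $N_{k'}$ and $(G/N)_{k'}$ are the group $k'$-$\s$-schemes represented by $k\{G\}\otimes_k k'$, $k\{N\}\otimes_k k'$ and $k\{G/N\}\otimes_k k'$ respectively. It suffices to prove that $\pi_{k'}$ is a quotient of $G_{k'}$ by $N_{k'}$ in the category of group $k'$-$\s$-schemes, for then the uniqueness of quotients (\S A.9 in \cite{articleone}) furnishes the canonical isomorphism $G_{k'}/N_{k'}\cong (G/N)_{k'}$.

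First I would identify $\ker(\pi_{k'})$ with $N_{k'}$. This is purely functorial: for a $k'$-$\s$-algebra $S$, viewed as a $k$-$\s$-algebra via $k\to k'$, one has $G_{k'}(S)=G(S)$, $(G/N)_{k'}(S)=(G/N)(S)$ and the map $\pi_{k'}(S)$ is exactly $\pi(S)$, whence $\ker(\pi_{k'})(S)=\ker(\pi(S))=N(S)=N_{k'}(S)$. Since $\ker(\pi_{k'})$ and $N_{k'}$ are both $\s$-closed subgroups of $G_{k'}$, this equality of subfunctors yields $\ker(\pi_{k'})=N_{k'}$.

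Next I would record that the comorphism $\pi_{k'}^*\colon k\{G/N\}\otimes_k k'\to k\{G\}\otimes_k k'$ is injective. Indeed $\pi_{k'}^*=\pi^*\otimes_k\id_{k'}$, and $\pi^*\colon k\{G/N\}\to k\{G\}$ is injective because a quotient map has injective comorphism (\S A.9 in \cite{articleone}); as $k'$ is flat over $k$, the map $\pi^*\otimes_k\id_{k'}$ remains injective. With $\ker(\pi_{k'})=N_{k'}$ and $\pi_{k'}^*$ injective, Corollary A.44 in \cite{articleone} applies verbatim and shows that $\pi_{k'}$ is a quotient of $G_{k'}$ by $N_{k'}$, which finishes the argument. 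There is no real obstacle here: the only substantive input is the exactness of $-\otimes_k k'$, used both to compute the kernel and to preserve injectivity of the comorphism, the rest being bookkeeping. Should one wish to avoid Corollary A.44, an equally routine alternative is to check the universal property of the quotient for $\pi_{k'}$ directly on $k'$-$\s$-Hopf algebras, using that $k\{G/N\}\otimes_k k'$ is the unique sub-Hopf algebra $B$ of $k\{G\}\otimes_k k'$ with $(k\{G\}\otimes_k k')\cdot B^+=\I(N_{k'})=\I(N)\otimes_k k'$, together with the fact that augmentation ideals and the ideals $\I(\cdot)$ commute with the flat base change; but the route through Corollary A.44 is shorter.
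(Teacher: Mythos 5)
Your argument is correct and follows the same route as the paper's own proof: compute $\ker(\pi_{k'})=N_{k'}$ functorially by viewing $k'$-$\s$-algebras as $k$-$\s$-algebras, use flatness of $k'$ over $k$ to keep $\pi_{k'}^*$ injective, and conclude via Corollary A.44 of \cite{articleone}. Nothing to add.
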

\begin{proof}
Since $\ker(\pi)=N$, for every $k'$-$\s$-algebra $S$, viewed as a $k$-$\s$-algebra $S$, the sequence
$N(S) \rightarrow G(S) \rightarrow (G/N)(S)$ of abstract groups is exact. It follows that $\ker(\pi_{k'}) =N_{k'}$ where $\pi_{k'}\colon  G_{k'} \rightarrow (G/N)_{k'}$ is the base  extension of $\pi$. Moreover since $k'|k$ is a faithfully flat extension and $\pi^*\colon k\{G/N\} \rightarrow k\{G\}$ is injective, we get that $\pi_{k'}^*\colon k'\{G/N\} \rightarrow k'\{G\}$ is injective. By Corollary A.44 in \cite{articleone}, we deduce that
$(G/N)_{k'}$ is a quotient of $G_{k'}$ by $N_{k'}$.\end{proof}

\subsubsection{The Zariski dense $\s$-closed subgroups of a simple algebraic group} \label{subsubsec:simple}

In this section, we show that every $\s$-reduced $\s$-closed subgroup of $\Gl_{n,k}$ with simple Zariski closure is conjugated to a $\s^d$-constant subgroup of $\Gl_{n,k}$ for some $d\geq 1$. This is the crucial group theoretic input for the proof of Proposition \ref{prop:simpleintegra}. The result is originally due to Z. Chatzidakis, E. Hrushovski and Y. Peterzil.
A slightly simplified version of their result reads as follows:

\begin{prop}[{Prop. 7.10, p. 309 in \cite{ChatHrusPet}}]
Let $k$ be a $\s$-closed $\s$-field of characteristic zero, $H$ an almost simple algebraic group over $k$, and let $G$ be a Zariski dense definable\footnote{In the language of difference rings.} subgroup of $H(k)$. Then, either $G=H(k)$, or, there exist an isomorphism $f\colon H\to H'$ of algebraic groups and an integer $d\geq1$ such that some subgroup of $f(G)$ of finite index is conjugate to a subgroup of $H'(k^\s)$. If $H$ is defined over the algebraic closure of $k^\s$, then we may take $H=H'$ and $f$ to be conjugation by an element of $H(k)$.
\end{prop}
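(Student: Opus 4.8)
The plan is to prove this model-theoretically, working inside a saturated model $(k,\s)$ of the theory of existentially closed difference fields of characteristic zero, so that $G$ is a definable subgroup of $H(k)$ whose Zariski closure is $H$; the dichotomy of the statement should correspond to the size of the generic type of $G$, measured by its $\mathrm{SU}$-rank. First I would record the basic alternative: because $G$ is a \emph{group} and is Zariski dense in $H$, either $G=H(k)$ (the generic type of $G$ imposing no proper difference-algebraic condition on a Zariski-generic point of $H$), or the generic type of $G$ has \emph{finite} $\mathrm{SU}$-rank. From now on assume the latter.

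The main tool is then the Zilber trichotomy for difference fields (Chatzidakis--Hrushovski): a type of finite $\mathrm{SU}$-rank is analyzable either in the fixed field $k^\s$ or in a one-based (modular) type. The step that is specific to almost simple $H$ is to exclude the modular alternative by rigidity: a one-based group of finite $\mathrm{SU}$-rank is nilpotent-by-finite, hence so is its Zariski closure; since $H$ is non-commutative and almost simple, $H$ is not nilpotent-by-finite, so $G$ cannot be both one-based and Zariski dense in $H$. Therefore the generic type of $G$, and so $G$ itself, is analyzable in the fixed field $k^\s$.

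Next I would convert this into algebraic-geometric data. Using the stable embeddedness and purity of the fixed field, the fact that a group interpretable in an algebraically closed field is an algebraic group (Weil, van~den~Dries, Hrushovski), and the fact that analyzability in $k^\s$ ``resolves'' after finitely many applications of $\s$, one should obtain an integer $d\geq1$, an algebraic group $H''$ over the algebraically closed field $k^{\s^d}$, and a definable isomorphism between a finite-index subgroup $G_0$ of $G$ and a finite-index subgroup of $H''(k^{\s^d})$. Comparing Zariski closures forces $\dim H''=\dim H$ and identifies $H''$, after base change to $k$, with a form of $H$; this produces the isomorphism $f\colon H\to H'$ of algebraic groups, and under $f$ the subgroup $G_0$ is carried, after conjugation inside $H'$, into $H'(k^{\s^d})$, which is the asserted conclusion. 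Finally, when $H$ is already defined over the algebraic closure of $k^\s$, the groups $H$ and $H'$ are inner forms of one another over that field (the relevant Galois cohomology of an almost simple group over an algebraically closed field being trivial), so $f$ may be taken to be conjugation by an element of $H(k)$.

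I expect the hard part to be twofold. The Zilber trichotomy for $\mathrm{ACFA}$ is itself a very deep theorem and has to be invoked as a black box; and within the argument proper, the delicate points are (i) the rigidity step ruling out the modular case in the presence of Zariski density, and (ii) the bookkeeping in the descent — passing from the abstract ``internal to $k^\s$'' datum to a genuine algebraic group over the fixed field, and controlling the finite-index subgroup and the conjugating element tightly enough to reach the clean statement. In the body of the paper this proposition is only used as an external input cited from \cite{ChatHrusPet}; the schematic refinements we actually need, valid for possibly non perfectly $\s$-reduced groups, are Theorems~\ref{thm:classsimple} and \ref{thm:classalmostsimple}.
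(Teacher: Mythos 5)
First, a point of orientation: the paper does \emph{not} prove this proposition. It is quoted (in simplified form) from \cite{ChatHrusPet} purely as background, and the statements the paper actually proves and uses are the schematic variants, Theorems \ref{thm:classsimple} and \ref{thm:classalmostsimple}. Those are established by a direct algebro-geometric argument with no model theory: one looks at the $d$-th order Zariski closures $G[d]\leq H_d$, observes that the kernel of the projection $G[m]\to G[m-1]$ determines a normal algebraic subgroup of the simple factor ${}^{\s^m}H$ and is therefore trivial, deduces that $G$ is the graph of an isomorphism $\varphi\colon H\to{}^{\s^m}H$, and finally uses the finiteness of $\operatorname{Out}(H)$ to replace $\varphi$ by an inner automorphism after passing to a power $\s^d$. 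Your model-theoretic plan is thus a genuinely different route, closer in spirit to the original source; but as written it has two concrete gaps, and they sit exactly at the load-bearing points.

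The first is the opening dichotomy. You claim that ``because $G$ is a group and is Zariski dense in $H$, either $G=H(k)$ or the generic type of $G$ has finite $\mathrm{SU}$-rank,'' and you reserve almost simplicity for excluding the one-based alternative later. That implication is false for a general algebraic group: in $H=\Gm^2$ the definable subgroup $\{(x,y)\mid \s(x)=x\}$ is proper, Zariski dense, and its generic type has infinite $\mathrm{SU}$-rank. The reduction to finite rank is precisely where (almost) simplicity enters, via the normal-kernel argument on successive prolongations sketched above (or an equivalent), and skipping it means the heart of the proof is missing rather than deferred. The second gap is the endgame. Passing from ``the generic of $G$ is analyzable in $k^\s$'' to the precise conclusion --- an algebraic group over the fixed field, a finite-index subgroup, a conjugating element, and an isomorphism $f$ of the stated form --- is the other half of the content, and your last step does not work as stated: $\overline{k^\s}$ is algebraically closed, so there is no Galois cohomology to invoke and no nontrivial forms to compare; the real issue is that the composite $H\to H'\to H$ may be an \emph{outer} automorphism, and turning $f$ into conjugation by an element of $H(k)$ requires enlarging $d$ and using that $\operatorname{Out}(H)$ is finite --- exactly the step carried out explicitly in the proof of Theorem \ref{thm:classsimple}. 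In short, the outline is a reasonable map of the original argument, but both places you flag as ``the hard part'' are genuinely unproved, and one of the steps you do assert is justified by a false general principle.
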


Unfortunately it seems impossible to apply the above proposition directly in the proof of Proposition \ref{prop:simpleintegra} and Theorem \ref{thm:almostintegra}.
Our versions are Theorem \ref{thm:classsimple} and \ref{thm:classalmostsimple} below. They differ in several aspects:
\begin{enumerate}
\item We prefer to work over an inversive algebraically closed difference field of characteristic zero, rather than over a $\s$-closed $\s$-field.
\item We have to avoid the isomorphism $f$ and the ``finite index''.
\item We are interested in the $\s$-closed subgroups of $H$, rather than in the definable subgroups of $H(k)$.
\item If $H$ is simple, it suffices to assume that $G$ is $\s$-reduced. If $H$ is almost simple, we have to assume that $G$ is $\s$-integral. See Remark \ref{rem:theoremfailsforalmostsimple}.
\end{enumerate}
Because of these differences, we have chosen to include a full proof of our reformulation, even though our proof follows \cite{ChatHrusPet} very closely. The divergence in the formulations is mainly due to (iii). Since ACFA does not (fully) eliminate quantifiers, the definable subgroups, say of $\Gl_n(k)$, $k$ a $\s$-closed $\s$-field, need not be defined by difference polynomials. For example, the group
$$\{g\in k^\times|\ \exists\ h\in k^\times:\ h^2=g,\ \s(h)=h\}\leq\Gl_1(k)$$ does not correspond to a $\s$-closed subgroup of $\Gl_{1,k}$.
On the other hand, the quantifier free definable subgroups of $\Gl_n(k)$, i.e., the subgroups of $\Gl_n(k)$ defined by difference polynomials in the matrix entries, only correspond to perfectly $\s$-reduced $\s$-closed subgroups of $\Gl_{n,k}$.

\vspace{5mm}

In this section we assume that the base field has characteristic zero. Note however, that \cite{ChatHrusPet} also provides a version for positive characteristic. So let $k$ be a field of characteristic zero. By an algebraic group over $k$ we mean an affine group scheme of finite type over $k$. If $H$ is an algebraic group over $k$ and $\wtilde{k}$ a field extension of $k$, we denote by $H_{\wtilde{k}}=H\otimes_k\wtilde{k}$ the algebraic group over $\wtilde{k}$ obtained from $H$ via base extension from $k$ to $\wtilde{k}$. By a representation $V$ of $H$ we mean a finite dimensional $k$-vector space $V$ together with a morphism $H\to \Gl(V)$ of algebraic groups over $k$. (Here, from a formal point of view, $\Gl(V)$ is the functor such that $\Gl(V)(S)=\Gl(V\otimes_k S)$ for every $k$-algebra $S$.)

It is well-known that every simple algebraic group descends to $\mathbb{Q}$. To avoid the isomorphism $f$ in the above proposition we require a slightly more precise statement. We thank Michael Singer for the proof of Lemma \ref{lemma:descentreductive} and Corollary \ref{cor:descentafterconjugation}.
% Let $k$ be an algebraically closed field of characteristic zero. It follows from the classification of the semi-simple algebraic groups ?? that every semi-simple algebraic group over $k$ descends to $\mathbb{Q}$, i.e., is isomorphic (over $k$) to an algebraic group defined over $\mathbb{Q}$.
%
% We first recall some terminology from the theory of algebraic groups (\cite{??}): Let $k$ be a field. By an algebraic group over $k$ we mean a group scheme of finite type over $k$. (Recall that all our schemes are affine.) ?? semi-simple simple

\begin{lemma}\label{lemma:descentreductive}
Let $\mathcal{H}$ be a split and connected reductive algebraic group over $\mathbb{Q}$ and let $k$ be an algebraically closed field of characteristic zero. If $V$ is a representation of $H:=\mathcal{H}_k$, then there exists a representation $\mathcal{V}$ of $\mathcal{H}$ such
that $V\simeq\mathcal{V}\otimes_\Q k$, as representations of $H$.
\end{lemma}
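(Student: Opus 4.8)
The plan is to reduce to irreducible representations and then invoke the classification by highest weight, which for a \emph{split} group is insensitive to the base field. First I would recall that, since $\mathcal H$ is connected reductive and $\operatorname{char}\Q=0$, the group scheme $\mathcal H$ is linearly reductive, so the category $\operatorname{Rep}_\Q(\mathcal H)$ of finite-dimensional representations of $\mathcal H$ is semisimple; likewise $\operatorname{Rep}_k(H)$ is semisimple. The base change functor $\mathcal W\mapsto\mathcal W\otimes_\Q k$ is exact and additive, so it suffices to show that every \emph{irreducible} representation $V$ of $H$ is isomorphic to $\mathcal W\otimes_\Q k$ for some irreducible representation $\mathcal W$ of $\mathcal H$.

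Next I would fix a split maximal torus $\mathcal T\leq\mathcal H$ and a Borel subgroup $\mathcal B\supseteq\mathcal T$, both defined over $\Q$, and base change them to a split maximal torus $T=\mathcal T_k$ and a Borel $B=\mathcal B_k$ of $H$. The character lattice $X(\mathcal T)$ of the split torus, together with the root datum of $(\mathcal H,\mathcal T)$ and the set $X(\mathcal T)^+$ of $\mathcal B$-dominant weights, does not change under the extension $\Q\subseteq k$, i.e. $X(T)=X(\mathcal T)$ and $X(T)^+=X(\mathcal T)^+$. By the highest weight classification of irreducible representations of a split reductive group (see e.g. \cite{Milne:BasicTheoryOfAffineGroupSchemes}), over each of these two fields the irreducible representations are parametrized by their highest weight, running over the common set $X(\mathcal T)^+$; write $\mathcal W_\lambda$ for the irreducible $\mathcal H$-representation over $\Q$ of highest weight $\lambda$, and $L(\lambda)$ for the irreducible $H$-representation over $k$ of highest weight $\lambda$. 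It then remains to identify $\mathcal W_\lambda\otimes_\Q k$ with $L(\lambda)$: since $\mathcal W_\lambda$ is absolutely irreducible in characteristic zero, $\operatorname{End}_{\mathcal H}(\mathcal W_\lambda)=\Q$, hence $\operatorname{End}_H(\mathcal W_\lambda\otimes_\Q k)=k$, so $\mathcal W_\lambda\otimes_\Q k$ is irreducible; its $\mathcal T$-weights are those of $\mathcal W_\lambda$, so it has highest weight $\lambda$, and therefore $\mathcal W_\lambda\otimes_\Q k\cong L(\lambda)$. (Alternatively one may compare formal characters: both $\mathcal W_\lambda\otimes_\Q k$ and $L(\lambda)$ have the character prescribed by the Weyl character formula for $\lambda$, and in the semisimple category $\operatorname{Rep}_k(H)$ a representation is determined up to isomorphism by its character.) Applying this with $\lambda$ the highest weight of the given $V$ yields the claim.

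The main obstacle will be phrasing the highest weight classification correctly in the scheme-theoretic setting used in this paper and, above all, justifying the two inputs that make the field $k$ irrelevant: that the standard (Weyl) module $\mathcal W_\lambda$ over $\Q$ is absolutely irreducible — this is exactly where $\operatorname{char}=0$ enters — and that the parametrizing set $X(\mathcal T)^+$ of dominant weights really does not move under $\Q\subseteq k$, which is precisely the content of the hypothesis that $\mathcal H$ is split over $\Q$. Everything else (complete reducibility, exactness of base change, invariance of characters) is routine.
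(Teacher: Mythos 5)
Your proof is correct and follows essentially the same route as the paper: reduce to the irreducible case by complete reducibility in characteristic zero, then use the highest weight classification for the split group, whose set of dominant weights is insensitive to the extension $\Q\subseteq k$. The only difference is that the paper delegates the final base-change step (that the irreducible of highest weight $\lambda$ over $\Q$ remains irreducible of the same highest weight after $-\otimes_\Q k$) to Corollary II.2.9 of \cite{Jantzen:RepresentationsOfAlgebraicGroups}, whereas you prove it directly via the endomorphism-algebra argument.
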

\begin{proof}
Since we are in characteristic zero, every representation of $H$ decomposes as a direct sum of irreducible representations. Therefore, we can assume without loss of generality, that $V$ is irreducible. The irreducible representation of $H$ are classified by the so-called dominant weights. See Part II, Chapter 2 of  \cite{Jantzen:RepresentationsOfAlgebraicGroups}. The claim now follows from Corollary  2.9, Part II, p. 203 in \cite{Jantzen:RepresentationsOfAlgebraicGroups}.
\end{proof}

\begin{cor} \label{cor:descentafterconjugation}
Let $k$ be an algebraically closed field of characteristic zero and $H\leq\Gl_{n,k}$ a connected reductive algebraic group. Then there exists $g\in\Gl_n(k)$ such that $gHg^{-1}\leq\Gl_{n,k}$ is defined over $\mathbb{Q}$.
\end{cor}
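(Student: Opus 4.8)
The plan is to descend the given faithful linear representation of $H$ to $\mathbb{Q}$, using Lemma \ref{lemma:descentreductive}. Set $V=k^n$, so that the given closed embedding $H\hookrightarrow\Gl_{n,k}=\Gl(V)$ exhibits $V$ as a faithful representation of $H$. Since $k$ is algebraically closed of characteristic zero and $H$ is connected reductive, $H$ is split, and by the classification of split connected reductive groups by their root data there is a split connected reductive algebraic group $\mathcal{H}$ over $\mathbb{Q}$ and an isomorphism $H\cong\mathcal{H}_k$ over $k$; this is the reductive analogue of the descent statement for simple groups recalled just before Lemma \ref{lemma:descentreductive}. Through this isomorphism, $V$ becomes a representation of $\mathcal{H}_k$.

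Next I would apply Lemma \ref{lemma:descentreductive} to obtain a representation $\mathcal{V}$ of $\mathcal{H}$ over $\mathbb{Q}$ together with an isomorphism $\phi\colon\mathcal{V}\otimes_{\mathbb{Q}}k\xrightarrow{\sim}V$ of representations of $\mathcal{H}_k$, hence of $H$. Fixing a $\mathbb{Q}$-basis of $\mathcal{V}$ identifies $\Gl(\mathcal{V})$ with $\Gl_{n,\mathbb{Q}}$; let $\mathcal{H}'\leq\Gl_{n,\mathbb{Q}}$ be the scheme-theoretic image of the homomorphism $\mathcal{H}\to\Gl(\mathcal{V})=\Gl_{n,\mathbb{Q}}$, a closed subgroup scheme defined over $\mathbb{Q}$ with $\mathcal{H}\to\mathcal{H}'$ faithfully flat.

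To conclude, I would use that the formation of the scheme-theoretic image of a homomorphism of affine algebraic groups commutes with the flat base change $\mathbb{Q}\to k$: thus $\mathcal{H}'_k$ is the image of $\mathcal{H}_k\to\Gl(\mathcal{V}\otimes_{\mathbb{Q}}k)$, and transporting along $\phi$ this equals the image of $H$ acting on $V=k^n$, which — the representation being faithful — is $H$ itself. Writing $\phi$ in the standard basis of $k^n$ and in the chosen basis of $\mathcal{V}$ produces a matrix $g\in\Gl_n(k)$; the intertwining property of $\phi$ then says precisely that conjugation by $g$ (or by $g^{-1}$, depending on conventions) carries $H$ onto $\mathcal{H}'_k$, so that $gHg^{-1}=\mathcal{H}'_k$ is defined over $\mathbb{Q}$, as desired.

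I do not expect a genuine obstacle here: once Lemma \ref{lemma:descentreductive} is available, the argument merely assembles standard facts — the existence of the $\mathbb{Q}$-form $\mathcal{H}$ of $H$ (Chevalley's classification), the behaviour of scheme-theoretic images under flat base change, and the fact that over a field of characteristic zero a homomorphism of affine algebraic groups with trivial scheme-theoretic kernel is a closed immersion. The only slightly subtle point is conceptual rather than technical, namely that Lemma \ref{lemma:descentreductive} is exactly what allows the $\mathbb{Q}$-form of $H$ to be chosen compatibly with the prescribed embedding into $\Gl_{n,k}$, rather than merely abstractly.
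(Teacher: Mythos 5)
Your proposal is correct and follows essentially the same route as the paper: pass to a split $\mathbb{Q}$-form $\mathcal{H}$ of $H$ via the classification, descend the faithful representation $k^n$ to a faithful representation $\mathcal{V}$ of $\mathcal{H}$ over $\mathbb{Q}$ using Lemma \ref{lemma:descentreductive}, and read off the conjugating matrix $g$ from the intertwining isomorphism $\mathcal{V}\otimes_{\mathbb{Q}}k\simeq k^n$. You merely spell out in more detail the final step that the paper compresses into ``it follows that $H$ is conjugated to $\mathcal{H}_k$''.
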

\begin{proof}
Since $k$ is algebraically closed, $H$ is split and it follows from the classification (See e.g. Corollary  1.3, p. 411 in \cite{DemazExiste}.) that $H=\mathcal{H}_k$ for some split and connected reductive algebraic group $\mathcal{H}$ over $\mathbb{Q}$. By Lemma \ref{lemma:descentreductive} the given representation $V:=k^n$ of $H$ is of the form $V=\mathcal{V}\otimes_{\Q}k$ for some representation $\mathcal{V}$ of $\mathcal{H}$. Since $V$ is faithful, also $\mathcal{V}$ is faithful and it follows that $H$ is conjugated to $\mathcal{H}_k$.
\end{proof}

%\paragraph{The $\s$-subgroup-scheme of simple algebraic groups}
% \par In \cite[Proposition 7.10]{ChatHrusPet}, the authors prove the difference counterpart of a result obtained by Cassidy (\cite{Cassimpl}) in the differential case.
% They show that if $H$ is an almost simple algebraic group defined over an algebraically closed field $k$ and
% $G$ is a Zariski dense perfectly reduced $\s$-subgroup scheme of $H$ then either $H =G$ or  there exists an isomorphism $f : H \rightarrow H^\p$
% of algebraic groups and integers $m>0 $ and $n$,
% such that some subgroup of $f(G)$ of finite index is conjugate to a subgroup of $H^\p( k^{\s^m Frob^n})$. The framework of Chatzidakis, Hrushovski and Peterzil
% is the theory of ACFA and they work with perfectly $\s$-reduced $\s$-closed subgroup of some $Gl_{n,k}$. We would like to show in this paragraph that
% the arguments of Chatzidakis, Hrushovski and Peterzil easily extend to a schematic approach and that we may weaken the hypothesis of
% perfect $\s$-reduceness. Even if  our arguments follow essentially  the  one used by Chatzidakis, Hrushovski and Peterzil, we would like,  for clarity of exposition and completeness,  still give here our schematic counterpart.  First, we  state some definitions
%
% \begin{defn}[Definition 3.1 in \cite{KowPillsigma}]  A  $\s$-field $(k,\s)$ is linearly $\s$-closed if   any linear difference system with coefficients in $k$
% possess a fundamental system of solutions in $k$. \end{defn}

Let $k$ be an algebraically closed field of characteristic zero. Recall that a connected non-commutative algebraic group $H$ over $k$ is called simple if every proper normal algebraic subgroup of $H$ is trivial.

\begin{prop} \label{prop: classification for simple}
Let $k$ be an algebraically closed, inversive $\s$-field of characteristic zero. Let $H$ be a simple algebraic group over $k$ and let $G$ be a $\s$-reduced, Zariski dense, $\s$-closed subgroup of $H$, properly contained in $H$. Then there exist an integer $m\geq 1$ and an isomorphism $\varphi\colon H\to{^{\s^m}H}$ of algebraic groups such that
$$G(S)=\{h\in H(S)|\ \s^m(h)=\varphi(h)\}$$ for every $k$-$\s$-algebra $S$.
\end{prop}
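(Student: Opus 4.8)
The plan is to follow the strategy of Chatzidakis--Hrushovski--Peterzil (\cite{ChatHrusPet}, Prop.~7.10) but reformulated schematically, using the machinery on $d$-th order Zariski closures, quotients and base extension developed in \cite{articleone} and recalled above. The essential point I want to extract is that a proper Zariski dense $\s$-closed subgroup $G\leq H$ is forced, component by component (i.e.\ order by order in the $d$-th order Zariski closures $G[d]\leq H_d$), to be the graph of an isomorphism. First I would note that, since $G$ is $\s$-reduced and Zariski dense with $G\subsetneq H$, for $d$ large enough the $d$-th order Zariski closure $G[d]$ is a \emph{proper} closed subgroup of $H_d = H\times{}^\s H\times\cdots\times{}^{\s^d}H$ which still projects onto each factor ${}^{\s^i}H$ surjectively (density) and whose projection to $H\times{}^\s H$ already stabilizes. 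Because $H$ is simple, a classical fact about subgroups of products of simple groups (Goursat-type argument: a subgroup of $H_1\times H_2$ projecting onto both factors is either everything or the graph of an isomorphism $H_1\to H_2$, using that $H$ has no nontrivial proper normal algebraic subgroups) forces $G[1]\subsetneq H\times{}^\s H$ to be the graph of an isomorphism $\varphi_1\colon H\xrightarrow{\sim}{}^\s H$.

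The next step is to \emph{propagate} this along the tower. Once $G[1]$ is the graph of $\varphi_1\colon H\to{}^\s H$, one shows $G[d]$ is the graph of $h\mapsto(h,\varphi_1(h),{}^\s\varphi_1(\varphi_1(h)),\dots)$ for all $d$: indeed the projection of $G[d]$ to the first two coordinates is $G[1]$, the projection to any two consecutive coordinates $({}^{\s^i}H,{}^{\s^{i+1}}H)$ is a proper Zariski dense subgroup of ${}^{\s^i}H\times{}^{\s^{i+1}}H$ (again by density and properness, using that these are obtained from $G[1]$ by the base change $\s^i\colon k\to k$, which carries graphs to graphs), hence a graph, and these graphs must be $\s$-conjugates of $\varphi_1$ by compatibility. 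Taking $d\to\infty$ and using $k\{G\}=\varinjlim k[G[d]]$, this says precisely that $G$ is the graph, inside $[\s]_kH$, of the $\s$-closure of $\varphi_1$, i.e.\ there is an isomorphism $\varphi\colon H\to{}^{\s}H$ (I will relabel; to match the statement I want $\varphi\colon H\to{}^{\s^m}H$ with $m$ the \emph{minimal} period) with $G(S)=\{h\in H(S)\mid \s^m(h)=\varphi(h)\}$ for every $k$-$\s$-algebra $S$. The role of $m$: a priori $G[1]$ could equal all of $H\times{}^\s H$, meaning $G$ imposes no relation between $h$ and $\s(h)$; then one looks at the first $m$ where $G[m]\subsetneq H_m$ projects onto $H\times{}^{\s^m}H$ as a proper subgroup, and the Goursat argument applies there, the intermediate projections being surjective.

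The main obstacle, and the reason a naive transcription of \cite{ChatHrusPet} does not suffice (cf.\ Remark~\ref{rem:theoremfailsforalmostsimple}), is the schematic subtlety: one must be sure that ``proper $\s$-closed subgroup'' really does descend to ``proper closed subgroup $G[d]\subsetneq H_d$'' at some finite level, and that the Goursat dichotomy holds \emph{scheme-theoretically} (intersections taken scheme-theoretically, kernels possibly non-reduced). The $\s$-reducedness hypothesis is exactly what is needed here: it guarantees, via the results of \cite{articleone} on $d$-th order Zariski closures and on reduced group $k$-$\s$-schemes, that $G$ is recovered from its family $\{G[d]\}$ and that no non-reduced ``infinitesimal in the $\s$-direction'' part can hide a failure of the graph structure --- the kind of phenomenon that makes the almost simple case (Theorem~\ref{thm:classalmostsimple}) require the stronger $\s$-integral hypothesis. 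A secondary technical point is identifying ${}^{\s^i}H$ with $H$ up to the twist: here I would invoke Corollary~\ref{cor:descentafterconjugation} to conjugate $H$ so that it is defined over $\mathbb{Q}$, whence ${}^{\s^i}H=H$ canonically and $\varphi$ becomes an honest automorphism of $H$; this also makes the statement match the ``$H=H'$'' part of the CHP proposition. I would carry out the Goursat step first (it is the conceptual heart), then the propagation, then the descent-to-$\mathbb{Q}$ normalization, and finally assemble the $k$-$\s$-algebra-valued-points description from the Hopf-algebra direct limit.
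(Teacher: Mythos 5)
Your overall strategy is the paper's own: locate the minimal $m$ with $G[m]\subsetneq H_m$, use simplicity of the factors together with surjectivity of the projections to force $G[m]$ to be the graph of an isomorphism $H\to{}^{\s^m}H$, and observe that this graph relation already generates $\I(G)$. The Goursat-style dichotomy you invoke is exactly what the paper's analysis of the normal subgroup of ${}^{\s^m}H$ cut out by $G[m]\cap(\{1\}^m\times{}^{\s^m}H)$ amounts to.

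There is, however, one step that fails as you have justified it: the parenthetical ``(density)'' for the claim that $G[d]$ surjects onto each factor ${}^{\s^i}H$. Zariski density of $G$ in $H$ gives only $G[0]=H$, i.e.\ surjectivity onto the $0$-th factor, and nothing about the twisted factors; the Remark following Theorem \ref{thm:classsimple} is a Zariski dense counterexample, namely $G(S)=\{h\in H(S)\mid\s(h)\in{}^{\s}H'(S)\}$ with $H'\subsetneq H$ connected, whose first-order closure projects onto ${}^{\s}H'\subsetneq{}^{\s}H$. The correct mechanism --- and the precise place where $\s$-reducedness and inversiveness of $k$ do their work --- is that a nonzero element of the kernel of $k[{}^{\s^i}H]\to k[G[d]]$ gives an element $1\otimes\cdots\otimes a\otimes\cdots\otimes 1\in\I(G)$; since $k$ is inversive one pulls $a$ back through $\s^i$, and reflexivity of $\I(G)$ then produces a nonzero element of $\I(G)\cap k[H_0]=0$. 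The same pull-back argument is what you need, and do not supply, to rule out the Goursat block at level $m$ pairing an intermediate coordinate ${}^{\s^i}H$ ($1\leq i\leq m-1$) with ${}^{\s^m}H$ rather than the $0$-th one: such a relation would descend via $\s^{-i}$ to a relation at level $m-i$, contradicting minimality of $m$. Your stated role for $\s$-reducedness (recovering $G$ from the $G[d]$, excluding infinitesimal parts) misses these two uses, which are the substance of the schematic adaptation. A minor point: conjugating $H$ into a $\mathbb{Q}$-form via Corollary \ref{cor:descentafterconjugation} is not needed for this Proposition, which asserts only an isomorphism $H\to{}^{\s^m}H$; that normalization belongs to the deduction of Theorem \ref{thm:classsimple}.
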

\begin{proof}
Note that $^{\s^d}H$ is simple for every $d\geq 0$. (Indeed, since $H$ descends to $\mathbb{Q}\subset k^\s$, $H$ and $^{\s^d}H$ are (non-canonically) isomorphic as algebraic groups over $k$.) So $H_d=H\times{^{\s}H}\times\cdots\times{^{\s^d}H}$ is a product of simple algebraic groups. As $G$ is Zariski dense in $H$, we have $G[0]=H_0=H$. Since $G$ is properly contained in $H$, we have $G[d]\subsetneq H_d$ for some $d\geq 1$. Let $m\geq 1$ denote the smallest integer such that $G[m]\subsetneq H_m$. Then $G[m-1]=H_{m-1}$. We will show that the projection $G[m]\to G[m-1]=H_{m-1}$ is an isomorphism (of algebraic groups).

Let $\pi\colon G[m]\to{^{\s^m}H}$ denote the projection onto the last factor of $H_m\supset G[m]$. Let $\I(G)\subset k\{H\}$ denote the defining ideal of $G$. Then $$\I(G)\cap k[H_m]=\I(G[m])\subset k[H_m]=k[H]\otimes_k\cdots\otimes_k k[^{\s^m}H]$$ defines $G[m]\leq H_m$. A non-zero element in the kernel of $\pi^*\colon k[^{\s^m}H]\to k[G[m]]=k[H_m]/\I(G[m])$ corresponds to an element of the form
$1\otimes\cdots\otimes 1\otimes a\in \I(G[m])\subset k[H]\otimes_k\cdots\otimes_k k[^{\s^m}H]$, with $a\in k[^{\s^m}H]\smallsetminus\{0\}$. As $G$ is $\s$-reduced (i.e., $\I(G)$ is reflexive) and $k$ is inversive, such an element would give rise to a non-zero element of $\I(G)\cap k[H_0]=\{0\}$. Thus $\pi^*$ is injective, i.e., $\pi$ is dominant.
%Because a dominant morphism of algebraic groups is surjective () it follows that $\pi$ is surjective.

Let $H'\leq{^{\s^m}H}$ denote the algebraic subgroup of ${^{\s^m}H}$ determined by $G[m]\cap(\{1\}^m\times {^{\s^m}H})=\{1\}^m\times H'\leq H_m$. We will show that $H'$ is normal in $H$. Because $k$ is algebraically closed, it suffices to work with the $k$-rational points. Let $h'\in H'(k)$ and $h\in{^{\s^m}H}(k)$. Because $\pi$ is dominant, $\pi(G[m](k))={^{\s^m}H}(k)$ (Proposition 2.2.5, (ii), p. 26, in \cite{Springer:LinearAlgebraicGroups}). Thus, there is a $g\in H_{m-1}(k)$ such that $(g,h)\in G[m](k)$. It follows that
\[(g,h)(1,h')(g,h)^{-1}=(1,hh'h^{-1})\in G[m](k),\]
and so $H'$ is normal in $^{\s^m}H$. Since $^{\s^m}H$ is simple, we have $H'=1$ or $H'={^{\s^m}H}$. Suppose that $H'={^{\s^m}H}$. As $G[m]\to G[m-1]=H_{m-1}$ is dominant, this implies $G[m]=H_m$; a contradiction.
So $H'=1$. This means that $G[m](k)\to G[m-1](k)$ is injective. Thus, $G[m](k)\to G[m-1](k)$ is actually bijective. Since we are in characteristic zero, this is enough to deduce that $G[m]\to G[m-1]$ is an isomorphism of algebraic groups ( Exercise 5.3.5, p. 87 in \cite{Springer:LinearAlgebraicGroups}). Let
\[\varphi\colon H_{m-1}=G[m-1]\rightarrow G[m]\xrightarrow{\pi}{^{\s^m}}H\]
denote the composition of the inverse of this isomorphism with $\pi$. Then $G[m]\leq H_m$ is the graph of $\varphi$.
I.e.,
\[G[m](k)=\left\{(h,\varphi(h))|\ h\in H_{m-1}(k)\right\}\leq H_m(k).\]
Suppose $H\times \{1\}^{m-1}\subset\ker(\varphi)$. Then
$$\varphi(1,h_1,\ldots,h_{m-1})=\varphi(h_0,h_1,\ldots,h_{m-1})=h_m\in{^{\s^m}H}(k)$$ for all $(h_0,\ldots,h_m)\in G[m](k)\leq H_m(k)$.
For every $f\in k[{^{\s^m}H}]$, the map $$(h_0,\ldots,h_m)\mapsto f(\varphi(1,h_1,\ldots,h_{m-1}))-f(h_m)$$ defines
a regular function $\wtilde{f}\in k[H_m]$. Moreover $\wtilde{f}\in\I(G[m])\subset\I(G)$. We can choose $f\in k[{^{\s^m}H}]$ such that $\wtilde{f}$ is non-zero. Because $G$ is $\s$-reduced and $k$ inversive, we can obtain from $\wtilde{f}$, by considering the inverse image of $\wtilde{f}$ under $\s$, a non-zero element of $\I(G[m-1])=\I(G)\cap k[H_{m-1}]$. But this contradicts the minimality $m$. Therefore $H\times \{1\}^{m-1}\nsubseteq\ker(\varphi)$.
Since $H$ is simple, we have $(H\times \{1\}^{m-1})\cap\ker(\varphi)=\{1\}^{m}$. Thus, $\varphi$ restricts to an isomorphism $\varphi\colon H=H\times\{1\}^{m-1}\to{^{\s^m}}H$ of algebraic groups.

Since the $^{\s^i}H$ are simple, a normal algebraic subgroup $N$ of $H_{m-1}=H\times{^{\s}H}\times\cdots\times{^{\s^{m-1}}H}$ must be a product of some of the factors $^{\s^i}H$.
(This is easy to see: For example, if $(h_0,\ldots,h_{m-1})\in N(k)$ and $h_0\neq 1$, then there exists $h_0'\in H(k)$ with $h_0h_0'\neq h_0'h_0$. Then $(h_0',1,\ldots,1)(h_0,\ldots,h_{m-1})({h_0'}^{-1},1,\ldots,1)(h_0,\ldots,h_{m-1})^{-1}=(h_0'h_0{h_0'}^{-1}{h_0}^{-1},1,\ldots,1)$ is a non-trivial element of $N(k)\cap (H(k)\times\{1\}^{m-1})$. It follows that $H\times\{1\}^{m-1}\subset N$.)

Because $\ker(\varphi)$ is a normal algebraic subgroup of $H_{m-1}$, we must have $\ker(\varphi)=\{1\}\times{^{\s}H}\times\cdots\times{^{\s^{m-1}}H}$.
In summary, we find that
\[G[m](k)=\left\{(h_0,\ldots,h_{m-1},\varphi(h_0))\big| \ h_i\in{^{\s^i}H}(k)\right\}\leq H_m(k).\]
Because $k[G[m-1]]=k[G[m]]=k\{G\}$, the defining ideal $\I(G)$ of $G$ is determined by $\I(G[m])$.
Therefore
$$G(S)=\{h\in H(S)|\ \s^m(h)=\varphi(h)\}$$ for every $k$-$\s$-algebra $S$.
\end{proof}

Recall (Definition \ref{defn:sconstantgroup}) that a $\s$-closed subgroup $G$ of $\Gl_{n,k}$ is called $\s^d$-constant if $\s^d(g)=g$ for every $g\in G(S)\leq\Gl_n(S)$ and all $k$-$\s$-algebras $S$. We say that
$G$ is conjugated to a $\s^d$-constant subgroup of $\Gl_{n,k}$ if there exists
$u \in \Gl_n(k)$ such that $u Gu^{-1}$ is $\s^d$-constant.

\begin{thm} \label{thm:classsimple}
Let $k$ be an algebraically closed, inversive $\s$-field of characteristic zero and let $G$ be a $\s$-reduced, $\s$-closed subgroup of $\Gl_{n,k}$. Assume that the Zariski closure of $G$ in $\Gl_{n,k}$ is a simple algebraic group, properly containing $G$. Then there exist a $\s$-field extension $\wtilde{k}$ of $k$ and an integer $d\geq 1$ such that $G_{\wtilde{k}}$ is conjugated to a $\s^d$-constant subgroup of $\Gl_{n,\wtilde{k}}$. If $k$ is linearly $\s$-closed we can choose $\wtilde{k}=k$.
\end{thm}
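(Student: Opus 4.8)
The plan is to deduce Theorem \ref{thm:classsimple} from Proposition \ref{prop: classification for simple} by iterating the structural description it provides and then solving a linear difference equation of order $d$. First I would arrange, at no cost, that the Zariski closure $H$ of $G$ is defined over $\mathbb{Q}$: since $H$ is simple it is connected reductive, so by Corollary \ref{cor:descentafterconjugation} there is $g\in\Gl_n(k)$ with $gHg^{-1}$ defined over $\mathbb{Q}$, and replacing $G$ by $gGg^{-1}$ — which is again $\sigma$-reduced, with Zariski closure $gHg^{-1}$ properly containing it — changes nothing in the conclusion, since $g\in\Gl_n(k)\subseteq\Gl_n(\widetilde{k})$. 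So assume $H\leq\Gl_{n,k}$ is split and defined over $\mathbb{Q}$; then $\mathbb{Q}\subseteq k^\sigma$, hence ${}^{\sigma^j}H=H$ for every $j\geq 0$, the automorphism group of $H$ has the form $\operatorname{Int}(H(k))\rtimes\Gamma$ with $\Gamma$ a finite group of diagram automorphisms which we may take defined over $\mathbb{Q}$, and $\sigma$ acts on $H(k)$ by a group automorphism commuting with each $\theta\in\Gamma$. Applying Proposition \ref{prop: classification for simple} (whose hypotheses hold: $k$ is algebraically closed, inversive, of characteristic zero, $H$ is simple, and $G$ is $\sigma$-reduced, Zariski dense and properly contained in $H$) yields an integer $m\geq 1$ and an automorphism $\varphi$ of $H$ with $G(S)=\{h\in H(S)\mid \sigma^m(h)=\varphi(h)\}$ for every $k$-$\sigma$-algebra $S$. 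Write $\varphi=\operatorname{Int}(c)\circ\theta$ with $c\in H(k)$ and $\theta\in\Gamma$.

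Next I would iterate. Applying $\sigma^m$ repeatedly to the defining relation, and using that $\theta$ is defined over $\mathbb{Q}$ (so that the morphism obtained from $\varphi$ by applying $\sigma^{im}$ to its coefficients is $\operatorname{Int}(\sigma^{im}(c))\circ\theta$), one gets, for $h\in G(S)$,
\[\sigma^{jm}(h)=\big(\operatorname{Int}(\sigma^{(j-1)m}(c))\circ\theta\big)\circ\cdots\circ\big(\operatorname{Int}(\sigma^{m}(c))\circ\theta\big)\circ\big(\operatorname{Int}(c)\circ\theta\big)(h).\]
Each factor has the same image $\theta$ in $\operatorname{Out}(H)$, so the right-hand side is an automorphism of $H$ whose image in $\operatorname{Out}(H)$ is $\theta^{j}$. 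Choosing $r\geq 1$ to be the order of $\theta$ in $\operatorname{Out}(H)$, the composite for $j=r$ is inner; since $H$ is simple (hence adjoint, with trivial centre) and $k$ is algebraically closed, it equals $\operatorname{Int}(c')$ for a unique $c'\in H(k)\subseteq\Gl_n(k)$. Setting $d:=rm\geq 1$ we obtain $\sigma^{d}(h)=c'h{c'}^{-1}$ for all $h\in G(S)$ and all $k$-$\sigma$-algebras $S$.

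Finally I would remove $c'$ by a conjugation after a base change. The matrix equation $\sigma^{d}(u)=u\,{c'}^{-1}$ is, after transposing, of the form $\sigma^{d}(w)=Bw$ with $B=({c'}^{T})^{-1}\in\Gl_n(k)$; by Lemma \ref{lemma:solvelinearinssep} it has a fundamental solution matrix $w\in\Gl_n(\widetilde{k})$ over a suitable $\sigma$-separable $\sigma$-field extension $\widetilde{k}$ of $k$, and over $\widetilde{k}=k$ itself when $k$ is linearly $\sigma$-closed, by Lemma \ref{lemma:linearlysclosedstableunderpower}. Put $u:=w^{T}\in\Gl_n(\widetilde{k})$, so that $\sigma^{d}(u)=u\,{c'}^{-1}$. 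Then for every $\widetilde{k}$-$\sigma$-algebra $S$ and every $h\in G(S)$,
\[\sigma^{d}(uhu^{-1})=\sigma^{d}(u)\,\sigma^{d}(h)\,\sigma^{d}(u)^{-1}=u\,{c'}^{-1}\,(c'h{c'}^{-1})\,c'\,u^{-1}=uhu^{-1},\]
so $uG_{\widetilde{k}}u^{-1}$ is a $\sigma^{d}$-constant subgroup of $\Gl_{n,\widetilde{k}}$ (that it is a $\sigma$-closed subgroup is routine: it is the image of $G_{\widetilde k}$ under the $\sigma$-scheme automorphism of $\Gl_{n,\widetilde k}$ given by conjugation by $u$). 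Undoing the harmless conjugation by $g$ performed at the start gives the assertion.

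The step I expect to be the main obstacle is the iteration in the second paragraph: one must correctly track how the automorphism $\varphi$ produced by Proposition \ref{prop: classification for simple} is transported by the powers of $\sigma$, and see that raising the exponent from $m$ to $d=rm$ is precisely what replaces the possibly outer automorphism $\varphi$ by an inner one. This is the schematic substitute for the ``isomorphism $f$ and finite-index subgroup'' appearing in the formulation of \cite{ChatHrusPet}; carrying out the descent of $H$ to $\mathbb{Q}$ first is what makes the twisting computation ${}^{\sigma^{im}}\big(\operatorname{Int}(c)\circ\theta\big)=\operatorname{Int}(\sigma^{im}(c))\circ\theta$ transparent, so it should be done before anything else.
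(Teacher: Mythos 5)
Your proposal is correct and follows essentially the same route as the paper: descend $H$ to $\mathbb{Q}$ via Corollary \ref{cor:descentafterconjugation}, apply Proposition \ref{prop: classification for simple}, iterate the twisted relation until the composite automorphism becomes inner using the finiteness of $\operatorname{Out}(H)$, and then eliminate the remaining inner twist by solving $\s^d(u)=Bu$ over a $\s$-separable extension (or over $k$ itself when $k$ is linearly $\s$-closed). The only divergence is cosmetic: you fix a splitting $\operatorname{Aut}(H)=\operatorname{Int}(H(k))\rtimes\Gamma$ with $\Gamma$ defined over $\mathbb{Q}$ so that the outer class of ${}^{\s^{im}}\varphi$ is constant and you can take $e$ to be the order of $\theta$, whereas the paper allows a possibly nontrivial $\s$-action on $\operatorname{Out}(H)$ and takes $e$ to be the product of the orbit length with $|\operatorname{Out}(H)|$ — both choices work.
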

\begin{proof}
Let $H\leq\Gl_{n,k}$ denote the Zariski closure of $G$. By Corollary \ref{cor:descentafterconjugation}, there exist an algebraic group $\mathcal{H}\leq\Gl_{n,\mathbb{Q}}$ over $\mathbb{Q}$ and $g\in\Gl_n(k)$ such that $gHg^{-1}=\mathcal{H}_k\leq\Gl_{n,k}$. Therefore, we can assume without loss of generality that $H=\mathcal{H}_k$.
Then, for every $d\geq 1$ the algebraic groups $H$ and $^{\s^d}H$ are equal as subgroups of $\Gl_{n,k}$. So the $\varphi$ in Proposition
\ref{prop: classification for simple} is an automorphism of $H$ (and not merely an isomorphism).

The group $\operatorname{Inn}(H)$ of inner automorphisms of $H$, is of finite index in the group $\operatorname{Aut}(H)$ of all automorphisms of $H$. (See Theorem 27.4, p. 166,
in \cite{Humphline}.) Therefore $\operatorname{Out}(H):=\operatorname{Aut}(H)/\operatorname{Inn}(H)$ is a finite group. If $\psi\colon H \to H$ is an automorphism of $H$, and $d\geq 0$ an integer, we denote by $^{\s^d}\psi\colon{^{\s^d}H}\to{^{\s^d}H}$ the automorphism of ${^{\s^d}H}$ obtained from $\psi$ by extension of scalars via $\s^d\colon k\to k$. Since ${^{\s^d}H}=H$ (as subgroups of $\Gl_{n,k}$) we see that $^{\s^d}\psi$ is actually an automorphism of $H$. We obtain an action $\psi\mapsto{^{\s}\psi}$ of $\s$ on $\operatorname{Aut}(H)$ and an induced action on $\operatorname{Out}(H)$. Since $\operatorname{Out}(H)$ is finite, there exists for every integer $m\geq 1$ and every element $\psi$ of $\operatorname{Out}(H)$ an integer $e\geq 1$ such that ${^{\s^{m(e-1)}}\psi}\cdots{^{\s^m}\psi}\cdot\psi=1\in \operatorname{Out}(H)$. (For example, we can choose $e$ as the product of the length of the orbit of $\psi$ under $\psi\mapsto{^{\s^m}\psi}$ and the size of $\operatorname{Out}(H)$.) With $m$ and $\varphi$ as in Proposition \ref{prop: classification for simple}, we see that there exists an integer $e\geq 1$ such that
${^{\s^{m(e-1)}}\varphi}\cdots{^{\s^m}\varphi}\cdot\varphi$ is the conjugation with an element $h\in H(k)$.

Let $S$ be a $k$-$\s$-algebra and $g\in H(S)$. Because $\s^m(g)=\varphi(g)$ we find that
$$\s^{me}(g)={^{\s^{m(e-1)}}\varphi}(\cdots{^{\s^m}\varphi}(\varphi(g))\cdots)=hgh^{-1}.$$
Set $d=me$ and let $\wtilde{k}$ be a $\s$-field extension of $k$ such that there exists $u\in\Gl_n(\wtilde{k})$ with $\s^{d}(u)=hu$. (For example we can take $\wtilde{k}=k(X)$ with $\s(X)=hX$.) If $k$ is linearly $\s$-closed we can choose $\wtilde{k}=k$ by Lemma \ref{lemma:linearlysclosedstableunderpower}.

Then, for every $\wtilde{k}$-$\s$-algebra $S$ and $g\in G_{\wtilde{k}}(S)=G(S)$ we have
$$\s^d(u^{-1}gu)=\s^d(u)^{-1}\s^{me}(g)\s^d(u)=u^{-1}h^{-1}hgh^{-1}hu=u^{-1}gu.$$
Thus $u^{-1}G_{\wtilde{k}}u\leq\Gl_{n,\wtilde{k}}$ is $\s^d$-constant.
\end{proof}

\begin{rem}
The assumption that $G$ is $\s$-reduced can not be omitted. For example, let $H\leq\Gl_{n,k}$ be a simple algebraic group and $H'$ a connected, non-trivial algebraic subgroup of $H$. Define a $\s$-closed subgroup $G$ of $H$ by
$$G(S)=\{h\in H(S)|\ \s(h)\in{^{\s}H'(S)}\}\leq H(S)$$ for every $k$-$\s$-algebra $S$. Then $G$ is Zariski dense in $H$ but not conjugated to a $\s^d$-constant group.
\end{rem}

\begin{rem} \label{rem:theoremfailsforalmostsimple}
The theorem does not extend to almost simple algebraic groups. For example, let $G$ be a $\s$-closed subgroup of $\Sl_{n,k}$ containing the
center $Z$ of $\Sl_{n,k}$, given by $Z(S)=\{\lambda I|\ \lambda\in S^\times,\ \lambda^n=1\}$ for every $k$-$\s$-algebra $S$.
Suppose there exists an integer $d\geq 1$ and a $\s$-field extension $\wtilde{k}$ of $k$, such that $G$ is conjugated over $\wtilde{k}$ to a $\s^d$-constant subgroup of $\Gl_{n,\wtilde{k}}$. Since $Z_{\wtilde{k}}$ is in the center of $\Gl_{n,\wtilde{k}}$, this would imply that $Z_{\wtilde{k}}\le\Gl_{n,\wtilde{k}}$ is $\s^d$-constant. Obviously this is not the case: Despite the fact that $\s^n(\lambda)=\lambda$ for every $\lambda$ in a \emph{$\s$-field} extension of $k$ with $\lambda^n=1$, this property is not retained if we choose $\lambda$ from an arbitrary $k$-$\s$-algebra.
\end{rem}

\begin{rem}\label{rem:placeofu}
In the notations of the proof of Theorem \ref{thm:classsimple}, there exists a $\s$-field
extension $\wtilde{k}$ of $k$ and an element $u \in \Gl_n(\wtilde{k})$ such that
$u^{-1}G_{\wtilde{k}}u\leq\Gl_{n,\wtilde{k}}$ is $\s^d$-constant. Assume that $H$ is defined over $\Q$.
One can show that a convenient choice of the $\s$-field
extension $\wtilde{k}$ allows us to choose $u$ in $H(\wtilde{k})$. In other words, there exists
a $\s$-field extension $\wtilde{k}$ of $k$ such that $G_{\wtilde{k}}$ is conjugated inside
$H_{\wtilde{k}}$ to a $\s^d$-constant group.
\end{rem}

\begin{proof}
 The proof is as follows. Going back to the proof
of Theorem \ref{thm:classsimple}, we need to show that, for a given $h \in H(k)$, one can
find a $\s$-field extension $\wtilde{k}$ of $k$ and $u \in H(\wtilde{k})$ such that $\s^d(u)=hu$.
% First of all, as in Corollary \ref{cor:descentafterconjugation}, one can assume that $H$ is defined over $\Q \subset k^\sigma$.
Let $\wtilde{k}$ be the function field
of $H^d=H\times \dots \times H$, i.e., $d$ copies of $H$. One can write $\wtilde{k}=k(X_1,\dots,X_d)$ where $X_i$ is
an $n\times n$-matrix of coordinates on the $i$-th copy of $H$. Since the vanishing ideal $\I(H) \subset
k[X,\frac{1}{\det(X)}]=k[\Gl_n]$ is defined over $\Q$, the action of $\s$ on the coefficients of the
 polynomial ring  $k[X,\frac{1}{\det(X)}]$ stabilizes $\I(H)$. This allows us to define a structure of
$k$-$\s$-field extension on $\wtilde{k}$ via
$$
\s(X_1)=X_2,\s(X_2)=X_3,\dots,\s(X_d)=hX_1.
$$
Then, $X_1$ is an element of $H(\wtilde{k})$ and satisfies $\s^d(X_1)=hX_1$ by construction.\end{proof}

\subsubsection{The case of almost simple groups}

%
%
%
% given by
% $$G(S)=\left\{g\in\Sl_n(S)|\ \exists\ \lambda\in S^\times:\ g\s(g)^{-1}=\lambda I,\ \lambda^n=1\right\}\leq\Sl_n(S).$$
% Here $I$ denotes the identity matrix of size $n$.
% Then $G$ contains $\Sl_{n,k}^\s$ and the center $Z$ of $\Sl_{n,k}$, given by $Z(S)=\{\lambda I|\ \lambda\in S^\times,\ \lambda^n=1\}$ for every $k$-$\s$-algebra $S$.

Let $k$ be an algebraically closed field of characteristic zero. Recall that a connected non-commutative algebraic group $H$ over $k$ is called almost simple if every proper connected normal closed subgroup is trivial.
In remark \ref{rem:theoremfailsforalmostsimple}, we have seen that a proper Zariski dense $\s$-closed subgroup of $H$ is not necessarily $\s^d$-constant.
However, the aim of this section is to show that Theorem \ref{thm:classsimple} still holds for a proper, \textit{$\s$-integral}, Zariski dense $\s$-closed subgroup $G$ of $H$.

We start with some remarks on almost simple algebraic groups, their center and their adjoint group.
Let $H$ be an almost simple algebraic group. By Corollary 1.3, p. 411 in \cite{DemazExiste}, the algebraic group $H$ is defined over $\Q$, i.e., is equal to $\mathcal{H}_k$ for some split almost simple algebraic group $\mathcal{H}$
over $\Q$. Since we are in characteristic zero, the schematic center $\mathcal{N}$ of $\mathcal{H}$
is representable by a closed subscheme of $\mathcal{H}$ (see Expos\'{e} XI, 6.11, in \cite{SGA3II}). Thus, we
get that, in our situation, the formation of schematic center commutes with base extension.
Precisely, $\mathcal{N}_k=N$. Now, by Expos\'{e} XXII, 4.3 in  \cite{SGA3III}, the quotient $\mathcal{H}/\mathcal{N}$
is representable by an affine group scheme over $\Q$. Since formation of quotients commute with flat base
extension (see Expos\'{e} VIIa, 4.6 of \cite{SGA3I}), the $k$-scheme $(\mathcal{H}/\mathcal{N})_k$
is a quotient of $H$ by $N$. In conclusion, given an almost simple algebraic group
$H$ over $k$, we see that its center $N$ is defined over $\Q$ as well as the quotient map
$p: H \rightarrow H/N$ and the adjoint group $H/N$.

Before proving our main result on the $\s$-closed subgroups of almost simple algebraic groups, we need one more lemma.

\begin{lemma}\label{lemma:propsubgroup}
Let $k$ be an algebraically closed field of characteristic zero,
$H$ an almost simple algebraic group over $k$ and $N \trianglelefteq H$ the center  of $H$.
Let $i : G \rightarrow H$ be the inclusion  of  a proper
$\s$-closed subgroup $G$ of $H$ into $H$ and $p: H \rightarrow H/N$ (resp. $\pi: G \rightarrow G/(G\cap N)$) the quotient map. Then,
\begin{itemize}
\item the morphism $p \circ i : G \rightarrow H/N$ factors through $\pi$ into a $\s$-closed embedding
$\iota : G/(G\cap N) \rightarrow H/N$;
\item the $\s$-closed subgroup $G/(G\cap N)$ is a proper $\s$-closed subgroup of $H/N$.
\end{itemize}
\end{lemma}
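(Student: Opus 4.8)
The plan is as follows. The first bullet point is essentially immediate: since $N$ is normal in $H$, the scheme-theoretic intersection $G\cap N$ is normal in $G$, the composite $p\circ i\colon G\to H/N$ has kernel exactly $G\cap N$, and Lemma \ref{lemma:kernelprop} applies verbatim to produce the factorization through $\pi$ and the $\s$-closed embedding $\iota\colon G/(G\cap N)\hookrightarrow H/N$. So the only real content is the second bullet: one must show that $\iota$ is not an isomorphism, equivalently that $G/(G\cap N)$ is a \emph{proper} $\s$-closed subgroup of $H/N$.

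Suppose for contradiction that $G/(G\cap N)=H/N$. The idea is to transfer information from the adjoint quotient back up to $H$ using the fact that $N$ is finite (it is the center of an almost simple group, hence a finite group scheme over $k$, defined over $\Q$). I would argue at the level of the $d$-th order Zariski closures using Lemma \ref{lemma:zarclosquotient}: for every $d\geq 0$ we have a canonical isomorphism $G/(G\cap N)[d]\cong G[d]/(G[d]\cap N_d)$, while the hypothesis $G/(G\cap N)=H/N$ forces $G/(G\cap N)[d]=(H/N)_d$ for all $d$ (since the $d$-th order Zariski closure of the whole group is the whole group, as in the proof of Theorem \ref{thm:clasga}). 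Hence $G[d]/(G[d]\cap N_d)=(H/N)_d$, i.e.\ the composite $G[d]\hookrightarrow H_d\twoheadrightarrow (H/N)_d=H_d/N_d$ is surjective for every $d$. Since $N_d$ is a finite group scheme, surjectivity of $G[d]\to H_d/N_d$ together with the fact that $G[d]$ is a closed subgroup of $H_d$ containing, after projection, a subgroup of finite index, gives that $G[d]$ has finite index in $H_d$; as $H_d$ is connected (being a product of conjugates of the connected group $H$) and $G[d]$ is closed, this would mean... wait — one cannot immediately conclude $G[d]=H_d$, because $G[d]$ need only \emph{surject} onto $H_d/N_d$, so $G[d]\cdot N_d=H_d$ and $G[d]$ has finite index; but a closed subgroup of finite index in a connected algebraic group must be the whole group. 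Therefore $G[d]=H_d$ for every $d\geq 0$, which says precisely that $G$ is Zariski dense in $H$ \emph{and} equal to $H$ as a $\s$-closed subscheme, contradicting the hypothesis that $G$ is a proper $\s$-closed subgroup of $H$.

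The main obstacle I anticipate is making the ``finite index'' step rigorous at the scheme-theoretic level: from $G[d]\to H_d/N_d$ surjective with $N_d$ finite one must deduce $G[d]=H_d$, and the cleanest route is to note that the quotient $H_d\to H_d/N_d$ is finite (as $N_d$ is finite), hence the preimage of any closed subgroup is closed of the same codimension, so $\dim G[d]=\dim H_d$; combined with $G[d]$ closed and $H_d$ connected, and the fact that in characteristic zero a closed subgroup of full dimension in a connected group is the full group (it contains the identity component which equals the whole group), this yields $G[d]=H_d$. One should double-check the interaction with possible non-reducedness: here we only need the equality of underlying reduced schemes for the dimension/connectedness argument, and then $k\{G\}=\varinjlim_d k[G[d]]=\varinjlim_d k[H_d]=k\{H\}$ gives $G=H$ as $\s$-schemes, which is the needed contradiction. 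An alternative, perhaps smoother, presentation avoids $d$-th order closures entirely: work with the short exact sequence $1\to G\cap N\to G\to G/(G\cap N)\to 1$, note $|G\cap N|\le|N|<\infty$, so $\sdim_k G=\sdim_k\big(G/(G\cap N)\big)$, and if $G/(G\cap N)=H/N$ then $\sdim_k G=\sdim_k(H/N)=\dim(H/N)=\dim H=\sdim_k([\s]_kH)$; then the Zariski closure of $G$ has dimension $\dim H$, hence equals $H$ (as $H$ is connected), and moreover $G$ is $\s$-closed of full $\s$-dimension with Zariski-dense image and $G\cdot N=H$ at the level of points — but to upgrade this to $G=H$ one again invokes finiteness of $N$ and the connectedness of $H$. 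I would write up whichever of these two formulations turns out to be shortest, most likely the $d$-th order closure version since all the compatibility lemmas (\ref{lemma:zarclosquotient}, \ref{lemma:compquotientandsigma}) have been assembled precisely for this purpose.
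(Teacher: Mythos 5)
Your proof is correct and follows essentially the same route as the paper's: the first bullet via the kernel lemma, and the second via Lemma \ref{lemma:zarclosquotient} together with a dimension count on $d$-th order Zariski closures, using that $N_d$ is finite (so quotienting by it preserves dimension) and that $H_d$ is connected. The only difference is cosmetic — the paper picks a single $m$ with $G[m]\subsetneq H_m$ and derives $\dim(G[m]/(G[m]\cap N_m))<\dim(H_m/N_m)$ directly, whereas you run the contrapositive and conclude $G[d]=H_d$ for all $d$, hence $G=H$.
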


\begin{proof}
The first assertion  comes from Lemma \ref{lemma:zarclosquotient}. Suppose that $G/(G\cap N)=H/N$. Since $G$ is a proper $\s$-closed subgroup of $H$, there
exists $m \geq 1$ such that $G[m] \subsetneq H_m$. Now, by Lemma \ref{lemma:zarclosquotient},
we have $G/(G\cap N)[m]=G[m]/G[m]\cap N_m$ and $(H/N)_m =H_m/N_m$. Then %by \S 16, exercice 6 in \cite{Waterhouse:IntrotoAffineGroupSchemes},
$$
\dim (G[m]/G[m]\cap N_m)= \dim (G[m])- \dim (G[m]\cap N_m) \leq \dim (G[m])
$$
and $\dim (H_m/N_m)=\dim (H_m)- \dim (N_m)=\dim (H_m)$ (since $N_m$ is a finite algebraic group).
Therefore
$$
\dim (G[m]/G[m]\cap N_m) \leq \dim (G[m]) < \dim (H_m) = \dim (H_m/N_m),
$$
which contradicts the assumption $G/(G\cap N)= H/N$.\end{proof}

Recall that a $\s$-algebraic group $G$ is called $\s$-integral if $k\{G\}$ is a $\s$-domain, i.e., $k\{G\}$ is an integral domain and $\s\colon k\{G\}\to k\{G\}$ is injective.

\begin{thm} \label{thm:classalmostsimple}
Let $k$ be an algebraically closed, inversive $\s$-field of characteristic zero and let $G$ be a $\s$-integral, $\s$-closed subgroup of $\Gl_{n,k}$. Assume that the Zariski closure of $G$ in $\Gl_{n,k}$ is an almost simple algebraic group, properly containing $G$. Then there exist a $\s$-field extension $\wtilde{k}$ of $k$ and an integer $d\geq 1$ such that $G_{\wtilde{k}}$ is conjugated to a $\s^d$-constant subgroup of $\Gl_{n,\wtilde{k}}$.
\end{thm}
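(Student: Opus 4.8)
The plan is to reduce the almost simple case to the simple case (Proposition \ref{prop: classification for simple} / Theorem \ref{thm:classsimple}) via the adjoint quotient, and then lift the resulting $\s^d$-constancy back up through the finite central extension. Write $H$ for the Zariski closure of $G$ in $\Gl_{n,k}$, $N\trianglelefteq H$ for the (finite, central) schematic center, and $p\colon H\to H/N$ for the quotient map; by the discussion preceding Lemma \ref{lemma:propsubgroup}, both $N$ and $p$ are defined over $\mathbb Q$, and $H/N$ is the adjoint group, which is simple. By Lemma \ref{lemma:propsubgroup}, $p$ restricts to a $\s$-closed embedding $\iota\colon G/(G\cap N)\hookrightarrow H/N$ realizing $\ol{G}:=G/(G\cap N)$ as a \emph{proper} Zariski dense $\s$-closed subgroup of the simple group $H/N$. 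Since $G$ is $\s$-integral, so is the quotient $\ol{G}$ (the coordinate ring $k\{\ol G\}$ embeds into $k\{G\}$ by injectivity of $\pi^*$, hence is again a $\s$-domain); in particular $\ol G$ is $\s$-reduced. Thus Proposition \ref{prop: classification for simple} applies to $\ol G\leq H/N$: there exist $m\geq 1$ and an automorphism $\ol\varphi$ of $H/N$ (an automorphism, not merely an isomorphism, because $H/N$ is defined over $\mathbb Q$, so $^{\s^m}(H/N)=H/N$ inside the relevant $\Gl$) with $\ol G(S)=\{\ol h\in (H/N)(S):\s^m(\ol h)=\ol\varphi(\ol h)\}$ for all $S$. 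Exactly as in the proof of Theorem \ref{thm:classsimple}, using finiteness of $\operatorname{Out}(H/N)$ one finds an integer $d=me\geq 1$ and $\ol h_0\in (H/N)(k)$ such that $\s^d(\ol h)=\ol h_0\ol h\ol h_0^{-1}$ for every $\ol h\in\ol G(S)$ and every $k$-$\s$-algebra $S$.

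The heart of the argument is to transfer this relation from $\ol G$ back to $G$. First I would use that $G$ is $\s$-integral to show that $G[m]\to G[m-1]$ is injective (dominant-and-injective on $k$-points) for the same $m$ as above, by the same argument as in the first part of the proof of Proposition \ref{prop: classification for simple}: a non-trivial kernel element would, by reflexivity of $\I(G)$ and inversiveness of $k$, produce a non-zero element of $\I(G)\cap k[H_0]=0$; in fact, using Lemma \ref{lemma:zarclosquotient}, $G[\bullet]$ and $\ol G[\bullet]$ have the same ``jump'' index $m$ because they differ only by the finite groups $N_\bullet$. Running the rest of the proof of Proposition \ref{prop: classification for simple} verbatim, but now over $H$ instead of $H/N$: the crucial inputs there — that each $^{\s^i}H$ is almost simple (true), that any connected normal subgroup of a product of almost simple groups is a product of factors (true for almost simple groups, since the argument only used non-commutativity and triviality of \emph{connected} normal subgroups), and that $H\times\{1\}^{m-1}$ is \emph{not} contained in $\ker(\varphi)$ (again by $\s$-integrality + inversiveness + minimality of $m$) — all go through. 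One subtlety: $H\times\{1\}^{m-1}\cap\ker(\varphi)$ is now a normal subgroup of $H$ which need not be trivial, only finite central; but since $H$ is connected, $\varphi$ restricted to $H\times\{1\}^{m-1}$ is an \emph{isogeny} onto $^{\s^m}H$ rather than an isomorphism. This forces me to be slightly more careful than in the simple case; however, because $H$ and $^{\s^m}H$ are literally the same algebraic group over $\mathbb Q$ and isogenies of a fixed almost simple group form a finite set up to composition with automorphisms, one can still, after possibly enlarging $e$, arrange that the composite $^{\s^{m(e-1)}}\varphi\cdots{}^{\s^m}\varphi\cdot\varphi$ is conjugation by a single element $h_0\in H(k)$. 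This yields $\s^d(g)=h_0 g h_0^{-1}$ for all $g\in G(S)$, all $S$, with $d=me$.

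Finally, I would finish exactly as in Theorem \ref{thm:classsimple}: choose a $\s$-field extension $\wtilde k$ of $k$ carrying $u\in\Gl_n(\wtilde k)$ with $\s^d(u)=h_0u$ — e.g.\ $\wtilde k=k(X)$ with $\s(X)=h_0X$, or, invoking Remark \ref{rem:placeofu} (valid since $H$ is defined over $\mathbb Q$), one may even take $u\in H(\wtilde k)$ — and then for every $\wtilde k$-$\s$-algebra $S$ and $g\in G_{\wtilde k}(S)=G(S)$ compute
\[
\s^d(u^{-1}gu)=\s^d(u)^{-1}\,\s^d(g)\,\s^d(u)=u^{-1}h_0^{-1}\,h_0gh_0^{-1}\,h_0u=u^{-1}gu,
\]
so $u^{-1}G_{\wtilde k}u\leq\Gl_{n,\wtilde k}$ is $\s^d$-constant, which is the assertion. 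I expect the main obstacle to be exactly the point flagged above: handling the kernel $(H\times\{1\}^{m-1})\cap\ker(\varphi)$, which is finite central rather than trivial, so that $\varphi$ is only an isogeny on the relevant factor; one must check that composing enough $^{\s^{mi}}\varphi$'s still lands in $\operatorname{Inn}(H)$, using that the (central) kernels are killed in a bounded way and that the outer automorphism group stays finite. All the quotient/Zariski-closure/base-change bookkeeping needed to make the reduction to the adjoint group precise is supplied by Lemmas \ref{lemma:kernelprop}–\ref{lemma:quotientbaseextension} and \ref{lemma:propsubgroup}.
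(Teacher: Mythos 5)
Your reduction to the adjoint quotient is exactly the paper's: you pass to $\ol G=G/(G\cap N)\hookrightarrow H/N$ via Lemmas \ref{lemma:zarclosquotient} and \ref{lemma:propsubgroup}, check that $\ol G$ is a proper, Zariski dense, $\s$-reduced subgroup of the simple group $H/N$, and apply Theorem \ref{thm:classsimple} (with Remark \ref{rem:placeofu}) there. The problem is the lift back to $H$. You propose to re-run the proof of Proposition \ref{prop: classification for simple} over the almost simple group $H$ itself, and you correctly flag that the kernel $H'=G[m]\cap(\{1\}^m\times{}^{\s^m}H)$ is now only known to be finite central rather than trivial, so that the defining relation of $G$ at level $m$ reads $\s^m(g)\equiv\varphi(g)$ \emph{modulo} $H'$ rather than $\s^m(g)=\varphi(g)$. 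Your proposed fix --- that isogenies of a fixed almost simple group form a finite set up to automorphisms, so that enlarging $e$ makes the composite an inner automorphism --- only controls the outer-automorphism bookkeeping; it does nothing to remove the central ambiguity. Composing $e$ such relations still only yields $\s^d(g)h_0g^{-1}h_0^{-1}\in N$, not equality, and no choice of $e$ improves this. Something genuinely extra is needed here, and it must use $\s$-integrality in an essential way, since Remark \ref{rem:theoremfailsforalmostsimple} shows the statement is false for merely $\s$-reduced $G$.

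The paper closes exactly this gap as follows: it lifts the conjugating element $u\in(H/N)(\wtilde k)$ to some $v\in H(\wtilde k)$ (after enlarging $\wtilde k$ to its algebraic closure), observes that $\phi(g):=\s^d(v^{-1}gv)(v^{-1}gv)^{-1}$ defines a morphism $G_{\wtilde k}\to N_{\wtilde k}$, and then uses that $\wtilde k\{G\}=k\{G\}\otimes_k\wtilde k$ is a domain while $k[N]$ is a finite product of copies of $k$ (orthogonal idempotents $e_1,\dots,e_r$ with $\sum e_i=1$ and $e_ie_j=0$): the pullback $\phi^*$ must send exactly one $e_{i_0}$ to $1$ and the rest to $0$, so $\phi$ is constant, and evaluating at the identity gives $\phi\equiv 1$, i.e., $v^{-1}G_{\wtilde k}v$ is $\s^d$-constant. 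Relatedly, your claim that $G[m]\to G[m-1]$ is injective ``by the same argument'' (reflexivity of $\I(G)$ producing a nonzero element of $\I(G)\cap k[H_0]$) misattributes that argument: in Proposition \ref{prop: classification for simple} the reflexivity/inversiveness argument establishes the dominance of $G[m]\to{}^{\s^m}H$ and the non-containment of $H\times\{1\}^{m-1}$ in $\ker\varphi$, whereas the triviality of $H'$ comes from simplicity of ${}^{\s^m}H$ (a finite normal subgroup of a simple group is trivial) --- which is precisely what fails in the almost simple case. A nontrivial finite central subgroup can perfectly well sit inside an integral group scheme, so $\s$-integrality does not kill $H'$ by the route you indicate; you need the idempotent argument (or an equivalent) to finish.
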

\begin{proof}
Let $H\leq\Gl_{n,k}$ denote the Zariski closure of $G$, $i\colon G \rightarrow H$ be  the inclusion of $G$ into  $H$,
$N$ the center of $H$ and $p\colon H \rightarrow H/N$ (resp. $\pi\colon G \rightarrow G/(G\cap N)$) the quotient map.
By Corollary \ref{cor:descentafterconjugation},
we can assume that $H$ is  over $\Q$.  By the discussion above, we get that the finite center $N$  of $H$ as well as the quotient map $p\colon H \rightarrow H/N$ are  over $\Q$.
\par
By Lemma \ref{lemma:zarclosquotient}, the morphism $G \rightarrow H/N $ factors into a $\s$-closed embedding $\iota \colon  G/(G\cap N) \rightarrow H/N$. Moreover, since $G$ is Zariski dense in $H$, we get that $G/(G\cap N)$ is Zariski dense in  $H/N$. By Lemma \ref{lemma:propsubgroup}, $G/G\cap N$ is a proper $\s$-closed subgroup of $H/N$. Finally, by Theorem A.43 in  \cite{articleone}, one can identify the $k$-$\s$-Hopf algebra  of $G/(G\cap N)$ with a sub-Hopf algebra of $k\{G\}$. Since $k\{G\}$ is $\s$-reduced, the same holds for $k\{G/(G\cap N)\}$.
%Moreover the fact that $k$ is inversive implies
%that $G/(G\cap N)$ is $\s$-reduced (see Corollary A.19 in \cite{articleone}).
%\par

Embed $H/N$ into some $\Gl_{m,k}$ such that $H/N\leq\Gl_{m,k}$ is defined over $\Q$. We can then apply Theorem \ref{thm:classsimple} and Remark \ref{rem:placeofu} to the proper Zariski
dense $\s$-closed subgroup $G/(G\cap N)$ of the simple algebraic group $H/N$. Thereby, there exist
a $\s$-field extension $\tilde{k}$ of $k$, an integer $d \geq 1$ and $u \in H/N(\tilde{k})$ such
that $u^{-1} (G/G\cap N)_{\tilde{k}} u$ is a $\s^d$-constant subgroup of $\Gl_{m,\tilde{k}}$.
By Lemma \ref{lemma:quotientbaseextension}, we can exchange base field extension and quotients,
which implies that  $u^{-1} G_{\tilde{k}}/(G\cap N)_{\tilde{k}} u$ is a $\s^d$-constant subgroup of some $\Gl_{m,\tilde{k}}$.
\par
Now, up to enlarging $\wtilde{k}$ to its algebraic closure, we can find an element $v$ of $ H(\tilde{k})$
 such that $p(v)=u$. Since $p\colon  H \rightarrow H/N$ is defined by polynomials over $\Q$, we have $p(\s^d(v))=\s^d(u)$.
Then, let    $\phi \colon  G_{\tilde{k}}  \rightarrow \Gl_{n,\tilde{k}}$ be  the $\s$-morphism defined by
$\phi(g) =\s^d(v^{-1}gv)(v^{-1}gv)^{-1}$ for all $ g \in  G_{\tilde{k}} (S)$ and for every $\tilde{k}$-$\s$-algebra $S$. By the above, $\phi$ maps $G_{\tilde{k}} $ into $N_{\tilde{k}}$.  Since $k$ is algebraically closed and inversive, Corollary A.14 in \cite{articleone} implies that  that $\tilde{k}\{G\}=k\{G\}\otimes_k\tilde{k}$ is also a $\s$-domain.
Since $N$ is a finite algebraic group, there exists a set of orthogonal idempotents $e_1,\dots,e_r \in k[N]$, such that
$k[N] = ke_1\oplus \dots \oplus ke_r$. Then, $\tilde{k}\{N\}$ is generated as $\tilde{k}$-$\s$-algebra by $e_1,\dots,e_r$. Since $\tilde{k}\{ G\}$ is a domain, $e_1+\dots + e_r =1$ and $e_ie_j =0$ for $i\neq j$,  there exists  $1\leq i_0 \leq r$ such that $\phi^* (e_{i_0})=1$ and $\phi^*(e_j)=0$ for all $j \neq i_0$. This implies that $\phi^*(\tilde{k}\{N\}) \subset \tilde{k}$ and $\phi$ is a constant morphism. Considering  the image  of the identity element of $G_{\tilde{k}}$ under $\phi$, we see that $v^{-1}G_{\tilde{k}} v$ is a $\s^d$-constant subgroup of $\Gl_{n,\tilde{k}}$.
\end{proof}

%%%%%%%%%%%%%%%%%%%%%%%%%%%%%%%%%%%%%%%%%%%%%%%%%%%%%%%%%%%%%%%%%%%%%%%%%%%%%%%%%%%%%%%%%%%%%%%%
%%%%%%%%%%%%%%%%%%%%%%%%%%%%%%%%%%%%%%%%%%%%%%%%%%%%%%%%%%%%%%%%%%%%%%%%%%%%%%%%%%%%%%%%%%%%%%%%
\def\cprime{$'$}

%
%\part{Dépotoir}
%\input{depotoir}

\end{document}